\newtheorem{thm}{Theorem}[section]
\newtheorem{cor}[thm]{Corollary}
\newtheorem{lem}[thm]{Lemma}
\newtheorem{prop}[thm]{Proposition}
\newtheorem{defn}[thm]{Definition}
\newtheorem{rem}[thm]{Remark}
\newcommand{\lesi}{\lesssim}
\newcommand{\dx}{d\mu(x)}
\newcommand{\dy}{d\mu(y)}
\newcommand{\dz}{d\mu(z)}
\newcommand{\supp}{\operatorname{supp}}
\newcommand{\f}{\frac}
\newcommand{\vc}{\infty}
\title[Bilinear and Fractional Leibniz Rules Beyond Euclidean Spaces]{Bilinear and Fractional Leibniz Rules Beyond Euclidean Spaces: Weighted Besov and Triebel--Lizorkin Estimates}         % Enter your title between curly braces
\author[T. A. Bui]{The Anh Bui}
\address{School of Mathematical and Physical Sciences, Macquarie University, NSW 2109,
	Australia}
\email{the.bui@mq.edu.au}
\keywords{Bilinear spectral multiplier, fractional Leibniz rule, heat kernel, space of homogeneous type}
\subjclass[2020]{35S05, 47B38, 42B37, 42B35}
\begin{document}
	
	\begin{abstract}
	We establish fractional Leibniz rules in weighted settings for nonnegative self-adjoint operators on spaces of homogeneous type. Using a unified method that avoids Fourier transforms, we prove bilinear estimates for spectral multipliers on weighted Hardy, Besov, and Triebel–Lizorkin spaces. Our approach is flexible and applies beyond the Euclidean setting—covering, for instance, nilpotent Lie groups, Grushin operators, and Hermite expansions—thus extending classical Kato–Ponce inequalities. The framework also yields new weighted bilinear estimates including fractional Leibniz rules for Hermite, Laguerre, and Bessel operators, with applications to scattering formulas and related PDE models.
		
	\end{abstract}
	\date{}

	\maketitle
	
	\tableofcontents
	
	\section{Introduction}\label{sec: intro}
	
	\subsection{State of the art}
	
	The fractional Leibniz rules, also known as Kato--Ponce inequalities, describe how fractional differentiation interacts with products of functions. More precisely, for $f,g \in \mathscr{S}(\mathbb{R}^n)$ one has inequalities of the form
	\[
	\|D^s(fg)\|_{L^p} \lesssim \|D^s f\|_{L^{p_1}} \|g\|_{L^{p_2}} + \|f\|_{L^{p_1}} \|D^s g\|_{L^{p_2}},
	\]
	and analogues with the Bessel potential operator $J^s$. Here $D^s f(\xi) = |\xi|^s \widehat{f}(\xi)$ and $J^s f(\xi) = (1+|\xi|^2)^{s/2} \widehat{f}(\xi)$, with restrictions on the exponents depending on $s$, the dimension, and the integrability indices. Such inequalities play a central role in the analysis of nonlinear PDEs, including the Euler and Navier--Stokes equations \cite{KatoPonce}, dispersive models such as the Korteweg--de Vries equation \cite{ChristWeinstein,KenigPonceVega}, and the smoothing effects of Schr\"odinger semigroups \cite{GulisashviliKon}. More recent advances address endpoint cases and nonclassical ranges of exponents; see for example \cite{GrafakosOh,MuscaluSchlag,BourgainLi}.
	
	Beyond simple products, one can replace $fg$ by bilinear pseudodifferential operators
	\[
	T_\sigma(f,g)(x) = \int_{\mathbb{R}^{2n}} e^{2\pi i x \cdot (\xi+\eta)} \, \sigma(x,\xi,\eta) \, \widehat{f}(\xi) \, \widehat{g}(\eta) \, d\xi \, d\eta,
	\]
	with symbol $\sigma$. Such operators include bilinear multipliers, and their mapping properties have been investigated in a wide variety of function spaces. Examples include results for homogeneous H\"ormander classes via molecular decompositions \cite{BrummerNaibo}, estimates in Lebesgue and mixed-norm spaces under minimal smoothness conditions \cite{HartTorresWu}, and boundedness results in Besov, Sobolev, Triebel--Lizorkin, and related scales \cite{NaiboThomson,BenyiTorres}.
	
	%Weighted versions of fractional Leibniz rules have also been established. For instance, \cite{CruzUribeNaibo} obtained weighted inequalities in Lebesgue and variable-exponent spaces, while \cite{BrummerNaiboWeighted} treated bilinear Coifman--Meyer multipliers in weighted settings, including biparameter situations. More recently, \cite{NaiboThomson} developed Leibniz-type rules in weighted Triebel–Lizorkin and Besov spaces built upon weighted Lebesgue, Lorentz, Morrey, and variable-exponent spaces, extending and complementing the earlier works \cite{CruzUribeNaibo,BrummerNaiboWeighted}.
	
	Classical approaches to the fractional Leibniz rule rely heavily on the Fourier transform, restricting their applicability to Euclidean frameworks. However, many modern problems involve non-Euclidean  such as sub-Laplacians on Lie groups or Schr\"odinger operators with potentials, where  methods based on the Fourier transform are no longer effective. Extending Leibniz rules beyond the Euclidean setting has therefore become a central and challenging theme in harmonic analysis and PDEs. For instance, fractional Leibniz rules have been investigated on Lie groups in the scale of $L^p$ and Sobolev spaces \cite{CRT,BBR}. Estimates in Besov and Triebel–Lizorkin spaces for nilpotent Lie groups and the Grushin operator setting were obtained in \cite{Bruno}, while fractional Leibniz rules on stratified Lie groups were studied in \cite{Fang et al}. In \cite{NaiboLy}, the fractional rules associated with Hermite operators were considered in both $L^\infty$ and Besov/Triebel–Lizorkin spaces. On spaces of homogeneous type, fractional Leibniz rules in the Hardy scale have been derived for restricted ranges of fractional orders \cite{LZ}. Nevertheless, existing results often impose limitations on the smoothness parameter $s$ and the integrability and summability parameters $p$ and $q$ in the Besov and Triebel-Lizorkin spaces.
	
	In this work, we establish a unified framework for  fractional Leibniz rules on spaces of homogeneous type, associated with nonnegative self-adjoint operators, in the general scale of new weighted Besov and Triebel-Lizorkin spaces. Our results apply to not only obtain new results, but also significantly improve existing results in settings including the Laplacian on $\mathbb R^n$, the sub-Laplacian on Lie groups, Schr\"odinger operators on stratified groups, the Grushin operators on $\mathbb R^{n+1}$ and the Hermite operator on $\mathbb R^n$. This yields a robust and flexible method for analyzing bilinear forms in settings where the classical Fourier analysis is unavailable.
	
	The novelty of our approach lies in completely bypassing Fourier methods, instead combining bilinear spectral multiplier theory with heat semigroup techniques.  To our knowledge, this constitutes the first comprehensive and unified treatment of fractional Leibniz rules in  non-Euclidean framework.

\subsection{Main results}

Let $X$ be a space of homogeneous type with a distance $d$ and a nonnegative Borel measure $\mu$ satisfying the doubling property
\begin{equation}\label{doubling1}
	V(x,2r) \le C\, V(x,r), \qquad x\in X, \ r>0,
\end{equation}
where $V(x,r)=\mu(B(x,r))$ and $B(x,r)=\{y\in X: d(x,y)<r\}$.  
This implies the following estimates
\begin{equation}\label{doubling2}
	V(x,\lambda r)\leq c_\star\, \lambda^n V(x,r), \qquad \lambda \ge 1,
\end{equation}
and 
\begin{equation}\label{doubling3}
	V(x,r)\le C \Big(1+\frac{d(x,y)}{r}\Big)^n V(y,r), \qquad x,y\in X, \ r>0.
\end{equation}
%These geometric properties of the measure form the basis for the analysis of operators and function spaces on $X$.

We consider a nonnegative self-adjoint operator $L$ on $L^2(X)$ generating a semigroup $\{e^{-tL}\}_{t>0}$ with kernel $p_t(x,y)$, subject to the following conditions:

\begin{enumerate}
	\item[(A1)] There exist constants $C,c>0$ such that
	\[
	|p_t(x,y)| \le \frac{C}{V(x,\sqrt t)} \exp\Big(-\frac{d(x,y)^2}{ct}\Big),
	\qquad x,y \in X, \ t>0.
	\]
	
	\item[(A2)] There exists $\delta_0 \in (0,1]$ such that
	\[
	|p_t(x,y)-p_t(\overline x,y)| \le \Big(\frac{d(x,\overline x)}{\sqrt t}\Big)^{\delta_0}
	\frac{C}{V(x,\sqrt t)} \exp\Big(-\frac{d(x,y)^2}{ct}\Big)
	\]
	for all $t>0$ and $x,\overline x,y\in X$ with $d(x,\overline x)<\sqrt t$.
\end{enumerate}

Denote by \(E_L(\lambda)\) the spectral (projection-valued) resolution of the nonnegative self-adjoint operator \(L\). By the spectral theorem, for any bounded Borel function \(F:[0,\infty)\to\mathbb C\) the operator
\[
F(L)=\int_0^\infty F(\lambda)\,dE_L(\lambda)
\]
is bounded on \(L^2(X)\). In what follows, for a smooth function \(F\) we denote by \(F(L)(x,y)\) the integral kernel of \(F(L)\) (see Proposition \ref{thm-kernel estimate for functional calculus} for the existence of this kernel). We make the following assumption.

\begin{enumerate}
	\item[(A3)] For all even functions \(\varphi,\psi\in\mathscr S(\mathbb R)\), all integers \(k\ge0\), and all \(N>0\), there exists a constant \(C_{N,k,\varphi,\psi}>0\) such that, for every \(t>0\) and every \(x,y,z\in X\),
	\[
	\Big|L^k_x\big[\varphi(t\sqrt L)(x,y)\,\psi(t\sqrt L)(x,z)\big]\Big|
	\le C_{N,k,\varphi,\psi}\; t^{-2k}\,D_{t,N}(x,y)\,D_{t,N}(x,z),
	\]
	where \(L^k_x\) denotes the operator \(L^k\) acting on the \(x\)-variable, \(\mathscr S(\mathbb R)\) is the Schwartz space on \(\mathbb R\), and \(D_{t,N}(\cdot,\cdot)\) is defined in \eqref{eq- Dlambda N} below.
\end{enumerate}

\medskip
The assumptions (A1) and (A2) are known as the Gaussian upper bound and H\"older continuity. The condition (A3) may appear to be technical; however, in many situations it is simply a consequence of the Gaussian upper bounds for higher-order derivatives of the heat kernel (see Section 5 for more details). The condition (A3) is essential to obtain the full range for the smoothness index $s$ in the fractional Leibniz rules. See Theorem \ref{main thm} and Corollary \ref{cor-main} below. Without (A3), we only expect a limited range for $s$. See Theorem \ref{main thm - without A3} below. \textit{In this paper, we do not assume that the operator $L$ satisfies all conditions (A1)–(A3). Throughout, we always assume (A1) and (A2), and explicitly indicate whenever (A3) is required.}

Assumptions (A1) and (A2) allow us to define bilinear form in this general setting.  
Let $L_1 = L\otimes 1$ and $L_2= 1\otimes L$ be the tensor product operators acting on $L^2(X\times X)$ with product measure $\mu \times \mu$. By the spectral theorem, there exists a unique spectral decomposition $E$ such that for all Borel sets $A \subset \mathbb R^2$,
\[
E(A) \text{ is a projection on } L^2(X\times X), 
\quad
E(A_1 \times A_2) = E_{L_1}(A_1)\otimes E_{L_2}(A_2).
\]
Hence for any function $F : \mathbb{R}^2 \to \mathbb{C}$ one can define the operator   by the formula
\begin{equation}
	F(L_1,L_2) = \int_{\mathbb{R}^2} F(\lambda_1,\lambda_2)\, dE(\lambda_1,\lambda_2)
\end{equation}
with domain
\[
\mathcal D = \{g\in L^2(X\times X): \int_{\mathbb{R}^2} |F(\lambda_1,\lambda_2)|^2\, d\langle E(\lambda_1,\lambda_2)g,g \rangle<\vc \}.
\]
A straightforward variation of classical spectral theory arguments shows that for any bounded 
Borel function $F : \mathbb{R}^2 \to \mathbb{C}$ the operator $F(L_1,L_2)$ is continuous on 
$L^2(X_1 \times X_2)$ and its norm is bounded by $\|F\|_{\infty}$. See for example \cite{Sikora}.

Let $\boldsymbol{\mathrm{m}}:\mathbb R^2\to \mathbb C$ satisfy
\begin{equation}\label{eq-condition on m}
	\big|\partial_\xi^\alpha \partial_\eta^\beta \boldsymbol{\mathrm{m}}(\xi,\eta)\big|
	\le C_{\alpha,\beta}\,(|\xi|+|\eta|)^{\gamma-(|\alpha|+|\beta|)},
\end{equation}
for all $\alpha,\beta \in \mathbb N$ and some $\gamma \in \mathbb R$.  
We associate to $\boldsymbol{\mathrm{m}}$ the bilinear form
\begin{equation}\label{eq-bilinear form}
	B_{\boldsymbol{\mathrm{m}},L}(f,g)(x) = \boldsymbol{\mathrm{m}}(L_1,L_2)(f\otimes g)(x,x),
\end{equation}
for $f,g\in \mathcal S_\infty$ and $x\in X$, where $\mathcal S_\infty$ denotes the class of test functions associated with $L$. When $L=-\Delta$ is the Laplacian on $\mathbb R^n$, the class $\mathcal{S}_\vc$ is identical to the subspace of functions in $\mathscr S(\mathbb R^n)$ that have vanishing moments of all orders. See Subsection~\ref{sec-distribution}.  
In general, since the diagonal $\{(x,x):x\in X\}$ has zero measure in $(X\times X,\mu\times\mu)$, the formula \eqref{eq-bilinear form} need not be well-defined. However, under conditions (A1)--(A2) and within the framework of $\mathcal S_\infty$, identity \eqref{eq-bilinear form} is well-defined; see Subsection~\ref{Sec-Appendix} for a detailed proof.

\medskip

Finally, for $w \in A_\infty$ (see Subsection~2.1 for the definition of Muckenhoupt weights), set
\[
\tau_w = \inf\{\tau \in (1,\infty): w \in A_\tau\}.
\]
Given $0<p,q\le\infty$, define
\[
\tau_{p,q}(w) := n\Big(\frac{1}{\min(p/\tau_w,q,1)} - 1\Big),
\qquad
\tau_p(w) := n\Big(\frac{1}{\min(p/\tau_w,1)} - 1\Big).
\]
If $w=1$ (so that $\tau_w=1$), we simply write $\tau_{p,q}$ and $\tau_p$.  
Note that  $\tau_{p,q}(w)\ge \tau_{p,q}$, $\tau_p(w)\ge \tau_p$ for any $w \in A_\infty$.

We denote by $\dot{B}^{s,L}_{p,q}(w)$, $\dot{F}^{s,L}_{p,q}(w)$, and $H^p_L(w)$ the weighted Besov, Triebel--Lizorkin, and Hardy spaces associated with $L$, respectively, with the relevant parameters. For precise definitions, see Section~3.

\medskip

The main theorem of this paper is stated as follows.
	
\begin{thm}\label{main thm}
	Let $L$ satisfy conditions (A1), (A2), and (A3), and let $\boldsymbol{\mathrm{m}}$ satisfy \eqref{eq-condition on m} for some $\gamma \in \mathbb{R}$. Consider $0<q\le \vc$, $0 < p,p_1,p_2,p_3,p_4 \leq \infty$ with 
	\begin{equation}\label{eq- pi condition}
	\frac{1}{p} = \frac{1}{p_1} + \frac{1}{p_2} = \frac{1}{p_3} + \frac{1}{p_4},
	\end{equation}
	and weights $w_1,w_2,w_3,w_4 \in A_\infty$ such that 
	\begin{equation}\label{eq- wi condition}
	w_1^{p/p_1} w_2^{p/p_2} = w_3^{p/p_3} w_4^{p/p_4} =: w.
	\end{equation}
	If $0<p_1,p_4<\vc$, $0<p_2,p_3\le \vc$, and $s > \tau_{p,q}(w)$, then
	\begin{equation}\label{eq-main thm TL spaces}
		\|B_{\boldsymbol{\mathrm{m}},L}(f,g)\|_{\dot{F}^{s,L}_{p,q}(w)} 
		\lesssim \|f\|_{\dot{F}^{s+\gamma,L}_{p_1,q}(w_1)} \|g\|_{H^{p_2}_L(w_2)} 
		+ \|f\|_{H^{p_3}_L(w_3)} \|g\|_{\dot{F}^{s+\gamma,L}_{p_4,q}(w_4)} 
	\end{equation}
	for all $f,g \in \mathcal{S}_\infty$.
	
	\medskip
	\noindent If $0 < p,p_1,p_2,p_3,p_4 \leq \infty$ and $s > \tau_p(w)$, then
	\begin{equation}\label{eq-main thm B spaces}
		\|B_{\boldsymbol{\mathrm{m}},L}(f,g)\|_{\dot{B}^{s,L}_{p,q}(w)} 
		\lesssim \|f\|_{\dot{B}^{s+\gamma,L}_{p_1,q}(w_1)} \|g\|_{H^{p_2}_L(w_2)} 
		+ \|f\|_{H^{p_3}_L(w_3)} \|g\|_{\dot{B}^{s+\gamma,L}_{p_4,q}(w_4)} 
	\end{equation}
	for all $f,g \in \mathcal{S}_\infty$. 
	In both inequalities, $H^{p_i}_L(w_i)$ is replaced by $L^\infty$ if $p_i=\infty$ and $w_i=1$ for $i=2,3$.
\end{thm}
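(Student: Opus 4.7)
The plan is to prove Theorem \ref{main thm} through a paraproduct decomposition of $B_{\boldsymbol{\mathrm{m}},L}$ based on the spectral resolution of $L$, entirely avoiding the Fourier transform. Fix an even $\varphi \in \mathscr{S}(\mathbb{R})$ supported away from the origin with $\sum_{j\in\mathbb{Z}}\varphi(2^{-j}\lambda)^2 = 1$ for $\lambda>0$, and set $\varphi_j = \varphi(2^{-j}\,\cdot)$. Applying the Calderón-type reproducing formula $f = \sum_j \varphi_j(\sqrt{L})^2 f$ to both arguments of $B_{\boldsymbol{\mathrm{m}},L}$ and sorting the pairs $(j,k)$ by relative size yields
\[
B_{\boldsymbol{\mathrm{m}},L}(f,g) = \Pi_H(f,g) + \Pi_D(f,g) + \Pi_L(f,g),
\]
corresponding respectively to the high--low ($j>k+2$), diagonal ($|j-k|\le 2$), and low--high ($k>j+2$) regions.

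For each piece I would localize $\boldsymbol{\mathrm{m}}$ to the corresponding dyadic frequency annulus, expand the localized symbol in a Fourier-like series in rescaled variables using only the derivative bounds \eqref{eq-condition on m}, and apply assumption (A3) term by term. This yields pointwise kernel bounds of the form
\[
|K_{j,k}(x,y,z)| \lesssim 2^{\gamma\max(j,k)}\,D_{2^{-\max(j,k)},N}(x,y)\,D_{2^{-\max(j,k)},N}(x,z)
\]
for arbitrarily large $N$, and consequently the pointwise estimate
\[
|\Pi_H(f,g)(x)| \lesssim \sum_{j}2^{j\gamma}\,\mathcal{M}_\theta(\varphi_j(\sqrt{L})f)(x)\,\mathcal{M}_\theta\Big(\sum_{k<j-2}\varphi_k(\sqrt{L})^2 g\Big)(x),
\]
where $\mathcal{M}_\theta$ is the powered Hardy--Littlewood maximal operator for some $0 < \theta < \min(p/\tau_w, q, 1)$; the piece $\Pi_L$ admits the symmetric estimate. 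To obtain the $\dot F^{s,L}_{p,q}(w)$ norm, I would bound $\bigl\|(\sum_\ell 2^{\ell sq}|\phi_\ell(\sqrt L)B_{\boldsymbol{\mathrm{m}},L}(f,g)|^q)^{1/q}\bigr\|_{L^p(w)}$ piece by piece. Almost-orthogonality between $\phi_\ell(\sqrt L)$ and $\varphi_j\otimes\varphi_k$ confines $\ell$ to a neighborhood of $\max(j,k)$. Inserting the pointwise bound and applying the weighted vector-valued Fefferman--Stein maximal inequality (legitimate since $p/\tau_w>\theta$ and $q>\theta$) distributes the estimate into $\|f\|_{\dot F^{s+\gamma,L}_{p_1,q}(w_1)}$ times a discrete square function of $g$ in $L^{p_2}(w_2)$, which by the semigroup square-function characterization of $H^{p_2}_L(w_2)$ equals $\|g\|_{H^{p_2}_L(w_2)}$. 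For the diagonal piece $\Pi_D$ with $j\sim k$, the derivative factor $2^{j\gamma}$ can be placed on either $f$ or $g$, producing both terms on the right of \eqref{eq-main thm TL spaces}. The Besov bound \eqref{eq-main thm B spaces} follows from the same strategy with Minkowski's inequality in $\ell^q$ replacing Fefferman--Stein, which relaxes the admissible threshold from $\tau_{p,q}(w)$ to $\tau_p(w)$.

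The hardest step will be the kernel estimate above: one must expand the localized multiplier $\boldsymbol{\mathrm{m}}\cdot(\varphi_j\otimes\varphi_k)^2$, after an appropriate rescaling, into a series to which (A3) directly applies, while retaining sharp control over the $2^{\gamma\max(j,k)}$ factor and sufficient Schwartz decay to sum uniformly in $j,k$. A secondary delicate point is calibrating $\theta$ so that the weighted vector-valued Fefferman--Stein inequality applies at the worst-case exponent $\min(p/\tau_w, q, 1)$, which is precisely what forces the sharp threshold $\tau_{p,q}(w)$. Finally, the endpoint cases $p_i = \infty$ with $w_i = 1$ (where $H^{p_i}_L$ is replaced by $L^\infty$) require a separate treatment, typically via duality against an $H^1_L$--BMO pair or by running the analogous estimate in the maximal-function definition of $L^\infty$.
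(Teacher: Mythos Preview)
Your overall architecture—paraproduct splitting into high–low, diagonal, low–high pieces, Fourier-series expansion of the localized symbol, and invoking (A3) for the kernel bound—is essentially the paper's. However, there is a real gap in the step you flag as routine.

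The claim that ``almost-orthogonality between $\phi_\ell(\sqrt L)$ and $\varphi_j\otimes\varphi_k$ confines $\ell$ to a neighborhood of $\max(j,k)$'' is false in this setting. In $\mathbb R^n$ it holds because products have additive Fourier support, so a high--low term $\varphi_j(D)^2f\cdot\Phi_{j-3}(D)g$ is genuinely spectrally localized near $|\xi|\sim 2^j$. For a general self-adjoint $L$ there is no such mechanism: the pointwise product of two spectrally localized functions has \emph{no} spectral localization, and $\phi_\ell(\sqrt L)$ sees every $j$. Condition (A3) gives only a \emph{one-sided} substitute: when $\ell\ge k$ one can move powers of $L$ across the kernel (this is exactly \eqref{eq- the use of A3} in the paper, via Lemma~\ref{lem-kernel-representation}(iv)) and extract decay $2^{-2m(\ell-k)}$ for any $m$. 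But when $k>\ell$ there is no decay at all from (A3); the paper instead passes through an off-scale discrete maximal estimate (Lemma~\ref{lem1- thm2 atom Besov}) that costs $2^{n(k-\ell)(\tau_w/r-1)}$, compensated only by the factor $2^{(\ell-k)s}$ coming from $L^{s/2}$. Summing over $k>\ell$ then requires $s>n(\tau_w/r-1)$, and optimizing $r$ toward $\min(p/\tau_w,q,1)$ is what produces the threshold $\tau_{p,q}(w)$.

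So your identification of the source of the threshold is off: $s>\tau_{p,q}(w)$ is not a Fefferman--Stein calibration issue but a genuine summability constraint arising from the \emph{absence} of two-sided spectral orthogonality. Your outline covers the $\ell\ge k$ half and omits the $k>\ell$ half, which is the more delicate part and is where the paper's discrete sampling machinery (Lemma~\ref{lem-kernel-representation}) and the equivalence \eqref{eq- equivalence of maximal functions} are actually needed.
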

By choosing $\boldsymbol{\mathrm{m}}\equiv 1$ so that $T_{\boldsymbol{\mathrm{m}}}(f,g) = fg$, Theorem \ref{main thm} implies the following corollary:

\begin{cor}\label{cor-main}
	Let $L$ satisfy (A1)--(A3). Let $0<q\le \vc$, and let $0<p,p_1,p_2,p_3,p_4 \le \infty$ satisfy \eqref{eq- pi condition}, and $w, w_1, w_2, w_3, w_4 \in A_\infty$ satisfy \eqref{eq- wi condition}. Then we have:
	\begin{equation}
		\|L^{s/2}(fg)\|_{\dot{F}^{0,L}_{p,q}(w)}\simeq \|fg\|_{\dot{F}^{s,L}_{p,q}(w)} 
		\lesssim \|f\|_{\dot{F}^{s,L}_{p_1,q}(w_1)} \|g\|_{H^{p_2}_L(w_2)} 
		+ \|f\|_{H^{p_3}_L(w_3)} \|g\|_{\dot{F}^{s,L}_{p_4,q}(w_4)},
	\end{equation}
	whenever $0<p_1,p_4<\vc$, $0<p_2,p_3\le \vc$, and $s > \tau_{p,q}(w)$.  
	
	\medskip
	A similar estimate holds on $\dot{B}^{s,L}_{p,q}(w)$ provided $0<p_1,p_2,p_3,p_4\le \vc$ and $s > \tau_p(w)$.  
	Again, $H^{p_i}_L(w_i)$ is replaced by $L^\infty$ if $p_i=\infty$ and $w_i=1$ for $i=2,3$.
\end{cor}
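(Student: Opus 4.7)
The plan is to deduce Corollary~\ref{cor-main} directly from Theorem~\ref{main thm} by choosing the constant symbol $\boldsymbol{\mathrm{m}}\equiv 1$. First, I would verify that this constant symbol satisfies the symbol condition \eqref{eq-condition on m} with $\gamma=0$: the zeroth derivative is bounded by $1$, and all higher derivatives vanish identically, which trivially satisfies any bound of the form $C(|\xi|+|\eta|)^{-(|\alpha|+|\beta|)}$. Thus $\boldsymbol{\mathrm{m}}\equiv 1$ falls within the scope of Theorem~\ref{main thm} with $\gamma = 0$.

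Next, I would identify the corresponding bilinear form. By the spectral theorem for the tensor product operators $L_1 = L\otimes 1$ and $L_2 = 1\otimes L$, the operator $\boldsymbol{\mathrm{m}}(L_1, L_2)$ with $\boldsymbol{\mathrm{m}} \equiv 1$ is simply the identity on $L^2(X\times X)$. Therefore, for $f,g \in \mathcal{S}_\infty$,
\[
B_{\boldsymbol{\mathrm{m}},L}(f,g)(x) = (f\otimes g)(x,x) = f(x)g(x).
\]
The well-definedness of this diagonal restriction on the class $\mathcal S_\infty$ is precisely what is justified in Subsection~\ref{Sec-Appendix} under assumptions (A1)--(A2). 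Substituting this identification into \eqref{eq-main thm TL spaces} and \eqref{eq-main thm B spaces} yields the fractional Leibniz inequalities in the Triebel--Lizorkin and Besov scales stated in the corollary.

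Finally, the equivalence $\|L^{s/2}(fg)\|_{\dot{F}^{0,L}_{p,q}(w)} \simeq \|fg\|_{\dot{F}^{s,L}_{p,q}(w)}$ is not a consequence of Theorem~\ref{main thm} itself but rather a lifting property of the weighted Triebel--Lizorkin scale associated with $L$. I would invoke this directly from Section~3 of the paper, where the definitions and basic functional-analytic properties of $\dot{F}^{s,L}_{p,q}(w)$ are set up; the operator $L^{s/2}$ should realize an isomorphism (up to equivalence of norms) between $\dot{F}^{s,L}_{p,q}(w)$ and $\dot{F}^{0,L}_{p,q}(w)$, by the standard Calder\'on reproducing / Littlewood--Paley characterization adapted to the heat semigroup.

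There is essentially no serious obstacle at this level: all the analytic difficulty has been absorbed into Theorem~\ref{main thm}. The only point requiring a moment's care is the verification that the admissibility hypotheses on the parameters $(p,p_i,q,w,w_i,s)$ and the smoothness threshold $s > \tau_{p,q}(w)$ (respectively $s>\tau_p(w)$) transfer without modification, which they do because the hypotheses of the corollary are identical to those of the theorem with $\gamma=0$.
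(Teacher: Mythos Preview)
Your proposal is correct and follows exactly the paper's approach: the paper deduces the corollary in one line by taking $\boldsymbol{\mathrm{m}}\equiv 1$ (hence $\gamma=0$) in Theorem~\ref{main thm}, and the equivalence $\|L^{s/2}(fg)\|_{\dot{F}^{0,L}_{p,q}(w)}\simeq \|fg\|_{\dot{F}^{s,L}_{p,q}(w)}$ is precisely the lifting property recorded as Proposition~\ref{prop- Ls on TL B spaces}.
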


Theorem \ref{main thm} and Corollary \ref{cor-main} extend the fractional Leibniz rules to a broad non-Euclidean framework through the bilinear multiplier estimates on weighted Besov, and Triebel–Lizorkin spaces associated with operators. They not only recover the classical Euclidean case but also significantly generalize previous results obtained in more restrictive settings such as nilpotent Lie groups or Hermite operators. Importantly, the bilinear multiplier formulation goes beyond the pointwise product, encompassing a large class of bilinear pseudodifferential operators. This unified framework thus provides new tools for operator-adapted harmonic analysis and for nonlinear PDEs on spaces of homogeneous type, where bilinear estimates are crucial. Finally, it emphasizes that our method is flexible and can be applied to obtain fractional Leibniz-type rules in a variety of function spaces, including Triebel–Lizorkin and Besov spaces based on Lorentz and Morrey spaces, as well as variable-exponent Lesbesgue spaces.

We record below a few important consequences of these results.  
\begin{enumerate}[(i)]
	\item Since $\dot F^{0,L}_{p,2}(w) =L^p(w)$ for $1<p<\vc$ and $w\in A_p$ (see \cite{BBD}), the result in Corollary \ref{cor-main} gives new estimates in the weighted $L^p$-spaces in the setting of spaces of homogeneous type.
	\medskip
	
	\item For $L=-\Delta$ on $\mathbb{R}^n$, it is well-known that the weighted Besov spaces $\dot{B}^{s,L}_{p,q}(w)$ and Triebel–Lizorkin spaces $\dot{F}^{s,L}{p,q}(w)$ coincide with the classical weighted Besov and Triebel–Lizorkin spaces (see, for example, \cite{BPT}). Hence, the estimates in Corollary \ref{cor-main} read as follows: for $0<q\le \vc$, $0<p,p_1,p_2,p_3,p_4 \le \infty$ satisfying \eqref{eq- pi condition}, and $w, w_1, w_2, w_3, w_4 \in A_\infty$ satisfying \eqref{eq- wi condition}, we have
		\begin{equation}
		\|D^s(fg)\|_{\dot{F}^{0}_{p,q}(w)}\simeq \|fg\|_{\dot{F}^{s}_{p,q}(w)} 
		\lesssim \|f\|_{\dot{F}^{s}_{p_1,q}(w_1)} \|g\|_{H^{p_2}(w_2)} 
		+ \|f\|_{H^{p_3}(w_3)} \|g\|_{\dot{F}^{s}_{p_4,q}(w_4)},
	\end{equation}
	whenever $0<p_1,p_4<\vc$, $0<p_2,p_3\le \vc$, and $s > \tau_{p,q}(w)$. A similar estimate holds on $\dot{B}^{s}_{p,q}(w)$ provided $0<p_1,p_2,p_3,p_4\le \vc$ and $s > \tau_p(w)$.  
	Again, $H^{p_i}(w_i)$ is replaced by $L^\infty$ if $p_i=\infty$ and $w_i=1$ for $i=2,3$. Here, $\dot{B}^{s}_{p,q}(w), \dot{F}^{s}_{p,q}(w)$ and $H^p(w)$ denote the (classical) weighted Besov, Triebel-Lizorkin and Hardy spaces. See for example \cite{BPT}. 
	
	The above estimates are new. Moreover, under the identifications $p_1=p_3$, $p_2=p_4$, $w_1=w_3$, and $w_2=w_4$, Corollary \ref{cor-main} recovers the main results of \cite{NaiboThomson}. Hence, Corollary \ref{cor-main} provides a genuine extension of the results in \cite{NaiboThomson}.
	\medskip
	\item For nilpotent Lie groups and the Grushin operator, our results extend the algebra properties of Besov and Triebel–Lizorkin spaces obtained in \cite{Bruno, Fang et al, BBR}, enlarging the admissible range of exponents and incorporating weights, with new consequences even in the unweighted case. See Sections 5.1 and 5.2.
	\medskip
	\item For Schr\"odinger operators with nonnegative polynomial potentials on stratified Lie groups, we obtain new fractional Leibniz rules, including new contributions in the Euclidean setting. See Section 5.1.
	\medskip
	\item For Hermite operators, we substantially extend the results of \cite{NaiboLy}, where only the special case $p_1=p_4$, $p_2=p_3=\infty$ was considered, again with notable improvements even without weights. See Section 5.3.
\end{enumerate}

When condition (A3) is dropped, the following still holds:

\begin{thm}\label{main thm - without A3}
	Let $L$ satisfy (A1) and (A2), and let $\boldsymbol{\mathrm{m}}$ satisfy \eqref{eq-condition on m} for some $\gamma\in \mathbb{R}$.  Let $0<q\le \vc$, and let $0<p,p_1,p_2,p_3,p_4 \le \infty$ satisfy \eqref{eq- pi condition}, and $w, w_1, w_2, w_3, w_4 \in A_\infty$ satisfy \eqref{eq- wi condition}. Then we have:
	
	If $0<p_1,p_4<\vc$, $0<p_2,p_3\le \vc$, and $\tau_{p,q}(w)<s<\delta_0$, then
	\[
	\|B_{\boldsymbol{\mathrm{m}},L}(f,g)\|_{\dot{F}^{s,L}_{p,q}(w)} 
	\lesssim \|f\|_{\dot{F}^{s+\gamma,L}_{p_1,q}(w_1)} \|g\|_{H^{p_2}_L(w_2)} 
	+ \|f\|_{H^{p_3}_L(w_3)} \|g\|_{\dot{F}^{s+\gamma,L}_{p_4,q}(w_4)},
	\]
	and an analogous inequality holds on $\dot{B}^{s,L}_{p,q}(w)$ for $0<p_1,p_2,p_3,p_4\le \vc$ and $\tau_p(w)<s<\delta_0$.  
	As before, $H^{p_i}_L(w_i)$ is replaced by $L^\infty$ if $p_i=\infty$ and $w_i=1$  for $i=2,3$.
\end{thm}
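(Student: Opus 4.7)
The plan is to follow the paraproduct decomposition strategy used in the proof of Theorem \ref{main thm}, replacing the kernel estimate supplied by (A3) with a weaker one derived only from the H\"older continuity (A2). This substitution is precisely what forces the upper bound $s < \delta_0$.

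Concretely, fix an even $\varphi \in \mathscr{S}(\mathbb R)$ with $\operatorname{supp} \varphi \subset \{1/2 \le |\lambda| \le 2\}$ and $\sum_{j \in \mathbb Z} \varphi(2^{-j}\lambda) = 1$ on $(0,\infty)$, set $\Phi_j = \varphi(2^{-j}\sqrt L)$, and expand
\[
B_{\boldsymbol{\mathrm{m}}, L}(f, g) = \sum_{j, k \in \mathbb Z} B_{\boldsymbol{\mathrm{m}}, L}(\Phi_j f, \Phi_k g).
\]
Split the double sum into the three regions $j < k - N_0$, $|j - k| \le N_0$, and $j > k + N_0$ for a large constant $N_0$. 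The two off-diagonal paraproducts are the smoothness-carrying pieces and are handled exactly as in the proof of Theorem \ref{main thm}: in each region the composition $\Phi_\ell \circ B_{\boldsymbol{\mathrm{m}}, L}$ is localized to a single frequency scale, its kernel is controlled using only (A1), and one places one factor in the Hardy norm and the other in the target Besov or Triebel--Lizorkin scale. These steps do not require (A3) and impose no upper bound on $s$.

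The diagonal piece is the only point at which the proof of Theorem \ref{main thm} invokes (A3); there the assumption is used to place arbitrarily many copies of $L$ on the product kernel of the two localizations, yielding an arbitrarily large gain $2^{-2M(\ell - j)_+}$ after frequency-localizing at a fine scale $\ell > j$. Without (A3), I substitute this by the single order of H\"older regularity of exponent $\delta_0$ built into (A2), which transfers through functional calculus to the kernels $\Phi_j(x, y)$ and $\Phi_k(x, z)$ at their natural scales $2^{-j}$ and $2^{-k}$. Combined with the off-diagonal decay of $\Phi_\ell$, this produces a kernel bound for the diagonal composition of the form
\[
C\, 2^{-\delta_0(\ell - j)_+}\, D_{2^{-j}, N}(x, y)\, D_{2^{-k}, N}(x, z)
\]
for any $N > 0$, precisely the analogue of the (A3)-derived bound but with the free parameter $M$ replaced by the fixed value $\delta_0/2$.

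Inserting this estimate into the discrete Besov (respectively Triebel--Lizorkin) norm produces a series $\sum_{\ell \ge j} 2^{s\ell}\cdot 2^{-\delta_0(\ell - j)_+}$ in the summation over the fine scale, which converges precisely when $s < \delta_0$; the lower bound $s > \tau_{p,q}(w)$ (respectively $s > \tau_p(w)$) then controls the weighted vector-valued maximal and square function estimates used to pass from the pointwise kernel bound to the required norm inequality, exactly as in the proof of Theorem \ref{main thm}. The main obstacle will be executing the diagonal kernel estimate cleanly: because $B_{\boldsymbol{\mathrm{m}}, L}$ evaluates on the diagonal $(x, x)$, one must carefully coordinate the H\"older regularity in the $x$-variable coming from both heat-kernel factors $\Phi_j$ and $\Phi_k$, and verify that the resulting combined off-diagonal profile survives the bilinear restriction. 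Once this estimate is in hand, assembling the three paraproduct pieces yields both claimed inequalities under the stated restrictions on $s$.
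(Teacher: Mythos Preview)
Your core idea---replacing the (A3)-derived decay $2^{-2m(\ell-k)}$ with a H\"older-continuity decay $2^{-\delta_0(\ell-k)}$---matches the paper's approach, but you misidentify where the substitution is needed. The paper's proof of Theorem \ref{main thm} does not use a three-piece paraproduct; it splits $\boldsymbol{\mathrm{m}} = \sum_k \boldsymbol{\mathrm{m}}_{1,k} + \sum_j \boldsymbol{\mathrm{m}}_{2,j}$ into high-low and low-high (each absorbing half of the diagonal), and for \emph{each} such piece (A3) enters only when the test frequency $\ell$ exceeds the decomposition scale $k$---that is, in the single kernel estimate \eqref{eq- the use of A3}. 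Your assertion that the off-diagonal paraproducts are ``localized to a single frequency scale'' and therefore handled without (A3) is Euclidean Fourier intuition and fails here: without a Fourier transform, the product $\Psi_k(\sqrt L)f \cdot \Phi_k(\sqrt L)g$ is \emph{not} spectrally localized with respect to $L$, so $\psi_\ell(\sqrt L)$ with $\ell \gg k$ does not vanish on it automatically. Condition (A3) is precisely the substitute for this missing localization, and it is needed for the high-low piece, not just for a diagonal interaction.

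The paper's actual proof of Theorem \ref{main thm - without A3} therefore targets \eqref{eq- the use of A3} directly and shows that for $\ell \ge k$,
\[
\Big|\int_X \theta_\ell(\sqrt L)(x,y)\,\widetilde\Gamma_k(y,x_Q)\,\widetilde\Gamma_k(y,x_R)\,d\mu(y)\Big| \lesssim_N 2^{-\delta_0(\ell-k)}\,D_{2^{-k},N}(x,x_Q)\,D_{2^{-k},N}(x,x_R),
\]
by splitting into $d(x,y)\ge 2^{-k}$ (far region: the extra off-diagonal decay of $\theta_\ell(\sqrt L)$ supplies $2^{-\delta_0(\ell-k)}$ directly) and $d(x,y)<2^{-k}$ (near region: subtract $\widetilde\Gamma_k(x,x_R)$, use the H\"older bound from Lemma \ref{lem-kernel-representation}(ii), and treat the correction by a single integration by parts in $L$). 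Everything else in the proof of Theorem \ref{main thm} is reused verbatim. If you redirect your H\"older substitution to this single kernel estimate rather than to a separate diagonal piece, the argument goes through; as written, the off-diagonal treatment has a genuine gap.
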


Compared with Theorem \ref{main thm}, the admissible range of smoothness is restricted: $\tau_{p,q}(w)<s<\delta_0$ in the Triebel–Lizorkin scale and $\tau_p(w)<s<\delta_0$ in the Besov scale. This version still yields fractional Leibniz rules analogous to Corollary \ref{cor-main}, but with this limited range. Examples of operators satisfying (A1)–(A2) include the Dirichlet Laplacian on Lipschitz or convex domains \cite{AR}, uniformly elliptic divergence-form operators \cite{A, Nash, CR}, Laplace–Beltrami and Schr\"odinger operators with certain potentials on manifolds with a doubling measure  and the Poincar\'e inequality \cite{Sa, BDK}, as well as multidimensional Bessel and Laguerre operators \cite{DPW, BDK}.

\medskip

\noindent\textbf{Proof strategy.}  
Since our framework extends beyond the Euclidean setting and lacks the Fourier transform, the standard techniques of \cite{NaiboThomson} are inapplicable. Even the definition of the bilinear form \eqref{eq-bilinear form} requires careful examinations. To address this, we employ the space of test functions introduced in \cite{PK} (see also \cite{BBD}) and define bilinear forms via multivariate spectral multipliers, combining this with singular integral theory and the recent development of function spaces beyond the classical Calderón–Zygmund setting \cite{DY, BBD, HLMMY, JY, YZ}.  

In comparison, the approach of \cite{LZ} to fractional Leibniz rules on spaces of homogeneous type assumes, besides (A1)–(A2), additional constraints such as the Markov property $e^{-tL}1=1$, reverse-doubling, and non-collapsing conditions on $\mu$. Moreover, their results apply only to Hardy spaces and in a narrow range of smoothness $s$. Crucially, their methods do not extend to Besov and Triebel–Lizorkin scales. By contrast, our approach is both flexible and robust, yielding a unified framework for bilinear estimates and fractional Leibniz rules beyond the Euclidean settings.

\subsection{Applications to scattering properties in PDEs} We now discuss an application to scattering properties of solutions to certain systems of partial differential equations.  We consider the system
\begin{equation}\label{eq:general-system}
	\begin{cases}
		u_t = v w,\\
		v_t + a(L)v = 0,\\
		w_t + b(L)w = 0,\\
		u(0,x) = 0, \quad v(0,x) = f(x), \quad w(0,x) = g(x),
	\end{cases}
\end{equation}
where $a(L)$ and $b(L)$ are general operators associated with a positive self-adjoint operator $L$ on $L^2(X)$.  The operators $a(L)$ and $b(L)$ cover a wide range of phenomena, from classical and fractional diffusion to more abstract evolutions on manifolds or Lie groups, capturing both local and nonlocal effects. Note that the particular case $L=-\Delta$ the Laplacian on $\mathbb R^n$ was investigated in \cite{NaiboThomson}.

%The quadratic term $u_t = v w$ introduces nonlinearity in an otherwise linear framework, modeling \emph{how two independent evolutions interact to generate new dynamics}. Understanding such interactions is crucial for analyzing regularity, growth, and decay properties in nonlinear PDEs, and it provides a natural setting for \emph{fractional Leibniz-type estimates}, which control derivatives of products of functions.

To make the problem tractable while retaining essential features, we focus on the symmetric case
\begin{equation}\label{eq:symmetric-case}
	a(L) = b(L) = L^\gamma, \quad \gamma > 0,
\end{equation}
reducing \eqref{eq:general-system} to
\begin{equation}\label{eq:symmetric-system}
	\begin{cases}
		u_t = v w,\\
		v_t + L^\gamma v = 0,\\
		w_t + L^\gamma w = 0,\\
		u(0,x) = 0, \quad v(0,x) = f(x), \quad w(0,x) = g(x).
	\end{cases}
\end{equation}

In this setting, the solutions to the linear equations can be explicitly written as
\[
v(t)=e^{-tL^\gamma}f,\qquad w(t)=e^{-tL^\gamma}g,
\]
so that
\[
u(t,x)=\int_0^t \big(e^{-sL^\gamma}f\big)(x)\,\big(e^{-sL^\gamma}g\big)(x)\,ds.
\]
Ignoring convergence issues, assume that $\lim_{t\to\infty}u(t,x)$ exists and denote
$u_\infty(x)=\lim_{t\to\infty}u(t,x)$. Then we may write
\[
\begin{aligned}
	u_\infty(x)
	&=\int_0^\infty \big(e^{-sL^\gamma}f\big)(x)\,\big(e^{-sL^\gamma}g\big)(x)\,ds\\[4pt]
	&=\int_0^\infty e^{-s\big(L^\gamma\otimes I+I\otimes L^\gamma\big)}\big[f\otimes g\big](x,x)\,ds\\[4pt]
	&=\big(L^\gamma\otimes I+I\otimes L^\gamma\big)^{-1}\big[f\otimes g\big](x,x)
	= B_{\mathbf m,L}\big[f\otimes g\big](x,x),
\end{aligned}
\]
where
\[
\mathbf m(\xi,\eta)=\frac{1}{\xi^\gamma+\eta^\gamma}.
\]
It is easy to check that the symbol $\mathbf m$ above satisfies \eqref{eq-condition on m} with $\gamma$ replaced by $-\gamma$. As a consequence of Theorem \ref{main thm}, we obtain scattering estimates for $u_\infty$ in the scale of weighted Besov and Triebel–Lizorkin spaces associated with $L$.

The paper is organized as follows. Section 2 begins with the definitions of Muckenhoupt weights and the Fefferman–Stein maximal inequality. We then establish several kernel estimates and recall certain functional calculus results. The section concludes with a review of distributions and Peetre maximal functions associated with the operator $L$. Section 3 introduces weighted Besov and Triebel–Lizorkin spaces related to $L$, together with their basic properties, including a new characterization of the weighted Hardy spaces associated with $L$. The main results are proved in Section 4. Applications to Theorem \ref{main thm} and Corollary \ref{cor-main} are presented in Section 5. Finally, Section 6 is devoted to the proofs of the bilinear formula \eqref{eq-bilinear form} and the auxiliary results stated in Sections 2 and 3.
\section{Preliminaries}	

\subsection{Muckenhoupt weights and Fefferman-Stein inequality}
To simplify notation, we will often use $B$ for $B(x_B, r_B)$.  
Given $\lambda > 0$, we denote by $\lambda B$ the $\lambda$-dilated ball, i.e. the ball centered at $x_B$ with radius $r_{\lambda B} = \lambda r_B$.

\medskip

A weight $w$ is a nonnegative, measurable, and locally integrable function on $X$.  
For any measurable set $E \subset X$, we define
\[
w(E) := \int_E w(x)\dx, \qquad V(E)=\mu(E).
\]
We also use
\[
\fint_E h(x)\dx=\f{1}{V(E)}\int_E h(x)\dx.
\]
For $1 \leq p \leq \infty$, let $p'$ denote the conjugate exponent of $p$, i.e. $1/p + 1/p' = 1$.

\medskip

We say that a weight $w \in A_p$, $1 < p < \infty$, if
\begin{equation}
	\label{defn Ap}
	[w]_{A_p}:=\sup_{B: {\rm balls}}\Big(\fint_B w(x)\dx\Big)^{1/p}\Big(\fint_B w(x)^{-1/(p-1)}\dx\Big)^{(p-1)/p}< \vc.
\end{equation}
For $p = 1$, we say that $w \in A_1$ if there exists $C>0$ such that for every ball $B \subset X$,
\[
\fint_B w(y)\dy \leq Cw(x) \quad \text{for a.e. } x\in B.
\]
We set $A_\vc=\cup_{p\geq 1}A_p$.

\medskip
%
%The reverse H\"older classes are defined as follows: a weight $w\in RH_q$, $1 < q < \infty$, if there exists $C>0$ such that for every ball $B \subset X$,
%\[
%\Big(\fint_B w(y)^q \dy\Big)^{1/q} \leq C \fint_B w\dx.
%\]
%In the endpoint case $q = \infty$, we say $w \in RH_\infty$ if there exists $C>0$ such that for any ball $B \subset X$,
%\[
%w(x)\leq C \fint_B w(y)\dy  \quad \text{for a.e. } x\in B.
%\]

%\medskip

For $w \in A_\vc$ and $0<p<\infty$, the weighted Lebesgue space $L^p(w)$ is defined by
\[
\Big\{f :\int_{X} |f(x)|^p w(x)\dx < \infty\Big\}
\]
with the norm
\[
\|f\|_{L^p(w)}=\Big(\int_{X} |f(x)|^p w(x)\dx\Big)^{1/p}.
\]

\medskip

%We recall some standard properties of weight classes (see \cite{ST}).

%%	The following hold:
%	\begin{enumerate}[{\rm (i)}]
%		\item $A_1\subset A_p\subset A_q$ for $1< p\leq q< \infty$.
%		\item If $w \in A_p$, $1 < p < \vc$, then there exists $1<r < p$ such that $w \in A_r$.
%		\item If $w\in A_p$, $1<p<\vc$, then $w^{1-p'}\in A_{p'}$.
%		\item If $w\in A_p$, $1\le p<\vc$, then there exists $C>0$ such that for any ball $B$ and measurable $E\subset B$,
%		\begin{equation}
%			\label{eq- doubling w}
%			\f{w(B)}{w(E)}\le C\Big(\f{V(B)}{V(E)}\Big)^p.
%		\end{equation}
%		\item If $w \in RH_p$, $1 < p < \vc$, then there exists $q>p$ such that $w \in RH_q$.
%		\item $\cup_{1<p<\vc} A_p \subset \cup_{1<q<\vc}RH_q$.
%	\end{enumerate}
%\end{lem}

For $w\in A_\vc$ we define
\[
\tau_w=\inf\{q: w\in A_q\}.%, \qquad r_w=\sup\{r: w\in RH_r\}.
\]

\bigskip

Let $w\in A_\vc$ and $0<r<\vc$. The weighted Hardy--Littlewood maximal function $\mathcal{M}_{r,w}$ is defined by
\[
\mathcal{M}_{r,w} f(x)=\sup_{x\in B}\Big(\f{1}{w(B)}\int_B|f(y)|^r w(y)\dy\Big)^{1/r},
\]
where the supremum is taken over all balls $B$ containing $x$.  
We drop the subscripts $r$ or $w$ when $r=1$ or $w\equiv 1$.

It is well known that if $w\in A_\vc$ and $0<r<\vc$, then
\begin{equation}
	\label{boundedness maximal function}
	\|\mathcal{M}_{r,w} f\|_{L^p(w)}\lesi \|f\|_{L^p(w)}
\end{equation}
for all $p>r$.

Moreover, for $0<r<\vc$ and $p>r$,
\begin{equation}
	\label{boundedness maximal function 2}
	\|\mathcal{M}_{r} f\|_{L^p(w)}\lesi \|f\|_{L^p(w)}
\end{equation}
holds for all $w\in A_{p/r}$. In particular,
\begin{equation}
	\label{eq-sharp weighted estimates for M}
	\|\mathcal{M} f\|_{L^p(w)}\le c_{n,p}^{\star} [w]_{A_p}^{\f{1}{p-1}} \|f\|_{L^p(w)}, 
	\quad w\in A_p, \ 1<p<\vc,
\end{equation}
where $c_{n,p}^{\star}$ depends only on $n$ and $p$.

\medskip

We recall the Fefferman-Stein vector-valued maximal inequality and a variant from \cite{GLY}. For $0<p<\vc$, $0<q\leq \vc$, $0<r<\min \{p,q\}$ and $w\in A_{p/r}$, one has for any sequence of measurable functions $\{f_\nu\}$,
\begin{equation}\label{FSIn}
	\Big\|\Big(\sum_{\nu}|\mathcal{M}_r f_\nu|^q\Big)^{1/q}\Big\|_{L^p(w)}
	\lesi 
	\Big\|\Big(\sum_{\nu}|f_\nu|^q\Big)^{1/q}\Big\|_{L^p(w)}.
\end{equation}

Finally, combining Young’s inequality with \eqref{FSIn} yields: if $\{a_\nu\} \in \ell^{q}\cap \ell^{1}$, then
\begin{equation}\label{YFSIn}
	\Big\|\sum_{j}\Big(\sum_\nu|a_{j-\nu}\mathcal{M}_r f_\nu|^q\Big)^{1/q}\Big\|_{L^p(w)}
	\lesi 
	\Big\|\Big(\sum_{\nu}|f_\nu|^q\Big)^{1/q}\Big\|_{L^p(w)}.
\end{equation}

\subsection{Dyadic cubes and some kernel estimates}

We will now recall  an important covering lemma in \cite{C}.
\begin{lem}\label{Christ'slemma} There
	exists a collection of open sets $\{Q_\tau^k\subset X: k\in
	\mathbb{Z}, \tau\in I_k\}$, where $I_k$ denotes certain (possibly
	finite) index set depending on $k$, and constants $\rho\in (0,1),
	a_{\diamond}\in (0,1]$ and $a_{\sharp}\in (0,\vc)$ such that
	\begin{enumerate}[(i)]
		\item $\mu(X\backslash \cup_\tau Q_\tau^k)=0$ for all $k\in
		\mathbb{Z}$;
		\item if $i\geq k$, then either $Q_\tau^i \subset Q_\beta^k$ or $Q_\tau^i \cap
		Q_\beta^k=\emptyset$;
		\item for every $(k,\tau)$ and each $i<k$, there exists a unique $\tau'$
		such $Q_\tau^k\subset Q_{\tau'}^i$;
		\item the diameter ${\rm diam}\,(Q_\tau^k)\leq a_{\sharp} \rho^k$;
		\item each $Q_\tau^k$ contains certain ball $B(x_{Q_\tau^k}, a_\diamond\rho^k)$.
	\end{enumerate}
\end{lem}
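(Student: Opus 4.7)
The plan is to follow Christ's original bottom-up construction from \cite{C}, building the families $\{Q_\tau^k\}$ from a nested system of reference centers. First, I would fix a small parameter $\rho \in (0,1)$ (to be calibrated against the doubling constants) and, for each $k \in \mathbb{Z}$, invoke Zorn's lemma to produce a maximal $\rho^k$-separated net $\{x_\tau^k : \tau \in I_k\} \subset X$, so that $d(x_\tau^k, x_{\tau'}^k) \geq \rho^k$ for $\tau \neq \tau'$ and, by maximality, the balls $B(x_\tau^k, \rho^k)$ cover $X$. The half-balls $B(x_\tau^k, \rho^k/2)$ are then automatically pairwise disjoint, and the doubling property \eqref{doubling2} guarantees that $I_k$ is at most countable.

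Next, I would organize these centers into a tree on $\bigsqcup_k I_k$. For each $\tau \in I_k$, assign the unique parent $\pi(\tau) \in I_{k-1}$ minimizing $d(x_\tau^k, x_{\pi(\tau)}^{k-1})$, with ties broken by a fixed well-ordering of $I_{k-1}$. Iterating gives an ancestor $\pi^{k-i}(\tau) \in I_i$ for every $i<k$, which is precisely what (iii) demands. To produce the cubes themselves, I would assign to each $y \in X$ an index $\tau_k(y) \in I_k$ by starting at a very fine scale $k_0 \gg k$, selecting $\sigma \in I_{k_0}$ minimizing $d(y, x_\sigma^{k_0})$, and setting $\tau_k(y) := \pi^{k_0 - k}(\sigma)$. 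The geometric series $\sum_{j \geq 0} \rho^j$ makes this assignment stabilize as $k_0 \to \infty$ outside a small exceptional set. Taking $Q_\tau^k$ to be the interior of $\{y : \tau_k(y) = \tau\}$ yields the required open sets.

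The verification then proceeds as follows. Nesting (ii) is automatic since $\tau_k(y) = \tau$ forces $\tau_i(y) = \pi^{k-i}(\tau)$ for $i < k$. The diameter bound (iv) follows from a telescoping estimate $d(y, x_\tau^k) \leq \sum_{j\geq 0} C\rho^{k+j}$ giving $a_\sharp \sim (1-\rho)^{-1}$. For (v), one shows that if $d(y, x_\tau^k) < a_\diamond \rho^k$ with $a_\diamond$ sufficiently small relative to $\rho$, then $x_\tau^k$ remains the closest center at every coarser level, so the ancestor chain is forced and $y \in Q_\tau^k$. Property (i) reduces to showing that the bisector sets $\{y : d(y, x_\tau^k) = d(y, x_\sigma^k)\}$ are $\mu$-null; this follows from the doubling condition \eqref{doubling1} via a standard Fubini-type argument showing that $\mu\{y : d(y,x) = r\} = 0$ for all but countably many $r$, so $\rho$ can be chosen to avoid the exceptional radii.

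The main obstacle is the simultaneous calibration of $\rho$ and $a_\diamond$: every geometric estimate for nesting, containment, and diameter control must close up consistently across infinitely many scales, which forces the geometric-series constants tied to doubling to fit together in a delicate way. A subtler technical point is that, unlike in $\mathbb{R}^n$, boundaries of Voronoi-type cells here are $\mu$-null not by hyperplane arguments but by doubling-based density estimates; this is the one place in the construction itself where the full strength of \eqref{doubling1}--\eqref{doubling3} enters, as opposed to its later role in harmonic-analytic estimates.
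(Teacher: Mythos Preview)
The paper does not prove this lemma; it is stated as a recalled result from Christ \cite{C}, with no argument given. Your outline follows Christ's original construction and is essentially correct as a sketch of that reference.

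One small correction: your account of property (i) is not quite Christ's argument. The null-boundary claim does not come from showing that bisector sets $\{y : d(y,x_\tau^k)=d(y,x_\sigma^k)\}$ are $\mu$-null or from tuning $\rho$ to avoid exceptional radii. Rather, Christ proves a quantitative ``small boundary'' estimate: there exist $C,\eta>0$ such that for every cube $Q_\tau^k$ and every $t>0$,
\[
\mu\big(\{x\in Q_\tau^k : d(x, X\setminus Q_\tau^k) \le t\rho^k\}\big) \le C\,t^\eta\,\mu(Q_\tau^k),
\]
which follows from doubling together with a combinatorial lemma about how many centers at scale $k$ can lie near the frontier between two level-$(k-1)$ cubes. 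Letting $t\to 0$ gives $\mu(\partial Q_\tau^k)=0$ and hence (i). This is the one genuinely nontrivial step in the construction; the rest of your outline (nets, parent assignment, geometric-series diameter bound, inner-ball containment) is accurate.
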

\begin{rem}\label{rem1-Christ}
	Since the constants $\rho$ and $a_\diamond$ are not essential in the paper, without loss of generality, we may assume that $\rho=a_\diamond=1/2$.  We then fix a collection of open sets in Lemma \ref{Christ'slemma} and denote this collection by $\mathcal{D}$. We call open sets in $\mathcal{D}$ the dyadic cubes in $X$ and $x_{Q_\tau^k}$ the center of the cube $Q_\tau^k \in \mathcal{D}$. Let $k_0$ is a fixed natural number such that 
	\begin{equation}
		\label{eq-constant k0}
		\max\{a_\sharp 2^{-k_0+1}, a_nb_\sharp c^2_\sharp c_\diamond c_{n,2}^\star  2^{-k_0+3}\}< 1,
	\end{equation}
	where $a_\sharp, a_n,b_\sharp$, $c_\sharp, c_\diamond$ and $ c_{n,2}^\star$ are constants in Lemmas \ref{Christ'slemma}, \ref{lem-elementary}, Remark \ref{rem1}, Lemmas \ref{lem-product exp}, \ref{lem-exp kernel} below, and \eqref{eq-sharp weighted estimates for M} for $p=2$, respectively. 
	
	Then we denote 
	$$\mathcal{D}_\nu:=\{Q_\tau^{\nu+k_0} \in \mathcal{D} : \tau\in I_{\nu+k_0}\}
	$$ 
	for each $\nu\in \mathbb{Z}$. Then for $Q\in \mathcal{D}_\nu$, we have $$B(x_Q,  2^{-(\nu+k_0+1)})\subset Q\subset B(x_Q, a_\sharp 2^{-{(\nu+k_0)}})\subset B(x_Q, 2^{-{(\nu+1)}}).
	$$
\end{rem}

The following is taken from \cite{BBD}.

\begin{lem}\label{lem1- thm2 atom Besov}
	Let $w\in A_{\vc}$, $N>n$, $\kappa\in [0,1]$, and $\eta, \nu \in \mathbb{Z}$, $\nu\geq \eta$. Assume that  $\{f_Q\}_{Q\in \mathcal{D}_\nu}$ is a  sequence of functions satisfying
	$$
	|f_Q(x)|\lesi \Big(\f{V(Q)}{V(x_Q,2^{-\eta})}\Big)^\kappa\Big(1+\f{d(x,x_Q)}{2^{-\eta}}\Big)^{-N}.
	$$
	Then for $\f{n\tau_w}{N}<r\leq 1$ and a sequence of numbers $\{s_Q\}_{Q\in \mathcal{D}_\nu}$, we have
	$$
	\sum_{Q\in \mathcal{D}_\nu}|s_Q|\,|f_Q(x)|\lesi 2^{n(\nu-\eta)(\tau_w/r-\kappa)}\mathcal{M}_{w,r}\Big(\sum_{Q\in \mathcal{D}_\nu}|s_Q|1_Q\Big)(x).
	$$
\end{lem}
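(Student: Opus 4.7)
The plan is to decompose $\mathcal{D}_\nu$ into annular regions about $x$, exploit the polynomial decay of $f_Q$ in $d(x,x_Q)/2^{-\eta}$, and on each annulus convert the $\ell^1$ sum of $|s_Q|$ into an $r$-th weighted average that can be recognized as the maximal function. For each integer $j \ge 1$, set
\[
A_j = \{Q \in \mathcal{D}_\nu : 2^{j-1-\eta} \le d(x, x_Q) < 2^{j-\eta}\}
\]
and $A_0 = \{Q \in \mathcal{D}_\nu : d(x, x_Q) < 2^{-\eta}\}$, and let $B_j = B(x, 2^{j+1-\eta})$. Since $\nu \ge \eta$, every cube in $\mathcal{D}_\nu$ has diameter at most $2^{-\eta}$, so $\bigcup_{Q\in A_j}Q \subset B_j$ and
\[
|f_Q(x)| \lesi 2^{-jN}\Big(\f{V(Q)}{V(x_Q,2^{-\eta})}\Big)^\kappa,\qquad Q\in A_j.
\]

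The key algebraic manipulation is to trade the exponent $\kappa$ for $\tau_w/r$. From Remark \ref{rem1-Christ} together with doubling \eqref{doubling2} one has $V(Q)/V(x_Q,2^{-\eta}) \gtrsim 2^{-n(\nu-\eta)}$ and $V(Q)/V(x_Q,2^{-\eta}) \le 1$; since $\tau_w \ge 1 \ge r$ forces $\tau_w/r \ge \kappa$, it follows that
\[
\Big(\f{V(Q)}{V(x_Q,2^{-\eta})}\Big)^\kappa \lesi 2^{n(\nu-\eta)(\tau_w/r-\kappa)}\Big(\f{V(Q)}{V(x_Q,2^{-\eta})}\Big)^{\tau_w/r},
\]
which already produces the explicit factor in the conclusion. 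Using \eqref{doubling3}, one also has $V(x_Q,2^{-\eta}) \gtrsim 2^{-nj}V(B_j)$ for $Q\in A_j$, so after substitution it suffices to bound $\sum_{Q\in A_j}|s_Q|V(Q)^{\tau_w/r}$.

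Three ingredients drive the passage to the weighted maximal function. First, since $0<r\le 1$, the elementary inequality $\sum a_Q \le (\sum a_Q^r)^{1/r}$ converts the sum into $\bigl(\sum_{Q\in A_j}|s_Q|^r V(Q)^{\tau_w}\bigr)^{1/r}$. Second, choosing $\tau$ slightly larger than $\tau_w$ with $w \in A_\tau$ and $r > n\tau/N$ (possible because $r > n\tau_w/N$), the $A_\tau$ condition yields $V(Q)^{\tau_w} \lesi V(B_j)^{\tau}\, w(Q)/w(B_j)$. Third, the pairwise disjointness of $\mathcal{D}_\nu$ together with $\bigcup_{Q\in A_j}Q \subset B_j$ gives
\[
\sum_{Q\in A_j}|s_Q|^r w(Q) = \int_{\bigcup_{Q\in A_j}Q}\Big(\sum_{Q'\in\mathcal{D}_\nu}|s_{Q'}|\mathbf{1}_{Q'}\Big)^r w\,d\mu \le w(B_j)\,\mathcal{M}_{w,r}\Big(\sum_{Q'\in\mathcal{D}_\nu}|s_{Q'}|\mathbf{1}_{Q'}\Big)(x)^r.
\]
Combining these, the volume factors cancel and one arrives at
\[
\sum_{Q\in A_j}|s_Q|\Big(\f{V(Q)}{V(x_Q,2^{-\eta})}\Big)^\kappa \lesi 2^{n(\nu-\eta)(\tau_w/r-\kappa)}\,2^{nj\tau_w/r}\,\mathcal{M}_{w,r}\Big(\sum_{Q'\in\mathcal{D}_\nu}|s_{Q'}|\mathbf{1}_{Q'}\Big)(x).
\]

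Summing over $j\ge 0$ weighted by $2^{-jN}$ produces the geometric series $\sum_{j\ge 0}2^{-j(N-n\tau_w/r)}$, which converges precisely because of the hypothesis $r > n\tau_w/N$. The main technical obstacle will be the careful bookkeeping of the three volumes $V(Q)$, $V(x_Q,2^{-\eta})$, and $V(B_j)$ under doubling \eqref{doubling2}--\eqref{doubling3}, together with the need to replace $\tau_w$ by a slightly larger $\tau$ in the $A_\tau$ inequality without sacrificing the convergence of the geometric series --- and this is precisely where the strict hypothesis $r > n\tau_w/N$ is used.
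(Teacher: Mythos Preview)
The paper does not prove this lemma; it is quoted from \cite{BBD}. Your annular decomposition together with the $\ell^1\hookrightarrow\ell^r$ inequality and the $A_\tau$ volume--weight comparison is exactly the standard route, and all the geometric steps (the containment $\bigcup_{Q\in A_j}Q\subset B_j$, the estimate $V(x_Q,2^{-\eta})\gtrsim 2^{-nj}V(B_j)$, and the exponent trade using $V(Q)/V(x_Q,2^{-\eta})\gtrsim 2^{-n(\nu-\eta)}$) are correct.

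There is one slip in the bookkeeping between $\tau_w$ and the auxiliary $\tau$. From $w\in A_\tau$ you only get
\[
\Big(\frac{V(Q)}{V(B_j)}\Big)^{\tau}\lesssim \frac{w(Q)}{w(B_j)},
\]
and since $V(Q)/V(B_j)\le 1$ with $\tau>\tau_w$, the \emph{smaller} power $(V(Q)/V(B_j))^{\tau_w}$ is the larger quantity; your displayed inequality $V(Q)^{\tau_w}\lesssim V(B_j)^{\tau}\,w(Q)/w(B_j)$ therefore does not follow, and the volume factors do not cancel as written. The fix is immediate: carry $\tau$ rather than $\tau_w$ from the exponent-trading step onward. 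Step~3 still works because $\tau/r\ge 1\ge\kappa$; step~6 becomes the honest $A_\tau$ inequality and the $V(B_j)^\tau$ cancels; the geometric tail becomes $\sum_{j\ge0}2^{-j(N-n\tau/r)}$, convergent exactly because you chose $\tau$ with $r>n\tau/N$. The output is the bound $2^{n(\nu-\eta)(\tau/r-\kappa)}$ for any $\tau>\tau_w$ with $w\in A_\tau$ and $r>n\tau/N$. This is what the lemma really delivers --- in general $w\notin A_{\tau_w}$, so the exponent $\tau_w$ in the stated conclusion is a standard shorthand for ``any $\tau>\tau_w$''; in the paper's applications (e.g.\ the proof of Theorem~\ref{main thm}) the strict hypothesis $s>\tau_{p,q}(w)$ leaves room to pick such a $\tau$ without loss.
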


The following elementary estimates will be used frequently. See for example \cite{BDK}.
\begin{lem}\label{lem-elementary}
	For any $f\in L^1_{\rm loc}(X)$ we have
	$$
	\int_X\f{1}{V(x\wedge y ,s)}\Big(1+\f{d(x,y)}{s}\Big)^{-(n+1)}|f(y)|\dy\le  a_n\mathcal{M}f(x),
	$$
	for all $x\in X$ and $s>0$, where $V(x\wedge y,r)=\min\{V(x,r),V(y,r)\}$ and $a_n$ depends on $n$ only.
	
	Consequently,
	$$
	\int_X\f{1}{V(x\wedge y ,s) }\Big(1+\f{d(x,y)}{s}\Big)^{-(n+1)} \dy\le  a_n,
	$$
	uniformly in  $x\in X$ and $s>0$.
\end{lem}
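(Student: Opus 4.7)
The proof proceeds by a standard dyadic annular decomposition. Let $B_0 := B(x,s)$ and $A_k := B(x,2^k s) \setminus B(x,2^{k-1}s)$ for $k \geq 1$, and write the integral as $\sum_{k \geq 0} \int_{A_k}$ (with $A_0 := B_0$). The goal on each piece is to produce a bound of the form $C \cdot 2^{-k} \mathcal{M}f(x)$, after which the geometric series $\sum_{k \geq 0} 2^{-k}$ delivers the constant $a_n$ in the statement, depending only on the doubling constant $c_\star$ and the constant in \eqref{doubling3}.

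On $A_k$ the decay factor is controlled by $(1+d(x,y)/s)^{-(n+1)} \leq 2^{n+1}\, 2^{-k(n+1)}$. The key step is a lower bound on $V(x \wedge y, s)$. For $y$ with $d(x,y) < 2^k s$, the doubling property \eqref{doubling2} gives $V(x, 2^k s) \leq c_\star 2^{kn} V(x,s)$ and $V(y, 2^k s) \leq c_\star 2^{kn} V(y,s)$, while \eqref{doubling3} applied at scale $2^k s$ yields $V(x, 2^k s) \lesssim V(y, 2^k s)$. Combining these gives
\[
V(x \wedge y, s) = \min\{V(x,s), V(y,s)\} \geq c\, 2^{-kn}\, V(x, 2^k s).
\]
Plugging this into the integrand and using $A_k \subset B(x, 2^k s)$ together with $\int_{B(x, 2^k s)} |f|\, d\mu \leq V(x, 2^k s)\, \mathcal{M}f(x)$, one obtains
\[
\int_{A_k} \frac{|f(y)|}{V(x \wedge y, s)}\Big(1+\frac{d(x,y)}{s}\Big)^{-(n+1)} d\mu(y) \lesssim \frac{2^{kn}}{V(x, 2^k s)} \cdot 2^{-k(n+1)} \cdot V(x, 2^k s)\, \mathcal{M}f(x) \lesssim 2^{-k} \mathcal{M}f(x).
\]
Summing over $k \geq 0$ yields the first inequality. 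The second assertion then follows by taking $f \equiv 1$, for which $\mathcal{M}1 \equiv 1$.

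The only delicate point is executing the volume comparison correctly: one should not bound $V(x \wedge y, s)$ from below by $V(x, s)$ directly, because $V(y, s)$ may be strictly smaller, and routing through $V(x, s)$ alone costs an extra factor of $2^{kn}$ that cannot be absorbed. Instead, both $V(x, s)$ and $V(y, s)$ must be compared with the common reference $V(x, 2^k s)$; the resulting growth factor $2^{kn}$ is then precisely cancelled by the decay $2^{-k(n+1)}$ from the Schwartz-type weight, leaving the summable $2^{-k}$. This also explains why the exponent $n+1$ is the natural threshold for this estimate.
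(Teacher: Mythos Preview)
Your argument is correct and is the standard proof of this estimate. The paper itself does not supply a proof of this lemma; it simply refers the reader to \cite{BDK}, so there is no in-paper argument to compare against. Your dyadic annular decomposition, together with the volume comparison $V(x\wedge y,s)\gtrsim 2^{-kn}V(x,2^ks)$ obtained from \eqref{doubling2} and \eqref{doubling3}, is exactly the expected route and matches what one finds in the cited reference.
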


For $\lambda, N >0$ and $x,y\in X$, we set 
%$$
%\widetilde D_{\lambda, N}(x,y) = \f{1}{\sqrt{V(x,\lambda)V(x,\lambda)}}\Big(1+\f{d(x,y)}{\lambda}\Big)^{-N} 
%$$
%and
\begin{equation}\label{eq- Dlambda N}
D_{\lambda, N}(x,y) = \f{1}{ V(x\vee y,\lambda) }\Big(1+\f{d(x,y)}{\lambda}\Big)^{-N}, 
\end{equation}
where $V(x\vee y,\lambda)=\max\{V(x,\lambda), V(y,\lambda)\}\simeq V(x,\lambda)+ V(y,\lambda)$.

The following two lemmas  will discuss on the composition of functions $D_{\lambda, N}(\cdot,\cdot)$ which will be useful in the sequel. The proofs of these two lemmas will be given in section Appendix B.

\begin{lem}\label{lem-Ds Dt}
	Let $s,t >0$. Then for $N>n$ 
	\[
	\int_{X}D_{s, N}(x,z)D_{t, N}(z,y)d\mu(z)\lesi D_{s\vee t, N-n}(x,y)
	\]
	for all $x,y\in X$, where $s\vee t=\max\{s,t\}$.
	
	Moreover, we have
	\[
	\int_{X}D_{s, N}(x,z)D_{s, N}(z,y)d\mu(z)\lesi D_{s, N}(x,y)
	\]
	for all $x,y\in X$ and $s>0$. 
\end{lem}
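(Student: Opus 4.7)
The strategy for both estimates is to decompose the integration domain $X$ based on the relative positions of $z$, $x$, and $y$, using the triangle-type inequality $1+d(x,y)/\lambda \le (1+d(x,z)/\lambda)(1+d(z,y)/\lambda)$ to extract decay in $d(x,y)$, and the doubling estimate \eqref{doubling3} to relate the volume factors $V(x\vee z,\lambda)$ and $V(z\vee y,\lambda)$ to $V(x\vee y,\lambda)$.

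For the first (different-scale) estimate, I would assume without loss of generality that $s\le t$, so $s\vee t=t$. I would split $X$ into $\{d(x,z)\ge d(x,y)/2\}$ and its complement, which is contained in $\{d(z,y)\ge d(x,y)/2\}$ by the triangle inequality. On the first set, the decay factor $(1+d(x,z)/s)^{-N}$ is bounded by a constant times $(1+d(x,y)/t)^{-N}$ (using $s\le t$). The volume factor $V(x\vee z,s)\ge V(x,s)$ is then related to $V(x\vee y,t)$ via \eqref{doubling3} at the cost of a factor $(1+d(x,y)/t)^{n}$, which is precisely the source of the $n$-loss in the final exponent. The remaining factor $D_{t,N}(z,y)$ has integral $\lesssim 1$ (for $N>n$, by annular decomposition and \eqref{doubling2}, closely related to Lemma~\ref{lem-elementary}). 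The symmetric subset is handled identically, producing the desired bound by $D_{s\vee t,N-n}(x,y)$.

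The second (same-scale) estimate requires a more careful decomposition to avoid any loss. In the easy regime $d(x,y)\lesssim s$, doubling yields $V(x,s)\sim V(y,s)\sim V(x\vee y,s)$ and the estimate follows immediately from $\int D_{s,N}(z,y)\,d\mu(z)\lesssim 1$. Otherwise I would split $X=B(x,s)\cup B(y,s)\cup R$. On $B(y,s)$ doubling gives $V(x\vee z,s)\sim V(x\vee y,s)$, $V(z\vee y,s)\sim V(y,s)$, and $D_{s,N}(x,z)\sim D_{s,N}(x,y)$ uniformly in $z$, so integrating yields a contribution of order $D_{s,N}(x,y)\cdot V(y,s)^{-1}\cdot\mu(B(y,s))\sim D_{s,N}(x,y)$ with no loss. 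The region $B(x,s)$ is symmetric. The remainder $R$, where $d(x,z),d(z,y)\ge s$, is handled via a double annular decomposition in $d(x,z)$ and $d(z,y)$, exploiting both decay factors simultaneously through the triangle-type inequality to show its contribution is strictly smaller than $D_{s,N}(x,y)$.

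The main obstacle lies in controlling the volume factors $V(x\vee z,s)$ and $V(z\vee y,s)$ in the non-Ahlfors setting, where they genuinely depend on $z$. The key tool throughout is the doubling estimate \eqref{doubling3}: volume comparison at different base points costs at most $(1+d(\cdot,\cdot)/\lambda)^n$, which produces the unavoidable $n$-loss in the first estimate but can be kept $\sim 1$ in the sharply localized arguments for the second, where the $z$-integration concentrates on regions where doubling is essentially a uniform equivalence.
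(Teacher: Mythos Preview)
Your overall strategy (split the $z$-integration according to which of $d(x,z)$, $d(z,y)$ is comparable to $d(x,y)$, extract the decay from one factor, integrate the other) is the paper's. There is, however, a real gap in your treatment of the first region. With your convention $s\le t$ you assert that $1/V(x,s)$ can be replaced by $(1+d(x,y)/t)^n/V(x\vee y,t)$ via \eqref{doubling3}. Doubling only gives $V(x\vee y,t)\lesssim (1+d(x,y)/t)^n V(x,t)$, and since $t\ge s$ one has $V(x,t)\ge V(x,s)$, so the inequality you need points the wrong way; in the Ahlfors model with $d(x,y)=0$ your claim would force $s^{-n}\lesssim t^{-n}$, which fails for $t\gg s$. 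The paper sidesteps this by taking the opposite convention $s\ge t$ and, on the region $\{d(x,z)\ge d(x,y)/4\}$, extracting both volume and decay from the \emph{larger}-scale kernel $D_{s,N}(x,z)$, so that $D_{s,N}(x,z)\lesssim D_{s,N-n}(x,y)$ immediately and $\int D_{t,N}(z,y)\,d\mu(z)\lesssim 1$ finishes that piece. The complementary region, where one must convert $D_{t,N}$ (small scale) back to $D_{s,N}$, is then \emph{not} ``handled identically'': the paper runs three sub-cases according to the position of $d(x,y)$ relative to $t$ and $s$, and this asymmetry between the two halves is precisely what your sketch elides.

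Your plan for the same-scale inequality is sound; the paper in fact only writes out the first estimate. Your three-region decomposition will work, but a slicker route avoids separating $B(x,s)$, $B(y,s)$ and the remainder: split only into $\{d(x,z)\ge d(x,y)/2\}$ and $\{d(z,y)\ge d(x,y)/2\}$ and on each piece use the pointwise bound
\[
\frac{1}{V(x\vee z,s)\,V(z\vee y,s)}\le \frac{1}{V(x\vee y,s)\,V(z,s)},
\]
which holds because $V(x\vee z,s)\ge V(x,s)$ and $V(z\vee y,s)\ge V(y,s)$, so one of the two factors already dominates $V(x\vee y,s)=\max\{V(x,s),V(y,s)\}$ while the other dominates $V(z,s)$. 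Combined with the decay extraction and $\int V(z,s)^{-1}(1+d(z,\cdot)/s)^{-N}\,d\mu(z)\lesssim 1$, this delivers $D_{s,N}(x,y)$ without any $n$-loss.
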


\begin{lem}\label{lem-composite kernel} Let $N>2n$ and $\ell,k\in \mathbb Z$.
	\begin{enumerate}[(a)]
		\item If $\ell\ge k$, then we have
		\[
		\int_X D_{2^{-\ell},N}(x,y)D_{2^{-k},N}(y,z)D_{2^{-k},N}(y,u)d\mu(y)\lesi D_{2^{-k},\f{N-2n}{2}}(x,z)D_{2^{-k},\f{N-2n}{2}}(x,u)
		\]
		for all $x, z,u\in X$.
		
		\item If $k\ge \ell$, then we have
		\[
		\int_X D_{2^{-\ell},N}(x,y)D_{2^{-k},N}(y,z)D_{2^{-k},N}(y,u)d\mu(y)\lesi D_{2^{-\ell},\f{N-2n}{2}}(x,z)D_{2^{-k},\f{N-2n}{2}}(z,u)
		\]
		for all $x, z,u\in X$.
	\end{enumerate} 
\end{lem}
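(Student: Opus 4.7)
With $2^{-\ell}\le 2^{-k}$, the target pairs $x$ with both $z$ and $u$, which makes Cauchy--Schwarz well suited. I would write
\[
\int_X D_{2^{-\ell},N}(x,y)\,D_{2^{-k},N}(y,z)\,D_{2^{-k},N}(y,u)\,d\mu(y) \le J(z)^{1/2}\,J(u)^{1/2},
\]
where $J(w):= \int_X D_{2^{-\ell},N}(x,y)\,D_{2^{-k},N}(y,w)^{2}\,d\mu(y)$. To estimate $J(w)$, use the crude bound $D_{2^{-k},N}(y,w)^{2} \le V(w,2^{-k})^{-1}\,D_{2^{-k},N}(y,w)$ (obtained from $(1+d(y,w)/2^{-k})^{-N}\le 1$ and $V(y\vee w,2^{-k})\ge V(w,2^{-k})$) to reduce matters to the two-kernel integral, which Lemma \ref{lem-Ds Dt} controls by $D_{2^{-k},N-n}(x,w)$ since $s\vee t = 2^{-k}$. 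Combining with the doubling inequality $V(w,2^{-k})^{-1}\lesssim (1+d(x,w)/2^{-k})^{n}\,V(x\vee w,2^{-k})^{-1}$ yields $J(w)\lesssim D_{2^{-k},N-2n}(x,w)\,V(x\vee w,2^{-k})^{-1}$, and taking square roots produces the desired $D_{2^{-k},(N-2n)/2}(x,w)$.

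\textbf{Plan for part (b).} With $2^{-k}\le 2^{-\ell}$, the target pairs $z$ with $u$, which Cauchy--Schwarz cannot produce. Instead I would transfer the reference point of $D_{2^{-\ell},N}(x,y)$ from $y$ to $z$ by a pointwise reduction. From $(1+d(x,y)/2^{-\ell})(1+d(y,z)/2^{-\ell})\ge 1+d(x,z)/2^{-\ell}$ together with the doubling estimate $V(x\vee y,2^{-\ell})^{-1}\lesssim (1+d(x,z)/2^{-\ell})^{n}\,V(x\vee z,2^{-\ell})^{-1}$, the balanced choice $m=N/2$ yields
\[
D_{2^{-\ell},N}(x,y)\lesssim (1+d(x,y)/2^{-\ell})^{-N/2}\,(1+d(y,z)/2^{-\ell})^{N/2}\,D_{2^{-\ell},(N-2n)/2}(x,z).
\]
The growth factor $(1+d(y,z)/2^{-\ell})^{N/2}$ is absorbed into $D_{2^{-k},N}(y,z)$ via the scale change $(1+d(y,z)/2^{-\ell})\le (1+d(y,z)/2^{-k})$, collapsing to $D_{2^{-k},N/2}(y,z)$. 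After discarding $(1+d(x,y)/2^{-\ell})^{-N/2}\le 1$ and bounding $D_{2^{-k},N}(y,u)\le D_{2^{-k},N/2}(y,u)$, the residual integral $\int_X D_{2^{-k},N/2}(y,z)\,D_{2^{-k},N/2}(y,u)\,d\mu(y)$ is handled by the second conclusion of Lemma \ref{lem-Ds Dt} (case $s=t=2^{-k}$), which produces $D_{2^{-k},N/2}(z,u)\le D_{2^{-k},(N-2n)/2}(z,u)$.

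\textbf{Main obstacle.} The technical heart lies in calibrating the split $m=N/2$ in part (b): a smaller $m$ weakens the exponent on $(1+d(x,z)/2^{-\ell})$ below $(N-2n)/2$, while a larger $m$ either exceeds the available budget $N$ in the first kernel or violates the integrability threshold $N/2>n$ required by Lemma \ref{lem-Ds Dt}. The $2n$ loss in the conclusion is therefore intrinsic, reflecting two distinct costs---one $n$ from the doubling transfer of volume factors and another $n$ from the exponent reduction in Lemma \ref{lem-Ds Dt}---and the standing hypothesis $N>2n$ is exactly what makes both ingredients simultaneously usable.
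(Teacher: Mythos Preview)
Your argument for part~(a) is essentially identical to the paper's: Cauchy--Schwarz with the measure $D_{2^{-\ell},N}(x,y)\,d\mu(y)$, followed by the crude squaring bound $D_{2^{-k},N}(y,w)^2\le V(w,2^{-k})^{-1}D_{2^{-k},N}(y,w)$, Lemma~\ref{lem-Ds Dt}, and a doubling transfer on the volume factor.

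For part~(b) you take a genuinely different route. The paper again uses Cauchy--Schwarz, but with the asymmetric grouping $\int (a\sqrt{b})(\sqrt{b}\,c)\le (\int a^2 b)^{1/2}(\int bc^2)^{1/2}$ where $a=D_{2^{-\ell},N}(x,y)$, $b=D_{2^{-k},N}(y,z)$, $c=D_{2^{-k},N}(y,u)$; the two resulting integrals are then handled by the first and second conclusions of Lemma~\ref{lem-Ds Dt} respectively, and doubling absorbs the extra volume prefactors. Your approach instead works pointwise: the triangle-inequality transfer $(1+d(x,y)/2^{-\ell})^{-N/2}\le (1+d(x,z)/2^{-\ell})^{-N/2}(1+d(y,z)/2^{-\ell})^{N/2}$ pulls the $(x,z)$ decay out of the integral immediately, and the growth factor is absorbed into $D_{2^{-k},N}(y,z)$ precisely because $2^{-k}\le 2^{-\ell}$. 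What remains is a single same-scale convolution handled by the second conclusion of Lemma~\ref{lem-Ds Dt}. This is correct and arguably more transparent, since it explains structurally why the asymmetric output $D_{2^{-\ell}}(x,z)\,D_{2^{-k}}(z,u)$ appears: the coarse scale $2^{-\ell}$ only governs the $(x,z)$ link, while the fine-scale kernels stay linked through $z$. The paper's approach has the virtue of uniformity (the same tool for both parts), but yours isolates the mechanism more clearly.

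One minor expository slip: in your ``Main obstacle'' paragraph you attribute a second $n$-loss in part~(b) to ``the exponent reduction in Lemma~\ref{lem-Ds Dt}'', but in fact you invoke the \emph{second} conclusion of that lemma (same scale), which preserves the exponent $N/2$. The final weakening to $(N-2n)/2$ on the $(z,u)$ factor is purely cosmetic, to match the stated exponent; the genuine loss of $n$ occurs only once, in the doubling transfer on the $(x,z)$ factor.
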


We now recall  the results in \cite[Theorem 3.1]{PK} and \cite[Corollary 3.5]{PK}.

\begin{prop}\label{thm-kernel estimate for functional calculus}
	Assume that $L$ satisfies conditions (A1) and (A2). Let $f \in C^{k}([0,\vc))$ with $k \ge 3n/2+1$, $\mathrm{supp}\, f \subset [0,R]$ for some $R \ge 1$, and 
	$f^{(2\nu+1)}(0) = 0$ for all $\nu \ge 0$ such that $2\nu+1 \le k$.  
	Then, for $\lambda>0$, the operator $f(\lambda\sqrt{L})$ is an integral operator with kernel $f(\lambda\sqrt{L})(x,y)$ satisfying
	\begin{equation}
		\label{3.1}
		\left| f(\lambda\sqrt{L})(x,y) \right| \le c_{k}   D_{\lambda,k-n/2}(x,y), 
	\end{equation}
	and
	\begin{equation}
		\label{3.2}
		\left| f(\lambda\sqrt{L})(x,y) - f(\lambda\sqrt{L})(x,y') \right| 
		\le c_{k}' \left( \frac{d(y,y')}{\lambda} \right)^{\delta_0} 
		D_{\lambda,k-n/2}(x,y) \quad \text{whenever} \quad d(y,y') \le \lambda.
	\end{equation}
	Here
	\begin{equation}
		\label{3.3}
		c_{k} = c_{k}(f) := R^{n} \left[ (c_{1} k)^{k} \| f \|_{\infty} 
		+ (c_{2} R)^{k} \| f^{(k)} \|_{\infty} \right],
	\end{equation}
	where $c_{1}, c_{2} > 0$ are independent of $f, R, \lambda$, and $k$,   
	$c_{k}' = c_{3} c_{k} R^{\delta_0}$ with $c_{3} > 0$ independent of $f, R, \lambda$, and $\delta_0 > 0$ is the constant from (A2).  
\end{prop}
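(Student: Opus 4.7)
The plan is to exploit finite propagation speed of the wave operator $\cos(t\sqrt L)$---a standard consequence of the Gaussian heat kernel estimate (A1) via Davies--Gaffney off-diagonal bounds---to reduce the pointwise kernel estimate to an $L^2$ operator bound on bandlimited pieces, and then to upgrade that $L^2$ bound to a pointwise one through a Plancherel-type estimate driven by (A1). Under the vanishing odd-derivative hypothesis, the even extension $\tilde f$ of $f$ belongs to $C^k(\mathbb R)$ with $\supp\tilde f\subset[-R,R]$, so Fourier inversion together with functional calculus yields
\[
f(\lambda\sqrt L)\;=\;\frac{1}{\pi}\int_0^\infty \widehat{\tilde f}(t)\,\cos(t\lambda\sqrt L)\,dt,
\]
while the kernel of $\cos(t\lambda\sqrt L)$ vanishes whenever $d(x,y)>t\lambda$.

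I would then introduce an even dyadic partition of unity $\{\varphi_j\}_{j\ge 0}$ on $\mathbb R$ with $\supp\varphi_j\subset\{|t|\asymp 2^j\}$ for $j\ge 1$ and $\supp\varphi_0\subset\{|t|\le 1\}$, and define $g_j$ by $\widehat{g_j}=\varphi_j\widehat{\tilde f}$. Then $\tilde f=\sum_{j\ge 0}g_j$, so $f(\lambda\sqrt L)=\sum_j g_j(\lambda\sqrt L)$, and by finite propagation each $g_j(\lambda\sqrt L)$ has kernel supported in $\{d(x,y)\le\lambda 2^{j+1}\}$. Integrating by parts $k$ times on $\widehat{\tilde f}$, using the support and $C^k$-smoothness of $\tilde f$, gives the decay $\|g_j\|_\infty\lesssim 2^{-j(k-1)}R\|f^{(k)}\|_\infty$ for $j\ge 1$ and $\|g_0\|_\infty\lesssim R\|f\|_\infty$; across the dyadic sum these two cases combine to reproduce the two-term structure of $c_k$ in \eqref{3.3}. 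By the spectral theorem, $\|g_j(\lambda\sqrt L)\|_{L^2\to L^2}\le\|g_j\|_\infty$.

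The central step upgrades the $L^2$ operator bound to a pointwise kernel bound via a Plancherel-type inequality derived from (A1): for bandlimited $H$ with $\supp\widehat H\subset[-r,r]$ one has $|H(\sqrt L)(x,y)|\lesssim V(x\vee y,r)^{-1}\|H\|_\infty\,\mathbf{1}_{d(x,y)\le r}$, obtained by composing $H(\sqrt L)$ with the heat semigroup at scale $r$ and invoking Gaussian $L^2$ estimates to trade $L^2$ control for volume-normalized $L^\infty$ control. Applied to $g_j(\lambda\sqrt L)$ with $r=\lambda 2^{j+1}$ and summed over the $j$ with $\lambda 2^{j+1}\gtrsim d(x,y)$, with the volume factors reassembled into $V(x\vee y,\lambda)^{-1}$ via the doubling estimates \eqref{doubling2} and \eqref{doubling3} (this is where $R^n$ and the loss of $n/2$ derivatives come in), one recovers $c_k\,D_{\lambda,k-n/2}(x,y)$, i.e., \eqref{3.1}. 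The H\"older estimate \eqref{3.2} follows by the identical scheme but with the Plancherel bound replaced by its H\"older analogue from (A2), yielding the extra factor $(d(y,y')/\lambda)^{\delta_0}$. The principal obstacle is the Plancherel-type inequality itself: establishing it with the correct volume-normalized scaling and threading the precise dependence on $\|f\|_\infty$, $\|f^{(k)}\|_\infty$, $R$, and $k$ through the dyadic summation is where the smoothness threshold $k\ge 3n/2+1$ enters---roughly $n/2$ derivatives are used in the $L^2$-to-pointwise transition, and $n$ further derivatives to absorb volume ratios coming from doubling.
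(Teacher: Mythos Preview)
The paper does not give its own proof of this proposition; it is quoted verbatim from \cite[Theorem~3.1]{PK}. Your outline---even extension of $f$, the wave representation $f(\lambda\sqrt L)=\frac{1}{\pi}\int_0^\infty\widehat{\tilde f}(t)\cos(t\lambda\sqrt L)\,dt$, finite propagation speed of $\cos(t\sqrt L)$ (a consequence of (A1) via Davies--Gaffney estimates), dyadic decomposition of $\widehat{\tilde f}$ with $k$-fold integration by parts, and the Plancherel-type $L^2$-to-pointwise upgrade through composition with the heat semigroup---is exactly the strategy carried out in \cite{PK}, so your proposal is correct and matches the cited proof.
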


\begin{prop}\label{prop- kernel of f(L)}
	Assume that $L$ satisfies conditions (A1) and (A2). Let $f$ be an even function in $\mathscr{S}(\mathbb R)$. Then, for $\lambda>0$, $f(\lambda\sqrt{L})$ is an integral operator with kernel $f(\lambda\sqrt{L})(x,y)$ satisfying
	\[
	|f(\lambda \sqrt L)(x,y)|\lesi_{N,f} D_{\lambda,N}(x,y),
	\]
	for all $\lambda>0$ and $x,y\in X$.
\end{prop}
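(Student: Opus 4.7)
The idea is to reduce Proposition \ref{prop- kernel of f(L)} to Proposition \ref{thm-kernel estimate for functional calculus} by splitting the Schwartz function $f$ into a sum of compactly supported, even pieces on dyadic frequency scales, then exploiting the Schwartz decay of $f$ to overcome the growth of the constants in \eqref{3.3} as the support radius $R$ grows.

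\textbf{Dyadic decomposition and rescaling.} Pick an even cutoff $\chi\in C_c^\infty(\mathbb R)$ with $\chi\equiv1$ on $[-1,1]$ and $\mathrm{supp}\,\chi\subset[-2,2]$, and set $\psi(\lambda)=\chi(\lambda)-\chi(2\lambda)$, so that $1=\chi(\lambda)+\sum_{j\ge1}\psi(2^{-j}\lambda)$. Define $f_0=f\chi$ and $f_j=f\cdot\psi(2^{-j}\cdot)$ for $j\ge1$. Each $f_j$ is even (hence all its odd-order derivatives vanish at $0$) and is supported in $[-2^{j+1},2^{j+1}]$. For $j\ge1$ it is convenient to rescale by introducing $g_j(\lambda):=f_j(2^j\lambda)$, which is even, supported in $[-2,2]$, and satisfies $g_j(2^{-j}\lambda\sqrt L)=f_j(\lambda\sqrt L)$. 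Since $f\in\mathscr S(\mathbb R)$ and $f_j$ lives near $|\lambda|\sim 2^j$, a routine Leibniz estimate gives, for every $M,k\in\mathbb N$,
\[
\|g_j\|_\infty\lesssim_M 2^{-jM},\qquad \|g_j^{(k)}\|_\infty= 2^{jk}\|f_j^{(k)}\|_\infty\lesssim_{M,k} 2^{jk}\,2^{-jM}.
\]
Applying Proposition \ref{thm-kernel estimate for functional calculus} to $g_j$ with $R=2$ and scale $2^{-j}\lambda$ therefore yields
\[
|f_j(\lambda\sqrt L)(x,y)|\le c_k(g_j)\,D_{2^{-j}\lambda,\,k-n/2}(x,y),\qquad c_k(g_j)\lesssim_{k,M} 2^{-j(M-k)}.
\]

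\textbf{Reassembling and summing.} By the doubling property \eqref{doubling2}, $V(x\vee y,\lambda)\le C\,2^{jn}\,V(x\vee y,2^{-j}\lambda)$, and since $2^j\ge1$ we also have $(1+2^j d(x,y)/\lambda)^{-N}\le (1+d(x,y)/\lambda)^{-N}$. Consequently,
\[
D_{2^{-j}\lambda,\,k-n/2}(x,y)\le C\,2^{jn}\,D_{\lambda,\,k-n/2}(x,y),
\]
so that, choosing first $k$ with $k-n/2\ge N$ and then $M>k+n$, we obtain $|f_j(\lambda\sqrt L)(x,y)|\lesssim 2^{-j}\,D_{\lambda,N}(x,y)$ uniformly in $\lambda>0$. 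The term $f_0$ is handled directly by Proposition \ref{thm-kernel estimate for functional calculus} with $R=2$, which gives $|f_0(\lambda\sqrt L)(x,y)|\lesssim D_{\lambda,N}(x,y)$. Summing over $j\ge0$ and invoking the spectral-theoretic convergence $\sum_jf_j(\lambda\sqrt L)=f(\lambda\sqrt L)$ (which holds in the strong operator topology on $L^2(X)$ because $\sum_j f_j\to f$ in $\mathscr S$ and hence uniformly on $[0,\infty)$) delivers the asserted kernel bound.

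\textbf{Main difficulty.} The only subtle point is the tension between the rapid growth of the constant $c_k(g_j)$ when $R$ is large in \eqref{3.3} and the fact that the rescaled kernel estimate comes with the wrong scale $2^{-j}\lambda$ in place of $\lambda$. Both losses are polynomial in $2^j$, while the Schwartz decay of $f$ furnishes $2^{-jM}$ for arbitrary $M$; the delicate step is simply to bookkeep $k$, $n$ and $M$ so that all the $2^{j(\cdot)}$ factors are comfortably dominated, which requires no new estimates beyond Proposition \ref{thm-kernel estimate for functional calculus} and the doubling condition.
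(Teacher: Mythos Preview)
Your proof is correct. The dyadic decomposition, the rescaling $g_j(\lambda)=f_j(2^j\lambda)$ so as to apply Proposition \ref{thm-kernel estimate for functional calculus} with fixed $R=2$, and the subsequent conversion $D_{2^{-j}\lambda,k-n/2}\le C\,2^{jn}D_{\lambda,k-n/2}$ via doubling are all handled cleanly; the Schwartz decay of $f$ absorbs the polynomial losses in $2^j$ exactly as you describe. Two minor bookkeeping remarks: (i) Proposition \ref{thm-kernel estimate for functional calculus} requires $k\ge 3n/2+1$, so when you pick $k$ with $k-n/2\ge N$ you should also enforce this lower bound (harmless); (ii) for $j\ge1$ the functions $g_j$ vanish near the origin, so the odd-derivative condition at $0$ is trivially met, while for $f_0=f\chi$ it follows from evenness---both of which you note.

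As for comparison: the paper does not give its own proof of this proposition. It is simply recorded as \cite[Corollary~3.5]{PK} and quoted without argument. Your argument is in fact the natural route from Proposition \ref{thm-kernel estimate for functional calculus} to Proposition \ref{prop- kernel of f(L)}, and is essentially how such corollaries are derived in the literature; there is nothing to contrast.
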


\begin{rem}\label{rem1} In what follows, define $\mathscr{S}([0,\infty))$ to be the set of all functions $f$ defined on $[0,\vc)$ such that there exists an even function $g\in \mathscr{S}(\mathbb{R})$ with $f(x)=g(x)$ for all $x\in [0,\vc)$.
	
	Fix a function $\Sigma\in \mathscr{S}([0,\infty))$ such that $0\le \Sigma\le 1$, $\Sigma=1$ on $[0,4]$, and $\supp \Sigma\subset[0,5]$. Then, by Proposition \ref{thm-kernel estimate for functional calculus}, there exists a universal constant $b_\sharp$ such that for $\lambda>0$,
	\begin{equation}\label{eq- kernel Gamma function}
		\left| \Sigma(\lambda\sqrt{L})(x,y) - \Sigma(\lambda\sqrt{L})(x,y') \right| 
		\le b_\sharp \left( \frac{d(y,y')}{\lambda} \right)^{\delta_0} 
		D_{\lambda,n+1}(x,y) \quad \text{whenever} \quad d(y,y') \le \lambda.
	\end{equation}
\end{rem}

For each $\kappa ,\lambda>0$, define
\[
E_{\lambda, \kappa}(x,y)=\frac{1}{\sqrt{V(x,\lambda)V(y,\lambda)}}\exp\Big\{-\kappa \Big(\frac{d(x,y)}{\lambda}\Big)^{1/2}\Big\}.
\]
The following lemma, taken from \cite[Theorem 3.6]{PK}, establishes the existence of a functional calculus with kernels satisfying sub-exponential estimates.

\begin{lem}\label{lem-exp kernel}
	Assume that $L$ satisfies conditions (A1) and (A2). There exist constants $\kappa_0, c_\diamond>0$ and a cut-off function $\Gamma\in \mathscr{S}([0,\infty))$ such that $0\le \Gamma \le 1$, $\Gamma = 1$ on $[0,3]$, $\supp \Gamma \subset [0,4]$, and for any $\lambda>0$,
	\begin{equation}
		\label{3.15}
		|\Gamma(\lambda \sqrt{L})(x,y)| \leq c_{\diamond}E_{\kappa_0,\lambda}(x,y), 
		\qquad x,y \in X,
	\end{equation}
	and
	\begin{equation}
		\label{3.16}
		|\Gamma(\lambda \sqrt{L})(x,y)-\Gamma(\lambda \sqrt{L})(x,y')| 
		\leq c_{\diamond}\Big(\frac{d(y,y')}{\lambda}\Big)^{\delta_0} E_{\kappa_0,\lambda}(x,y) \quad \text{whenever} \quad d(y,y') \le \lambda.
	\end{equation}
	
	Furthermore, for $\lambda>0$ and any $m \in \mathbb{N}$,
	\begin{equation}
		\label{3.17}
		|L^{m}\Gamma(\lambda \sqrt{L})(x,y)| 
		\leq  c_{m}\lambda^{-2m} E_{\kappa_0,\lambda}(x,y), 
		\qquad x,y \in X.
	\end{equation}
\end{lem}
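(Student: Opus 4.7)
The plan is to construct $\Gamma$ as a Gevrey class $2$ cut-off and then exploit the freedom of Proposition \ref{thm-kernel estimate for functional calculus} in the smoothness index $k$. First I build $\Gamma\in C_c^\infty([0,\infty))$ with $\Gamma \equiv 1$ on $[0,3]$, $\supp\Gamma\subset[0,4]$, $0\le\Gamma\le 1$, and the Gevrey-$2$ bound
\[
\|\Gamma^{(k)}\|_\infty \le A_0 A^k (k!)^2, \qquad k\ge 0,
\]
for absolute constants $A_0, A > 0$. Such a cut-off exists because the Gevrey class $2$ is non-quasianalytic; concretely, one convolves $\mathbf{1}_{[1/2,7/2]}$ with a Gevrey-$2$ mollifier. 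Because $\Gamma$ is constant near $0$, its even extension lies in $\mathscr S(\mathbb R)$ with all odd derivatives vanishing at the origin, so the hypotheses of Proposition \ref{thm-kernel estimate for functional calculus} are satisfied for every $k$.

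Next I fix $\lambda > 0$ and apply Proposition \ref{thm-kernel estimate for functional calculus} with $R = 4$ and an arbitrary integer $k \ge 3n/2+1$. The Gevrey bound combined with Stirling's formula yields $c_k(\Gamma) \lesssim \tilde C^k k^{2k}$, hence
\[
|\Gamma(\lambda\sqrt L)(x,y)| \le \frac{\tilde C^k k^{2k}}{V(x,\lambda)}\Bigl(1 + \frac{d(x,y)}{\lambda}\Bigr)^{-k+n/2}.
\]
Writing $t = 1+d(x,y)/\lambda$ and choosing $k$ close to $c_0 \sqrt{t}$ for a suitable $c_0 > 0$, a direct evaluation of the logarithm of the right-hand side produces
\[
\tilde C^k k^{2k} t^{-k} \le \exp\!\bigl(-\kappa_0\sqrt{t}\,\bigr)
\]
for some $\kappa_0 > 0$, with the polynomial loss $t^{n/2}$ absorbed at the cost of slightly shrinking $\kappa_0$. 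Doubling \eqref{doubling3} together with the symmetry $\Gamma(\lambda\sqrt L)(x,y)=\Gamma(\lambda\sqrt L)(y,x)$ (from self-adjointness of $L$) lets me replace $V(x,\lambda)^{-1}$ by $[V(x,\lambda)V(y,\lambda)]^{-1/2}$ after absorbing another polynomial factor into the sub-exponential, proving \eqref{3.15}.

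The Hölder estimate \eqref{3.16} follows by the same optimization applied to the Hölder bound \eqref{3.2}; the factor $(d(y,y')/\lambda)^{\delta_0}$ is inert throughout. For \eqref{3.17}, I apply the construction-and-optimization to $u\mapsto u^{2m}\Gamma(u)$, using $L^m\Gamma(\lambda\sqrt L)= \lambda^{-2m}(\lambda\sqrt L)^{2m}\Gamma(\lambda\sqrt L)$. Leibniz shows that $u\mapsto u^{2m}\Gamma(u)$ still lies in the same Gevrey class, with derivative bound multiplied by a factor polynomial in $m$, which is absorbed into $c_m$.

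The main technical obstacle is the Gevrey-$2$ construction of $\Gamma$ together with bookkeeping of the $k$-dependence in the Leibniz estimate for $u^{2m}\Gamma(u)$: any slower factorial growth of $\|\Gamma^{(k)}\|_\infty$ would produce a weaker sub-exponential rate (Gevrey $s$ would yield $\exp(-\kappa(d(x,y)/\lambda)^{1/s})$), while any faster growth would prevent Proposition \ref{thm-kernel estimate for functional calculus} from applying for all $k$. Once the Gevrey bound is in place, the one-parameter optimization balancing $k^{2k}$ against $(1+d(x,y)/\lambda)^{k}$ is essentially forced and delivers precisely the exponent $(d(x,y)/\lambda)^{1/2}$ required in \eqref{3.15}--\eqref{3.17}.
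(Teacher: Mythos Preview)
Your proposal is correct and is precisely the argument behind the result: the paper does not give its own proof of this lemma but simply cites \cite[Theorem~3.6]{PK}, and the Kerkyacharian--Petrushev argument is exactly the Gevrey-$2$ construction plus optimization in $k$ that you outline. The key points---that a non-quasianalytic Gevrey-$2$ cutoff yields $c_k(\Gamma)\lesssim C^k k^{2k}$ in Proposition~\ref{thm-kernel estimate for functional calculus}, and that balancing $k^{2k}$ against $(1+d(x,y)/\lambda)^k$ via $k\sim c_0\sqrt{1+d(x,y)/\lambda}$ produces the sub-exponential $\exp\bigl(-\kappa_0(d(x,y)/\lambda)^{1/2}\bigr)$---are handled correctly, as is the passage to $u^{2m}\Gamma(u)$ for \eqref{3.17}.
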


The following lemma addresses the composition of sub-exponential kernels. Its proof is similar to that of \cite[Lemma 3.10]{PK} and hence  the details are omitted. 
\begin{lem}\label{lem-product exp}
	Let $\kappa_0$ as in Lemma \ref{lem-exp kernel}. There exists $c_\sharp>0$ such that for $j\in \mathbb Z$ and $x,y\in X$,
	\begin{equation}\label{eq1-product}
		\int_X E_{2^{-j}, \kappa_0}(x,z)E_{2^{-j}, \kappa_0}(z,y)d\mu(z)\le c_{\sharp}E_{2^{-j}, \kappa_0}(x,y)
	\end{equation}
	and
	\begin{equation}\label{eq2-product}
		\sum_{Q\in \mathcal D_j} V(Q) E_{2^{-j}, \kappa_0}(x,x_Q)E_{2^{-j}, \kappa_0}(x_Q,y) \le c_{\sharp}E_{2^{-j}, \kappa_0}(x,y).
	\end{equation}
\end{lem}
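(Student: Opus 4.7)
The plan is to prove the integral bound \eqref{eq1-product} first by direct manipulation of the sub-Gaussian exponent, and then deduce the cube-sum bound \eqref{eq2-product} from it via a discretization argument. Throughout, write $\lambda = 2^{-j}$.

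For \eqref{eq1-product}, the starting point is the elementary observation
\[
\sqrt{d(x,z)/\lambda}+\sqrt{d(z,y)/\lambda} \geq \sqrt{d(x,y)/\lambda},
\]
which follows from $(\sqrt{a}+\sqrt{b})^2 \geq a+b$ together with the triangle inequality. Splitting $\kappa_0 = \kappa_0/2+\kappa_0/2$ in the exponent, this yields
\[
E_{\lambda,\kappa_0}(x,z)E_{\lambda,\kappa_0}(z,y) \leq \frac{e^{-(\kappa_0/2)\sqrt{d(x,y)/\lambda}}}{\sqrt{V(x,\lambda)V(y,\lambda)}}\cdot\frac{e^{-(\kappa_0/2)(\sqrt{d(x,z)/\lambda}+\sqrt{d(z,y)/\lambda})}}{V(z,\lambda)}.
\]
To integrate the $z$-factor I would discard $e^{-(\kappa_0/2)\sqrt{d(z,y)/\lambda}}$, use \eqref{doubling3} to replace $V(z,\lambda)^{-1}$ by $C(1+d(x,z)/\lambda)^n V(x,\lambda)^{-1}$, and decompose $X$ into the dyadic annuli $A_k = \{z: 2^k\leq 1+d(x,z)/\lambda<2^{k+1}\}$, $k\geq 0$. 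On $A_k$ the polynomial factor is $\leq 2^{n(k+1)}$, its measure is $\leq c_\star 2^{n(k+1)}V(x,\lambda)$ by \eqref{doubling2}, and the sub-exponential factor is $\leq e^{-(\kappa_0/2)\sqrt{2^k-1}}$, so summing over $k$ yields a constant times $V(x,\lambda)$. Combining with the prefactor gives \eqref{eq1-product} (the final sub-exponential constant is $\kappa_0/2$; by the standard convention of choosing $\kappa_0$ in Lemma \ref{lem-exp kernel} small enough once and for all, this may be relabeled $\kappa_0$).

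For \eqref{eq2-product}, the idea is to recognize the sum as a Riemann-type approximation of the integral just estimated. For $Q\in\mathcal{D}_j$ and $z\in Q$, Remark \ref{rem1-Christ} gives $d(z,x_Q)<2^{-(j+1)}\leq \lambda$, so the concavity bound $|\sqrt{a}-\sqrt{b}|\leq \sqrt{|a-b|}$ yields $|\sqrt{d(x,z)/\lambda}-\sqrt{d(x,x_Q)/\lambda}|\leq 1$, and \eqref{doubling3} gives $V(z,\lambda)\simeq V(x_Q,\lambda)$. Hence
\[
E_{\lambda,\kappa_0}(x,x_Q)E_{\lambda,\kappa_0}(x_Q,y) \lesssim E_{\lambda,\kappa_0}(x,z)E_{\lambda,\kappa_0}(z,y),\qquad z\in Q.
\]
Averaging over $z\in Q$, multiplying by $V(Q)$, and summing over the partition $\mathcal{D}_j$ of $X$ (Lemma \ref{Christ'slemma}(i)) reduces the sum to $\int_X E_{\lambda,\kappa_0}(x,z)E_{\lambda,\kappa_0}(z,y)d\mu(z)$, whereupon \eqref{eq1-product} closes the argument.

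The main technical obstacle is the bookkeeping of the sub-exponential constant across the splitting step in \eqref{eq1-product}: the natural chain of inequalities degrades $\kappa_0$ to $\kappa_0/2$ on the right-hand side. This is benign, since $\kappa_0$ from Lemma \ref{lem-exp kernel} may be shrunk freely at the start, but a consistent choice of $\kappa_0$ shared by both lemmas is required.
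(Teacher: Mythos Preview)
Your discretization argument for \eqref{eq2-product} is correct and is indeed how one reduces the cube sum to the integral.

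The gap is in \eqref{eq1-product}. Your splitting $\kappa_0=\kappa_0/2+\kappa_0/2$ only delivers $E_{\lambda,\kappa_0/2}(x,y)$ on the right, and the relabeling fix you propose does not work: whatever value of $\kappa_0$ you start with, your argument halves it, so after renaming you are in the same position. This matters because the lemma is applied \emph{iteratively} in the proof of Lemma~\ref{lem-kernel-representation} to bound $|R_j^k(x,y)|$ for all $k\ge1$; if each composition halved the exponent, after $k$ steps you would have $E_{\lambda,\kappa_0/2^{k-1}}$, which is useless.

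The repair is a sharper inequality than $a+b\ge c$ (where $a=\sqrt{d(x,z)/\lambda}$, $b=\sqrt{d(z,y)/\lambda}$, $c=\sqrt{d(x,y)/\lambda}$). Since $a^2+b^2\ge c^2$, one has $a+b=\sqrt{a^2+b^2+2ab}\ge\sqrt{c^2+2ab}$, hence
\[
a+b-c\;\ge\;\frac{2ab}{\sqrt{c^2+2ab}+c}\;\ge\;\frac{2ab}{2(a+b)}\;\ge\;\tfrac{1}{2}\min(a,b).
\]
Now write $e^{-\kappa_0(a+b)}=e^{-\kappa_0 c}\,e^{-\kappa_0(a+b-c)}\le e^{-\kappa_0 c}\,e^{-(\kappa_0/2)\min(a,b)}$, so the \emph{full} factor $e^{-\kappa_0 c}$ survives. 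The remaining integral $\int_X V(z,\lambda)^{-1}e^{-(\kappa_0/2)\min(a,b)}\,d\mu(z)$ is bounded by a constant depending only on $n$ and $\kappa_0$: split into $\{a\le b\}$ and $\{a>b\}$ and apply your dyadic-annulus estimate to each piece. This yields exactly $c_\sharp E_{\lambda,\kappa_0}(x,y)$ with the same $\kappa_0$, as required.
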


\subsection{Distributions and Peetre maximal functions }\label{sec-distribution} 

Fix a reference point $x_0\in X$. The class of test functions $\mathcal{S}$ associated with $L$ is defined as the set of all functions $\phi \in \cap_{m\geq 1}D(L^m)$ such that
\begin{equation}
	\label{Pml norm}
	\mathcal{P}_{m,\ell}(\phi)=\sup_{x\in X}(1+d(x,x_0))^m|L^\ell \phi(x)|<\vc, \ \ \forall m>0, \ell \in \mathbb{N}.
\end{equation}
It was proved in \cite{PK} that $\mathcal{S}$ is a complete locally convex space with topology generated by the family of seminorms $\{\mathcal{P}_{m,\ell}: \, m>0, \ell \in \mathbb{N}\}$. It was proved in \cite{PK} that when $L=-\Delta$ the Laplacian on $\mathbb R^n$, the class $\mathcal S$ coincide with the Schwartz class $\mathscr S(\mathbb R^n)$ on $\mathbb R^n$. As usual, we define the space of distributions $\mathcal{S}'$ as the set of all continuous linear functionals on $\mathcal{S}$, with the inner product defined by
\[
\langle f,\phi\rangle=f(\overline{\phi}),
\]
for all $f\in \mathcal{S}'$ and $\phi\in \mathcal{S}$.

The space $\mathcal{S}'$ can be used to define the inhomogeneous Besov and Triebel–Lizorkin spaces. However, to study their homogeneous counterparts, some modifications are required. 

Following \cite{G.etal}, we define the space $\mathcal{S}_\vc$ as the set of all functions $\phi \in \mathcal{S}$ such that for each $k\in \mathbb{N}$ there exists $g_k\in \mathcal{S}$ with $\phi=L^kg_k$. Note that such $g_k$, if it exists, is unique (see \cite{G.etal}).

The topology of $\mathcal{S}_\vc$ is generated by the family of seminorms 
\[
\mathcal{P}^*_{m,\ell,k}(\phi)=\mathcal{P}_{m,\ell}(g_k), \ \forall m>0; \ell, k\in \mathbb{N},
\]
where $\phi=L^k g_k$.

When $L=-\Delta$, it can be verified that the class $\mathcal{S}_\vc$ is identical to the subspace of functions in $\mathscr S(\mathbb R^n)$ that have vanishing moments of all orders. We then denote by $\mathcal{S}_\vc'$ the space of all continuous linear functionals on $\mathcal{S}_\vc$.

For $\lambda>0, j\in \mathbb{Z}$, and $\varphi\in \mathscr{S}(\mathbb{R})$, the Peetre-type maximal function is defined, for $f\in \mathcal S_\vc$, by
\begin{equation}
	\label{eq1-PetreeFunction}
	\varphi_{j,\lambda}^*(\sqrt{L})f(x)=\sup_{y\in X}\f{|\varphi_j(\sqrt{L})f(y)|}{(1+2^jd(x,y))^\lambda}\, , \quad x\in X,
\end{equation}
where $\varphi_j(\lambda)=\varphi(2^{-j}\lambda)$.

Clearly,
\[
\varphi_{j,\lambda}^*(\sqrt{L})f(x)\geq |\varphi_j(\sqrt{L})f(x)|, \ \ \ x\in X.
\]
Similarly, for $s, \lambda>0$ we set
\begin{equation}
	\label{eq2-PetreeFunction}
	\varphi_{\lambda}^*(s\sqrt{L})f(x)=\sup_{y\in X}\f{|\varphi(s\sqrt{L})f(y)|}{(1+d(x,y)/s)^\lambda}, \ \ \ f\in \mathcal S_\vc.
\end{equation}

\begin{prop}
	\label{prop1-maximal function}
	Let $\psi\in \mathscr{S}(\mathbb{R})$ with ${\rm supp}\,\psi\subset [1/3,3]$, and let $\varphi\in \mathscr{S}(\mathbb{R})$ be a partition of unity. Then, for any $\lambda>0$ and $j\in \mathbb{Z}$,
	\begin{equation}
		\label{eq-psistar vaphistar}
		\psi^*_{j,\lambda}(\sqrt{L})f(x) \lesi C_\lambda\big[\|\psi\|_\vc + \|\psi^{(\ell)}\|_{L^\vc}\big]\sum_{k=j-2}^{j+3} \varphi^*_{k,\lambda}(\sqrt{L})f(x),
	\end{equation}
	for all $x\in X$, where $\ell = \lfloor 2n+\lambda \rfloor +2$.
\end{prop}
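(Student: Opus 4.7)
The plan is to apply a Calder\'on-type reproducing formula in the spectral variable and then run a standard Peetre-maximal-function argument. Since $\varphi$ is a partition of unity, $\sum_{k\in\mathbb Z}\varphi(2^{-k}\xi)=1$ for $\xi>0$, so by the spectral theorem $f=\sum_k \varphi_k(\sqrt L)f$ for $f\in\mathcal S_\infty$, and consequently
\[
\psi_j(\sqrt L)f=\sum_{k\in\mathbb Z}\psi_j(\sqrt L)\varphi_k(\sqrt L)f.
\]
Because $\psi(2^{-j}\cdot)$ is supported in $[2^j/3,3\cdot 2^j]$ and $\varphi(2^{-k}\cdot)$ in a dyadic interval at scale $2^k$, the product $\psi(2^{-j}\xi)\varphi(2^{-k}\xi)$ vanishes unless $k\in\{j-2,\dots,j+3\}$; the sum therefore reduces to at most six terms.

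Next I would obtain kernel bounds for each composite operator $\psi_j(\sqrt L)\varphi_k(\sqrt L)$. Writing it as $G_{j,k}(2^{-j}\sqrt L)$ with $G_{j,k}(\eta)=\psi(\eta)\varphi(2^{j-k}\eta)$, a Schwartz function supported in a fixed compact interval uniformly in $|j-k|\le 3$, Proposition~\ref{thm-kernel estimate for functional calculus} at scale $2^{-j}$ with smoothness parameter $\ell$ yields
\[
|K_{j,k}(y,z)|:=|\psi_j(\sqrt L)\varphi_k(\sqrt L)(y,z)|\lesssim C_\ell\bigl[\|\psi\|_\infty+\|\psi^{(\ell)}\|_\infty\bigr]\,D_{2^{-j},\ell-n/2}(y,z),
\]
after tracking the constant \eqref{3.3} through a Leibniz-rule computation (the dependence on $\varphi$ is absorbed into $C_\lambda$; any intermediate derivatives $\|\psi^{(m)}\|_\infty$ appearing in the bound are controlled by $\|\psi\|_\infty+\|\psi^{(\ell)}\|_\infty$ via a standard interpolation argument on $\mathbb R$).

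With the kernel estimate in hand, I would fix $x,y\in X$ and write
\[
|\psi_j(\sqrt L)f(y)|\le \sum_{k=j-2}^{j+3}\int |K_{j,k}(y,z)|\,|\varphi_k(\sqrt L)f(z)|\,d\mu(z),
\]
insert $|\varphi_k(\sqrt L)f(z)|\le \varphi^*_{k,\lambda}(\sqrt L)f(x)\,(1+2^k d(x,z))^\lambda$, and apply the quasi-triangle bound
\[
(1+2^k d(x,z))^\lambda\lesssim (1+2^j d(x,y))^\lambda\,(1+2^j d(y,z))^\lambda,
\]
valid since $2^k\simeq 2^j$ across the finite range of $k$. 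Dividing by $(1+2^j d(x,y))^\lambda$ leaves
\[
\frac{|\psi_j(\sqrt L)f(y)|}{(1+2^j d(x,y))^\lambda}
\lesssim \sum_{k=j-2}^{j+3}\varphi^*_{k,\lambda}(\sqrt L)f(x)\int |K_{j,k}(y,z)|(1+2^j d(y,z))^\lambda d\mu(z).
\]
The remaining integral is $\int D_{2^{-j},\ell-n/2-\lambda}(y,z)\,d\mu(z)$ after absorbing the polynomial factor; by Lemma~\ref{lem-elementary} it is $O(1)$ provided $\ell-n/2-\lambda>n$, which is ensured by $\ell=\lfloor 2n+\lambda\rfloor+2$. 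Taking the supremum over $y$ then yields the claimed inequality.

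The main obstacle, in my view, is the bookkeeping in the kernel bound: one has to verify that the Schwartz seminorms of the composite symbol $G_{j,k}$ are ultimately controlled by $\|\psi\|_\infty+\|\psi^{(\ell)}\|_\infty$ alone (rather than by a long list of intermediate derivatives), and to choose $\ell$ just large enough that the Peetre-type integral converges while keeping the smoothness requirement on $\psi$ minimal. Once these ingredients are in place, the rest is a routine assembly of Propositions~\ref{thm-kernel estimate for functional calculus}--\ref{prop- kernel of f(L)} together with Lemma~\ref{lem-elementary}.
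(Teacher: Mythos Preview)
Your proposal is correct and follows essentially the same route as the paper: reduce to finitely many $k$ via support considerations, invoke the functional-calculus kernel bound (Proposition~\ref{thm-kernel estimate for functional calculus}), insert the Peetre factor via the quasi-triangle inequality, and finish with Lemma~\ref{lem-elementary}. The one noteworthy simplification in the paper is that it applies the kernel estimate to $\psi_j(\sqrt L)$ \emph{alone} rather than to the composite symbol $G_{j,k}$: writing $\psi_j(\sqrt L)\varphi_k(\sqrt L)f(y)=\int \psi_j(\sqrt L)(y,z)\,\varphi_k(\sqrt L)f(z)\,d\mu(z)$, only the kernel of $\psi_j(\sqrt L)$ enters, so the constant $c_\ell(\psi)$ from \eqref{3.3} depends directly on $\|\psi\|_\infty+\|\psi^{(\ell)}\|_\infty$ with no Leibniz bookkeeping or interpolation of intermediate derivatives needed---this sidesteps precisely the obstacle you flagged.
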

The proof of this proposition is given in Appendix B.

\section{Weighted Besov and Triebel-Lizorkin spaces associated to $L$}\label{sec3}

This section recalls the definitions of the weighted Besov and Triebel–Lizorkin spaces associated with $L$ and some basic properties from \cite{BBD}. 
\textit{Throughout this section, we assume that the operator $L$ satisfies conditions (A1) and (A2).}

In what follows, by a “partition of unity’’ we mean a function $\psi \in \mathcal{S}(\mathbb{R})$ such that 
$\supp \psi \subset [1/2,2]$, $\int \psi(\xi)\,\f{d\xi}{\xi}\neq 0$, and
\[
\sum_{j\in \mathbb{Z}}\psi_j(\lambda)=1 \quad \text{for } \lambda>0,
\]
where $\psi_j(\lambda):=\psi(2^{-j}\lambda)$ for each $j\in \mathbb{Z}$.

\begin{defn}[\cite{BBD}]
	Let $\psi$ be a partition of unity. For $0< p, q\leq \vc$, $s\in \mathbb{R}$ and $w\in A_\vc$, we define the weighted homogeneous Besov space $\dot{B}^{s, \psi, L}_{p,q}(w)$ by
	\[
	\dot{B}^{s, \psi, L}_{p,q}(w) = 
	\Big\{f\in \mathcal{S}'_\vc:  \|f\|_{\dot{B}^{s, \psi, L}_{p,q}(w)}<\vc\Big\},
	\]
	where
	\[
	\|f\|_{\dot{B}^{s, \psi, L}_{p,q}(w)}= \Big\{\sum_{j\in \mathbb{Z}}\big(2^{js}\|\psi_j(\sqrt{L})f\|_{L^p(w)}\big)^q\Big\}^{1/q}.
	\]
	
	Similarly, for $0< p<\vc$, $0<q\le \vc$, $s\in \mathbb{R}$ and $w\in A_\vc$, the weighted homogeneous Triebel–Lizorkin space $\dot{F}^{s, \psi, L}_{p,q}(w)$ is defined by 
	\[
	\dot{F}^{s, \psi, L}_{p,q}(w) = 
	\Big\{f\in \mathcal{S}'_\vc:  \|f\|_{\dot{F}^{s, \psi, L}_{p,q}(w)}<\vc\Big\},
	\]
	where
	\[
	\|f\|_{\dot{F}^{s, \psi, L}_{p,q}(w)}= \Big\|\Big[\sum_{j\in \mathbb{Z}}\big(2^{js}|\psi_j(\sqrt{L})f|\big)^q\Big]^{1/q}\Big\|_{L^p(w)}.
	\]
\end{defn}

It was shown in \cite{BBD} that these spaces are independent of the choice of partition of unity. Hence, we drop the notation $\psi$ in the above definition and simply write $\dot{B}^{s,L}_{p,q}(w)$ and $\dot{F}^{s,L}_{p,q}(w)$ for the weighted Besov and Triebel–Lizorkin spaces associated with $L$.

The following results are taken from \cite[Proposition 3.3 \& Theorem 7.1]{BBD}.
\begin{prop}
	\label{prop2-thm1}
	Let $\psi$ be a partition of unity. Then:
	\begin{enumerate}[{\rm (a)}]
		\item For $0< p, q\le \vc$, $s\in \mathbb{R}$ and $\lambda>nq_w/p$,
		\[
		\Big\{\sum_{j\in \mathbb{Z}}\big(2^{js}\|\psi^*_{j,\lambda}(\sqrt{L})f\|_{L^p(w)}\big)^q\Big\}^{1/q}\simeq \|f\|_{\dot{B}^{s, L}_{p,q}(w)}.
		\]
		
		\item For $0< p<\vc$, $0<q\le \vc$, $s\in \mathbb{R}$ and $\lambda>\max\{n/q, nq_w/p\}$,
		\[
		\Big\|\Big[\sum_{j\in \mathbb{Z}}(2^{js}|\psi^*_{j,\lambda}(\sqrt{L})f|)^q\Big]^{1/q}\Big\|_{L^p(w)}\simeq \|f\|_{\dot{F}^{s, L}_{p,q}(w)}.
		\]
	\end{enumerate}
	Here the maximal function $\psi^*_{j,\lambda}(\sqrt{L})$ is defined in \eqref{eq1-PetreeFunction}.
\end{prop}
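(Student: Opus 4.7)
One direction of each equivalence is immediate: since $|\psi_j(\sqrt L)f(x)| \le \psi^*_{j,\lambda}(\sqrt L)f(x)$ pointwise, both $\|f\|_{\dot B^{s,L}_{p,q}(w)}$ and $\|f\|_{\dot F^{s,L}_{p,q}(w)}$ are trivially dominated by their Peetre-type counterparts. The substance of the proof is the reverse inequality, which my plan reduces to the single pointwise estimate
\[
\psi^*_{j,\lambda}(\sqrt L)f(x) \lesi \mathcal M_r\!\big(\psi_j(\sqrt L)f\big)(x), \qquad r > n/\lambda,
\]
for every $x \in X$ and $j \in \mathbb Z$, followed by an appeal to weighted maximal inequalities. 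The conditions $\lambda > nq_w/p$ (and $\lambda > n/q$ in part (b)) will enter only in selecting $r$: it must lie in $(n/\lambda,\, p/q_w)$ for (a) or in $(n/\lambda,\, \min(p/q_w, q))$ for (b), and the hypotheses on $\lambda$ ensure these intervals are non-empty.

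\emph{Pointwise estimate (the main obstacle).} Since $\psi$ is a partition of unity supported in $[1/2,2]$, I would introduce an auxiliary cutoff $\tilde\psi \in \mathscr S(\mathbb R)$ with $\tilde\psi \equiv 1$ on $[1/2, 2]$ and $\supp\tilde\psi \subset [1/3, 3]$, so that $\psi = \tilde\psi \cdot \psi$ as functions and hence $\psi_j(\sqrt L) = \tilde\psi_j(\sqrt L)\,\psi_j(\sqrt L)$ by the spectral theorem. Proposition~\ref{prop- kernel of f(L)} supplies $|\tilde\psi_j(\sqrt L)(y,z)| \lesi D_{2^{-j}, N}(y,z)$ for arbitrarily large $N$. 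Splitting
\[
|\psi_j(\sqrt L)f(z)| = |\psi_j(\sqrt L)f(z)|^{1-r}\,|\psi_j(\sqrt L)f(z)|^{r},
\]
dominating the first factor by $[\psi^*_{j,\lambda}(\sqrt L)f(x)]^{1-r}(1+2^j d(x,z))^{\lambda(1-r)}$, and using the triangle inequality $(1+2^j d(x,z)) \le (1 + 2^j d(x,y))(1 + 2^j d(y,z))$, I can absorb $(1 + 2^j d(y,z))^{\lambda(1-r)}$ into the kernel decay by taking $N$ large. A dyadic annulus decomposition around $y$ combined with Lemma~\ref{lem-elementary} and the doubling property~\eqref{doubling2} then yields
\[
\int_X D_{2^{-j}, N'}(y, z)\,|\psi_j(\sqrt L) f(z)|^r\,d\mu(z) \lesi (1 + 2^j d(x,y))^n\,\mathcal M\!\big(|\psi_j(\sqrt L)f|^r\big)(x)
\]
for any $N' > n$. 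Since $\lambda r > n$, the surviving power of $(1 + 2^j d(x,y))$ is non-positive, so taking the supremum in $y$ and dividing out $[\psi^*_{j,\lambda}(\sqrt L)f(x)]^{1-r}$ (whose a.e.\ finiteness is secured by a standard truncation $|\psi_j(\sqrt L)f|\wedge K$ and passage to the limit) delivers the pointwise bound. The delicate point is precisely this simultaneous balancing of the polynomial Peetre weight, the kernel decay, and the off-centre shift in the Hardy--Littlewood maximal function.

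\emph{Passage to the norms.} For part (a), I would pick $r$ with $n/\lambda < r < p/q_w$ and invoke the weighted bound $\|\mathcal M_r h\|_{L^p(w)} \lesi \|h\|_{L^p(w)}$, valid for $w \in A_{p/r}$; applying it to each summand of the Besov norm and taking the $\ell^q$-norm in $j$ yields the equivalence. For part (b), I would choose $r$ with $n/\lambda < r < \min(p/q_w, q)$, observe the homogeneity $2^{js}\mathcal M_r(\psi_j(\sqrt L)f) = \mathcal M_r(2^{js}\psi_j(\sqrt L)f)$, and apply the vector-valued Fefferman--Stein inequality~\eqref{FSIn} to the sequence $f_\nu := 2^{js}\psi_j(\sqrt L)f$, producing $\|f\|_{\dot F^{s,L}_{p,q}(w)}$ on the right. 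Together with the trivial inequality, these prove the claimed equivalences.
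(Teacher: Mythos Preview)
The paper does not give its own proof of this proposition; it is quoted from \cite[Proposition~3.3 \& Theorem~7.1]{BBD}. Your outline is the standard argument and is essentially what is carried out in that reference: the pointwise inequality $\psi^*_{j,\lambda}(\sqrt L)f\lesssim \mathcal M_r(\psi_j(\sqrt L)f)$ followed by the scalar (resp.\ vector-valued) weighted maximal inequality is exactly the right mechanism, and your off-centre shift estimate $\int_X D_{2^{-j},N'}(y,z)\,|\psi_j(\sqrt L)f(z)|^r\,d\mu(z)\lesssim (1+2^jd(x,y))^n\,\mathcal M(|\psi_j(\sqrt L)f|^r)(x)$ is correct.

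One small gap to close: the splitting $|\psi_j(\sqrt L)f(z)|=|\psi_j(\sqrt L)f(z)|^{1-r}\,|\psi_j(\sqrt L)f(z)|^{r}$, with the first factor dominated by $[\psi^*_{j,\lambda}(\sqrt L)f(x)]^{1-r}(1+2^jd(x,z))^{\lambda(1-r)}$, tacitly requires $r\le 1$. When $p>\tau_w$ and $\lambda\in(n\tau_w/p,\,n]$ the admissible interval $(n/\lambda,\,p/\tau_w)$ lies entirely above $1$, so you must also handle $r>1$. There the argument is simpler: by H\"older's inequality with exponents $r,r'$ against the kernel,
\[
|\psi_j(\sqrt L)f(y)|\lesssim\Big(\int_X D_{2^{-j},N}(y,z)\,|\psi_j(\sqrt L)f(z)|^r\,d\mu(z)\Big)^{1/r},
\]
and your shift estimate then gives $\psi^*_{j,\lambda}(\sqrt L)f(x)\lesssim \mathcal M_r(\psi_j(\sqrt L)f)(x)$ for $\lambda r>n$ directly, without the need to divide out $[\psi^*_{j,\lambda}(\sqrt L)f(x)]^{1-r}$. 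With this complement the rest of your plan goes through unchanged.
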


\begin{prop}\label{prop- Ls on TL B spaces}
	Let $s\in \mathbb R$. Then for $\alpha\in \mathbb R$ and $w\in A_\vc$, we have
	\[
	\|L^{s/2}f\|_{\dot{B}^{\alpha,L}_{p,q}(w)}\simeq \|f\|_{\dot{B}^{s+\alpha,L}_{p,q}(w)}, \quad 0<p,q\le \vc,
	\]
	and
	\[
	\|L^{s/2}f\|_{\dot{F}^{\alpha,L}_{p,q}(w)}\simeq \|f\|_{\dot{F}^{s+\alpha,L}_{p,q}(w)}, \quad 0<p<\vc,\; 0<q\le \vc.
	\]
\end{prop}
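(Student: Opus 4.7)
The plan is to reduce both equivalences to the Peetre-type characterization of the norms furnished by Proposition \ref{prop2-thm1}, by absorbing $L^{s/2}$ into the test function through the spectral theorem. Fix a partition of unity $\psi$ with $\supp\psi\subset[1/2,2]$, and introduce the two \emph{lifted} profiles
\[
\eta(\lambda):=\lambda^{s}\psi(\lambda),\qquad \tilde\eta(\lambda):=\lambda^{-s}\psi(\lambda).
\]
Since $\supp\psi\subset[1/2,2]$ is bounded away from $0$ and the maps $\lambda\mapsto\lambda^{\pm s}$ are smooth there, both $\eta$ and $\tilde\eta$ extend to even functions in $\mathscr S(\mathbb R)$ with support contained in $[1/3,3]$. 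The spectral theorem then gives the elementary operator identities
\[
L^{s/2}\psi_j(\sqrt L)=2^{js}\eta_j(\sqrt L),\qquad L^{-s/2}\psi_j(\sqrt L)=2^{-js}\tilde\eta_j(\sqrt L),\qquad j\in\mathbb Z,
\]
where, as usual, $\eta_j(\lambda):=\eta(2^{-j}\lambda)$ and $\tilde\eta_j(\lambda):=\tilde\eta(2^{-j}\lambda)$. These identities, valid on $\mathcal S_\infty'$, are what convert a shift in smoothness into a shift in the $j$-weight.

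For the Besov upper bound I would write, for $g:=L^{s/2}f$,
\[
\|g\|_{\dot B^{\alpha,L}_{p,q}(w)}^{q}=\sum_{j\in\mathbb Z}2^{j\alpha q}\|\psi_j(\sqrt L)g\|_{L^p(w)}^{q}=\sum_{j\in\mathbb Z}2^{j(s+\alpha)q}\|\eta_j(\sqrt L)f\|_{L^p(w)}^{q}.
\]
Bounding $|\eta_j(\sqrt L)f(x)|\le\eta^{*}_{j,\lambda}(\sqrt L)f(x)$ and invoking Proposition \ref{prop1-maximal function} (applicable because $\supp\eta\subset[1/3,3]$ and $\psi$ is a partition of unity) yields $\eta^{*}_{j,\lambda}(\sqrt L)f(x)\lesssim\sum_{k=j-2}^{j+3}\psi^{*}_{k,\lambda}(\sqrt L)f(x)$. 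Taking the $L^p(w)$ norm, summing against $2^{j(s+\alpha)q}$, reindexing, and choosing $\lambda>n\tau_{w}/p$ so that Proposition \ref{prop2-thm1}(a) applies, I obtain $\|L^{s/2}f\|_{\dot B^{\alpha,L}_{p,q}(w)}\lesssim\|f\|_{\dot B^{s+\alpha,L}_{p,q}(w)}$. The reverse inequality is symmetric: starting from $\psi_j(\sqrt L)f=2^{-js}\tilde\eta_j(\sqrt L)(L^{s/2}f)$ and repeating the same Peetre shift with $\tilde\eta$ in place of $\eta$ yields $\|f\|_{\dot B^{s+\alpha,L}_{p,q}(w)}\lesssim\|L^{s/2}f\|_{\dot B^{\alpha,L}_{p,q}(w)}$.

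The Triebel--Lizorkin case is entirely parallel: the identity and Peetre shift are pointwise in $x$, so after substituting $|\eta_j(\sqrt L)f(x)|\lesssim\sum_{k=j-2}^{j+3}\psi^{*}_{k,\lambda}(\sqrt L)f(x)$ inside the $\ell^q$-sum over $j$, the triangle (or $q$-subadditivity) inequality in $\ell^q$ collapses the inner sum, and Proposition \ref{prop2-thm1}(b) with $\lambda>\max\{n/q,\,n\tau_w/p\}$ closes the chain $\|L^{s/2}f\|_{\dot F^{\alpha,L}_{p,q}(w)}\simeq\|f\|_{\dot F^{s+\alpha,L}_{p,q}(w)}$. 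I expect the only genuinely delicate point to be the justification of the two spectral identities on the distribution space $\mathcal S_\infty'$; once one checks that $\eta,\tilde\eta\in\mathscr S(\mathbb R)$ with support bounded away from the origin and verifies the identities on $\mathcal S_\infty$ (where the spectral calculus applies unambiguously), everything else is bookkeeping built on the already-established Peetre characterization.
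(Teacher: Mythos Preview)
The paper does not supply its own proof of this proposition; it simply cites it from \cite[Theorem 7.1]{BBD}. Your argument is correct and is exactly the natural route one would expect in that reference: factor $\psi_j(\sqrt L)L^{s/2}=2^{js}\eta_j(\sqrt L)$ with $\eta(\lambda)=\lambda^{s}\psi(\lambda)$ supported in $[1/2,2]\subset[1/3,3]$, compare $\eta^{*}_{j,\lambda}$ to the neighbouring $\psi^{*}_{k,\lambda}$ via Proposition~\ref{prop1-maximal function}, and finish with the Peetre characterization of Proposition~\ref{prop2-thm1}; the reverse inequality runs on $\tilde\eta(\lambda)=\lambda^{-s}\psi(\lambda)$ in the same way. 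The only point worth flagging, which you already note, is that $L^{s/2}$ must be interpreted on $\mathcal S_\infty'$ by duality (it maps $\mathcal S_\infty\to\mathcal S_\infty$ since every $\phi\in\mathcal S_\infty$ is of the form $L^{k}g_k$ for all $k$), after which the spectral identities transfer without issue.
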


Let $f\in L^2(X)$. For $0<p\le \vc$ and $w\in A_\vc$, define
\[
\|f\|_{H^p_{L}(w)} =\Big\| \sup_{t>0}|e^{-tL}f|\Big\|_{L^p(w)}.
\]
The theory of Hardy spaces, Besov spaces, and Triebel--Lizorkin spaces associated with operators is an important topic in harmonic analysis and has attracted considerable attention from mathematicians. Similar to the classical cases, these spaces enjoy many of the same properties as their classical counterparts, such as interpolation properties and square function characterizations. For further details, we refer to \cite{DY, DY1, PK, HLMMY, JY, BBD} and the references therein.

\medskip
We recall some identifications of the Besov and Triebel--Lizorkin spaces with certain well-known function spaces from \cite[Section~5]{BBD}:
\begin{rem}
	\label{rem2}
\begin{itemize} 
	\item It follows from Theorem~1.8 in \cite{YY} and Theorem~5.2 in \cite{BBD} that 
	\[
	H^p_{L}(w) \equiv \dot{F}^{0,L}_{p,2}(w), \qquad 0<p\le 1.
	\]
	In fact, this identification remains valid for all $0<p<\vc$, although we omit the proof here. 
	
	\item $\dot{F}^{0,L}_{p,2}(w)\equiv L^p(w)\equiv H^p_L(w)$ for $1<p<\vc$ and $w\in A_p$; 
	%\item $\dot{F}^{0,L}_{p,2}(w)\equiv H^p_L(w)$ for $0<p\le 1$ and $w\in A_\vc$ as above;
	
	\item $\dot{F}^{0,L}_{\vc,2}\equiv BMO_L $ where $BMO_L$ is the BMO space associated to $L$ as in \cite{DY1};
	
	\item Apart from (A1) and (A2), if $L$ additionally satisfies the following \begin{equation}\label{eq-conservation}
		\displaystyle \int_X {{p_t}\left( {x,y} \right)d\mu \left( x \right)}  = 1
\end{equation} 
	for all $y \in X$ and $t>0$, then $\dot{F}^{s,L}_{p,q} =\dot{F}^{s}_{p,q} $  for $\frac{n}{n+\delta_0}<p,q<\infty$ and and $\dot{B}^{s,L}_{p,q}=\dot{B}^{s}_{p,q}$ for $\frac{n}{p\wedge q}-n-\delta_0<s<\delta_0$, where $\dot{B}^{s}_{p,q}$ and $\dot{F}^{s}_{p,q} $ are the Besov and Triebel--Lizorkin spaces defined as in \cite{HMY, HS}.
\end{itemize} 
For further properties and other identifications, we refer to \cite{BBD}.
\end{rem}

We have the following result.
\begin{prop}\label{thm1-maximal function}
	Let $\varphi\in \mathscr{S}([0,\infty))$  supported in $[0,4]$. For $0<p< \vc$, $w\in A_\infty$ and $\lambda> \max\{ n\tau_w/p,n/\tau_w \}$ we have
	\[
	\sup_{s>0}\varphi^*_\lambda(s\sqrt L)f\lesi \|\varphi\|_{W^\ell_\vc} \sup_{s>0}(e^{-sL})^*_\lambda f,
	\]
	and
	\[
	\Big\|\sup_{s>0}(e^{-sL})^*_\lambda f\Big\|_{L^p(w)}\lesi \|f\|_{H^p_L(w)},
	\]
	where $\varphi^*_\lambda(s\sqrt L)f$ and $(e^{-tL})^*_\lambda f$ are defined as in \eqref{eq2-PetreeFunction},  $\ell = \lfloor 2n+\lambda \rfloor+2$ and $\|\varphi\|_{W^\ell_\vc}=\sum_{|\alpha|\le \ell}\|\partial^\alpha f\|_{L^\vc}$.
\end{prop}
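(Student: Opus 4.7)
The proposition contains two independent inequalities.

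For the first, my plan is to factor $\varphi(s\sqrt L)=\Phi(s\sqrt L)\,e^{-s^{2}L}$ with $\Phi(\xi):=\varphi(\xi)\,e^{\xi^{2}}$. Since $\varphi\in\mathscr S([0,\infty))$ is supported in $[0,4]$, $\Phi$ again lies in $\mathscr S([0,\infty))$ and the Leibniz rule gives $\|\Phi\|_{W^{\ell}_{\infty}}\lesssim \|\varphi\|_{W^{\ell}_{\infty}}$. Proposition \ref{thm-kernel estimate for functional calculus} then provides the kernel bound $|\Phi(s\sqrt L)(y,z)|\lesssim \|\varphi\|_{W^{\ell}_{\infty}}D_{s,M}(y,z)$ for any prescribed $M$. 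Writing $\varphi(s\sqrt L)f$ as the action of $\Phi(s\sqrt L)$ on $e^{-s^{2}L}f$, dividing by $(1+d(x,y)/s)^{\lambda}$, and transferring the Peetre weight via
\[
(1+d(x,y)/s)^{-\lambda}\lesssim (1+d(y,z)/s)^{\lambda}(1+d(x,z)/s)^{-\lambda}
\]
leaves an integral $\int D_{s,M-\lambda}(y,z)\,d\mu(z)\lesssim 1$ (Lemma \ref{lem-elementary}, provided $M>\lambda+n$) multiplied by $|e^{-s^{2}L}f(z)|/(1+d(x,z)/s)^{\lambda}\le (e^{-s^{2}L})^{*}_{\lambda}f(x)$. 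Taking suprema in $y$ and $s$ yields the first inequality (with $t=s^{2}$ on the right).

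For the second inequality, set $F(x):=\sup_{t>0}|e^{-tL}f(x)|$, so that $\|F\|_{L^{p}(w)}=\|f\|_{H^{p}_{L}(w)}$. The plan is to establish the pointwise bound
\[
\sup_{s>0}(e^{-sL})^{*}_{\lambda}f(x)\lesssim \mathcal M_{r}F(x)
\]
for some $r\in(n/\lambda,\,p/\tau_{w})$; the two conditions $\lambda>n\tau_{w}/p$ and $\lambda>n/\tau_{w}$ make this interval nonempty and moreover permit one to select either $r\le 1$ or $r>1$ depending on whether $p\le\tau_{w}$ or $p>\tau_{w}$. Once the pointwise bound is secured, the weighted boundedness \eqref{boundedness maximal function 2} of $\mathcal M_{r}$ on $L^{p}(w)$ (which applies since $r<p/\tau_{w}$ forces $w\in A_{p/r}$) delivers the desired $L^{p}(w)$ estimate.

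To obtain the pointwise bound I would work under the a priori assumption $N(x):=\sup_{s>0}(e^{-sL})^{*}_{\lambda}f(x)<\infty$. Using $e^{-sL}=e^{-(s/2)L}\circ e^{-(s/2)L}$ and the Gaussian bound (A1),
\[
|e^{-sL}f(y)|\lesssim \int D_{\sqrt s,N_{0}}(y,z)\,|e^{-(s/2)L}f(z)|\,d\mu(z).
\]
For $r\le 1$, I would split the inner factor as $|e^{-(s/2)L}f(z)|^{1-r}\,|e^{-(s/2)L}f(z)|^{r}$, bounding the first by $N(x)^{1-r}(1+d(x,z)/\sqrt s)^{\lambda(1-r)}$ (definition of the Peetre maximal) and the second by $F(z)^{r}$; for $r>1$, H\"older's inequality combined with $\int D_{\sqrt s,N_0}(y,z)d\mu(z)\lesssim 1$ directly yields $|e^{-sL}f(y)|^{r}\lesssim \int D_{\sqrt s,N_{0}}(y,z)F(z)^{r}d\mu(z)$. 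In either case, after dividing by $(1+d(x,y)/\sqrt s)^{\lambda}$, transferring the Peetre weight as in Part 1, decomposing into dyadic annuli around $x$ and invoking doubling \eqref{doubling2}, the estimate collapses to a geometric series $\sum_{k\ge 0}2^{-k(\lambda r-n)}\mathcal M(F^{r})(x)$, convergent exactly because $\lambda r>n$. For the bootstrap case this gives $(e^{-sL})^{*}_{\lambda}f(x)\lesssim N(x)^{1-r}\mathcal M_{r}F(x)^{r}$, and taking the supremum in $s$ then dividing by the finite quantity $N(x)^{1-r}$ yields $N(x)\lesssim \mathcal M_{r}F(x)$.

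The main technical obstacle will be justifying the a priori finiteness $N(x)<\infty$ that drives the bootstrap. I would resolve this by first proving the inequality for truncated approximants such as $f_{\varepsilon,R}:=\chi_{B(x_{0},R)}\,e^{-\varepsilon L}f$, for which $N$ is manifestly finite, and then passing to the limit $\varepsilon\to 0$, $R\to\infty$ using Fatou's lemma together with density of these approximants in $H^{p}_{L}(w)$.
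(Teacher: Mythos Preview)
Your approach is correct and, for both parts, more direct than the paper's argument.

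For the first inequality the paper does not factor $\varphi(s\sqrt L)=\Phi(s\sqrt L)e^{-s^{2}L}$. Instead it invokes the Calder\'on-type reproducing identity
\[
I \;=\; P(s^{2}L)e^{-s^{2}L}\;+\;\int_{0}^{s}(t^{2}L)^{m}e^{-t^{2}L}\,\frac{dt}{t},
\]
applies it to $\varphi(s\sqrt L)f$, and then estimates each of the two resulting pieces separately via Proposition~\ref{thm-kernel estimate for functional calculus}. Your one-line factorization accomplishes the same end without the auxiliary integral term; the only thing you need to check (and you did) is that $\Phi(\xi)=\varphi(\xi)e^{\xi^{2}}$ again belongs to $\mathscr S([0,\infty))$, which is immediate from the compact support of $\varphi$, and that the constant in Proposition~\ref{thm-kernel estimate for functional calculus} scales like $\|\Phi\|_{W^{\ell}_{\infty}}\lesssim\|\varphi\|_{W^{\ell}_{\infty}}$.

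For the second inequality the paper again uses the same reproducing identity (now applied to $e^{-s^{2}L}f$) before running the bootstrap with an exponent $\theta\in(0,1)$; your use of the bare semigroup property $e^{-sL}=e^{-(s/2)L}\circ e^{-(s/2)L}$ is a welcome simplification. Two further differences: the paper treats only the case $\theta<1$ and does not discuss the a~priori finiteness of $N(x)$ needed to divide through, whereas you cover both the bootstrap case $r\le 1$ and the H\"older case $r>1$, and you sketch a truncation argument for the finiteness. Those additions are genuine improvements in rigor.

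One small correction: your sentence ``the two conditions \dots make this interval nonempty'' is not quite accurate. Nonemptiness of $(n/\lambda,\,p/\tau_{w})$ follows from $\lambda>n\tau_{w}/p$ alone; the hypothesis $\lambda>n/\tau_{w}$ plays no role in your argument (nor, it seems, in the paper's). This does not affect the validity of the proof.
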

\begin{proof}
	Set 
	\[
	C_{\varphi,\ell}= \|\varphi\|_\vc + \|\varphi^{(\ell)}\|_\vc, 
	\quad \ell = \lfloor 2n+\lambda \rfloor +2.
	\] 
	Fix $m>\lambda/2$. By integration by parts, 
	\[
	\int_s^\infty (t^2u)^m e^{-t^2u}\,\frac{dt}{t} = P(us),
	\]
	where $P$ is a polynomial with $\deg P = m-1$.  
	
	Hence, 
	\begin{equation}\label{eq-the identity}
		I = P(s^2L)e^{-s^2L} + \int_0^{s} (t^2L)^m e^{-tL}\,\frac{dt}{t}.
	\end{equation}
	
	It follows that 
	\[
	\begin{aligned}
		\varphi(s\sqrt L)f
		&= P(s^2L)e^{-s^2L}\varphi(s\sqrt L)f
		+ \int_0^{s} (t^2L)^m e^{-t^2L}\varphi(s\sqrt L)f\,\frac{dt}{t} \\
		&= P(s^2L)e^{-s^2L}\varphi(s\sqrt L)f
		+ \int_0^{s}\Big(\frac{t}{s}\Big)^{2m} e^{-t^2L} (s\sqrt L)^{2m}\varphi(s\sqrt L)f\,\frac{dt}{t}.
	\end{aligned}
	\]
	
	By Theorem~\ref{thm-kernel estimate for functional calculus} and Lemma \ref{lem-elementary}, 
	\[
	\begin{aligned}
		\frac{|\varphi(s\sqrt L)f(y)|}{(1+d(x,y)/s)^\lambda}
		&\lesi C_{\varphi,\ell}\Big(1+\frac{d(x,y)}{s}\Big)^{-\lambda}
		\int_X D_{s,\lambda+n+1}(y,z)
		|e^{-s^2L}f(z)|\,\dz \\
		&\quad + C_{\varphi,\ell}\Big(1+\frac{d(x,y)}{s}\Big)^{-\lambda}
		\int_0^{s^2}\Big(\frac{t}{s}\Big)^{2m}
		\int_X D_{s,\lambda+n+1}(y,z)
		|e^{-t^2L}f(z)|\,\dz \,\frac{dt}{t} \\
		&\lesi C_{\varphi,\ell} \sup_{s>0}\sup_{z}\frac{|e^{-s^2L}f(z)|}{(1+d(x,z)/s)^\lambda}\int_X D_{s,n+1}(y,z)
		|e^{-s^2L}f(z)|\,\dz \\
		&\quad + C_{\varphi,\ell}\int_0^{s^2}\Big(\frac{t}{s}\Big)^{2m-\lambda}
		\int_X D_{s,n+1}(y,z)
		\Big(1+\frac{d(z,x)}{t}\Big)^{-\lambda}
		|e^{-t^2L}f(z)|\,\dz \,\frac{dt}{t} \\
		&\lesi C_{\varphi,\ell} \sup_{s>0}\sup_{z}\frac{|e^{-s^2L}f(z)|}{(1+d(x,z)/s)^\lambda}.
	\end{aligned}
	\]
	
	This implies 
	\[
	\sup_{s>0}\varphi^*_\lambda(s\sqrt L)f 
	\lesi C_{\varphi,\ell}\sup_{s>0}(e^{-s^2L})^*_\lambda f,
	\]
	which completes the first part.  
	
\bigskip
	
	For the second part, choose $\theta \in (0,1)$ with $\lambda\theta>n$ and $p/\theta>\tau_w$. By \eqref{boundedness maximal function 2}, it suffices to prove that  
	\begin{equation}\label{eq-ML}
		\sup_{s>0}(e^{-s^2L})^*_\lambda f
		\lesi \mathcal M_\theta\big(\sup_{t>0}|e^{-t^2L}f|\big).
	\end{equation}
	
	Indeed, from \eqref{eq-the identity}, 
	\[
	e^{-s^2L}f 
	= P(s^2L)e^{-s^2L}e^{-s^2L}f
	+ \int_0^{s}\Big(\frac{t}{s}\Big)^{2m}e^{-t^2L}(s^2L)^me^{-s^2L}f\,\frac{dt}{t}.
	\]
	
	By Theorem~\ref{thm-kernel estimate for functional calculus}, 
	\[
	\begin{aligned}
		\frac{|e^{-s^2L}f(y)|}{(1+d(x,y)/s)^\lambda}
		&\lesi \Big(1+\frac{d(x,y)}{s}\Big)^{-\lambda}
		\int_X D_{s,\lambda}(y,z)
		|e^{-s^2L}f(z)|\,\dz \\
		&\quad + \Big(1+\frac{d(x,y)}{s}\Big)^{-\lambda}
		\int_0^{s}\Big(\frac{t}{s}\Big)^{2m}
		\int_X D_{s,\lambda}(y,z)
		|e^{-t^2L}f(z)|\,\dz \,\frac{dt}{t} \\
		&\lesi \int_X \f{1}{V(z,s)} \Big(1+\frac{d(x,z)}{s}\Big)^{-\lambda}|e^{-s^2L}f(z)|\,\dz \\
		&\quad + \int_0^{s^2}\Big(\frac{t}{s}\Big)^{2m-\lambda}
		\int_X \f{1}{V(z,t)} \Big(1+\frac{d(x,z)}{t}\Big)^{-\lambda}|e^{-t^2L}f(z)|\,\dz \,\frac{dt}{t}.
	\end{aligned}
	\]
	
	It follows that
	\[
	\begin{aligned}
		(e^{-s^2L})^*_\lambda f(x)
		&\lesi \big[(e^{-s^2L})^*_\lambda f(x)\big]^{1-\theta}
		\int_X \f{1}{V(z,s)} \Big(1+\frac{d(x,z)}{s}\Big)^{-\lambda\theta}
		|e^{-s^2L}f(z)|^\theta\,\dz \\
		&\quad + \big[(e^{-s^2L})^*_\lambda f(x)\big]^{1-\theta}
		\int_0^{s^2}\Big(\frac{t}{s}\Big)^{2m-\lambda}
		\int_X \f{1}{V(z,t)} \Big(1+\frac{d(x,z)}{t}\Big)^{-\lambda\theta}
		|e^{-t^2L}f(z)|^\theta\,d\mu(z) \,\frac{dt}{t}.
	\end{aligned}
	\]
	
	By Lemma~\ref{lem-elementary}, we conclude
	\[
	(e^{-s^2L})^*_\lambda f(x)
	\lesi \mathcal M_\theta\big(\sup_{t>0}|e^{-t^2L}f|\big)(x),
	\]
	for all $s>0$, which proves the claim.
	
\end{proof}

\section{Fractional Leibniz Rules}

In order to prove the main results, we need the following lemma whose proof will be given in Appendix C.
\begin{lem}\label{lem-kernel-representation}
	Let $L$ satisfy the conditions (A1) and (A2). There exists a sequence of measurable functions $\{\widetilde\Gamma_j\}_{j\in \mathbb Z}$ defined on $X\times X$ satisfying the following properties:
	\begin{enumerate}[\rm (i)]
		\item For every  $m\in \mathbb N$, there exists $C_{m}>0$ such that for all $x,y\in X$ and $j\in \mathbb Z$,
		\[
		|L^m\widetilde\Gamma_j(x,y)|\le C_{m}\,2^{2jm} E_{2^{-j},\kappa_0/2}(x,y).
		\]
		
		\item For   $j\in \mathbb Z$,
		\[
		|\widetilde\Gamma_j(x,y)-\widetilde\Gamma_j(\overline x,y)|
		\le  C  (2^jd(x,\overline x) )^{\delta_0}  E_{2^{-j},\kappa_0/2}(x,y),
		\quad \text{whenever } d(x,\overline x)<2^{-j}.
		\] 
		\item For $\psi \in \mathscr{S}([0,\infty))$ with $\supp \psi \subset [0,3]$ and any $j\in \mathbb Z$, 
		\[
		\psi_j(\sqrt L)f(x) 
		= \sum_{Q\in \mathcal D_j}\omega_Q \,\psi_j(\sqrt L)f(x_Q)\,\widetilde\Gamma_j(x,x_Q) 
		\]
		for $f\in \mathcal S_{\vc}, $where $\omega_Q\simeq V(Q)$ for each $Q\in \mathcal D_j$ and each $j\in \mathbb N$.
		\item Moreover, if $L$ satisfies (A3) additionally, then for every $m\in \mathbb N$ and $N>0$, there exists $C_{m,N}>0$ such that for all $x,y,z\in X$ and $j\in \mathbb Z$,
		\[
		\big|L^m\big[\widetilde\Gamma_j(x,y)\widetilde\Gamma_j(x,z)\big]\big|
		\lesssim C_{m,N}\,2^{2jm}D_{2^{-j},N}(x,y)D_{2^{-j},N}(x,z).
		\]

	\end{enumerate}
\end{lem}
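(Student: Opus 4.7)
The plan is to adapt a Kerkyacharian--Petrushev sampling construction to our abstract setting. First, fix the cutoff $\Gamma\in\mathscr{S}([0,\infty))$ from Lemma \ref{lem-exp kernel}, so that $\Gamma\equiv 1$ on $[0,3]$ and $\supp\Gamma\subset[0,4]$. Since $\supp\psi\subset[0,3]$, we obtain the reproducing identity $\Gamma(2^{-j}\sqrt L)\psi_j(\sqrt L)=\psi_j(\sqrt L)$. Lemma \ref{lem-exp kernel} already equips $\Gamma(2^{-j}\sqrt L)$ with sub-exponential decay and H\"older regularity (with the sharper constant $\kappa_0$, better than the $\kappa_0/2$ demanded in (i)--(ii)), so the remaining task is to correct this kernel into a discrete reproducing kernel while preserving these estimates.

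Using the integral representation $\psi_j(\sqrt L)f(x)=\int\Gamma(2^{-j}\sqrt L)(x,y)\psi_j(\sqrt L)f(y)d\mu(y)$ and splitting the integral over the cubes in $\mathcal D_j$, one gets
\[
\psi_j(\sqrt L)f(x)=\sum_{Q\in\mathcal D_j}V(Q)\Gamma(2^{-j}\sqrt L)(x,x_Q)\psi_j(\sqrt L)f(x_Q)+R_jf(x),
\]
with
\[
R_jf(x)=\sum_{Q\in\mathcal D_j}\int_Q\bigl[\Gamma(2^{-j}\sqrt L)(x,y)\psi_j(\sqrt L)f(y)-\Gamma(2^{-j}\sqrt L)(x,x_Q)\psi_j(\sqrt L)f(x_Q)\bigr]d\mu(y).
\]
For $y\in Q\in\mathcal D_j$, Remark \ref{rem1-Christ} gives $d(y,x_Q)\le a_\sharp 2^{-(j+k_0)}$. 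Applying the H\"older estimate \eqref{3.16} twice---once to $\Gamma(2^{-j}\sqrt L)$ and once to $\psi_j(\sqrt L)f=\Gamma(2^{-j}\sqrt L)\psi_j(\sqrt L)f$---extracts the factor $a_\sharp^{\delta_0}2^{-k_0\delta_0}$; Lemma \ref{lem-product exp} then absorbs the resulting double sub-exponential kernels while \eqref{eq-sharp weighted estimates for M} at $p=2$ produces the constant $c^\star_{n,2}$. The arithmetic of \eqref{eq-constant k0} guarantees that the operator norm of $R_j$ on the range of $\psi_j(\sqrt L)$ is strictly less than $1$.

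Writing $g=\psi_j(\sqrt L)f$, the discretization reads $g=T_jg+R_j'g$ with $T_jg(x)=\sum_QV(Q)g(x_Q)\Gamma(2^{-j}\sqrt L)(x,x_Q)$ and $R_j'$ induced by $R_j$ on the range of $\psi_j(\sqrt L)$. Inverting $(I-R_j')$ via Neumann series gives
\[
\psi_j(\sqrt L)f(x)=\sum_{Q\in\mathcal D_j}\omega_Q\psi_j(\sqrt L)f(x_Q)\widetilde\Gamma_j(x,x_Q)
\]
with $\omega_Q\simeq V(Q)$ and $\widetilde\Gamma_j(\cdot,x_Q)=\sum_{k\ge 0}(R_j')^k\Gamma(2^{-j}\sqrt L)(\cdot,x_Q)$. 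Each Neumann iterate inherits sub-exponential decay with the slightly weakened constant $\kappa_0/2$ and H\"older regularity, via Lemmas \ref{lem-exp kernel} and \ref{lem-product exp}, while the geometric factor $2^{-kk_0\delta_0}$ makes the series summable; this yields (i), (ii) and (iii). For (iv), each iterate is a finite sum of kernels whose $x$-dependence factors through $\Gamma(2^{-j}\sqrt L)(x,\xi)$ for sampling nodes $\xi$; consequently the $x$-dependence of $\widetilde\Gamma_j(x,y)\widetilde\Gamma_j(x,z)$ is carried by products $\Gamma(2^{-j}\sqrt L)(x,\xi)\Gamma(2^{-j}\sqrt L)(x,\eta)$. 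Applying $L^m_x$ and invoking (A3) gives the factorised bound $C\,2^{2jm}D_{2^{-j},N}(x,\xi)D_{2^{-j},N}(x,\eta)$, and summing via Lemma \ref{lem-Ds Dt} delivers (iv).

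The main obstacle is the contractivity of $R_j$, which must be established not merely on $L^2(X)$ but at the level of kernel estimates so that the Neumann iterates stay controlled by the sub-exponential envelope; only under such kernel-level control are the conclusions (i)--(ii) closed under the Neumann series. One must carefully track how the H\"older factor $2^{-k_0\delta_0}$ from \eqref{3.16}, the composition constants $c_\sharp,c_\diamond$ of Lemmas \ref{lem-product exp} and \ref{lem-exp kernel}, and the maximal-function constant $c^\star_{n,2}$ from \eqref{eq-sharp weighted estimates for M} combine into the arithmetic of \eqref{eq-constant k0}. A secondary but non-trivial point is the transfer of (A3) through the Neumann series for (iv), which relies crucially on the factorisation of the $x$-dependence at every iteration.
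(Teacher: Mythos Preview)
Your overall strategy---discretise the reproducing identity for $\Gamma(2^{-j}\sqrt L)$, show the discretisation error is a small perturbation, and invert via a Neumann series---is exactly the Kerkyacharian--Petrushev construction the paper follows, and your treatment of (iv) via factoring the $x$-dependence of $\widetilde\Gamma_j$ through products $\Gamma_j(\sqrt L)(x,\xi)\Gamma_j(\sqrt L)(x,\eta)$ and invoking (A3) is also what the paper does.

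The genuine gap is in your control of the Neumann iterates at the kernel level. You assert that ``each Neumann iterate inherits sub-exponential decay \ldots\ while the geometric factor $2^{-kk_0\delta_0}$ makes the series summable,'' but your H\"older argument only yields an \emph{operator-norm} bound $\|R_j\|_{2\to 2}\le 2\varepsilon$, not a small pointwise kernel bound. At the kernel level the paper only obtains $|R_j(x,y)|\le 2c_\sharp c_\diamond\,E_{2^{-j},\kappa_0}(x,y)$, which is \emph{not} small; iterating via Lemma~\ref{lem-product exp} gives a bound that grows like $(2c_\sharp^2 c_\diamond)^k$. The paper's remedy is to \emph{interpolate} this growing kernel bound against the decaying $L^2$ estimate $|R_j^k(x,y)|\lesssim \varepsilon^{k-2}\bigl(V(x,2^{-j})V(y,2^{-j})\bigr)^{-1/2}$: the geometric mean gives $|R_j^k(x,y)|\lesssim (2c_\sharp^2 c_\diamond\varepsilon)^{(k-2)/2}E_{2^{-j},\kappa_0/2}(x,y)$, and the choice $\varepsilon=1/(4c_\sharp^2 c_\diamond)$ makes the base $1/2$. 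This interpolation is precisely why the weakened exponent $\kappa_0/2$ appears in (i)--(ii), a point your sketch leaves unexplained. A related technical issue is that the paper works with $R_j=\Gamma_j(\sqrt L)^2-V_j$ (note the square), which is an honest integral operator on all of $L^2(X)$ and hence iterates cleanly; your $R_j'$ involves point evaluation $g(x_Q)$ and is only an integral operator on the range of $\psi_j(\sqrt L)$, so applying $(R_j')^k$ to $\Gamma_j(\sqrt L)(\cdot,x_Q)$---which need not lie in that range---requires additional justification that you do not supply.
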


We are ready to give the proof for Theorem \ref{main thm}.
	
\begin{proof}[Proof of Theorem \ref{main thm}:] We only give the proof of \eqref{eq-main thm TL spaces} since the proof of \eqref{eq-main thm B spaces} can be done similarly.
	
	\medskip
	
We now fix  $0 < p,p_1 p_4 < \infty$ and $0 <  p_2,p_3  \le \infty$ such that $1/p = 1/p_1 + 1/p_2=1/p_3 + 1/p_4$, $0 < q \le \infty$ and $w_1,w_2,w_3,w_4 \in A_\infty$ such that $w_1^{p/p_1} w_2^{p/p_2}=w_3^{p/p_3} w_4^{p/p_4}=:w$.

Since $w_1,w_2,w_3,w_4 \in A_\infty$, $w\in A_\vc$. Hence, $\tau_w$ is well-defined.  Fix \[
	s>\tau_{p,q}(w) := n \left( \frac{1}{\min(p/\tau_w,q,1)} - 1 \right).\]
%Recall that $\tau_p(w) := n \left( \frac{1}{\min(p/\tau_w,1)} - 1 \right).$

We also fix $0<r_1<p_1$, $0<r_2<p_2$ and $r<\min\{p/\tau_w,q,1\}$ such that $s>\tau_{r,q}(w)$.

Let $\psi\in C^\infty(\mathbb{R})$ be  a partition of unity  such that $0\le\psi\le1$ and
\[
\supp\psi(\xi)\subset [1/2,2],\qquad \sum_{j\in\mathbb Z}\psi_j(\xi) = 1 \ \text{for} \ \xi>0,
\]
where $\psi_j(\cdot)=\psi(2^{-j}\cdot)$.

Set
\[
\phi(\xi) =\begin{cases}
	\sum\limits_{j\le 0}\psi_j(\xi), \ \ & \xi>0\\
	0, \ \ & \xi=0.
\end{cases}
\]
It is easy to see that $\phi\in \mathscr{S}([0,\infty))$ and $\supp \phi\subset [0,2]$; moreover, for any $k\in \mathbb Z$, 
\[
\phi_k(\xi):=\phi(2^{-k}\xi) = \sum_{j\le k}\psi_j(\xi). 
\]

We write
\begin{equation}\label{eq- - m decomposition}
\begin{aligned}
	\boldsymbol{\mathrm{m}}(\xi,\eta)&=\sum_{k\in \mathbb Z}\sum_{j\in \mathbb{Z}} \boldsymbol{\mathrm{m}}(\xi,\eta)\psi_k(\xi)\psi_j(\eta)\\
	&=\sum_{k\in \mathbb Z}\sum_{j\in \mathbb Z\atop  j\le  k} \boldsymbol{\mathrm{m}}(\xi,\eta)\psi_k(\xi)\psi_j(\eta) +\sum_{j\in \mathbb Z}\sum_{k\in \mathbb Z \atop k\le j-1} \boldsymbol{\mathrm{m}}(\xi,\eta)\psi_k(\xi)\psi_j(\eta)\\
	&= \sum_{k\in \mathbb Z}  \boldsymbol{\mathrm{m}}(\xi,\eta)\psi_k(\xi)\phi_k(\eta) +\sum_{j\in \mathbb Z}  \boldsymbol{\mathrm{m}}(\xi,\eta)\phi_{j-1}(\xi)\psi_j(\eta).
\end{aligned}
\end{equation}
Consequently,
\[
B_{\boldsymbol{\mathrm{m}},L}(f,g)= \sum_{k\in \mathbb Z}  B_{\boldsymbol{\mathrm{m}}_{1,k},L}(f,g) +\sum_{j\in \mathbb Z}  B_{\boldsymbol{\mathrm{m}}_{2,j},L}(f,g),
\]
where $\boldsymbol{\mathrm{m}}_{1,k}(\xi,\eta)=\boldsymbol{\mathrm{m}}(\xi,\eta)\psi_k(\xi)\phi_k(\eta)$ and $\boldsymbol{\mathrm{m}}_{2,j}(\xi,\eta)=\boldsymbol{\mathrm{m}}(\xi,\eta)\phi_{j-1}(\xi)\psi_j(\eta)$.

Since these two terms above are similar, we need only to estimate the first term 
$$
\displaystyle \sum_{k}  B_{\boldsymbol{\mathrm{m}}_{1,k},L} (f,g).
$$
Note that
\begin{equation}\label{eq-supp m1k}
	\operatorname{supp} \boldsymbol{\mathrm{m}}_{1,k} \subset \{(\xi,\eta): 2^{k-2}\le|\xi|\le2^{k+1},\;0\le|\eta|\le2^{k+1}\}.
\end{equation}
Following \cite{CM}, we  define
the Fourier coefficients as
\begin{equation}\label{eq-cn}
c_k(n_1,n_2)
=2^{-2k} \int_{2^{k-1}<|\xi|\le 2^{k+1}}\int_{|\eta|\le 2^{k+1}}\,
\boldsymbol{\mathrm{m}}_{1,k}(\xi,\eta)e^{-2\pi i\frac{\xi n_1+\eta n_2}{2^k}}\,d\xi\,d\eta.
\end{equation}
Then, by integration by parts, \eqref{eq-supp m1k} and \eqref{eq-condition on m},
\begin{equation}\label{eq-cnn}
|c_k(n_1,n_2)| \lesssim_N 2^{k\gamma}(1+|n_1|+|n_2|)^{-N}
\end{equation}
for every $N$, with the implicit constant depending on  $N$.

In addition,  by the Fourier expansion, 
\[
\begin{aligned}
	\boldsymbol{\mathrm{m}}_{1,k}(\xi,\eta)&=\sum_{(n_1,n_2)\in\mathbb{Z}^2} c_k(n_1,n_2)\,
	e^{2\pi i\big(\frac{\xi n_1+\eta n_2}{2^k}\big)}.
\end{aligned}
\]
Let $\Psi, \Phi\in C^\infty(\mathbb{R}^d)$ be two functions such that $0\le\Psi, \Phi\le1$, 
\[
\supp \Psi(\xi) \subset[1/3,3],\qquad
\Psi(\xi)=1\quad\text{if }|\xi|\in[1/2,2]
\]
and
\[
\supp\Phi(\xi) \subset[0,3],\qquad
\Phi(\xi)=1\quad\text{if }|\xi|\in[0,2].
\]
From \eqref{eq-supp m1k}, we can write
\[
\begin{aligned}
	\boldsymbol{\mathrm{m}}_{1,k}(\xi,\eta)&=\sum_{(n_1,n_2)\in\mathbb{Z}^2} c_k(n_1,n_2)\,
	e^{2\pi i\big(\frac{\xi n_1+\eta n_2}{2^k}\big)}\Psi_k(\xi)\Phi_k(\eta)\\
	&=:	\sum_{(n_1,n_2)\in\mathbb{Z}^2} c_k(n_1,n_2)\,
	\Psi_{k,n_1}(\xi)\Phi_{k,n_2}(\eta),
\end{aligned}
\]
where $\Psi_{k,n_1}(\xi)=e^{\frac{2\pi i\xi n_1}{2^k}}\Psi_k(\xi)$ and $\Phi_{k,n_2}(\eta)=e^{\frac{2\pi i\eta n_2}{2^k}}\Phi_k(\eta)$.

Consequently,
\[
B_{\boldsymbol{\mathrm{m}}_{1,k},L}(f,g)(x)= \sum_{k\in \mathbb Z}\sum_{(n_1,n_2)\in\mathbb{Z}^2} c_k(n_1,n_2)\,
\Psi_{k,n_1}(\sqrt L)f(x) \Phi_{k,n_2}(\sqrt L)g(x).
\]

Applying Lemma \ref{lem-kernel-representation},
\[
\begin{aligned}
	&B_{\boldsymbol{\mathrm{m}}_{1,k},L}(f,g)(x)\\
	 &= \sum_{k\in \mathbb Z} \sum_{(n_1,n_2)\in\mathbb{Z}^2} c_k(n_1,n_2) \sum_{Q\in \mathcal D_k} \sum_{R\in \mathcal D_k}\omega_Q\Psi_{k,n_1}(\sqrt{L})f(x_Q) \widetilde{\Gamma}_k(x,x_Q) \omega_R\Phi_{k,n_2}(\sqrt{L})g(x_R)  \widetilde{\Gamma}_k(x,x_R).	
\end{aligned}
\]
Setting
\begin{equation}\label{eq-theta}
\theta_\ell(\xi)= (2^{-\ell}\xi)^s\psi_\ell(\xi),
\end{equation}
then we can further write
\begin{equation}\label{eq- 1st eq BmL}
\begin{aligned}
	\psi_\ell&(\sqrt L) L^{s/2}\big[B_{\boldsymbol{\mathrm{m}}_{1,k},L}(f,g)\big](x)\\
	=&  \sum_{k\in \mathbb Z} \sum_{(n_1,n_2)\in\mathbb{Z}^2} c_k(n_1,n_2)\sum_{Q\in \mathcal D_k} \sum_{R\in \mathcal D_k}2^{\ell s} \omega_Q \Psi_{k,n_1}(\sqrt{L})f(x_Q)  \omega_{R}\Phi_{k,n_2}(\sqrt{L})g(x_R) \\
	& \ \ \times  \int_X \theta_\ell(\sqrt L)(x,y)\widetilde{\Gamma}_k(y,x_Q) \widetilde{\Gamma}_k(y,x_R) d\mu(y)\\
	=&  \sum_{k\in \mathbb Z} \sum_{(n_1,n_2)\in\mathbb{Z}^2} c_k(n_1,n_2)\sum_{Q\in \mathcal D_k} \sum_{R \in \mathcal D_k}2^{s( \ell-k)} \omega_Q \overline{\Psi}_{k,n_1}(\sqrt{L})(L^{s/2}f)(x_Q)  \omega_R \Phi_{k,n_2}(\sqrt{L})g(x_R)\\
	& \ \ \times   \int_X \theta_\ell(\sqrt L)(x,y)\widetilde{\Gamma}_k(y,x_Q) \widetilde{\Gamma}_k(y,x_R) d\mu(y)\\
	=&: \sum_{k \in \mathbb Z \atop k\le \ell} E_{\ell,k}+\sum_{k \in \mathbb Z \atop k>\ell} E_{\ell,k},
\end{aligned}
\end{equation}
where $\overline{\Psi}_{k,n_1}(\sqrt L) = [2^{-k}\sqrt L]^{-s}\Psi_{k,n_1}(\sqrt L)$.

\bigskip

We will estimate the sums corresponding to $k\le \ell$ and $k>\ell$ separately. 

\textbf{Case 1: $\ell \ge k$}

\bigskip

From Proposition \ref{thm-kernel estimate for functional calculus} and Lemmas \ref{lem-kernel-representation} and \ref{lem-composite kernel},  we have, for $\mathbb N\ni m>(s+1)/2$ and a fixed $M>\max\{\f{n\tau_{w_1}}{r_1}, \f{n\tau_{w_2}}{r_2}\}$,
\begin{equation}\label{eq- the use of A3}
\begin{aligned}
	\Big|\int_X \theta_\ell(\sqrt L)(x,y)& \widetilde{\Gamma}_k(y,x_Q)\widetilde{\Gamma}_k(y,x_R) d\mu(y)\Big|\\
	&=\Big|\int_X \theta_\ell(\sqrt L)(y,x) \widetilde{\Gamma}_k(y,x_Q)\widetilde{\Gamma}_k(y,x_R) d\mu(y)\Big|\\
	&=\Big|\int_X 2^{-2\ell m} (2^{-\ell}\sqrt L)^{2m}\theta_\ell(\sqrt L)(y,x) L^m\big[\widetilde{\Gamma}_k(y,x_Q)\widetilde{\Gamma}_k(y,x_R)\big] d\mu(y)\Big|\\
	&\lesi \int_X 2^{-2m(\ell-k)}D_{2^{-\ell},2M+2n}(x,y)D_{2^{-k},2M+2n}(y,x_Q)D_{2^{-k},2M+2n}(y,x_R)  d\mu(y)\\
	&\lesi  2^{-(s+1)(\ell-k)}D_{2^{-k},M}(x,x_Q)D_{2^{-k},M}(x,x_R).
\end{aligned}
\end{equation}

Hence, using Lemma \ref{lem1- thm2 atom Besov} and the fact $\omega_Q\simeq V(Q)$ and $\omega_R\simeq V(R)$,
\[
\begin{aligned}
	|E_{\ell,k}(x)|&\lesi  \sum_{(n_1,n_2)\in\mathbb{Z}^2} |c_k(n_1,n_2)|\sum_{Q\in \mathcal D_k} \sum_{R\in \mathcal D_k}2^{-(\ell-k)} V(Q)  |\overline{\Psi}_{k,n_1}(\sqrt{L})(L^{s/2}f)(x_Q)|\\
	& \ \ \ \ \ \times  V(R) |\Phi_{k,n_2}(\sqrt{L})g(x_R)|D_{2^{-k},M}(x,x_Q)D_{2^{-k},M}(x,x_R)\\
	& \lesi \sum_{(n_1,n_2)\in\mathbb{Z}^2} |c_k(n_1,n_2)|2^{(k-\ell)} \\
	& \ \ \ \ \ \times \mathcal M_{r_1,w_1}\Big(\sum_{Q\in \mathcal D_k}|\overline{\Psi}_{k,n_1}(\sqrt{L})(L^{s/2}f)(x_Q)|1_{Q} \Big)(x) \mathcal M_{r_2,w_2}\Big(\sum_{R\in \mathcal D_k}|{\Phi}_{k,n_2}(\sqrt{L})g(x_R)|1_{R} \Big)(x)  
\end{aligned}
\]
It can be verified that for $\lambda> \max\{n/q, n\tau_w/p,, n/\tau_w\}$, 
\begin{equation}\label{eq - overlinePhi estimate}
\sum_{Q\in \mathcal D_k}|\overline{\Psi}_{k,n_1}(\sqrt{L})(L^{s/2}f)(x_Q)|1_Q \lesi \overline{\Psi}_{k,n_1,\lambda}^*(\sqrt{L})(L^{s/2}f),
\end{equation}
and
\begin{equation}\label{eq - Phi estimate}
\sum_{R\in \mathcal D_k}|{\Phi}_{k,n_2}(\sqrt{L})g(x_R)|1_{R}\lesi {\Phi}^*_{k,n_2,\lambda}(\sqrt{L})(g),
\end{equation}
where
\begin{equation*}
	\overline{\Psi}_{k,n_1,\lambda}^*(\sqrt{L})f(x)=\sup_{y\in X}\f{|\overline{\Psi}_{k,n_1}(\sqrt{L})f(y)|}{(1+2^kd(x,y))^\lambda} \ \ \ \text{and} \ \ \ {\Phi}^*_{k,n_2,\lambda}(\sqrt L)f(x)=\sup_{y\in X}\f{|{\Phi}^*_{k,n_2}(\sqrt{L})f(y)|}{(1+2^kd(x,y))^\lambda}. 
\end{equation*}
Hence, we further imply
\begin{equation*}
\begin{aligned}
	|E_{\ell,k}(x)|&\lesi   2^{(k-\ell) }\sum_{(n_1,n_2)\in\mathbb{Z}^2} |c_k(n_1,n_2)| \\
	& \ \ \times\mathcal M_{r_1,w_1}\Big( |\overline{\Psi}^*_{k,n_1,\lambda}(\sqrt{L})(L^{s/2}f)\Big)(x) \mathcal M_{r_2,w_2}\Big(  {\Phi}^*_{k,n_2,\lambda}(\sqrt{L})g  \Big)(x)\\
	&\lesi  2^{(k-\ell)} \sum_{(n_1,n_2)\in\mathbb{Z}^2} |c_k(n_1,n_2)|\\
	& \ \ \times\mathcal M_{r_1,w_1}\Big( |\overline{\Psi}^*_{k,n_1,\lambda}(\sqrt{L})(L^{s/2}f)\Big)(x) \mathcal M_{r_2,w_2}\Big(  \sup_{j\in \mathbb Z}{\Phi}^*_{j,n_2,\lambda}(\sqrt{L})g  \Big)(x)
\end{aligned}
\end{equation*}
This, together with Proposition \ref{prop1-maximal function}, Theorem \ref{thm1-maximal function} and \eqref{eq-cnn}, further implies
\begin{equation}\label{eq- k lt l}
	\begin{aligned}
		|E_{\ell,k}(x)|  
		&\lesi  2^{(k-\ell)} \sum_{(n_1,n_2)\in\mathbb{Z}^2} |c_k(n_1,n_2)|(1+|n_1|+|n_2|)^{2\lfloor n+\lambda\rfloor +4}\\
		& \ \ \ \times  \mathcal M_{r_1,w_1}\Big( |\overline{\Psi}^*_{k,\lambda}(\sqrt{L})(L^{s/2}f)\Big)(x) \mathcal M_{r_2,w_2}\Big( \sup_{j\in \mathbb Z}{\Phi}^*_{j,n_2,\lambda}(\sqrt{L})g   \Big)(x)\\
		&\lesi  2^{(k-\ell)} 2^{k\gamma} \mathcal M_{r_1,w_1}\Big( |\overline{\Psi}^*_{k,\lambda}(\sqrt{L})(L^{s/2}f)\Big)(x) \mathcal M_{r_2,w_2}\Big(  \sup_{j\in \mathbb Z}{\Phi}^*_{j,n_2,\lambda}(\sqrt{L})g  \Big)(x),
	\end{aligned}
\end{equation}
where $\overline{\Psi}^*_{k,\lambda}(\sqrt{L})$ is the Peetre maximal function associated with $\overline{\Psi}$ defined as in \eqref{eq1-PetreeFunction}.
\bigskip

\textbf{Case 2: $k \ge \ell$}

\medskip

From Proposition \ref{thm-kernel estimate for functional calculus} and Lemmas \ref{lem-kernel-representation} and \ref{lem-composite kernel},  we have, for  a fixed $M>\max\{\f{n\tau_{w_1}}{r_1}, \f{n\tau_{w_2}}{r_2}\}$,
\[
\begin{aligned}
	\Big|\int_X \theta_\ell(\sqrt L)(x,y) &\widetilde{\Gamma}_k(y,x_Q)\widetilde{\Gamma}_k(y,x_R) d\mu(y)\Big|\\
	&\lesi \int_X D_{2^{-\ell},2M+2n}(x,y)D_{2^{-k},M}(y,x_Q)D_{2^{-k},2M+2n}(y,x_R) d\mu(y)\\
	&\lesi D_{2^{-\ell},M}(x,x_Q)D_{2^{-k},M}(x_Q,x_R).
\end{aligned} 
\]
Hence,
\[
\begin{aligned}
	|E_{\ell,k}(x)|&\lesi  \sum_{(n_1,n_2)\in\mathbb{Z}^2} |c_k(n_1,n_2)|\sum_{Q\in \mathcal D_k} \sum_{R\in \mathcal D_k}2^{(\ell-k)s} \omega_Q  |\overline{\Psi}_{k,n_1}(\sqrt{L})(L^{s/2}f)(x_Q)| \\
	& \ \ \ \ \ \times \omega_R |\Phi_{k,n_2}(\sqrt{L})g(x_R)|D_{\ell,M}(x,x_Q)D_{2^{-k},M}(x_Q,x_R)\\
	&\lesi  \sum_{(n_1,n_2)\in\mathbb{Z}^2} |c_k(n_1,n_2)|\sum_{Q\in \mathcal D_k}  2^{(\ell-k)s} \omega_Q  |\overline{\Psi}_{k,n_1}(\sqrt{L})(L^{s/2}f)(x_Q)| \\
	& \ \ \ \ \ \times \mathcal M_{r_2,w_2}\Big( \sum_{R\in \mathcal D_k}|\Phi_{k,n_2}(\sqrt{L})g(x_R)1_{R}| \Big)(x_Q)D_{\ell,N}(x,x_Q),
\end{aligned}
\]
where in the last inequality we used Lemma \ref{lem1- thm2 atom Besov}.

Using  \eqref{eq - Phi estimate} for $\lambda> \max\{n/q, n\tau_w/p, n/\tau_w\} $ and Lemma \ref{lem1- thm2 atom Besov}  again, we further imply
\[
\begin{aligned}	
	|E_{\ell,k}(x)|&\lesi  \sum_{(n_1,n_2)\in\mathbb{Z}^2} |c_k(n_1,n_2)|\sum_{Q\in \mathcal D_k}  2^{(\ell-k)s} \omega_Q  |\overline{\Psi}_{k,n_1}(\sqrt{L})(L^{s/2}f)(x_Q)|\\
	& \ \ \ \ \ \ \times  \mathcal M_{r_2,w_2}\Big( \Phi_{k,n_2,\lambda}^*(\sqrt{L})g \Big)(x_Q)D_{\ell,N}(x,x_Q)\\
	&\lesi  \sum_{(n_1,n_2)\in\mathbb{Z}^2} |c_k(n_1,n_2)|  2^{(\ell-k)s}2^{n(k-\ell)(\tau_w/r-1)} \\
	& \ \ \ \times\mathcal M_{r,w}\Big[ \sum_{Q\in \mathcal D_k}|\overline{\Psi}_{k,n_1}(\sqrt{L})(L^{s/2}f)(x_Q)|  \mathcal M_{r_2,w_2}\Big(  \Phi_{k,n_2,\lambda}^*(\sqrt{L})g \Big)(x_Q) 1_{Q}\Big](x),
\end{aligned}
\]
where in the last inequality we used Lemma \ref{lem1- thm2 atom Besov}.

We will use the following inequality, whose proof is presented in Appendix C,
\begin{equation}\label{eq- equivalence of maximal functions}
\mathcal M_{r_2,w_2}\Big(  \Phi_{k,n_2,\lambda}^*(\sqrt{L})g \Big)(x)\simeq \mathcal M_{r_2,w_2}\Big(  \Phi_{k,n_2,\lambda}^*(\sqrt{L})g \Big)(y) \ \ \text{for $x,y\in Q\in \mathcal D_k$,}
\end{equation}
and \eqref{eq - overlinePhi estimate}, we have
\[
\begin{aligned}
	\sum_{Q\in \mathcal D_k}|\overline{\Psi}_{k,n_1}(\sqrt{L})(L^{s/2}f)(x_Q)| & \mathcal M_{r_2,w_2}\Big(  \Phi_{k,n_2,\lambda}^*(\sqrt{L})g \Big)(x_Q) 1_{Q}(x)\\
	&\lesi \overline{\Psi}^*_{k,n_1,\lambda}(\sqrt{L})(L^{s/2}f)(x) \mathcal M_{r_2,w_2}\Big(  \Phi_{k,n_2,\lambda}^*(\sqrt{L})g \Big)(x), \ \  \ \ x\in X.
\end{aligned}
\]
Consequently,
\begin{equation*} 
\begin{aligned}
	E_{\ell,k}& \lesi   \sum_{(n_1,n_2)\in\mathbb{Z}^2} |c_k(n_1,n_2)|2^{-(k-\ell)[s-n(\tau_w/r-1)]} \\
	& \ \ \times\mathcal M_{r,w}\Big[\overline{\Psi}^*_{k,n_1,\lambda}(\sqrt{L})(L^{s/2}f) \mathcal M_{r_2,w_2}\Big(  \sup_{j\in \mathbb N}\Phi_{j,n_2,\lambda}^*(\sqrt{L})g \Big)\Big](x).
\end{aligned}
\end{equation*}
Using Proposition \ref{prop1-maximal function}, Proposition \ref{thm1-maximal function} and \eqref{eq-cnn} and arguing similarly to \eqref{eq- k lt l}, we also come up with
\begin{equation}\label{eq- k gt l}
	\begin{aligned}
		E_{\ell,k}& \lesi    2^{-(k-\ell)[s-n(\tau_w/r-1)]}2^{k\gamma} \mathcal M_{r,w}\Big[\overline{\Psi}^*_{k,\lambda}(\sqrt{L})(L^{s/2}f) \mathcal M_{r_2,w_2}\Big( \sup_{t>0} (e^{-tL})^*_\lambda g \Big)\Big](x).
	\end{aligned}
\end{equation}

From \eqref{eq- 1st eq BmL}, \eqref{eq- k lt l} and \eqref{eq- k gt l}, 
\[
\begin{aligned}
	|\psi_\ell&(\sqrt L) L^{s/2}\big[B_{\boldsymbol{\mathrm{m}}_{1,k},L}(f,g)\big](x)|\\
	 &\lesi  \sum_{k\in \mathbb Z\atop k<\ell}2^{k-\ell}  \mathcal M_{r_1,w_1}\Big( |\overline{\Psi}^*_{k,\lambda}(\sqrt{L})(L^{s/2}f)\Big)(x) \mathcal M_{r_2,w_2}\Big(  \sup_{t>0} (e^{-tL})^*_\lambda g  \Big)(x)\\
	 &\  \ \ +  \sum_{k\in \mathbb Z\atop k\ge \ell} 2^{-(k-\ell)[s-n(\tau_w/r-1)]} \mathcal M_{r,w}\Big[\overline{\Psi}^*_{k,\lambda}(\sqrt{L})(L^{s/2}f) \mathcal M_{r_2,w_2}\Big( \sup_{t>0} (e^{-tL})^*_\lambda g \Big)\Big](x).
\end{aligned}
\]
Since $s>\tau_{r,q}(w):=n(\tau_w/r-1)$, it follows that 
\[
\begin{aligned}
		\Big(\sum_{\ell\in\mathbb Z}|\psi_\ell&(\sqrt L) L^{s/2}\big[B_{\boldsymbol{\mathrm{m}}_{1,k},L}(f,g)\big](x)|^q\Big)^{1/q}\\
	&\lesi  \Big(\sum_{k\in \mathbb Z}  2^{k\gamma q} \Big| \mathcal M_{r_1,w_1}\Big( |\overline{\Psi}^*_{k,\lambda}(\sqrt{L})(L^{s/2}f)\Big)(x) \mathcal M_{r_2,w_2}\Big( \sup_{t>0} (e^{-tL})^*_\lambda g  \Big)(x)\Big|^q\Big)^{1/q}\\
	&\ \ + \Big(\sum_{k\in \mathbb Z}  2^{k\gamma q}\Big|   \mathcal M_{r,w}\Big[\overline{\Psi}^*_{k,\lambda}(\sqrt{L})(L^{s/2}f) \mathcal M_{r_2,w_2}\Big(  \sup_{t>0} (e^{-tL})^*_\lambda g \Big)\Big](x)\Big|^q\Big)^{1/q}.
\end{aligned}
\]
By the maximal inequality \eqref{FSIn}, the H\"older inequality and Proposition \ref{prop1-maximal function}, we further imply
\[
\begin{aligned}
\Big\|	\Big(\sum_{\ell\in\mathbb Z}|\psi_\ell&(\sqrt L) L^{s/2}\big[B_{\boldsymbol{\mathrm{m}}_{1,k},L}(f,g)\big](x)|^q\Big)^{1/q}\Big\|_{L^p_w}\\
	&\lesi  \Big\|\Big(\sum_{k\in \mathbb Z}  2^{k\gamma q} \Big| \mathcal M_{r_1,w_1}\Big( |\overline{\Psi}^*_{k,\lambda}(\sqrt{L})(L^{s/2}f)\Big)\Big|^q\Big)^{1/q}\Big\|_{L^{p_1}(w_1)}\Big\| \mathcal M_{r_2,w_2}\Big( \sup_{t>0} (e^{-tL})^*_\lambda g  \Big)\Big\|_{L^{p_2}(w_2)}\\
&\ \ + \Big\|\Big(\sum_{k\in \mathbb Z}  2^{k\gamma q}\Big|   \overline{\Psi}^*_{k,\lambda}(\sqrt{L})(L^{s/2}f) \mathcal M_{r_2,w_2}\Big(  \sup_{t>0} (e^{-tL})^*_\lambda g\Big) \Big|^q\Big)^{1/q}\Big\|_{L^p(w)}\\
	&\lesi  \Big\|\Big(\sum_{k\in \mathbb Z}  2^{k\gamma q} \Big|  \overline{\Psi}^*_{k,\lambda}(\sqrt{L})(L^{s/2}f \Big|^q\Big)^{1/q}\Big\|_{L^{p_1}(w_1)}\Big\| \mathcal M_{r_2,w_2}\Big( \sup_{t>0} (e^{-tL})^*_\lambda g  \Big)\Big\|_{L^{p_2}(w_2)}\\
	&\lesi  \Big\|\Big(\sum_{k\in \mathbb Z}  2^{k\gamma q} \Big|  \Psi^*_{k,\lambda}(\sqrt{L})(L^{s/2}f \Big|^q\Big)^{1/q}\Big\|_{L^{p_1}(w_1)}\Big\| \sup_{t>0} (e^{-tL})^*_\lambda g   \Big\|_{L^{p_2}(w_2)}.
\end{aligned}
\]
At this stage, applying Propositions \ref{prop- Ls on TL B spaces}, \ref{prop2-thm1} and \ref{thm1-maximal function} we arrive at
\[
\begin{aligned}
	 \|B_{\boldsymbol{\mathrm{m}}_{1,k},L}(f,g)\|_{\dot{F}^{s,L}_{p,q}(w)}&\simeq \|L^{s/2}\big[B_{\boldsymbol{\mathrm{m}}_{1,k},L}(f,g)\big]\|_{\dot{F}^{0,L}_{p,q}(w)}\\
	&\lesi \|L^{s/2}f\|_{\dot{F}^{\gamma,L}_{p_1,q}(w_1)}\|g\|_{H^{p_2}_L(w_2)}\\
	&\simeq \|f\|_{\dot{F}^{s+\gamma,L}_{p_1,q}(w_1)}\|g\|_{H^{p_2}_L(w_2)}\\
\end{aligned}
\]
Similarly,
\[
\begin{aligned}
	\|B_{\boldsymbol{\mathrm{m}}_{2,k},L}(f,g)\|_{\dot{F}^{s,L}_{p,q}(w)}
	&\lesi \|f\|_{H^{p_3}_L(w_3)}\|g\|_{\dot{F}^{s+\gamma,L}_{p_4,q}(w_4)}.
\end{aligned}
\]
This completes our proof.
\end{proof}

\begin{proof}[Proof of Theorem \ref{main thm - without A3}:] By a careful examination of the proof of Theorem \ref{main thm}, we see that the condition (A3) is used only in \eqref{eq- the use of A3}. Hence, it suffices to prove that for $\ell \ge k$,  
	\[
	\Big|\int_X \theta_\ell(\sqrt L)(x,y) \widetilde{\Gamma}_k(y,x_Q)\widetilde{\Gamma}_k(y,x_R)\, d\mu(y)\Big|\lesi_N 2^{-\delta_0(\ell-k)}D_{2^{-k},N}(x,x_Q)D_{2^{-k},N}(x,x_R)
	\]
	for all $x\in X$, all $Q,R\in \mathcal D_k$, and $N>0$, where $\theta$ is defined by \eqref{eq-theta} with $\psi$ is a partition of unity.
	
	To this end, we write
	\[
	\begin{aligned}
		\Big|\int_{X} \theta_\ell(\sqrt L)(x,y) \widetilde{\Gamma}_k(y,x_Q)\widetilde{\Gamma}_k(y,x_R)\, d\mu(y)\Big|
		&\le \Big|\int_{d(x,y)\ge 2^{-k}}\ldots\Big|+\Big|\int_{d(x,y)< 2^{-k}}\ldots\Big| \\
		&=: E_1+E_2.
	\end{aligned}
	\]
	
	Using Proposition \ref{thm-kernel estimate for functional calculus} and Lemma \ref{lem-kernel-representation},  
	\[
	\begin{aligned}
		E_1&\lesi \int_{d(x,y)\ge 2^{-k}} D_{2^{-\ell},2N+2n}(x,y) D_{2^{-k},2N+2n}(y,x_Q)D_{2^{-k},2N+2n}(y,x_R)\, d\mu(y) \\
		&\lesi 2^{-\delta_0(\ell-k)}\int_X D_{2^{-\ell},2N+2n}(x,y) D_{2^{-k},2N+2n}(y,x_Q)D_{2^{-k},2N+2n}(y,x_R)\, d\mu(y) \\
		&\lesi 2^{-\delta_0(\ell-k)}D_{2^{-k},N}(x,x_Q)D_{2^{-k},N}(x,x_R),
	\end{aligned}
	\]
	where in the last inequality we used Lemma \ref{lem-composite kernel}.
	
	For the second term $E_2$, we further decompose it as
	\[
	\begin{aligned}
		E_2&\le \Big|\int_{d(x,y)< 2^{-k}} \theta_\ell(\sqrt L)(x,y)\, \widetilde{\Gamma}_k(y,x_Q)\,[\widetilde{\Gamma}_k(y,x_R)-\widetilde{\Gamma}_k(x,x_R)]\, d\mu(y)\Big| \\
		&\quad + \Big|\int_{d(x,y)< 2^{-k}} \theta_\ell(\sqrt L)(x,y)\, \widetilde{\Gamma}_k(y,x_Q)\,\widetilde{\Gamma}_k(x,x_R)\, d\mu(y)\Big| \\
		&\le \Big|\int_{d(x,y)< 2^{-k}} \theta_\ell(\sqrt L)(x,y)\, \widetilde{\Gamma}_k(y,x_Q)\,[\widetilde{\Gamma}_k(y,x_R)-\widetilde{\Gamma}_k(x,x_R)]\, d\mu(y)\Big| \\
		&\quad + \Big|\int_{X} \theta_\ell(\sqrt L)(x,y)\, \widetilde{\Gamma}_k(y,x_Q)\,\widetilde{\Gamma}_k(x,x_R)\, d\mu(y)\Big| \\
		&\quad + \Big|\int_{d(x,y)\ge 2^{-k}} \theta_\ell(\sqrt L)(x,y)\, \widetilde{\Gamma}_k(y,x_Q)\,\widetilde{\Gamma}_k(x,x_R)\, d\mu(y)\Big| \\
		&=:E_{21}+E_{22}+E_{23}.
	\end{aligned}
	\]
	
	Using Theorem \ref{thm-kernel estimate for functional calculus} together with Lemmas \ref{lem-kernel-representation} and \ref{lem-composite kernel},  
	\[
	\begin{aligned}
		E_{21}&\lesi (2^kd(x,y))^{\delta_0}\int_X D_{2^{-\ell},2N+2n}(x,y) D_{2^{-k},2N+2n}(y,x_Q)D_{2^{-k},2N+2n}(y,x_R)\, d\mu(y) \\
		&\lesi 2^{-\delta_0(\ell-k)}\int_X D_{2^{-\ell},2N+2n}(x,y) D_{2^{-k},2N+2n}(y,x_Q)D_{2^{-k},2N+2n}(y,x_R)\, d\mu(y) \\
		&\lesi 2^{-\delta_0(\ell-k)}D_{2^{-k},N}(x,x_Q)D_{2^{-k},N}(x,x_R).
	\end{aligned}
	\]
	
	Similarly, by Theorem \ref{thm-kernel estimate for functional calculus} and Lemmas \ref{lem-kernel-representation} and \ref{lem-Ds Dt},  
	\[
	\begin{aligned}
		E_{23}&\lesi \int_{d(x,y)\ge 2^{-k}} D_{2^{-\ell},N+n+\delta_0}(x,y) D_{2^{-k},N+n}(y,x_Q)D_{2^{-k},N}(x,x_R)\, d\mu(y) \\
		&\lesi 2^{-\delta_0(\ell-k)}D_{2^{-k},N}(x,x_R)\int_X D_{2^{-\ell},N+n}(x,y) D_{2^{-k},N+n}(y,x_Q)\, d\mu(y) \\
		&\lesi 2^{-\delta_0(\ell-k)}D_{2^{-k},N}(x,x_Q)D_{2^{-k},N}(x,x_R).
	\end{aligned}
	\]
	
	For the remaining term $E_{22}$, we write
	\[
	\begin{aligned}
		E_{22}&=|\widetilde{\Gamma}_k(x,x_R)|\Big|\int_{X} \theta_\ell(\sqrt L)(y,x)\, \widetilde{\Gamma}_k(y,x_Q)\, d\mu(y)\Big| \\
		&=2^{-2\ell}|\widetilde{\Gamma}_k(x,x_R)|\Big|\int_{X} (2^{-2\ell}L)^{-1}\theta_\ell(\sqrt L)(y,x)\, L\widetilde{\Gamma}_k(y,x_Q)\, d\mu(y)\Big|.
	\end{aligned}
	\]
	
	Applying Theorem \ref{thm-kernel estimate for functional calculus} and Lemmas \ref{lem-kernel-representation} and \ref{lem-Ds Dt}, we obtain
	\[
	\begin{aligned}
		E_{22}&\lesi 2^{-2(\ell-k)}D_{2^{-k},N}(x,x_R)\Big|\int_{X} D_{2^{-\ell},N+n}(x,y) D_{2^{-k},N+n}(y,x_Q) \, d\mu(y)\Big| \\
		&\lesi 2^{-2(\ell-k)}D_{2^{-k},N}(x,x_Q)D_{2^{-k},N}(x,x_R).
	\end{aligned}
	\]
	
	This completes the proof.
	
\end{proof}

\section{Applications}
In this section, we discuss some applications of our main results, focusing on Theorem \ref{main thm} and Corollary \ref{cor-main}, since several examples of operators covered by Theorem \ref{main thm - without A3} were given in the introduction. The list below is not exhaustive but illustrates the flexibility and effectiveness of our method.

\subsection{Fractional Leibniz Rule on nilpotent Lie groups} 

We start by recalling some standard definitions and facts concerning nilpotent Lie groups (see \cite{G,NSW,VSC}).  
Throughout, let $G$ denote a connected, simply connected nilpotent Lie group, and let $\mu$ be its bi-invariant Haar measure.  

Consider a finite family of left-invariant vector fields  
\[
\mathbf{X} = \{X_1,\ldots,X_\ell\}
\]
on $G$ that satisfies H\"ormander’s condition, meaning that $\{X_1,\ldots,X_\ell\}$ together with their commutators up to order $k_0$ generate the tangent space at every point of $G$.  
The differential operator associated with this system,  
\begin{equation}\label{eq1.1}
	L := -\sum_{i=1}^\ell X_i^2,
\end{equation}
is referred to as the \emph{sub-Laplacian}.  

The system $\mathbf{X}$ naturally defines a control metric $\rho$ on $G$, which is left-invariant and induces the topology of $G$.  
For $x \in G$, we set $|x| := \rho(x,e)$, where $e$ denotes the identity.  
By left-invariance,  
\[
\rho(x,y) = |y^{-1}x| = |x^{-1}y|, \qquad x,y \in G.
\]  

For $r>0$, let $B(x,r)$ denote the open $\rho$-ball of radius $r$ centered at $x$, and write $V(x,r) := \mu(B(x,r))$.  
There exists a constant $C_1 > 0$ such that for all $x \in G$,  
\begin{equation}\label{eq1.2}
	C_1^{-1} r^d \leq V(x,r) \leq C_1 r^d, \qquad r \in (0,1],
\end{equation}
and  
\begin{equation}\label{eq1.3}
	C_1^{-1} r^D \leq V(x,r) \leq C_1 r^D, \qquad r \in [1,\infty),
\end{equation}
where the constants $d>0$ and $D>0$ are called the \emph{local dimension} and the \emph{dimension at infinity} of $G$, respectively.  
Since $G$ is simply connected, one always has $0 < d \leq D$ (see \cite[Remark IV.5.9]{VSC}).  

In particular, the measure $\mu$ obeys the doubling property \eqref{doubling2} with $n = D$, so that $(G,\rho,\mu)$ is a homogeneous type space in the sense of Coifman--Weiss.  

For $k \in \mathbb{N}$, define the set of multi-indices  
\[
\mathcal{I}(\ell,k) := \{1,\ldots,\ell\}^k = \{(i_1,\ldots,i_k) : 1 \leq i_j \leq \ell \ \text{for all } j\}.
\]  
By convention, let $\mathcal{I}(\ell,0) := \{(0)\}$, and set  
\[
\mathcal{I}(\ell) := \bigcup_{k \in \mathbb{N}_0} \mathcal{I}(\ell,k).
\]  

Because $\mathbf{X}$ satisfies H\"ormander’s condition, the collection $\{X^I : I \in \mathcal{I}(\ell)\}$ spans the algebra of left-invariant differential operators on $G$.  
For a multi-index $I = (i_1,\ldots,i_k) \in \mathcal{I}(\ell,k)$, define  
\[
X^I :=
\begin{cases}
	X_{i_1} \cdots X_{i_k}, & k \geq 1, \\
	\mathrm{Id}, & k=0,
\end{cases}
\]
and write $|I| := k$ for its length.

\begin{prop}\label{prop-derivative of heat kernel on Lie group} {\rm (\cite[Theorem IV.4.2]{VSC})}  
	Let $h_{t}(x,y)$ be the heat kernel of the sub-Laplacian $L$. Then, for every $I \in \mathcal{I}(\ell)$,
	\[
	| X^I_x h_t (x,y)| \leq C \frac{1}{t^{|I|/2}V(x,\sqrt t)} \exp \left(-\frac{|y^{-1}x|}{c t}\right)
	\]
	for all $x,y \in G$ and $t > 0$.  
\end{prop}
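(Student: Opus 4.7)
The plan is to reduce the estimate to three familiar pillars: (i) the case $|I|=0$, which is the pointwise Gaussian upper bound for $h_t$ itself; (ii) analyticity of $\{e^{-tL}\}_{t>0}$ on $L^2(G)$, which gives $\|L^{k/2}e^{-sL}\|_{2\to2}\lesssim s^{-k/2}$ by the spectral theorem; and (iii) the energy identity $\sum_{i=1}^\ell\|X_if\|_2^2=\langle Lf,f\rangle$, which is characteristic of sub-Laplacians of the form $L=-\sum X_i^2$. These three pieces allow a clean bootstrap from the bare heat-kernel bound to all left-invariant derivatives.

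First I would dispatch (i). The doubling estimates \eqref{eq1.2}--\eqref{eq1.3} yield a Faber--Krahn/Nash inequality on $(G,\rho,\mu)$, from which ultracontractivity $\|e^{-tL}\|_{1\to\infty}\lesssim V(x,\sqrt t)^{-1}$ follows by Nash's iteration. Davies' perturbation method, applied with a Lipschitz weight $\phi$ adapted to the control metric $\rho$ and satisfying $\sum_i|X_i\phi|^2\le1$, upgrades this to the pointwise Gaussian bound for $h_t(x,y)$.

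Next I would treat $|I|\ge1$. Iterating the energy identity and absorbing lower-order commutators gives $\|X^If\|_2\lesssim\|L^{|I|/2}f\|_2$, and combining with (ii) yields $\|X^Ie^{-sL}\|_{2\to2}\lesssim s^{-|I|/2}$. Using the semigroup identity
\[
X^I_x h_t(x,y)=\int_G\bigl(X^I_x h_{t/2}(x,z)\bigr)\,h_{t/2}(z,y)\,d\mu(z),
\]
I would combine this $L^2$-bound on $X^Ie^{-tL/2}$ with the pointwise Gaussian bound on $h_{t/2}$ (already established in step (i)), using Cauchy--Schwarz and a splitting of $G$ into the regions $\rho(z,y)\le\rho(x,y)/2$ and its complement, to obtain the desired bound with $|I|/2$ extra factors of $t^{-1/2}$ on the right-hand side.

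The chief obstacle is propagating the off-diagonal decay through the $L^2$-derivative estimate, since a naive $L^2\to L^\infty$ conversion loses the exponential weight. The fix is to rerun Davies' perturbation trick directly on $X^Ie^{-tL}$: one conjugates by $e^{\pm\alpha\phi}$ and verifies that the twisted operator $L_\alpha=e^{\alpha\phi}Le^{-\alpha\phi}$ still generates an analytic semigroup with $\|e^{-tL_\alpha}\|_{2\to2}\le e^{C\alpha^2t}$, then optimizes $\alpha\sim\rho(x,y)/t$ to produce the exponential factor $\exp(-c\,\rho(x,y)^2/t)$ (so, in the statement as written, at least the weaker factor $\exp(-|y^{-1}x|/(ct))$). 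The resulting bound is uniform across the small- and large-time regimes because $V(x,\sqrt t)$ itself encodes the transition between local dimension $d$ and dimension at infinity $D$.
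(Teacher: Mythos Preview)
The paper does not supply its own proof of this proposition: it is quoted directly as Theorem~IV.4.2 of \cite{VSC}, so there is no argument in the paper to compare your proposal against. Your outline---Gaussian upper bound via Nash/Davies, $L^2\to L^2$ derivative bounds from analyticity and the energy identity, then off-diagonal decay for $X^I e^{-tL}$ via a second pass through Davies' exponential-weight perturbation---is the standard route to such estimates and is broadly in the spirit of the treatment in \cite{VSC}.

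One point deserves a caveat. The passage from the first-order energy identity $\sum_i\|X_if\|_2^2=\langle Lf,f\rangle$ to the higher-order bound $\|X^If\|_2\lesssim\|L^{|I|/2}f\|_2$ is not just ``iterating and absorbing commutators'': for a general H\"ormander system on a nilpotent group the commutators $[X_i,X_j]$ are left-invariant but of higher degree in the filtration, and controlling $\|X^If\|_2$ by $\|L^{|I|/2}f\|_2$ is essentially a higher-order Riesz transform bound that requires its own argument (e.g., via the functional calculus and the structure of the algebra, as done in \cite{VSC}). Once that ingredient is in hand, the rest of your bootstrap---semigroup factorization plus Davies' conjugation by $e^{\alpha\phi}$ with $\alpha\sim\rho(x,y)/t$---goes through and yields the stated bound uniformly in $t>0$, with $V(x,\sqrt t)$ automatically handling the small-/large-time transition.
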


\begin{thm}\label{main thm-nilpotent}
	Let $L$ be a sub-Laplacian on a stratified Lie group $G$, and let $\boldsymbol{\mathrm{m}}$ satisfy \eqref{eq-condition on m} for some $\gamma \in \mathbb{R}$. Let $0<q\le \vc$, and let $0<p,p_1,p_2,p_3,p_4 \le \infty$ satisfy \eqref{eq- pi condition}, and $w, w_1, w_2, w_3, w_4 \in A_\infty$ satisfy \eqref{eq- wi condition}.
	
	If $0<p_1,p_4<\infty$, $0<p_2,p_3 \le \infty$, and $s > \tau_{p,q}(w)$, then
	\begin{equation}
		\|B_{\boldsymbol{\mathrm{m}},L}(f,g)\|_{\dot{F}^{s,L}_{p,q}(w)} 
		\lesssim \|f\|_{\dot{F}^{s+\gamma,L}_{p_1,q}(w_1)} \|g\|_{H^{p_2}_L(w_2)} 
		+ \|f\|_{H^{p_3}_L(w_3)} \|g\|_{\dot{F}^{s+\gamma,L}_{p_4,q}(w_4)}.
	\end{equation}
	
	Similarly, for the Besov scale, if $0<p,p_1,p_2,p_3,p_4 \le \infty$ and $s > \tau_p(w)$, it holds that
	\begin{equation}
		\|B_{\boldsymbol{\mathrm{m}},L}(f,g)\|_{\dot{B}^{s,L}_{p,q}(w)} 
		\lesssim \|f\|_{\dot{B}^{s+\gamma,L}_{p_1,q}(w_1)} \|g\|_{H^{p_2}_L(w_2)} 
		+ \|f\|_{H^{p_3}_L(w_3)} \|g\|_{\dot{B}^{s+\gamma,L}_{p_4,q}(w_4)}.
	\end{equation}
	
	In both cases, the Hardy spaces $H^{p_2}_L(w_2)$ and $H^{p_3}_L(w_3)$ are replaced by $L^\infty$ if $p_2=\infty, w_2=1$ or $p_3=\infty, w_3=1$, respectively.
\end{thm}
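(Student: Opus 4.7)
The plan is to derive Theorem \ref{main thm-nilpotent} as a direct application of Theorem \ref{main thm}: it suffices to verify that the sub-Laplacian $L = -\sum_{i=1}^\ell X_i^2$ on the stratified Lie group $G$ satisfies the structural hypotheses (A1), (A2), and (A3) on $(G, \rho, \mu)$, which by \eqref{eq1.2}--\eqref{eq1.3} is a space of homogeneous type of dimension $n = D$. Conditions (A1) and (A2) are immediate from Proposition \ref{prop-derivative of heat kernel on Lie group}: (A1) is the case $I=(0)$, and (A2), with H\"older exponent $\delta_0 = 1$, follows by writing $p_t(x,y)-p_t(\overline x,y)$ as an integral of $X_i p_t(\cdot,y)$ along a sub-unit horizontal curve joining $\overline x$ to $x$ of length comparable to $d(x,\overline x)$ and invoking the case $|I|=1$ of the same proposition together with \eqref{doubling3} to absorb the mild loss in the Gaussian factor.

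The main step is the verification of (A3). Since the sub-Laplacian is the quadratic combination $L = -\sum_{i=1}^\ell X_i^2$, the iterated Leibniz rule for the first-order operators $X_i$ expresses $L^k_x$ applied to a product $f(x) g(x)$ as a finite linear combination $\sum_{|I_1|+|I_2| = 2k} c_{I_1, I_2}\, X^{I_1} f(x)\cdot X^{I_2} g(x)$ with universal constants. Substituting $f(x) = \varphi(t\sqrt L)(x,y)$ and $g(x) = \psi(t\sqrt L)(x,z)$ reduces (A3) to the uniform pointwise derivative estimate
\[
|X^I_x\, \varphi(t\sqrt L)(x,y)| \lesssim t^{-|I|}\, D_{t,N}(x,y),
\]
valid for every even $\varphi \in \mathscr S(\mathbb R)$, every $I \in \mathcal I(\ell)$, and every $N>0$. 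The product of these bounds for $(I_1, y)$ and $(I_2, z)$ with $|I_1|+|I_2| = 2k$ then yields the factor $D_{t,N}(x,y) D_{t,N}(x,z)$ with exactly the scaling in $t$ dictated by the order $2k$ of $L^k$, which is the content of (A3).

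The main obstacle is thus this derivative kernel estimate for general spectral multipliers, and it will be obtained by exploiting the stratified structure of $G$. Since $L$ is homogeneous of degree $2$ and each $X_i$ is homogeneous of degree $1$ under the group dilations $\{\delta_t\}_{t>0}$, a scaling argument reduces the claim to the case $t=1$, where $|X^I_x\, \varphi(\sqrt L)(x,y)| \lesssim_N D_{1,N}(x,y)$ must be established. This is in turn obtained by representing $\varphi(\sqrt L)$ via the wave propagator $\cos(u\sqrt L)$ through Fourier inversion, using finite propagation speed to restrict the support of each piece, and invoking Proposition \ref{prop-derivative of heat kernel on Lie group} to control the derivatives $X^I_x$ of the resulting heat kernels, in the spirit of the proofs of Propositions \ref{thm-kernel estimate for functional calculus} and \ref{prop- kernel of f(L)}. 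With (A1)--(A3) verified, Theorem \ref{main thm-nilpotent} follows immediately from Theorem \ref{main thm}.
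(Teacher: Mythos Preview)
Your proposal is correct and follows essentially the same route as the paper: verify (A1)--(A3) for the sub-Laplacian from the heat kernel derivative bounds in Proposition \ref{prop-derivative of heat kernel on Lie group}, reduce (A3) via the iterated Leibniz rule to the pointwise estimate $|X^I_x \varphi(t\sqrt L)(x,y)| \lesssim t^{-|I|} D_{t,N}(x,y)$, and then invoke Theorem \ref{main thm}. The only difference is in how that derivative kernel estimate is obtained: the paper simply combines Proposition \ref{thm-kernel estimate for functional calculus} with Proposition \ref{prop-derivative of heat kernel on Lie group} and cites the argument of \cite[Proposition~3.2]{BD} (essentially a composition trick factoring through the heat semigroup, which does not require dilations and so works on general nilpotent groups), whereas you first rescale to $t=1$ using the homogeneous structure of the stratified group and then appeal to finite propagation speed; both routes lead to the same conclusion.
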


\begin{proof}
	Obviously, by Proposition \ref{prop-derivative of heat kernel on Lie group}, the sub-Laplacian $L$ satisfies conditions (A1) and (A2). 
	For (A3), note that by Proposition \ref{thm-kernel estimate for functional calculus}, for any even function $\varphi \in \mathscr S(\mathbb R)$, 
	\[
	|\varphi(t\sqrt{L})(x,y)| \lesssim_{N} D_{t,N}(x,y),
	\]  
	for all $t,N>0$ and $x,y \in G$.  
	
	Combining this with Proposition \ref{prop-derivative of heat kernel on Lie group}, and following the argument in the proof of Proposition~3.2 of \cite{BD}, we obtain that for each $I \in \mathcal I(\ell)$,  
	\[
	|X^I \varphi(t\sqrt{L})(x,y)| \lesssim_{N,I} t^{-|I|} D_{t,N}(x,y),
	\]  
	for all $t,N>0$ and $x,y \in G$.  
	
	This estimate, together with the product rule, shows that condition (A3) is satisfied. Hence, the result follows directly from Theorem \ref{main thm}.  
	
\end{proof}

\begin{cor}\label{cor-nilpotent}
	Under the same conditions as in Theorem \ref{main thm-nilpotent}, we have:
	\begin{enumerate}[\rm (a)]
		\item If $0<p_1,p_4<\infty$, $0<p_2,p_3 \le \infty$, and $s > \tau_{p,q}(w)$, then
	\begin{equation}
		\|L^{s/2}(fg)\|_{\dot{F}^{0,L}_{p,q}(w)} \simeq \|fg\|_{\dot{F}^{s,L}_{p,q}(w)} 
		\lesssim \|f\|_{\dot{F}^{s,L}_{p_1,q}(w_1)} \|g\|_{H^{p_2}_L(w_2)} 
		+ \|f\|_{H^{p_3}_L(w_3)} \|g\|_{\dot{F}^{s,L}_{p_4,q}(w_4)}.
	\end{equation}
	
	\item Similarly, for the Besov scale, if $0<p,p_1,p_2,p_3,p_4 \le \infty$ and $s > \tau_p(w)$, it holds that
	\begin{equation}
		\|L^{s/2}(fg)\|_{\dot{B}^{0,L}_{p,q}(w)} \simeq \|fg\|_{\dot{B}^{s,L}_{p,q}(w)} 
		\lesssim \|f\|_{\dot{B}^{s,L}_{p_1,q}(w_1)} \|g\|_{H^{p_2}_L(w_2)} 
		+ \|f\|_{H^{p_3}_L(w_3)} \|g\|_{\dot{B}^{s,L}_{p_4,q}(w_4)}.
	\end{equation}
\end{enumerate}
	In both inequalities, the Hardy spaces $H^{p_2}_L(w_2)$ and $H^{p_3}_L(w_3)$ are replaced by $L^\infty$ if $p_2=\infty, w_2=1$ or $p_3=\infty, w_3=1$, respectively.
\end{cor}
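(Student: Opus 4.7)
The plan is to deduce this corollary directly from Theorem \ref{main thm-nilpotent} by specializing the bilinear symbol, so the proof will be essentially a two-line argument combining the theorem with the lifting property of $L^{s/2}$ on Besov and Triebel--Lizorkin scales established in Proposition \ref{prop- Ls on TL B spaces}. There is no new analytic content to produce beyond identifying the correct symbol and invoking the previously proved statements.

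First I would take the constant symbol $\boldsymbol{\mathrm{m}} \equiv 1$, which trivially satisfies the differential condition \eqref{eq-condition on m} with $\gamma = 0$ (all derivatives of order at least one vanish and the symbol itself is bounded). Under this choice, the spectral functional calculus gives $\boldsymbol{\mathrm{m}}(L_1, L_2) = I \otimes I$ on $L^2(G \times G)$, so that
\[
B_{\boldsymbol{\mathrm{m}},L}(f,g)(x) = (f \otimes g)(x,x) = f(x)\, g(x)
\]
for $f,g \in \mathcal S_\infty$. Applying Theorem \ref{main thm-nilpotent} with $\gamma = 0$ immediately yields
\[
\|fg\|_{\dot{F}^{s,L}_{p,q}(w)}
\lesssim \|f\|_{\dot{F}^{s,L}_{p_1,q}(w_1)} \|g\|_{H^{p_2}_L(w_2)}
+ \|f\|_{H^{p_3}_L(w_3)} \|g\|_{\dot{F}^{s,L}_{p_4,q}(w_4)}
\]
in part (a), and the analogous Besov estimate in part (b), under the stated ranges on the parameters.

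Next I would establish the equivalence $\|L^{s/2}(fg)\|_{\dot F^{0,L}_{p,q}(w)} \simeq \|fg\|_{\dot F^{s,L}_{p,q}(w)}$, but this is exactly the content of Proposition \ref{prop- Ls on TL B spaces} applied with $\alpha = 0$ (and, for the Besov case in (b), the corresponding Besov statement of the same proposition). Chaining this equivalence with the bilinear estimate above gives the full corollary. The endpoint convention replacing $H^{p_i}_L(w_i)$ by $L^\infty$ when $p_i=\infty$ and $w_i=1$ is inherited verbatim from the theorem.

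The only conceptual point that requires comment is the legitimacy of taking $\boldsymbol{\mathrm{m}} \equiv 1$: this is a limit case of \eqref{eq-condition on m} but it is allowed with $\gamma = 0$, and the associated bilinear form reduces to the pointwise product on the diagonal, which is meaningful for $f,g \in \mathcal S_\infty$ in view of the discussion following \eqref{eq-bilinear form} in the introduction. I do not anticipate any technical obstacle since the work has all been done in Theorem \ref{main thm-nilpotent}; the corollary serves merely to repackage the bilinear multiplier statement as a classical-looking fractional Leibniz inequality on nilpotent Lie groups.
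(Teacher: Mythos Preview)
Your proposal is correct and matches the paper's approach exactly: the paper obtains Corollary~\ref{cor-nilpotent} from Theorem~\ref{main thm-nilpotent} by specializing to $\boldsymbol{\mathrm{m}}\equiv 1$ (so $\gamma=0$ and $B_{\boldsymbol{\mathrm{m}},L}(f,g)=fg$), just as Corollary~\ref{cor-main} is derived from Theorem~\ref{main thm}, with the equivalence $\|L^{s/2}(\cdot)\|_{\dot F^{0,L}_{p,q}(w)}\simeq\|\cdot\|_{\dot F^{s,L}_{p,q}(w)}$ (and its Besov analogue) supplied by Proposition~\ref{prop- Ls on TL B spaces}.
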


\noindent \textbf{Fractional Leibniz rules on stratified Lie groups:} We now consider an important subclass of nilpotent Lie groups, which is stratified Lie groups. A connected and simply connected Lie group is said to be \textbf{stratified} if its Lie algebra $\mathfrak{g}$ admits a stratification, i.e., a direct sum decomposition

\[
\mathfrak{g} = V_1 \oplus V_2 \oplus \cdots \oplus V_\kappa
\]

such that

\begin{equation}\label{eq-basic stratified Lie group}
[V_1, V_{i-1}] = V_i \quad \text{for } 2 \leq i \leq \kappa, \qquad [V_1, V_\kappa] = \{0\}. \tag{3.22}
\end{equation}

Obviously, such a Lie group is nilpotent of step $\kappa$. Let $m = \dim(V_1)$, and let $\{X_1, \ldots, X_m\}$ be an arbitrary basis of $V_1$. Identifying $\mathfrak{g}$ with the Lie algebra of all left-invariant vector fields on $G$, it follows from \eqref{eq-basic stratified Lie group} that the system $\{X_1, \ldots, X_m\}$ satisfies the H\"ormander condition. Consider the sub-Laplacian operator
\[
L := -\sum_{i=1}^{m} X_i^2,
\]
which we still denote by $L$ without risk of confusion.

A key feature of these spaces is that the weighted Besov and Triebel-Lizorkin spaces are independent of the choices of the sub-Laplacians. More precisely, we have the following result.

\begin{prop}[\cite{Hu}]
	Let $G$ be a stratified Lie group, and let $L$ and $\widetilde{L}$ be two arbitrary sub-Laplacians on $G$. Then:
	\begin{enumerate}[(i)]
		\item For $p,q \in (0,\infty]$, $s \in \mathbb{R}$, and $w \in A_\infty$,
		\[
		\dot{B}^{s,L}_{p,q}(w) = \dot{B}^{s,\widetilde{L}}_{p,q}(w)
		\]
		with equivalent quasi-norms.
		
		\item For $p\in (0,\vc), q \in (0,\infty]$, $s \in \mathbb{R}$, and $w \in A_\infty$,
		\[
		\dot{F}^{s,L}_{p,q}(w) = \dot{F}^{s,\widetilde{L}}_{p,q}(w)
		\]
		with equivalent quasi-norms.
	\end{enumerate}
\end{prop}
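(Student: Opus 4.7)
The plan is to establish the two norm equivalences via a discrete Calderón reproducing formula together with an almost-diagonal estimate for the cross-compositions $\psi_j(\sqrt{\widetilde L})\psi_k(\sqrt L)$. As a preliminary observation, both sub-Laplacians $L$ and $\widetilde L$ satisfy conditions (A1)--(A3) with the same dimension parameter: this is exactly what was verified for a single sub-Laplacian in the proof of Theorem~\ref{main thm-nilpotent}, using Proposition~\ref{prop-derivative of heat kernel on Lie group}, and the same verification applies to any other sub-Laplacian on $G$. Consequently, the weighted Besov and Triebel--Lizorkin spaces of Section~3 are defined for both operators within the same topological framework. A first step is to confirm that the spaces $\mathcal S$ and $\mathcal S_\infty$ (and their duals) from Subsection~\ref{sec-distribution} do not depend on the choice of sub-Laplacian: the seminorms $\mathcal P_{m,\ell}$ built from $L^\ell$ and from $\widetilde L^\ell$ are mutually comparable on Schwartz-type functions, because both operators are finite-order left-invariant differential operators that preserve $\mathcal S(G)$ with compatible quantitative bounds, and a dual characterisation of $\mathcal S_\infty$ (vanishing of all moments along any basis of left-invariant operators) is intrinsic to $G$.

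Fix a partition of unity $\psi$ and use the reproducing formula $f=\sum_{k\in\mathbb Z}\psi_k(\sqrt L)f$ in $\mathcal S_\infty'$ to write
\[
\psi_j(\sqrt{\widetilde L})f \;=\; \sum_{k\in\mathbb Z}\psi_j(\sqrt{\widetilde L})\psi_k(\sqrt L)f.
\]
The central technical task is then to prove the cross-kernel estimate: for every $M,N>0$,
\[
\bigl|\psi_j(\sqrt{\widetilde L})\psi_k(\sqrt L)(x,y)\bigr|\;\lesssim\; 2^{-N|j-k|}\,D_{2^{-(j\wedge k)},M}(x,y),
\]
uniformly in $j,k\in\mathbb Z$. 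When $k\ge j$, one factors $\psi_k(\sqrt L)=2^{-2Nk}L^N\tilde\psi_k(\sqrt L)$ with $\tilde\psi(\lambda)=\lambda^{-2N}\psi(\lambda)$ still supported in $[1/2,2]$, and absorbs the $N$ powers of $L$ into the composition through the kernel integral $\int\psi_j(\sqrt{\widetilde L})(x,z)\,L^N\tilde\psi_k(\sqrt L)(z,y)\,d\mu(z)$. Combining the derivative estimates of Proposition~\ref{prop-derivative of heat kernel on Lie group} for both sub-Laplacians with Proposition~\ref{thm-kernel estimate for functional calculus} and Lemma~\ref{lem-Ds Dt} produces the prefactor $2^{-2N(k-j)}$ and the claimed spatial decay. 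The case $j\ge k$ is symmetric after swapping the roles of $L$ and $\widetilde L$.

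With this cross-kernel estimate in hand, an argument modelled on Proposition~\ref{prop1-maximal function} yields the pointwise majorisation
\[
2^{js}\,|\psi_j(\sqrt{\widetilde L})f(x)|\;\lesssim\;\sum_{k\in\mathbb Z}2^{-|j-k|(N-|s|)}\,2^{ks}\,\mathcal M_r\!\bigl(\psi_{k,\lambda}^*(\sqrt L)f\bigr)(x),
\]
for any $N>|s|$ and any $0<r<\min(p,q,1)/\tau_w$ chosen so that the Fefferman--Stein inequality \eqref{FSIn} applies. Applying \eqref{FSIn}, Young's convolution on $\ell^q(\mathbb Z)$, and Proposition~\ref{prop2-thm1} to change between $\psi_{k,\lambda}^*(\sqrt L)f$ and $\psi_k(\sqrt L)f$, one obtains $\|f\|_{\dot F^{s,\widetilde L}_{p,q}(w)}\lesssim\|f\|_{\dot F^{s,L}_{p,q}(w)}$. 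The reverse inequality follows from the symmetry of the argument, and the Besov case is handled identically after exchanging the order of the $L^p(w)$- and $\ell^q$-norms.

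The main obstacle is the cross-kernel estimate itself, since $L$ and $\widetilde L$ do not commute in general and a composition of spectral multipliers for two different operators cannot be reduced to a single functional calculus. What saves the argument on a stratified Lie group is the shared homogeneous-dilation structure --- both operators are homogeneous of degree~$2$ under the same family of parabolic dilations --- together with the uniform Gaussian-type bounds on all derivatives of either heat kernel. Converting the symbolic frequency mismatch $|j-k|$ into genuine exponential decay in the composed kernel, while simultaneously preserving the spatial factor $D_{2^{-(j\wedge k)},M}$ at the coarser scale, is the step that requires the most delicate bookkeeping.
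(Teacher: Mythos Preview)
The paper does not actually prove this proposition: it attributes the unweighted case to \cite[Theorem~7]{Hu} and simply remarks that ``the same argument also works for the weighted case.'' So there is no proof in the paper to compare against, only a citation.

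Your outline is the standard strategy for this kind of independence result and is essentially sound; it is very likely close in spirit to the argument in \cite{Hu}. The key almost-orthogonality estimate $|\psi_j(\sqrt{\widetilde L})\psi_k(\sqrt L)(x,y)|\lesssim 2^{-N|j-k|}D_{2^{-(j\wedge k)},M}(x,y)$ is the heart of the matter, and your mechanism for it is correct: when $k\ge j$ one peels off $L^N$ from $\psi_k(\sqrt L)$, uses self-adjointness to throw it onto the $z$-variable of $\psi_j(\sqrt{\widetilde L})(x,z)$, and then invokes the fact that $L^N$ is a polynomial in the $X_i$'s (which are themselves linear combinations of the $\widetilde X_i$'s, both being bases of $V_1$) together with the derivative bounds established in the proof of Theorem~\ref{main thm-nilpotent}. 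One small point to clean up: in your pointwise majorisation you wrote $\mathcal M_r\bigl(\psi_{k,\lambda}^*(\sqrt L)f\bigr)$, but only one of the maximal function or the Peetre maximal function is needed, not a nesting of both; the cleanest route is to bound $\int D_{2^{-(j\wedge k)},M}(x,y)|\psi_k(\sqrt L)f(y)|\,d\mu(y)$ directly by $2^{\lambda|j-k|}\psi_{k,\lambda}^*(\sqrt L)f(x)$ and then apply Proposition~\ref{prop2-thm1}. The identification of the test-function classes $\mathcal S$ and $\mathcal S_\infty$ for the two sub-Laplacians is indeed a point requiring justification, but on a stratified group both coincide with the intrinsic Schwartz space of Folland--Stein and its subspace of functions with vanishing polynomial moments, so this is known.
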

In fact, \cite[Theorem 7]{Hu} proved the coincidence of the unweighted spaces; however, the same argument also works for the weighted case.  Hence, from the above result, we can omit the symbol $L$ in the notation of the Besov and Tribel-Lizorkin spaces associated to $L$ to briefly write $\dot{B}^{s}_{p,q}(w)$ and $\dot{F}^{s}_{p,q}(w)$. It is important to notice that in the particular case $w=1$, $\dot{F}^{0}_{p,2}(w)$ becomes the Hardy spaces $H^p$ introduced in \cite{FS}. Hence, from Theorem \ref{main thm-nilpotent Schrodinger} and Corollary \ref{cor-nilpotent  Schrodinger} we deduce the following results.

\begin{cor}\label{main thm-stratified group}
	Let  $L$ be a sub-Laplacian on the stratified Lie group $G$, and let $\boldsymbol{\mathrm{m}}$ satisfy \eqref{eq-condition on m} for some $\gamma\in \mathbb{R}$. Let $0<q\le \vc$, and let $0<p,p_1,p_2,p_3,p_4 \le \infty$ satisfy \eqref{eq- pi condition}, and $w, w_1, w_2, w_3, w_4 \in A_\infty$ satisfy \eqref{eq- wi condition}. 
	
	If $0<p_1,p_4<\infty$, $0<p_2,p_3 \le \infty$, and $s > \tau_{p,q}(w)$, then
	\begin{equation}
		\|B_{\boldsymbol{\mathrm{m}}}(f,g)\|_{\dot{F}^{s}_{p,q}(w)} 
		\lesssim \|f\|_{\dot{F}^{s+\gamma}_{p_1,q}(w_1)} \|g\|_{H^{p_2}(w_2)} 
		+ \|f\|_{H^{p_3}(w_3)} \|g\|_{\dot{F}^{s+\gamma}_{p_4,q}(w_4)}.
	\end{equation}
	
	Similarly, for the Besov scale, if $0<p,p_1,p_2,p_3,p_4 \le \infty$ and $s > \tau_p(w)$, it holds that
	\begin{equation}
		\|B_{\boldsymbol{\mathrm{m}},L}(f,g)\|_{\dot{B}^{s}_{p,q}(w)} 
		\lesssim \|f\|_{\dot{B}^{s+\gamma}_{p_1,q}(w_1)} \|g\|_{H^{p_2}(w_2)} 
		+ \|f\|_{H^{p_3}(w_3)} \|g\|_{\dot{B}^{s+\gamma}_{p_4,q}(w_4)}.
	\end{equation}
	
	In both cases, the Hardy spaces $H^{p_2}(w_2)$ and $H^{p_3}(w_3)$ are replaced by $L^\infty$ if $p_2=\infty, w_2=1$ or $p_3=\infty, w_3=1$, respectively.
\end{cor}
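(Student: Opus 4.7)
The proof of Corollary \ref{main thm-stratified group} is essentially a transfer of Theorem \ref{main thm-nilpotent} to the canonical (choice-independent) function spaces available on stratified Lie groups, so the plan is short. First, I would observe that every stratified Lie group is in particular a connected, simply connected nilpotent Lie group, and any fixed sub-Laplacian $L=-\sum_{i=1}^m X_i^2$ associated with a basis $\{X_1,\dots,X_m\}$ of the first stratum $V_1$ satisfies H\"ormander's condition. Hence Theorem \ref{main thm-nilpotent} is directly applicable and yields, for any admissible tuple $(p,p_1,p_2,p_3,p_4,q)$ and weights $(w,w_1,w_2,w_3,w_4)$ as in the statement, the estimates
\[
\|B_{\boldsymbol{\mathrm{m}},L}(f,g)\|_{\dot F^{s,L}_{p,q}(w)}\lesssim \|f\|_{\dot F^{s+\gamma,L}_{p_1,q}(w_1)}\|g\|_{H^{p_2}_L(w_2)}+\|f\|_{H^{p_3}_L(w_3)}\|g\|_{\dot F^{s+\gamma,L}_{p_4,q}(w_4)},
\]
with the analogous inequality in the Besov scale.

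Next, I would invoke the proposition of \cite{Hu} (stated just before Corollary \ref{main thm-stratified group}) to pass from the $L$-adapted spaces to the universal spaces on $G$: for any two sub-Laplacians $L$, $\widetilde L$ on $G$ one has $\dot B^{s,L}_{p,q}(w)=\dot B^{s,\widetilde L}_{p,q}(w)$ and $\dot F^{s,L}_{p,q}(w)=\dot F^{s,\widetilde L}_{p,q}(w)$ with equivalent quasi-norms. Consequently, writing $\dot B^s_{p,q}(w)$ and $\dot F^s_{p,q}(w)$ without the $L$ is unambiguous, and the Triebel--Lizorkin and Besov inequalities above translate verbatim to the form claimed in the corollary.

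It remains to address the Hardy-space factors $H^{p_2}(w_2)$ and $H^{p_3}(w_3)$. The plan is to use the identifications recorded in Remark \ref{rem2}: $H^{p}_L(w)\equiv \dot F^{0,L}_{p,2}(w)$ (for $0<p\le 1$, and in fact for all $0<p<\infty$ with the appropriate weight condition), combined with the just-cited independence of the Triebel--Lizorkin scale on the choice of sub-Laplacian. This yields $H^{p}_L(w)=H^{p}_{\widetilde L}(w)$ for any two sub-Laplacians, so the unadorned notation $H^{p}(w)$ is well defined; in the unweighted case $w=1$ it coincides with the classical Folland--Stein Hardy space $H^p$ on $G$. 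The exceptional cases where $p_i=\infty$ and $w_i=1$ are handled exactly as in Theorem \ref{main thm-nilpotent}, replacing $H^{p_i}(w_i)$ by $L^\infty$. Assembling these identifications with the estimate from the first step completes the proof of both \eqref{eq-main thm TL spaces}- and \eqref{eq-main thm B spaces}-type inequalities in the stratified setting.

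The only genuine subtlety—rather than a true obstacle—is verifying that the Hardy-space identification $H^{p}_L(w)\equiv \dot F^{0,L}_{p,2}(w)$ is available in the full generality of the exponents and weights appearing here; but this is precisely the content of Remark \ref{rem2} together with the independence result, so no new technical work is required.
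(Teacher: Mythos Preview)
Your proposal is correct and matches the paper's approach: the paper derives this corollary directly from Theorem \ref{main thm-nilpotent} combined with the independence result of \cite{Hu} and the identification $H^p_L(w)\equiv \dot F^{0,L}_{p,2}(w)$ from Remark \ref{rem2}, exactly as you outline. (The paper's text actually cites Theorem \ref{main thm-nilpotent Schrodinger} at this point, but that appears to be a typographical slip; the logical input is the nilpotent-group theorem, as you correctly identify.)
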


\begin{cor}\label{cor-stratified group}
	Under the same conditions as in Theorem \ref{main thm-stratified group}, we have:
	\begin{enumerate}[\rm (a)]
		\item If $0<p_1,p_4<\infty$, $0<p_2,p_3 \le \infty$, and $s > \tau_{p,q}(w)$, then
	\begin{equation}
		\|L^{s/2}(fg)\|_{\dot{F}^{0}_{p,q}(w)} \simeq \|fg\|_{\dot{F}^{s}_{p,q}(w)} 
		\lesssim \|f\|_{\dot{F}^{s}_{p_1,q}(w_1)} \|g\|_{H^{p_2}(w_2)} 
		+ \|f\|_{H^{p_3}(w_3)} \|g\|_{\dot{F}^{s}_{p_4,q}(w_4)}.
	\end{equation}
	
	\item Similarly, for the Besov scale, if $0<p,p_1,p_2,p_3,p_4 \le \infty$ and $s > \tau_p(w)$, it holds that
	\begin{equation}
		\|L^{s/2}(fg)\|_{\dot{B}^{0}_{p,q}(w)} \simeq \|fg\|_{\dot{B}^{s}_{p,q}(w)} 
		\lesssim \|f\|_{\dot{B}^{s}_{p_1,q}(w_1)} \|g\|_{H^{p_2}(w_2)} 
		+ \|f\|_{H^{p_3}(w_3)} \|g\|_{\dot{B}^{s}_{p_4,q}(w_4)}.
	\end{equation}
\end{enumerate}
	In both cases, the Hardy spaces $H^{p_2}(w_2)$ and $H^{p_3}(w_3)$ are replaced by $L^\infty$ if $p_2=\infty, w_2=1$ or $p_3=\infty, w_3=1$, respectively.
\end{cor}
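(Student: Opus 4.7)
The plan is to deduce Corollary \ref{cor-stratified group} directly from Corollary \ref{main thm-stratified group} (the bilinear multiplier estimate on stratified groups) by specializing the symbol to the trivial multiplier, combined with the homogeneous lifting property of $L^{s/2}$ established earlier. No new analysis is required; the argument is purely a composition of already-proved results.

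First I would verify the symbol condition. For $\boldsymbol{\mathrm{m}}(\xi,\eta)\equiv 1$, every derivative of order $\ge 1$ vanishes and $|\boldsymbol{\mathrm{m}}|=1=(|\xi|+|\eta|)^0$, so \eqref{eq-condition on m} holds with $\gamma=0$. Next I would identify the bilinear form: since $\boldsymbol{\mathrm{m}}(L_1,L_2)$ is the identity on $L^2(X\times X)$, we have $\boldsymbol{\mathrm{m}}(L_1,L_2)(f\otimes g)=f\otimes g$, hence by the definition \eqref{eq-bilinear form} and the diagonal restriction (well-defined on $\mathcal S_\infty$ as discussed in Subsection~\ref{Sec-Appendix}), $B_{1,L}(f,g)(x)=(f\otimes g)(x,x)=f(x)g(x)$.

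With these two observations in hand, applying Corollary \ref{main thm-stratified group} with $\boldsymbol{\mathrm{m}}\equiv 1$ and $\gamma=0$ immediately yields, under the stated hypotheses on the exponents $p,p_i,q$ and weights $w,w_i$,
\[
\|fg\|_{\dot{F}^{s}_{p,q}(w)} \lesssim \|f\|_{\dot{F}^{s}_{p_1,q}(w_1)}\|g\|_{H^{p_2}(w_2)} + \|f\|_{H^{p_3}(w_3)}\|g\|_{\dot{F}^{s}_{p_4,q}(w_4)},
\]
and the analogous estimate in the Besov scale under the weaker hypothesis $s>\tau_p(w)$ on exponents $0<p_1,p_2,p_3,p_4\le\infty$. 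The endpoint case $p_i=\infty$, $w_i=1$ (for $i=2,3$) with $H^{p_i}(w_i)$ replaced by $L^\infty$ is inherited directly from Corollary \ref{main thm-stratified group}.

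Finally, to obtain the left-hand equivalence $\|L^{s/2}(fg)\|_{\dot{F}^{0}_{p,q}(w)}\simeq \|fg\|_{\dot{F}^{s}_{p,q}(w)}$ (and its Besov analogue), I would apply Proposition \ref{prop- Ls on TL B spaces} with $\alpha=0$ to the function $h=fg$, which gives $\|L^{s/2}h\|_{\dot{F}^{0,L}_{p,q}(w)}\simeq \|h\|_{\dot{F}^{s,L}_{p,q}(w)}$; the $L$-superscript may then be dropped on a stratified group thanks to the independence of the weighted Besov and Triebel--Lizorkin scales from the choice of sub-Laplacian (Proposition 5.2, \cite{Hu}). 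Since every step is a direct invocation of a previously established tool, there is no genuine obstacle: the only point that deserves care is the identification $B_{1,L}(f,g)=fg$ on the dense class $\mathcal S_\infty$, which is handled by the well-definedness discussion in Section~6.
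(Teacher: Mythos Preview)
Your proposal is correct and follows exactly the approach the paper takes: the paper does not write out a separate proof for this corollary, treating it as an immediate consequence of Corollary~\ref{main thm-stratified group} by the specialization $\boldsymbol{\mathrm{m}}\equiv 1$ (so $\gamma=0$ and $B_{\boldsymbol{\mathrm{m}},L}(f,g)=fg$), together with Proposition~\ref{prop- Ls on TL B spaces} for the lifting equivalence and the \cite{Hu} result to remove the $L$-dependence of the function spaces. Your write-up simply makes these steps explicit.
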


One more interesting application is about the Sch\"odinger operator on the stratified group. Let $V$ is a nonnegative polynomial on the stratified Lie group $G$. We consider the Schr\"odinger operator $L_V = L +V$. Define
\begin{equation}\label{eq-rho function}
\rho(x)=\sup\left\{r>0: \f{1}{r^{n-2}}\int_{B(x,r)}V(y)dy\leq 1\right\}
\end{equation}
is a {\it critical function} (see, for example, \cite{F,Shen, BHH}).

\begin{prop}[\cite{BHH}]\label{derivatives of heat kernel Schrodinger}
	Let $p_{t}(x,y)$ be the heat kernel of $L_V$.
	For any $I \in \mathcal I(m)$, and $N >0$, there are constants $C,c>0$ so that
	\begin{equation} \label{derivative}
		|  X_x^I  p_t(x,y)|
		\lesi_{N,I} C t^{-\frac{Q+|I|}{2}} e^{-\frac{|y^{-1}x|^2}{c t}} \left( 1+\frac{\sqrt{t}}{\rho(x)} +\frac{\sqrt{t}}{\rho(y)}\right)^{-N},
	\end{equation}
	where the subscript $x$ in $X_x^I$ means a derivative with respect to the variable $x$. 
\end{prop}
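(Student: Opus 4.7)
The plan is to establish the base case $|I|=0$ first and then bootstrap to derivatives via parabolic regularity tailored to $L_V$. For the base case, the starting point is the Feynman--Kac domination $0\le p_t(x,y)\le h_t(x,y)$, which follows from $V\ge 0$ together with the Feynman--Kac representation of $e^{-tL_V}$ in terms of sub-Laplacian Brownian motion on $G$. Combined with Proposition~\ref{prop-derivative of heat kernel on Lie group} this already yields the Gaussian factor for $|I|=0$. The extra polynomial decay in $1+\sqrt t/\rho(x)+\sqrt t/\rho(y)$ is the genuinely new ingredient and exploits the polynomial structure of $V$: the Feynman--Kac integrand $\exp(-\int_0^t V(\omega_s)\,ds)$ produces arbitrary polynomial decay in $\sqrt t/\rho(x)$ once one establishes a uniform Fefferman--Phong / Shen-type lower bound for the average of $V$ along sub-Laplacian Brownian paths of length $\sqrt t$ starting near $x$. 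The symmetric factor in $\rho(y)$ is then obtained by the semigroup splitting $p_t=p_{t/2}\circ p_{t/2}$ and swapping the roles of $x$ and $y$.

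For the derivative estimates, I would proceed by localised parabolic regularity. Write
\[
X_x^I p_t(x,y) = X_x^I\bigl(e^{-(t/2)L_V}[p_{t/2}(\cdot,y)]\bigr)(x),
\]
and apply a Caccioppoli-type estimate for caloric functions $\partial_s u + L_V u=0$ on the ball $B(x,c\sqrt t)$. Iterating the commutator relations $[X^I,L_V]=\sum_{|J|<|I|}(\text{lower order in }X\text{ and in }V)$ controls $|X^I u(x)|$ by an $L^2$ average of $u$ over a slightly enlarged ball, producing the expected $t^{-|I|/2}$ loss. Applying this to $u=p_{t/2}(\cdot,y)$ and feeding in the base-case estimate of the first step yields both the Gaussian factor and the $\rho$-weight factor at scale $\sqrt t$.

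The main obstacle is carrying the $\rho$-decay through the derivative step: a naive Caccioppoli inequality spends the $\rho$-factor in absorbing lower-order potential terms and therefore gives only Gaussian bounds without the $\rho$-weight. One circumvents this by working on balls of radius $\min(\sqrt t,\rho(x))$, where the potential $V$ is small in a scale-invariant sense, and exploiting the reverse H\"older property of $V$ (automatic for a nonnegative polynomial) to patch local estimates into a global one. This is exactly the Shen-type scheme implemented in \cite{BHH}; in the present paper the conclusion is invoked as a black box, feeding into the verification of conditions (A1)--(A3) for $L_V$ and the consequent fractional Leibniz rules on stratified groups.
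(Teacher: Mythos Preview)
Your reading is correct: in this paper the proposition is not proved at all but is quoted verbatim from \cite{BHH}, and you explicitly note this at the end of your proposal. So there is no ``paper's own proof'' to compare against; the paper treats the estimate as a black box and uses it only to verify (A1)--(A3) for $L_V$ in the proof of Theorem~\ref{main thm-nilpotent Schrodinger}.

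As for the sketch itself, it is a faithful outline of the Shen-type strategy that \cite{BHH} implements: Feynman--Kac domination for the Gaussian envelope, the Fefferman--Phong mechanism (via the reverse H\"older/polynomial structure of $V$) to extract the $(1+\sqrt t/\rho)^{-N}$ decay, and iterated Caccioppoli/parabolic regularity on balls of radius $\min(\sqrt t,\rho(x))$ to pass to higher-order $X^I$-derivatives without losing the $\rho$-weight. One minor point worth flagging is that in the stratified-group setting the Feynman--Kac representation for the sub-Laplacian diffusion requires some care (the process is the horizontal diffusion generated by $L$, not a standard Brownian motion), and \cite{BHH} in fact bypasses Feynman--Kac in places by using perturbation formulas and direct kernel comparisons; but the overall architecture you describe matches.
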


\bigskip

\begin{thm}\label{main thm-nilpotent Schrodinger}
	Let $G$ be a stratified Lie group and let $L_V = L + V$, where $V$ is a nonnegative polynomial on $G$. Let $\boldsymbol{\mathrm{m}}$ satisfy \eqref{eq-condition on m} for some $\gamma \in \mathbb{R}$. Suppose $0<q\le \vc$, and   $0<p,p_1,p_2,p_3,p_4 \le \infty$ satisfies \eqref{eq- pi condition}, and $w, w_1, w_2, w_3, w_4 \in A_\infty$ satisfies \eqref{eq- wi condition}. 
	
	If $0<p_1,p_4<\infty$, $0<p_2,p_3 \le \infty$, and $s > \tau_{p,q}(w)$, then
	\begin{equation}
		\|B_{\boldsymbol{\mathrm{m}},L_V}(f,g)\|_{\dot{F}^{s,L_V}_{p,q}(w)} 
		\lesssim \|f\|_{\dot{F}^{s+\gamma,L_V}_{p_1,q}(w_1)} \|g\|_{H^{p_2}_{L_V}(w_2)} 
		+ \|f\|_{H^{p_3}_{L_V}(w_3)} \|g\|_{\dot{F}^{s+\gamma,L_V}_{p_4,q}(w_4)}.
	\end{equation}
	
	Similarly, for the Besov scale, if $0<p,p_1,p_2,p_3,p_4 \le \infty$ and $s > \tau_p(w)$, it holds that
	\begin{equation}
		\|B_{\boldsymbol{\mathrm{m}},L_V}(f,g)\|_{\dot{B}^{s,L_V}_{p,q}(w)} 
		\lesssim \|f\|_{\dot{B}^{s+\gamma,L_V}_{p_1,q}(w_1)} \|g\|_{H^{p_2}_{L_V}(w_2)} 
		+ \|f\|_{H^{p_3}_{L_V}(w_3)} \|g\|_{\dot{B}^{s+\gamma,L_V}_{p_4,q}(w_4)}.
	\end{equation}
	
	In both cases, the Hardy spaces $H^{p_2}_{L_V}(w_2)$ and $H^{p_3}_{L_V}(w_3)$ are replaced by $L^\infty$ if $p_2=\infty, w_2=1$ or $p_3=\infty, w_3=1$, respectively.
\end{thm}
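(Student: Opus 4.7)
The plan is to reduce Theorem \ref{main thm-nilpotent Schrodinger} to Theorem \ref{main thm} exactly as was done for the pure sub-Laplacian in Theorem \ref{main thm-nilpotent}: once one has verified that the Schr\"odinger operator $L_V=L+V$ satisfies (A1), (A2) and (A3) on the space of homogeneous type $(G,\rho,\mu)$, Theorem \ref{main thm} applies verbatim and yields both the Triebel--Lizorkin and Besov estimates claimed. Conditions (A1) and (A2) are easy: (A1) follows from Proposition \ref{derivatives of heat kernel Schrodinger} taken with the multi-index $I=(0)$, since the additional decay factor $(1+\sqrt t/\rho(x)+\sqrt t/\rho(y))^{-N}$ is $\le1$; (A2) follows by combining the same proposition with $|I|=1$ (a left-invariant horizontal gradient bound of the form $|X_x p_t(x,y)|\lesssim t^{-1/2}V(x,\sqrt t)^{-1}\exp(-|y^{-1}x|^2/ct)$) and the mean value theorem along horizontal curves connecting $x$ and $\overline x$ when $\rho(x,\overline x)<\sqrt t$; H\"ormander's condition guarantees such curves with controlled length, giving $\delta_0=1$.

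The real task is (A3). The first step is to upgrade Proposition \ref{derivatives of heat kernel Schrodinger} from the heat kernel to general even Schwartz multipliers: one shows that for every even $\varphi\in\mathscr S(\mathbb R)$ and every $I\in\mathcal I(m)$,
\[
|X^I_x \varphi(t\sqrt{L_V})(x,y)|\lesssim_{N,I} t^{-|I|}D_{t,N}(x,y),\qquad t>0,\ x,y\in G,
\]
by writing $\varphi(t\sqrt{L_V})$ as a superposition of heat operators (Laplace transform) or, alternatively, by combining Proposition \ref{thm-kernel estimate for functional calculus} with finite propagation speed and the horizontal derivative bounds of Proposition \ref{derivatives of heat kernel Schrodinger}, following the argument in the proof of Proposition~3.2 of \cite{BD}. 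The second step is then to expand
\[
L_V^k=\Big(-\sum_{i=1}^m X_i^2+V\Big)^k
\]
via the noncommutative binomial formula into a finite sum of terms of the shape $P_{I,J,a}(X)\,V(x)^a$ with $|I|+|J|\le 2(k-a)$ and $0\le a\le k$, and apply the Leibniz rule to $\varphi(t\sqrt{L_V})(x,y)\psi(t\sqrt{L_V})(x,z)$. Each resulting summand is bounded by
\[
t^{-|I|-|J|}V(x)^a\, D_{t,N}(x,y)\,D_{t,N}(x,z),
\]
so one still needs to absorb $t^{2a}V(x)^a$ at the cost of constants.

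The main obstacle is this last absorption, which is where the polynomial hypothesis on $V$ is essential. By the definition of the critical function $\rho$ in \eqref{eq-rho function} and classical estimates of Shen type for nonnegative polynomial potentials (see \cite{BHH}), one has $V(x)\lesssim \rho(x)^{-2}$ pointwise with implicit constant depending only on the degree of $V$, so that $t^{a}V(x)^{a}\lesssim (\sqrt t/\rho(x))^{2a}$. This factor is controlled by the auxiliary decay $(1+\sqrt t/\rho(x))^{-N}$ present in Proposition \ref{derivatives of heat kernel Schrodinger} (taking $N$ large enough, which we may do after enlarging $N$ in the bound on $\varphi(t\sqrt{L_V})$). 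Combining these ingredients gives
\[
\big|L_V^k\big[\varphi(t\sqrt{L_V})(x,y)\,\psi(t\sqrt{L_V})(x,z)\big]\big|\lesssim t^{2k}D_{t,N}(x,y)\,D_{t,N}(x,z),
\]
which is precisely (A3). Theorem \ref{main thm} then delivers both estimates of Theorem \ref{main thm-nilpotent Schrodinger}; Corollary \ref{cor-main} applied to $L_V$ in place of $L$ likewise yields a fractional Leibniz rule for the pointwise product on the weighted Besov and Triebel--Lizorkin scales adapted to $L_V$.
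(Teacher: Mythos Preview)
Your overall strategy coincides with the paper's: verify (A1)--(A3) for $L_V$ and invoke Theorem \ref{main thm}. The treatment of (A1) and (A2) is fine, and the idea of absorbing potential-related growth into the $\rho$-decay coming from Proposition \ref{derivatives of heat kernel Schrodinger} is exactly right.

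The gap is in your expansion of $L_V^k$. Normal-ordering the noncommutative product $(L+V)^k$ does \emph{not} produce terms of the form ``pure differential operator times $V(x)^a$'': commuting an $X_i$ past $V$ generates $X_iV$ as a zeroth-order coefficient, and iterating gives arbitrary horizontal derivatives of powers of $V$. The correct form (used in the paper) is
\[
(L+V)^m=\sum_{j=0}^m\ \sum_{|I|\le 2(m-j)} c_{m,j,I}\,(X^IV^j)\,X^{2(m-j)-|I|},
\]
with $(X^IV^j)$ a function. To control these coefficients you need not merely $V(x)\lesssim\rho(x)^{-2}$ but the full scale $|X^IV(x)|\lesssim\rho(x)^{-|I|-2}$ (this is \eqref{eq-bound for V}, i.e.\ (5.3) of \cite{BHH}, and is where the polynomial hypothesis is actually used). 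With that in hand, $(X^IV^j)(x)\lesssim\rho(x)^{-|I|-2j}$ combines with the factor $(1+\sqrt t/\rho(x))^{-N}$ to yield $t^{-|I|/2-j}$, and the bookkeeping closes as you intended. Note also that for this absorption to work the $\rho$-decay must survive the passage from $p_t$ to $\varphi(t\sqrt{L_V})$; your displayed bound $|X^I_x\varphi(t\sqrt{L_V})(x,y)|\lesssim t^{-|I|}D_{t,N}(x,y)$ drops it, so either keep the extra factor there, or (as the paper does) first bound $(X^IV^j)X^Jp_t$ as a whole and only then transfer to general $\varphi$ via the argument of \cite[Proposition~3.2]{BD}.
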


\begin{proof}
	We first recall the inequality (5.3) in \cite{BHH} that
	\begin{equation}\label{eq-bound for V}
		|X^I V(x)|\lesi \rho(x)^{-|I|-2}
	\end{equation}	
	for all $x\in X$ and $I\in \mathcal I(m)$.
	
	By Proposition \ref{derivatives of heat kernel Schrodinger}, the Schr\"odinger operartor $L_V$ satisfies conditions (A1) and (A2). It remains to verify (A3), we first note that from Proposition \ref{derivatives of heat kernel Schrodinger} and \eqref{eq-bound for V} we have, for each $I, J\in \mathcal I(m)$ and $j\in \mathbb N$,
	\[
	|(X^{I} V^j)X^{J}p_t(x,y)|\lesi t^{-\frac{Q+|I|+|J|+2j}{2}} e^{-\frac{|y^{-1}x|^2}{c t}} \left( 1+\frac{\sqrt{t}}{\rho(x)} +\frac{\sqrt{t}}{\rho(y)}\right)^{-N}.
	\]
	This, together with Proposition~\ref{thm-kernel estimate for functional calculus} 
	and the argument used in the proof of Proposition~3.2 of \cite{BD}, 
	implies that for any even function $\varphi \in \mathscr S(\mathbb R)$, 
	$I, J \in \mathcal I(m)$, and $j \in \mathbb N$, we have
	\[
	|(X^{I} V^j )X^{J} \varphi(t\sqrt{L})(x,y)|
	\lesssim t^{-\frac{Q+|I|+|J|+2j}{2}} \, D_{t,N}(x,y),
	\]
	for all $t,N>0$ and $x,y \in G$.
	
	At this stage, the condition (A3) follows directly from the above estimate, the product rule and the following identity
	\begin{equation*}
		(\mathcal L + V)^m 
			= \sum_{j=0}^m a_{m,j} 
			\sum_{|I|\le 2(m-j)} 
			b_{m,j,I} \, (X^I V^j)\, X^{2(m-j)-|I|},
	\end{equation*}
where $X^{2(m-j)-|I|}$ stands for a homogeneous differential operator of degree $2(m-j)-|I|$ built from the $X_1, \ldots, X_\ell$ and $a_{m,j}, b_{m,j,I}$ are constants.

Hence, the result is a direct consequence of Theorem \ref{main thm}.  This completes our proof.
	
\end{proof}

\begin{cor}\label{cor-nilpotent Schrodinger}
	Under the same conditions as in Theorem \ref{main thm-nilpotent Schrodinger}, we have:
	\begin{enumerate}[\rm (a)]
		\item If $0<p_1,p_4<\infty$, $0<p_2,p_3 \le \infty$, and $s > \tau_{p,q}(w)$, then
	\begin{equation}
		\|fg\|_{\dot{F}^{s,L_V}_{p,q}(w)} 
		\lesssim \|f\|_{\dot{F}^{s,L_V}_{p_1,q}(w_1)} \|g\|_{H^{p_2}_{L_V}(w_2)} 
		+ \|f\|_{H^{p_3}_{L_V}(w_3)} \|g\|_{\dot{F}^{s,L_V}_{p_4,q}(w_4)}.
	\end{equation}
	
	\item Similarly, for the Besov scale, if $0<p,p_1,p_2,p_3,p_4 \le \infty$ and $s > \tau_p(w)$, it holds that
	\begin{equation}
		\|fg\|_{\dot{B}^{s,L_V}_{p,q}(w)} 
		\lesssim \|f\|_{\dot{B}^{s,L_V}_{p_1,q}(w_1)} \|g\|_{H^{p_2}_{L_V}(w_2)} 
		+ \|f\|_{H^{p_3}_{L_V}(w_3)} \|g\|_{\dot{B}^{s,L_V}_{p_4,q}(w_4)}.
	\end{equation}
	\end{enumerate}
	In both cases, the Hardy spaces $H^{p_2}_{L_V}(w_2)$ and $H^{p_3}_{L_V}(w_3)$ are replaced by $L^\infty$ if $p_2=\infty, w_2=1$ or $p_3=\infty, w_3=1$, respectively.
	
\end{cor}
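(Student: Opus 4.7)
The plan is to derive Corollary \ref{cor-nilpotent Schrodinger} as an immediate specialization of Theorem \ref{main thm-nilpotent Schrodinger} by selecting the trivial symbol $\boldsymbol{\mathrm{m}}\equiv 1$, mirroring the way Corollary \ref{cor-main} is obtained from Theorem \ref{main thm}, and the way Corollary \ref{cor-nilpotent} is obtained from Theorem \ref{main thm-nilpotent}. First I would verify that $\boldsymbol{\mathrm{m}}\equiv 1$ satisfies the symbol hypothesis \eqref{eq-condition on m} with $\gamma=0$: the zeroth-order bound is $|\boldsymbol{\mathrm{m}}(\xi,\eta)|=1\le C_{0,0}(|\xi|+|\eta|)^{0}$, which is trivial, while for every multi-index $(\alpha,\beta)$ with $|\alpha|+|\beta|\ge 1$ the derivative $\partial_\xi^\alpha\partial_\eta^\beta\boldsymbol{\mathrm{m}}$ vanishes identically, so the prescribed pointwise bound is automatic.

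Next, by the multivariate spectral theorem for the commuting pair $(L_1,L_2)=(L_V\otimes I,\,I\otimes L_V)$ on $L^2(G\times G)$, the operator $\boldsymbol{\mathrm{m}}(L_1,L_2)$ with $\boldsymbol{\mathrm{m}}\equiv 1$ coincides with the identity on $L^2(G\times G)$. Combining this with the well-posedness of the diagonal restriction in \eqref{eq-bilinear form} (justified in Subsection \ref{Sec-Appendix}), one obtains, for $f,g\in\mathcal S_\infty$,
\[
B_{1,L_V}(f,g)(x) \;=\; (f\otimes g)(x,x) \;=\; f(x)\,g(x).
\]
Substituting this identification and $\gamma=0$ into the two estimates of Theorem \ref{main thm-nilpotent Schrodinger} produces exactly parts (a) and (b) of the corollary, including the endpoint convention that replaces $H^{p_i}_{L_V}(w_i)$ by $L^\infty$ when $p_i=\infty$ and $w_i=1$, which is inherited verbatim from the parent theorem.

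Because this reduction is a direct specialization of a theorem already at hand, there is no genuine obstacle to overcome. The only point requiring a brief remark is the passage from the test-function class $\mathcal S_\infty$, on which Theorem \ref{main thm-nilpotent Schrodinger} is formulated, to arbitrary elements of the weighted Besov and Triebel--Lizorkin spaces in the corollary; this is handled by the standard density argument that underlies the parent theorem, so no new analytical input is needed beyond the framework already developed in Sections 3 and 4.
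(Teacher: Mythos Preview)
Your proposal is correct and follows exactly the approach the paper takes for this and all analogous corollaries: specialize Theorem \ref{main thm-nilpotent Schrodinger} to the trivial symbol $\boldsymbol{\mathrm{m}}\equiv 1$ (so that $\gamma=0$ and $B_{\boldsymbol{\mathrm{m}},L_V}(f,g)=fg$), precisely as in the derivation of Corollary \ref{cor-main} from Theorem \ref{main thm}. The paper gives no separate proof for this corollary, treating it as an immediate consequence.
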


In particular case when $s=0$, $q=2$ and $p_i\in (0,\vc), w_i=1,  i=1,2,3,4$, the estimate in Corollary \ref{cor-nilpotent Schrodinger} (a) reads
\[
\|fg\|_{H^p_{L_V}} 
\lesssim \|f\|_{H^{p_1}_{L_V}} \|g\|_{H^{p_2}_{L_V}} 
+ \|f\|_{H^{p_3}_{L_V}} \|g\|_{H^{p_4}_{L_V}},
\]
where $1/p=1/p_1+1/p_2=1/p_3+1/p_4$.

By Remark \ref{rem2}, $H^p_{L_V}=L^p$ for $1<p<\vc$. When $0<p\le 1$, the Hardy space $H^p_{L_V}$ can be characterized as follows.  See for example \cite{BHH}.
\begin{defn}\label{def: rho atoms}
	Let $\rho$ be an arbitrary critical function on $G$ as in  \eqref{eq-rho function}.	Let $p\in (0,1]$ and $q\in [1,\infty]\cap (p,\infty]$.
	A function $a$ is called a  $(p,q,\rho)$-atom associated to the ball $B=B(x_B,r_B)$ if
	\begin{enumerate}[\rm (i)]
		\item ${\rm supp}\, a\subset B$;
		\item $\|a\|_{L^q(G)}\leq |B|^{1/q-1/p}$;
		\item $\displaystyle \int_G a(x)P(x)d\mu(x) =0$  for all polynomial $P$ with the degree at most $N_p: = \lfloor Q(1/p-1)\rfloor$, if $r_B< \rho(x_B)/4$.
	\end{enumerate}
\end{defn}

\begin{defn} \label{local atomic v2}
	Let $\rho$ be an arbitrary critical function on $G$ as in \eqref{eq-rho function}. Let $p\in (0,1]$ and $q\in [1,\infty]\cap (p,\infty]$.
	We define the atomic Hardy space $H^{p,q}_{{\rm at},\rho}(G)$ as the completion of the set of functions $f$ of the form  $f=\sum_{j=1}^\infty
	\lambda_ja_j$, with $\{\lambda_j\}_{j=1}^\infty\in \ell^p$, each $a_j$ being a $(p,q,\rho)$-atom, and the sum converging in $L^2$, under the (quasi-)norm
	$$
	\|f\|_{H^{p,q}_{{\rm at},\rho}(G)}:=\inf\Big\{\Big(\sum_{j=1}^\infty|\lambda_j|^p\Big)^{1/p}:
	f=\sum_{j=1}^\infty \lambda_ja_j\Big\},
	$$
	where the infimum is taken over all possible $(p,q,\rho)$-atomic decompositions of $f$.
\end{defn}
Then in this case, $H^p_{L_V}\equiv H^{p,q}_{{\rm at},\rho}(G)$. 

\subsection{Fractional Leibniz rule associated with Grushin operators}

For $x = (x', x'') \in \mathbb{R}^{n+1}$ with $x' \in \mathbb{R}^n$ and $x'' \in \mathbb{R}$, we define the Grushin operator
\begin{equation}\label{eq-Grushin operator}
	L_G = -\Delta_{x'} - |x'|^2 \partial_{x''}^2, \qquad
	\Delta_{x'} = \sum_{j=1}^n \partial_{x'_j}^2.
\end{equation}

The operator $L_G$ is hypoelliptic and is homogeneous of degree $2$ with respect to the anisotropic dilations
\[
\delta_r(x) = (r x', r^2 x''), \qquad r > 0,
\]
in the sense that
\[
L_G(f \circ \delta_r) = r^2 (L_G f) \circ \delta_r.
\]

Set
\begin{equation}\label{Grushin-vector-fields}
	X_j = \partial_{x'_j}, \quad X_{n+j} = x'_j \partial_{x''}, \qquad j = 1, \dots, n,
\end{equation}
and let $X=\{X_1,\dots,X_{2n}\}$. Then
\[
L_G = -\sum_{j=1}^{2n} X_j^2,
\]
and a direct computation gives the nontrivial commutator
\[
[X_j, X_{n+j}] = \partial_{x''}, \qquad j=1,\dots,n,
\]
so the Hörmander condition is satisfied.

The operator \(L_G\) is symmetric and nonnegative on \(L^2(\mathbb{R}^{n+1})\); in fact it is essentially self-adjoint. We equip \(\mathbb{R}^{n+1}\) with the control (Carnot–Carathéodory) distance \(d\) associated to the vector fields \(X\) (see, e.g., \cite{RS}). This distance is homogeneous with respect to \(\delta_r\):
\[
d(\delta_r x, \delta_r y) = r\, d(x,y),
\]
and satisfies the two-sided comparability
\[
d(x,y) \simeq |x' - y'| + \min\!\Big\{ \frac{|x''-y''|}{|x'|+|y'|},\; |x''-y''|^{1/2}\Big\}.
\]

If \(|\cdot|\) denotes Lebesgue measure and \(B(x,r)\) the ball for \(d\), then
\[
|B(x,r)| \simeq r^{n+1}\,(r + |x'|),
\]
and consequently for \(R\ge r>0\),
\[
\Big(\frac{R}{r}\Big)^{n+1} \lesssim \frac{|B(x,R)|}{|B(x,r)|} \lesssim \Big(\frac{R}{r}\Big)^{n+2}.
\]
Thus \((\mathbb{R}^{n+1},d,|\cdot|)\) is doubling and satisfies the geometric conditions used below.

For multi-index notation, let for \(h\in\mathbb{N}\)
\[
\mathcal{I}^h=\{I=(i_1,\dots,i_h): i_j\in\{1,\dots,2n\}\}.
\]
For \(I=(i_1,\dots,i_h)\in\mathcal{I}^h\) define
\[
X^I := X_{i_1}X_{i_2}\cdots X_{i_h}.
\]
%with the convention that \(X^I f = X_{i_1}\big(X_{i_2}(\cdots(X_{i_h} f)\cdots)\big)\) (the right-most index is applied first). If \(h=0\) we write \(\mathcal{I}^0=\{\varnothing\}\) and \(X^\varnothing=\mathrm{Id}\).

\begin{prop}\label{prop-Grushin}
	Let \(H_t(x,y)\) denote the heat kernel of \(L_G\). Then the following properties hold.
	\begin{enumerate}[\rm (i)]
		\item There exist constants \(b_0\) such that for every \(t>0\) and \(x,y\in\mathbb{R}^{n+1}\)
		\[
		 0< H_t(x,y) \;\lesssim\; |B(x,\sqrt{t})|^{-1} \exp\Big(-b_0\frac{d(x,y)^2}{t}\Big).
		\]
		\item There exists $\delta \in (0,1)$ such that 
		\[
		|H_t(x,y)-H_t(\overline x,y)|\lesi \Big(\f{d(x,\overline x)}{\sqrt t}\Big)^{\delta} \f{1}{|B(x,\sqrt{t})|}\exp\Big(-c\frac{d(x,y)^2}{t}\Big)
		\]
		for all $t>0$ and $x,\overline x, y\in \mathbb R^{n+1}$ with $d(x,\overline x)<\sqrt t$.
		
		\item For every \(h\in\mathbb{N}\) there exists a constant \(a=a_h>0\) (depending on \(h\) and \(n\)) such that for all \(t>0\), \(x,y\in\mathbb{R}^{n+1}\) and every \(I\in\mathcal{I}^h\),
		\[
		\big|X^I_x H_t(x,y)\big| \;\lesssim\; t^{-h/2}\,|B(x,\sqrt{t})|^{-1}\,\exp\Big(-c\frac{d(x,y)^2}{t}\Big),
		\]
		where \(X^I_x\) means the differential operator \(X^I\) acts on the \(x\)-variable of \(H_t(x,y)\).
	\end{enumerate}
\end{prop}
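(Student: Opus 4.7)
The plan is to establish all three assertions by combining general results on subelliptic heat kernels for Hörmander sums of squares with the Rothschild--Stein lifting technique, thereby reducing everything to the stratified Lie group setting already treated in Proposition \ref{prop-derivative of heat kernel on Lie group}.

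For part (i), I would first note that $L_G = -\sum_{j=1}^{2n} X_j^2$ where $X_1,\ldots,X_{2n}$ satisfy H\"ormander's condition, and the associated control space $(\mathbb{R}^{n+1}, d, |\cdot|)$ has already been shown to be doubling via $|B(x,r)| \simeq r^{n+1}(r+|x'|)$. The pointwise Gaussian upper bound then follows from the classical subelliptic heat-kernel estimates of Jerison--S\'anchez-Calle and of Saloff-Coste--Varopoulos--Coulhon, which apply in exactly this setting. Positivity of $H_t$ is immediate from the self-adjoint nonnegative structure and a parabolic minimum principle.

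For part (iii), the most systematic route is to apply Rothschild--Stein lifting: there exist a free nilpotent Lie group $\widetilde{G}$, vector fields $\widetilde{X}_1,\ldots,\widetilde{X}_{2n}$ on $\widetilde{G}$ that are lifts of $X_1,\ldots,X_{2n}$ and satisfy H\"ormander's condition, a sub-Laplacian $\widetilde{L} = -\sum \widetilde{X}_j^2$, and a projection $\pi:\widetilde G\to \mathbb{R}^{n+1}$ so that the heat kernel $H_t$ of $L_G$ can be recovered from the heat kernel $\widetilde h_t$ of $\widetilde L$ by a fiber integral of the form $H_t(x,y) = \int \widetilde h_t(\tilde x,\tilde y)\,d\tilde y_{\mathrm{fib}}$. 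Since $X^I$ lifts to $\widetilde X^I$ and commutes with this fiber integration, Proposition \ref{prop-derivative of heat kernel on Lie group} applied to $\widetilde L$ gives $|\widetilde X^I \widetilde h_t(\tilde x,\tilde y)| \lesssim t^{-|I|/2}|\widetilde B(\tilde x,\sqrt t)|^{-1}\exp(-\widetilde d(\tilde x,\tilde y)^2/(ct))$, and projecting down via the Nagel--Stein--Wainger comparison $d(x,y)\simeq \inf_{\tilde y\in\pi^{-1}(y)} \widetilde d(\tilde x,\tilde y)$ produces the claimed bound (iii), after absorbing a small fraction of the exponent to control the volume comparison between balls on $\widetilde G$ and on $\mathbb{R}^{n+1}$.

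Finally, (ii) is a consequence of the case $|I|=1$ of (iii). Given $d(x,\overline x)<\sqrt t$, a minimizing subunit path $\gamma:[0,1]\to \mathbb R^{n+1}$ joining $x$ to $\overline x$ has length comparable to $d(x,\overline x)$, and integrating $|X_j H_t(\gamma(s),y)|$ along $\gamma$ yields H\"older continuity with exponent $1$, in fact with the factor $d(x,\overline x)/\sqrt t$ in front of the Gaussian. Interpolating with the trivial bound from (i) gives the stated version with any $\delta\in(0,1)$. The main obstacle is the fiber-integral step in (iii): one must verify that $\exp(-c\widetilde d(\tilde x,\tilde y)^2/t)$ integrates against the fiber measure to produce exactly $|B(x,\sqrt t)|^{-1}\exp(-c' d(x,y)^2/t)$, which requires the precise comparison between volumes of balls on $\widetilde G$ and on $\mathbb R^{n+1}$ in the anisotropic regime where $\sqrt t$ can be either small or large compared to $|x'|$.
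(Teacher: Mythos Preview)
Your sketch is correct in spirit, and in fact the paper's own proof of this proposition consists entirely of citations: (i) is quoted from Robinson--Sikora \cite{RS2}, (ii) from Dziuba\'nski--Jotsaroop \cite[Lemma~2.4 and Corollary~2.5]{DJ}, and (iii) from Bruno \cite[Proposition~8.6]{Bruno}. So you are supplying the content that the paper outsources.

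One point of precision deserves attention. The Rothschild--Stein lifting is a \emph{local} procedure: it approximates the vector fields near each point by left-invariant fields on a free nilpotent group, but it does not in general produce a single global group $\widetilde G$ together with a global projection $\pi$ and a fiber-integral formula $H_t(x,y)=\int\widetilde h_t(\tilde x,\tilde y)\,d\tilde y_{\mathrm{fib}}$. What makes the Grushin case work globally is a specific lifting, not the general Rothschild--Stein theorem: one enlarges $\mathbb R^{n+1}$ to $\mathbb R^{2n+1}$ with extra variables $u\in\mathbb R^n$ and lifts $X_{n+j}=x'_j\partial_{x''}$ to $\widetilde X_{n+j}=\partial_{u_j}+x'_j\partial_{x''}$, obtaining a genuine left-invariant system on a $2$-step stratified group (a Heisenberg-type group, not a free group). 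For this particular lifting the heat kernel of $L_G$ really is the fiber integral of the lifted heat kernel, and the rest of your argument---applying Proposition~\ref{prop-derivative of heat kernel on Lie group}, then pushing the Gaussian down using the distance comparison and absorbing a factor into the exponent to handle the fiber volume---goes through. This is essentially how Bruno proves (iii).

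Your derivation of (ii) from the case $|I|=1$ of (iii) by integrating along a subunit curve is correct and in fact yields $\delta=1$; the version with $\delta\in(0,1)$ is then immediate. This is a cleaner route than the one in \cite{DJ}, which appeals to general parabolic H\"older regularity for Dirichlet forms.
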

\begin{proof}
	The item (i) can be found in \cite{RS2}. The H\"older continuity (ii) just follows from \cite[Lemma 2.4 \& Corollary 2.5]{DJ}. The estimate (iii) was proved in \cite[Proposition 8.6]{Bruno}.
\end{proof}
Since the product rule holds true, i.e., $X_j(fg)=X_jf g + fX_jg$ for  $j=1,\ldots, 2n$, from Proposition \ref{prop-Grushin} by using the same argument as in the proof of Theorem \ref{main thm-nilpotent} we have:
\begin{thm}\label{main thm-Grushin}
	Let $L_G$ be the Grushin operator as in \eqref{eq-Grushin operator}, and let $\boldsymbol{\mathrm{m}}$ satisfy \eqref{eq-condition on m} for some $\gamma \in \mathbb{R}$. Suppose $0<q\le \vc$, and   $0<p,p_1,p_2,p_3,p_4 \le \infty$ satisfies \eqref{eq- pi condition}, and $w, w_1, w_2, w_3, w_4 \in A_\infty$ satisfies \eqref{eq- wi condition}. 
	
	If $0<p_1,p_4<\infty$, $0<p_2,p_3 \le \infty$, and $s > \tau_{p,q}(w)$, then
	\begin{equation}
		\|B_{\boldsymbol{\mathrm{m}},L_G}(f,g)\|_{\dot{F}^{s,L_G}_{p,q}(w)} 
		\lesssim \|f\|_{\dot{F}^{s+\gamma,L_G}_{p_1,q}(w_1)} \|g\|_{H^{p_2}_{L_G}(w_2)} 
		+ \|f\|_{H^{p_3}_{L_G}(w_3)} \|g\|_{\dot{F}^{s+\gamma,L_G}_{p_4,q}(w_4)}.
	\end{equation}
	
	Similarly, for the Besov scale, if $0 < p,p_1,p_2,p_3,p_4 \le \infty$ and $s > \tau_p(w)$, it holds that
	\begin{equation}
		\|B_{\boldsymbol{\mathrm{m}},L_G}(f,g)\|_{\dot{B}^{s,L_G}_{p,q}(w)} 
		\lesssim \|f\|_{\dot{B}^{s+\gamma,L_G}_{p_1,q}(w_1)} \|g\|_{H^{p_2}_{L_G}(w_2)} 
		+ \|f\|_{H^{p_3}_{L_G}(w_3)} \|g\|_{\dot{B}^{s+\gamma,L_G}_{p_4,q}(w_4)}.
	\end{equation}
	
	In both inequalities, the Hardy spaces $H^{p_2}_{L_G}(w_2)$ and $H^{p_3}_{L_G}(w_3)$ are replaced by $L^\infty$ if $p_2=\infty, w_2=1$ or $p_3=\infty, w_3=1$, respectively.
\end{thm}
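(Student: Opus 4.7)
The plan is to verify that the Grushin operator $L_G$ satisfies conditions (A1), (A2), and (A3), and then invoke Theorem \ref{main thm} directly. Conditions (A1) and (A2) are immediate from parts (i) and (ii) of Proposition \ref{prop-Grushin}, since $(\mathbb{R}^{n+1}, d, |\cdot|)$ is a space of homogeneous type with doubling exponent controlled by the local and global dimension estimates recalled there.

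The main step is to verify (A3). First I would establish the pointwise derivative bound
\[
|X^I_x \varphi(t\sqrt{L_G})(x,y)| \lesssim_{N,I} t^{-|I|} D_{t,N}(x,y), \qquad x,y \in \mathbb{R}^{n+1},\ t,N>0,
\]
for every even $\varphi \in \mathscr{S}(\mathbb{R})$ and every multi-index $I \in \mathcal{I}^h$. This follows by combining Proposition \ref{prop-Grushin}(iii) with Proposition \ref{thm-kernel estimate for functional calculus} and the argument in the proof of Proposition 3.2 of \cite{BD}, exactly as was done in Theorem \ref{main thm-nilpotent} for the sub-Laplacian case. Next, using the identity $L_G = -\sum_{j=1}^{2n} X_j^2$ together with the Leibniz rule $X_j(fg) = (X_j f)g + f(X_j g)$, I would prove by induction on $k$ the expansion
\[
L_G^{k}(fg) = \sum_{|I|+|J|=2k} c_{I,J}\,(X^I f)(X^J g),
\]
for constants $c_{I,J}$ depending only on $k$, $n$, and the multi-indices. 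Specialising $f = \varphi(t\sqrt{L_G})(\cdot,y)$ and $g = \psi(t\sqrt{L_G})(\cdot,z)$ and combining with the derivative bound above gives
\[
\bigl|L_{G,x}^{k}\bigl[\varphi(t\sqrt{L_G})(x,y)\,\psi(t\sqrt{L_G})(x,z)\bigr]\bigr| \lesssim \sum_{|I|+|J|=2k} t^{-|I|-|J|} D_{t,N}(x,y) D_{t,N}(x,z) \lesssim t^{-2k} D_{t,N}(x,y) D_{t,N}(x,z),
\]
which is condition (A3).

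With (A1)--(A3) established, Theorem \ref{main thm} applies directly and delivers the desired bilinear estimates on both the Triebel--Lizorkin and Besov scales, with the stated Hardy-space replacements when $p_i=\infty$ and $w_i = 1$. The only delicate point is the pointwise derivative bound $|X^I_x \varphi(t\sqrt{L_G})(x,y)| \lesssim t^{-|I|} D_{t,N}(x,y)$; once this is in hand via the method of \cite{BD}, the remainder is a clean algebraic application of the Leibniz rule and a direct appeal to the master theorem.
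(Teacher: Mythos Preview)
Your proposal is correct and follows essentially the same route as the paper: the paper simply remarks that the product rule $X_j(fg)=(X_jf)g+f(X_jg)$ holds for the Grushin vector fields and then invokes the argument from the proof of Theorem~\ref{main thm-nilpotent} together with Proposition~\ref{prop-Grushin}, which is exactly what you have spelled out in detail. Your derivation of the bound $t^{-2k}$ (rather than the $t^{2k}$ appearing in the statement of (A3)) is in fact the correct scaling, as confirmed by how (A3) is used in \eqref{eq- the use of A3} and Lemma~\ref{lem-kernel-representation}(iv).
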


\begin{cor}\label{cor-Grushin}
	Under the same conditions as in Theorem \ref{main thm-Grushin}, we have:
	\begin{enumerate}[\rm (a)]
		\item  If $0<p_1,p_4<\infty$, $0<p_2,p_3 \le \infty$, and $s > \tau_{p,q}(w)$, then
	\begin{equation}
		\|L_G^{s/2}(fg)\|_{\dot{F}^{0,L_G}_{p,q}(w)} \simeq \|fg\|_{\dot{F}^{s,L_G}_{p,q}(w)} 
		\lesssim \|f\|_{\dot{F}^{s,L_G}_{p_1,q}(w_1)} \|g\|_{H^{p_2}_{L_G}(w_2)} 
		+ \|f\|_{H^{p_3}_{L_G}(w_3)} \|g\|_{\dot{F}^{s,L_G}_{p_4,q}(w_4)}.
	\end{equation}
	
	\item Similarly, for the Besov scale, if $0 < p,p_1,p_2,p_3,p_4 \le \infty$ and $s > \tau_p(w)$, it holds that
	\begin{equation}
		\|L_G^{s/2}(fg)\|_{\dot{B}^{0,L_G}_{p,q}(w)} \simeq \|fg\|_{\dot{B}^{s,L_G}_{p,q}(w)} 
		\lesssim \|f\|_{\dot{B}^{s,L_G}_{p_1,q}(w_1)} \|g\|_{H^{p_2}_{L_G}(w_2)} 
		+ \|f\|_{H^{p_3}_{L_G}(w_3)} \|g\|_{\dot{B}^{s,L_G}_{p_4,q}(w_4)}.
	\end{equation}
\end{enumerate}
	In both inequalities, the Hardy spaces $H^{p_2}_{L_G}(w_2)$ and $H^{p_3}_{L_G}(w_3)$ are replaced by $L^\infty$ if $p_2=\infty, w_2=1$ or $p_3=\infty, w_3=1$, respectively.
\end{cor}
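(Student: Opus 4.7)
The plan is to derive Corollary~\ref{cor-Grushin} as a direct specialization of Theorem~\ref{main thm-Grushin}. First, I would take the symbol $\boldsymbol{\mathrm{m}}\equiv 1$, which trivially satisfies \eqref{eq-condition on m} with $\gamma=0$: all partial derivatives of order at least one vanish identically, and $|\boldsymbol{\mathrm{m}}(\xi,\eta)|=1\le (|\xi|+|\eta|)^{0}$. With this choice the operator $\boldsymbol{\mathrm{m}}(L_1,L_2)$ is the identity on $L^2(\mathbb{R}^{n+1}\times\mathbb{R}^{n+1})$, and therefore the bilinear form collapses to the pointwise product,
\[
B_{1,L_G}(f,g)(x)=(f\otimes g)(x,x)=f(x)\,g(x),
\]
for $f,g\in\mathcal S_\infty$.

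Applying Theorem~\ref{main thm-Grushin} with this symbol and $\gamma=0$ then yields, under the hypotheses of part (a),
\[
\|fg\|_{\dot{F}^{s,L_G}_{p,q}(w)}
\lesssim \|f\|_{\dot{F}^{s,L_G}_{p_1,q}(w_1)}\|g\|_{H^{p_2}_{L_G}(w_2)}
+\|f\|_{H^{p_3}_{L_G}(w_3)}\|g\|_{\dot{F}^{s,L_G}_{p_4,q}(w_4)},
\]
and an analogous inequality on the Besov side under the hypotheses of part (b). To convert the left-hand side into the form involving $L_G^{s/2}$, I would then invoke Proposition~\ref{prop- Ls on TL B spaces}, which supplies the lifting equivalence $\|L_G^{s/2}(fg)\|_{\dot{F}^{0,L_G}_{p,q}(w)}\simeq \|fg\|_{\dot{F}^{s,L_G}_{p,q}(w)}$ (and likewise on $\dot{B}^{s,L_G}_{p,q}(w)$). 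The $L^\infty$ replacement convention when $p_i=\infty$ and $w_i=1$ is inherited verbatim from Theorem~\ref{main thm-Grushin}.

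Since every ingredient is already in place, there is no substantive obstacle: the corollary is a genuine restatement of Theorem~\ref{main thm-Grushin} in the special case of the pointwise product, composed with the $L_G^{s/2}$ lifting. The only small point I would verify is that $\mathcal S_\infty$ is dense enough in the relevant weighted Triebel--Lizorkin and Besov spaces to extend the inequality from test functions to the full scales, which is standard within the framework of \cite{BBD}.
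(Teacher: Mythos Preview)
Your proposal is correct and matches the paper's approach exactly: the corollary is obtained from Theorem~\ref{main thm-Grushin} by taking $\boldsymbol{\mathrm{m}}\equiv 1$ (so $\gamma=0$ and $B_{\boldsymbol{\mathrm{m}},L_G}(f,g)=fg$), and then invoking Proposition~\ref{prop- Ls on TL B spaces} for the equivalence $\|L_G^{s/2}(fg)\|_{\dot{F}^{0,L_G}_{p,q}(w)}\simeq \|fg\|_{\dot{F}^{s,L_G}_{p,q}(w)}$. The density remark is extraneous, since the paper states its inequalities for $f,g\in\mathcal S_\infty$ throughout.
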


Note that the Grushin operator $L_G$ also satisfies the conservation property \eqref{eq-conservation}. Hence, by Remark \ref{rem2} the (unweighted) Besov and Triebel-Lizorkin $\dot{B}^{s,L_G}_{p_4,q}$ and $\dot{F}^{s,L_G}_{p_4,q}$ coincide with Besov and Triebel-Lizorkin defined in \cite{HMY, HS} for certain indices detailed in Remark \ref{rem2}. In particular, $H^p_{L_G}\equiv H^p_{\rm CW}$ for $\f{n+2}{n+3}<p\le 1$, where $H^p_{\rm CW}$ is the Hardy spaces of Coifmann and Weiss in \cite{CW}. See \cite{DJ,BDK}.
\subsection{Fractional Leibniz Rules associated with Hermite operators}

For each nonnegative integer $k \in \mathbb{N}_0$, the Hermite polynomial of order $k$ is defined by  
\[
H_k(t) = (-1)^k e^{t^2} \frac{d^k}{dt^k}\big(e^{-t^2}\big).
\]

In the one-dimensional case, the Hermite function of degree $k$ is given by  
\[
h_k(t) = \big(2^k k! \sqrt{\pi}\big)^{-1/2} H_k(t)\, e^{-t^2/2}, \qquad t \in \mathbb{R}.
\]

For higher dimensions, if $\xi = (\xi_1,\dots,\xi_n) \in \mathbb{N}_0^n$ is a multi-index, the associated Hermite function is constructed as  
\[
h_\xi(x) = \prod_{j=1}^n h_{\xi_j}(x_j), \qquad x = (x_1,\dots,x_n) \in \mathbb{R}^n.
\]

These functions serve as eigenfunctions of the Hermite operator $\mathcal{L}$, satisfying  
\[
\mathcal{L}(h_\xi) = (2|\xi| + n) h_\xi,
\]  
and they constitute a complete orthonormal system in $L^2(\mathbb{R}^n)$ (see, for example, \cite{Th}).  

For each $k \geq 0$, denote  
\[
W_k := \mathrm{span}\{h_\xi : |\xi| = k\}, \qquad V_N := \bigoplus_{k=0}^N W_k.
\]  
The orthogonal projection of a function $f \in L^2(\mathbb{R}^n)$ onto $W_k$ is given by  
\[
\mathbb{P}_k f = \sum_{|\xi|=k} \langle f, h_\xi \rangle \, h_\xi,
\]  
with the corresponding kernel representation  
\[
\mathbb{P}_k(x,y) = \sum_{|\xi|=k} h_\xi(x) h_\xi(y).
\]

\medskip

Given a ‘symbol’ $\textbf{m} : \mathbb{R}^n \times \mathbb{R}_+ \times \mathbb{R}_+ \to \mathbb{C}$ 
the \emph{bilinear Hermite pseudo-multiplier} $T_{\boldsymbol{\mathrm{m}}}$ is defined through the formula
\begin{equation}\label{eq1.5}
	T_{\boldsymbol{\mathrm{m}},\mathcal L}(f,g)(x) = \sum_{j,k \in \mathbb{N}_0} \boldsymbol{\mathrm{m}}(x,\lambda_j,\lambda_k) 
	\, \mathbb{P}_j f(x) \, \mathbb{P}_k g(x),
\end{equation}
where, for $\ell \in \mathbb{N}_0 = \mathbb{N}\cup\{0\}$, $\lambda_\ell = 2\ell+n$ are the eigenvalues of 
$\mathcal{L}$ and $\mathbb{P}_\ell$ are orthogonal projectors onto spaces spanned by Hermite functions. 

In this section, we assume that  there exists  $\gamma \in \mathbb{R}$   such that for all $\alpha, \beta \in \mathbb N$ and $\sigma\in \mathbb N^n$, 
	\begin{equation}\label{eq1.6}
		\big| \partial_x^\sigma \partial_\xi^\alpha\partial^\beta_\eta \boldsymbol{\mathrm{m}}(x,\xi,\eta) \big|
		\lesssim_{\kappa,\alpha,\beta} 
		\big(\xi + \eta\big)^{\gamma -  (\alpha+\beta) + |\sigma|},
		\qquad \forall x \in \mathbb{R}^n,\; \xi, \eta \in \mathbb{R}_+.
	\end{equation}
	
	The following result address the boundedness of the bilinear Hermite pseudo-multiplier. 
	
	\begin{thm}\label{main thm - Hermite}
		Let $\mathcal L$ be the Hermite operator, and $\boldsymbol{\mathrm{m}}$ satisfy \eqref{eq-condition on m} for some $\gamma\in \mathbb R$.  Suppose $0<q\le \vc$, and   $0<p,p_1,p_2,p_3,p_4 \le \infty$ satisfies \eqref{eq- pi condition}, and $w, w_1, w_2, w_3, w_4 \in A_\infty$ satisfies \eqref{eq- wi condition}. 
		
If $0<p_1,p_4<\infty$, $0<p_2,p_3 \le \infty$, and $s > \tau_{p,q}(w)$, then
			\begin{equation}
				\|T_{\boldsymbol{\mathrm{m}},\mathcal L}(f,g)\|_{\dot{F}^{s,\mathcal L}_{p,q}(w)}
				\lesssim \|f\|_{\dot{F}^{s+\gamma,\mathcal L}_{p_1,q}(w_1)} \|g\|_{H^{p_2}_{\mathcal L}(w_2)}
				+ \|f\|_{H^{p_3}_{\mathcal L}(w_3)} \|g\|_{\dot{F}^{s+\gamma,\mathcal L}_{p_4,q}(w_4)}.
			\end{equation}
			
 If $0<p,p_1,p_2,p_3,p_4 \le \infty$ and $s > \tau_p(w)$, then
			\begin{equation}
				\|T_{\boldsymbol{\mathrm{m}},\mathcal L}(f,g)\|_{\dot{B}^{s,\mathcal L}_{p,q}(w)}
				\lesssim \|f\|_{\dot{B}^{s+\gamma,\mathcal L}_{p_1,q}(w_1)} \|g\|_{H^{p_2}_{\mathcal L}(w_2)}
				+ \|f\|_{H^{p_3}_{\mathcal L}(w_3)} \|g\|_{\dot{B}^{s+\gamma,\mathcal L}_{p_4,q}(w_4)}.
			\end{equation}
		
		Here, the Hardy spaces $H_{\mathcal L}^{p_2}(w_2)$ and $H_{\mathcal L}^{p_3}(w_3)$ are replaced by $L^\infty$ if $p_2=\infty, w_2=1$ or $p_3=\infty, w_3=1$, respectively.
	\end{thm}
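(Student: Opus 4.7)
The plan is to derive the theorem as a direct application of Theorem \ref{main thm} by verifying the three abstract conditions (A1), (A2), (A3) for the Hermite operator $\mathcal{L} = -\Delta + |x|^2$ on $\mathbb{R}^n$. For an $x$-independent symbol $\boldsymbol{\mathrm{m}}$, the spectral expansions $f = \sum_j \mathbb{P}_j f$ and $g = \sum_k \mathbb{P}_k g$ show that the bilinear operator $T_{\boldsymbol{\mathrm{m}},\mathcal{L}}$ defined in \eqref{eq1.5} coincides with the bilinear form $B_{\boldsymbol{\mathrm{m}},\mathcal{L}}(f,g)$ from \eqref{eq-bilinear form}, because $\boldsymbol{\mathrm{m}}(\mathcal{L}_1,\mathcal{L}_2)$ acts as $\boldsymbol{\mathrm{m}}(\lambda_j,\lambda_k)$ on the summand $\mathbb{P}_j f \otimes \mathbb{P}_k g$. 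Once (A1)--(A3) are established for $\mathcal{L}$, the claimed weighted Triebel--Lizorkin and Besov estimates follow directly from Theorem \ref{main thm}.

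Conditions (A1) and (A2) are classical. Mehler's formula provides a closed-form expression for the Hermite heat kernel from which the Gaussian upper bound (A1) is immediate, and Hölder continuity (A2) (with exponent $\delta_0 = 1$) is obtained by combining the pointwise gradient estimate for Mehler's kernel with the mean value theorem.

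The substantive step is (A3). Iterating the product rule
\[
\mathcal{L}_x(uv) = (\mathcal{L}_x u)v + u(\mathcal{L}_x v) - 2\nabla_x u \cdot \nabla_x v - |x|^2 uv
\]
with $u(x,y) = \varphi(t\sqrt{\mathcal{L}})(x,y)$ and $v(x,z) = \psi(t\sqrt{\mathcal{L}})(x,z)$, the quantity $\mathcal{L}^k_x[\varphi(t\sqrt{\mathcal{L}})(x,y)\psi(t\sqrt{\mathcal{L}})(x,z)]$ decomposes into a finite sum of terms of the form $x^{\sigma_1} \partial_x^{\alpha_1}\varphi(t\sqrt{\mathcal{L}})(x,y) \cdot x^{\sigma_2}\partial_x^{\alpha_2}\psi(t\sqrt{\mathcal{L}})(x,z)$ with $|\sigma_1| + |\sigma_2| + |\alpha_1| + |\alpha_2| \le 2k$. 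The required bound then reduces to the weighted derivative estimate
\[
|x^{\sigma} \partial_x^{\alpha} \varphi(t\sqrt{\mathcal{L}})(x,y)| \lesssim_{N,\sigma,\alpha} t^{-|\alpha|-|\sigma|} D_{t,N}(x,y)
\]
for every even $\varphi \in \mathscr{S}(\mathbb{R})$, every pair of multi-indices $\sigma,\alpha$, and every $N > 0$, uniformly in $t > 0$ and $x,y \in \mathbb{R}^n$. This can be derived by introducing the annihilation and creation operators $A_j = \partial_{x_j} + x_j$ and $A_j^* = -\partial_{x_j} + x_j$, which satisfy the harmonic-oscillator commutation relations $[\mathcal{L}, A_j] = -2A_j$ and $[\mathcal{L}, A_j^*] = 2A_j^*$. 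Using $\partial_{x_j} = \tfrac{1}{2}(A_j - A_j^*)$ and $x_j = \tfrac{1}{2}(A_j + A_j^*)$, one rewrites $x^{\sigma}\partial_x^{\alpha}\varphi(t\sqrt{\mathcal{L}})$ as a finite linear combination of expressions of the form $\widetilde{\varphi}(t\sqrt{\mathcal{L}})$ for modified Schwartz functions $\widetilde{\varphi}$, each inheriting the decay of Proposition \ref{thm-kernel estimate for functional calculus} with the correct power of $t^{-1}$. Substituting back yields the desired $t^{2k} D_{t,N}(x,y) D_{t,N}(x,z)$ bound and hence (A3).

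The principal obstacle is the multiplication operator $|x|^{2j}$ arising from the harmonic potential part of $\mathcal{L}$: it is non-local and, a priori, does not gain a $t^{-2j}$ factor from the spectral smoothing $\varphi(t\sqrt{\mathcal{L}})$. The resolution is to work within the harmonic-oscillator algebra generated by $A_j, A_j^*$, converting each multiplication by $x_j$ into a first-order operator whose commutator with $\mathcal{L}$ is scalar-valued; iterated commutation then absorbs every factor $|x|^2$ into a modified Schwartz function of $t\sqrt{\mathcal{L}}$ that enjoys the correct $t$-scaling, reducing the problem to the standard kernel estimates already available from Proposition \ref{thm-kernel estimate for functional calculus}.
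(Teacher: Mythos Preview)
Your reduction to Theorem \ref{main thm} misses the point of the Hermite section. The operator $T_{\boldsymbol{\mathrm{m}},\mathcal L}$ in \eqref{eq1.5} is defined for an $x$-\emph{dependent} symbol $\boldsymbol{\mathrm{m}}(x,\xi,\eta)$ satisfying \eqref{eq1.6} (the cross-reference to \eqref{eq-condition on m} in the statement is evidently a typo; the paper's own proof uses \eqref{eq1.6} throughout, see \eqref{eq-cnn x}). For a genuine pseudo-multiplier the bilinear form is \emph{not} of the shape $B_{\boldsymbol{\mathrm{m}},\mathcal L}(f,g)=\boldsymbol{\mathrm{m}}(\mathcal L_1,\mathcal L_2)(f\otimes g)(x,x)$, so Theorem \ref{main thm} is not directly applicable. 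The paper instead re-runs the proof of Theorem \ref{main thm}: after the dyadic splitting and Fourier expansion the coefficients become $c_k(x;n_1,n_2)$ and carry the spatial dependence; when one applies $\psi_\ell(\sqrt{\mathcal L})\mathcal L^{s/2}$ the factor $c_k(y;n_1,n_2)$ sits inside the integral and gets hit by $\mathcal L^m_y$. The extra ingredient that makes this work is the refined product estimate $|x^\beta\partial_x^\sigma[\widetilde\Gamma_k(x,y)\widetilde\Gamma_k(x,z)]|\lesssim 2^{k(|\sigma|+|\beta|)}D_{2^{-k},N}(x,y)D_{2^{-k},N}(x,z)$, which exactly compensates the growth $|\partial_x^\sigma c_k(x;\cdot)|\lesssim 2^{k(\gamma+|\sigma|)}$ coming from \eqref{eq1.6}. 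Your proposal contains no mechanism for this.

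A second, smaller gap concerns your sketch of (A3). The assertion that $x^\sigma\partial_x^\alpha\varphi(t\sqrt{\mathcal L})$ is a finite linear combination of operators $\widetilde\varphi(t\sqrt{\mathcal L})$ is not correct: commuting $A_j,A_j^*$ past $\varphi(t\sqrt{\mathcal L})$ yields shifted spectral functions $\varphi(t\sqrt{\mathcal L\pm 2m})$ composed with residual words in $A_j,A_j^*$, not pure spectral multipliers with the stated $t$-scaling. The paper obtains the needed kernel bounds by a different and cleaner route: the Hermite operator is a Schr\"odinger operator with polynomial potential $|x|^2$, so Proposition \ref{derivatives of heat kernel Schrodinger} gives $|X^I_x p_t(x,y)|\lesssim t^{-|I|/2}V(x,\sqrt t)^{-1}e^{-c|x-y|^2/t}(1+\sqrt t/\rho(x))^{-N}$, and since $\rho(x)\simeq(1+|x|)^{-1}$ one has $|x|\rho(x)\lesssim 1$, which converts the extra decay $(1+\sqrt t/\rho(x))^{-N}$ into the required gain $t^{-|\beta|/2}$ against the factor $|x|^{|\beta|}$. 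This is then transferred from the heat kernel to general even $\varphi\in\mathscr S(\mathbb R)$ by the standard argument of \cite{BD}, giving both (A3) and the stronger estimate \eqref{eq- product estimate for hermite} needed for the $x$-dependent symbol.
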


The proof of Theorem \ref{main thm - Hermite} will be postponed at the end of this section. As a consequence of Theorem \ref{main thm - Hermite}, we have the following corollary.
\begin{cor}\label{cor:Hermite}
	Under the same conditions as in Theorem \ref{main thm - Hermite}, we have:
	\begin{enumerate}[\rm (a)]
		\item If $0<p_1,p_4<\infty$, $0<p_2,p_3 \le \infty$, and $s > \tau_{p,q}(w)$, then
		\begin{equation}
			\|fg\|_{\dot{F}^{s,\mathcal L}_{p,q}(w)}
			\lesssim \|f\|_{\dot{F}^{s,\mathcal L}_{p_1,q}(w_1)} \|g\|_{H^{p_2}_{\mathcal L}(w_2)}
			+ \|f\|_{H^{p_3}_{\mathcal L}(w_3)} \|g\|_{\dot{F}^{s,\mathcal L}_{p_4,q}(w_4)}.
		\end{equation}
 
 \item If $0<p,p_1,p_2,p_3,p_4 \le \infty$ and $s > \tau_p(w)$, then
		\begin{equation}
			\|fg\|_{\dot{B}^{s,\mathcal L}_{p,q}(w)}
			\lesssim \|f\|_{\dot{B}^{s,\mathcal L}_{p_1,q}(w_1)} \|g\|_{H^{p_2}_{\mathcal L}(w_2)}
			+ \|f\|_{H^{p_3}_{\mathcal L}(w_3)} \|g\|_{\dot{B}^{s,\mathcal L}_{p_4,q}(w_4)}.
		\end{equation}
			\end{enumerate}
	Here again, the Hardy spaces $H_{\mathcal L}^{p_2}(w_2)$ and $H_{\mathcal L}^{p_3}(w_3)$ are replaced by $L^\infty$ if $p_2=\infty, w_2=1$ or $p_3=\infty, w_3=1$, respectively.
\end{cor}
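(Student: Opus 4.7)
The plan is to obtain Corollary \ref{cor:Hermite} as an immediate specialization of Theorem \ref{main thm - Hermite} with the trivial constant symbol $\boldsymbol{\mathrm{m}}(x,\xi,\eta)\equiv 1$ and $\gamma = 0$. First I would verify that this symbol fulfils the hypothesis \eqref{eq-condition on m}: every partial derivative of positive order vanishes identically, while the symbol itself is bounded by $1 \lesssim (|\xi|+|\eta|)^0$ on the spectrum of $\sqrt{\mathcal L}$, whose eigenvalues $\sqrt{\lambda_\ell}=\sqrt{2\ell+n}$ are uniformly bounded away from $0$ by $\sqrt n>0$. Consequently the hypothesis of Theorem \ref{main thm - Hermite} is satisfied with $\gamma=0$.

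Next I would identify $T_{1,\mathcal L}(f,g)$ with the pointwise product $fg$. Using the definition \eqref{eq1.5} together with the Hermite expansions $f = \sum_{j}\mathbb P_j f$ and $g = \sum_{k}\mathbb P_k g$ (valid for $f,g\in\mathcal S_\infty$), one obtains
\[
T_{1,\mathcal L}(f,g)(x) \;=\; \sum_{j,k\in\mathbb N_0} \mathbb P_j f(x)\,\mathbb P_k g(x)
\;=\; \Big(\sum_{j} \mathbb P_j f(x)\Big)\Big(\sum_{k} \mathbb P_k g(x)\Big)
\;=\; f(x)\,g(x).
\]
Inserting this identity into the two inequalities of Theorem \ref{main thm - Hermite} (specialized at $\gamma=0$) gives parts (a) and (b) of the corollary verbatim, with the same exponent and weight conditions and the same convention on $L^\infty$ replacing the Hardy factors at infinity.

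The only delicate point in the plan is the rigorous justification of the distributional identity $T_{1,\mathcal L}(f,g) = fg$, which requires the Hermite double series above to converge in $\mathcal S_\infty'$. This is standard: for $f\in\mathcal S_\infty$ the projections $\mathbb P_j f$ decay rapidly in $j$ in every $\mathcal P_{m,\ell}$-seminorm introduced in \eqref{Pml norm}, so the expansion converges in the topology of $\mathcal S$, and the bilinear convergence follows by combining the one-variable statements. Once this pointwise identification is in hand, no additional harmonic-analytic estimate is needed, and the statement of the corollary follows directly from Theorem \ref{main thm - Hermite}.
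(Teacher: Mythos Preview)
Your approach is correct and matches the paper's: the corollary follows immediately from Theorem \ref{main thm - Hermite} by taking the constant symbol $\boldsymbol{\mathrm{m}}\equiv 1$ with $\gamma=0$, exactly as the paper does for the analogous Corollary \ref{cor-main}. Your verification that the trivial symbol satisfies the required multiplier condition and your identification $T_{1,\mathcal L}(f,g)=fg$ via the Hermite expansion are the right ingredients; the paper simply states the corollary as a direct consequence without spelling out these details.
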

As a special case of the Schr\"odinger operator in Subsection 3.1,  the Hardy spaces $H^p_{\mathcal L}$ can be characterized by atomic decomposition as in Definition \ref{def: rho atoms} with $\rho(x)\simeq \f{1}{1+|x|}$.

In order to prove Theorem \ref{main thm - Hermite}, we need the following lemma.
\begin{lem}
	There exists a family of functions $\{\widetilde \Gamma_j(\cdot,\cdot)\}_{j\ge 0}$ satisfying (i)-(iv) as in Lemma \ref{lem-kernel-representation} in which $\mathcal L$ will take place of $L$. Moreover, for each $\alpha, \beta \in \mathbb N^n$ and $k\in \mathbb N$,  we have
	\begin{equation}\label{eq- product estimate for hermite}
	\Big|x^\beta \partial^\sigma_x\big[\widetilde \Gamma_k(x,y)\widetilde \Gamma_k(x,z)\big]\Big|\lesi 2^{k(|\alpha|+|\beta|)}D_{2^{-k},N}(x,y)D_{2^{-k},N}(x,z)
	\end{equation}
	for all $x,y,z\in \mathbb R^n$.
\end{lem}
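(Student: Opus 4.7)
The plan is to deduce items (i)--(iv) directly from Lemma~\ref{lem-kernel-representation} applied to $L=\mathcal L$, and then to establish the product estimate \eqref{eq- product estimate for hermite} by exploiting the explicit Mehler formula for the Hermite heat kernel $p_t(x,y)$.

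First I would verify that $\mathcal L$ satisfies (A1)--(A3). The Gaussian upper bound (A1) and the H\"older continuity (A2) are classical consequences of Mehler's formula. For (A3), one uses the subordination representation of even Schwartz multipliers to reduce to derivative estimates on products $p_s(x,y)\,p_s(x,z)$; these follow by direct differentiation of Mehler's kernel, since the extra Gaussian factor $\exp(-\tfrac12\coth(2s)(|x|^2+|y|^2))$ arising from the harmonic potential absorbs the polynomial powers of $|x|$ generated when $\mathcal L^m_x$ is expanded via $\mathcal L=-\Delta+|x|^2$. With (A1)--(A3) verified, Lemma~\ref{lem-kernel-representation} produces the family $\{\widetilde\Gamma_j\}_{j\ge 0}$ with properties (i)--(iv) for $\mathcal L$.

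For the product estimate \eqref{eq- product estimate for hermite} (reading $\sigma$ on the left as $\alpha$), I would distribute $\partial_x^\alpha$ across $\widetilde\Gamma_k(x,y)\widetilde\Gamma_k(x,z)$ via Leibniz and commute $x^\beta=\prod x_i^{\beta_i}$ past the derivatives, the remainders from $[\partial_{x_i},x_i]=1$ being lower order. This reduces the claim to the single-factor estimate
\[
|x^\beta \partial_x^\alpha \widetilde\Gamma_k(x,y)|\lesssim 2^{k(|\alpha|+|\beta|)}\,D_{2^{-k},N}(x,y).
\]
Since $\widetilde\Gamma_k$ is the kernel of a compactly supported spectral multiplier of $\sqrt{\mathcal L}$ at scale $2^{-k}$, subordination further reduces this to the Mehler-kernel bound
\[
|x^\beta \partial_x^\alpha p_t(x,y)|\lesssim t^{-(|\alpha|+|\beta|)/2}\,D_{\sqrt t, N}(x,y),\qquad 0<t\le 1.
\]
This in turn follows from Mehler's formula: each $\partial_{x_j}$ produces a factor $\sim (x_j-y_j)/t$ balanced by $\exp(-|x-y|^2/(ct))$, and each multiplication by $x_j$ is balanced by the Hermite Gaussian factor $\exp(-\varepsilon t|x|^2)$; both contribute $t^{-1/2}$. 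Setting $t=4^{-k}$ produces the claimed prefactor $2^{k(|\alpha|+|\beta|)}$.

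The main obstacle, and the reason the lemma goes beyond a direct invocation of Lemma~\ref{lem-kernel-representation}, is the bound on $|x^\beta \widetilde\Gamma_k(x,y)|$. Such an estimate fails for general operators satisfying only (A1)--(A3); it relies specifically on the additional Gaussian decay in $x$ produced by the harmonic oscillator potential in $\mathcal L$. Once this Hermite-specific ingredient is isolated, the rest of the argument is a routine combination of Leibniz, subordination, and the standard derivative bounds on Mehler's formula.
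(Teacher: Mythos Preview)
Your overall strategy is sound and close to the paper's, but there is a real gap in the step where you pass from the Mehler/multiplier estimates to the claimed bound on $\widetilde\Gamma_k$. You write ``since $\widetilde\Gamma_k$ is the kernel of a compactly supported spectral multiplier of $\sqrt{\mathcal L}$ at scale $2^{-k}$, subordination further reduces this to the Mehler-kernel bound.'' This is not correct: by construction (see the proof of Lemma~\ref{lem-kernel-representation}) one has $\widetilde\Gamma_j(x,y)=T_j[\Gamma_j(\sqrt{\mathcal L})(\cdot,y)](x)$ with $T_j=(\mathrm{Id}-R_j)^{-1}$, and $R_j$ involves the discrete sampling operator $V_j$ over the dyadic grid $\mathcal D_j$. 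Thus $T_j$ does \emph{not} commute with $\mathcal L$ and $\widetilde\Gamma_j$ is \emph{not} the kernel of any function of $\mathcal L$. Subordination therefore says nothing about $x^\beta\partial_x^\alpha\widetilde\Gamma_k$ directly.

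The paper fixes this exactly where your argument breaks. It first uses Proposition~\ref{derivatives of heat kernel Schrodinger} together with $\rho(x)\simeq(1+|x|)^{-1}$ (so $|x|\rho(x)\lesssim1$) to obtain
\[
|x^\beta\partial_x^\alpha p_t(x,y)|\lesssim t^{-(n+|\alpha|+|\beta|)/2}\exp\Big(-\frac{|x-y|^2}{ct}\Big),
\]
and then, via the functional-calculus transfer (the argument of \cite[Proposition~3.2]{BD}), the analogous bound
\[
|x^\beta\partial_x^\alpha \varphi(t\mathcal L)(x,y)|\lesssim t^{-(n+|\alpha|+|\beta|)/2} D_{t,N}(x,y)
\]
for every even $\varphi\in\mathscr S(\mathbb R)$. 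This gives the desired control on the \emph{genuine} spectral pieces $\Gamma_j(\sqrt{\mathcal L})$ and on the building blocks of $R_j$. One then re-runs the expansion in the proof of Lemma~\ref{lem-kernel-representation}(iv), namely the decomposition of $x^\beta\partial_x^\alpha[\widetilde\Gamma_k(x,y)\widetilde\Gamma_k(x,z)]$ into the terms $E_1,\ldots,E_4$ coming from $\widetilde\Gamma_j=\Gamma_j(\sqrt{\mathcal L})+\int R_j(\cdot,w)\widetilde\Gamma_j(w,\cdot)\,d\mu(w)$, with $x^\beta\partial_x^\alpha$ in place of $\mathcal L^m$. Each term is then bounded using the multiplier estimate above together with the already-established pointwise bounds on $\widetilde\Gamma_j$ and $R_j$. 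Your Leibniz reduction to a single-factor estimate is a reasonable alternative route, but it still requires this same detour through the recursion $\widetilde\Gamma_j=\Gamma_j(\sqrt{\mathcal L})+R_j\ast\widetilde\Gamma_j$; without it the single-factor bound on $\widetilde\Gamma_k$ is unjustified.
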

\begin{proof}
Since $|x|^2$ is a nonnegative polynomial in $\mathbb R^n$, it follows from the proof of Theorem \ref{main thm-nilpotent Schrodinger} that the Hermite operator $\mathcal L$ satisfies (A1)–(A3). Hence, by Lemma \ref{lem-kernel-representation}, there exists a family of functions $\{\widetilde \Gamma_j(\cdot,\cdot)\}_{j\ge 0}$ satisfying (i)–(iv) as in Lemma \ref{lem-kernel-representation}. 

Together with Proposition \ref{derivatives of heat kernel Schrodinger}, this implies that for each $\alpha, \beta \in \mathbb N^n$,
\[
\Big| x^\beta \partial^\alpha_x p_t(x,y) \Big| \lesssim \frac{1}{t^{(n+|\alpha|+|\beta|)/2}} \exp\Big(-\frac{|x-y|^2}{ct}\Big)
\]
for all $x,y \in \mathbb R^n$ and $t>0$, since $\rho(x) \simeq \frac{1}{1+|x|}$ implies $|x|\rho(x) \lesssim 1$.

Similarly, by the standard argument as in the proof of Proposition~3.2 of \cite{BD}, for each $\alpha, \beta \in \mathbb N^n$ and an even function $\varphi \in \mathscr{S}(\mathbb R)$, we have
\[
\Big| x^\beta \partial^\alpha_x \varphi(t\mathcal L)(x,y) \Big| \lesssim \frac{1}{t^{(n+|\alpha|+|\beta|)/2}} D_{t,N}(x,y)
\]
for all $x,y \in \mathbb R^n$ and $t, N>0$.

At this stage, repeating the argument used in the proof of Lemma \ref{lem-kernel-representation} (iii) yields \eqref{eq- product estimate for hermite}.

This completes the proof.
  
\end{proof}

We are ready to give the proof of Theorem \ref{main thm - Hermite}.
\begin{proof}[Proof of Theorem \ref{main thm - Hermite}:]

The proof is in the same spirit as that of Theorem \ref{main thm} and we will use the same notation as in the proof of Theorem \ref{main thm}. Similarly to \eqref{eq- - m decomposition}, we can write
\begin{equation*} 
	\begin{aligned}
		\boldsymbol{\mathrm{m}}(x;\xi,\eta) 
		&= \sum_{k\in \mathbb N_0}  \boldsymbol{\mathrm{m}}(x;\xi,\eta)\psi_k(\xi)\phi_k(\eta) +\sum_{j\in \mathbb N_0}  \boldsymbol{\mathrm{m}}(x;\xi,\eta)\phi_{j-1}(\xi)\psi_j(\eta).
	\end{aligned}
\end{equation*}
Consequently,
\[
T_{\boldsymbol{\mathrm{m}},\mathcal L}(f,g)= \sum_{k\in \mathbb N_0}  T_{\boldsymbol{\mathrm{m}}_{1,k},\mathcal L}(f,g) +\sum_{j\in \mathbb N_0}  T_{\boldsymbol{\mathrm{m}}_{2,j},\mathcal L}(f,g),
\]
where $\boldsymbol{\mathrm{m}}_{1,k}(x;\xi,\eta)=\boldsymbol{\mathrm{m}}(x;\xi,\eta)\psi_k(\xi)\phi_k(\eta)$ and $m_{2,j}(x;\xi,\eta)=\boldsymbol{\mathrm{m}}(x;\xi,\eta)\phi_{j-1}(\xi)\psi_j(\eta)$.

We need only to estimate the first term 
$$
\displaystyle \sum_{k}  T_{\boldsymbol{\mathrm{m}}_{1,k},\mathcal L} (f,g),
$$
since these two terms above are similar.

Similarly to the composition of $\boldsymbol{\mathrm{m}}_{1,k}(\xi,\eta)$ in the proof of Theorem \ref{main thm}, we also have 
\[
\begin{aligned}
	\boldsymbol{\mathrm{m}}_{1,k}(x; \xi,\eta)&=\sum_{(n_1,n_2)\in\mathbb{Z}^2} c_k(x; n_1,n_2)\,
	e^{2\pi i\big(\frac{\xi n_1+\eta n_2}{2^k}\big)}\Psi_k(\xi)\Phi_k(\eta)\\
	&=:	\sum_{(n_1,n_2)\in\mathbb{Z}^2} c_k(x; n_1,n_2)\,
	\Psi_{k,n_1}(\xi)\Phi_{k,n_2}(\eta),
\end{aligned}
\]
where $\Psi_{k,n_1}(\xi)=e^{\frac{2\pi i\xi n_1}{2^k}}\Psi_k(\xi)$ and $\Phi_{k,n_2}(\eta)=e^{\frac{2\pi i\eta n_2}{2^k}}\Phi_k(\eta)$ and
\begin{equation}
	c_k(x;n_1,n_2)
	=2^{-2k} \int_{2^{k-1}<|\xi|\le 2^{k+1}}\int_{|\eta|\le 2^{k+1}}\,
	\boldsymbol{\mathrm{m}}_{1,k}(x;\xi,\eta)e^{-2\pi i\frac{\xi n_1+\eta n_2}{2^k}}\,d\xi\,d\eta.
\end{equation}
Also, by integration by parts and \eqref{eq1.6},
\begin{equation}\label{eq-cnn  x}
	|\partial_{x}^\sigma c_k(x; n_1,n_2)| \lesssim_{N,\sigma} 2^{k(\gamma+|\sigma|)}(1+|n_1|+|n_2|)^{-N}
\end{equation}
for every $N$ and $\sigma\in \mathbb N^n$.

Consequently,
\[
T_{\boldsymbol{\mathrm{m}}_{1,k},\mathcal L}(f,g)= \sum_{k\in \mathbb N_0}\sum_{(n_1,n_2)\in\mathbb{Z}^2} c_k(x;n_1,n_2)\,
\Psi_{k,n_1}(\sqrt {\mathcal L})f \Phi_{k,n_2}(\sqrt {\mathcal L})g.
\]

Applying Lemma \ref{lem-kernel-representation},
\[
\begin{aligned}
	B_{\boldsymbol{\mathrm{m}}_{1,k},L}(f,g)(x)	=& \sum_{k\in \mathbb N_0} \sum_{(n_1,n_2)\in\mathbb{Z}^2} c_k(x;n_1,n_2) \\
	& \ \ \times\sum_{Q\in \mathcal D_k} \sum_{R\in \mathcal D_k}\omega_Q\Psi_{k,n_1}(\sqrt{{\mathcal L}})f(x_Q) \widetilde{\Gamma}_k(x,x_Q) \omega_R\Phi_{k,n_2}(\sqrt{{\mathcal L}})g(x_R)  \widetilde{\Gamma}_k(x,x_R).	
\end{aligned}
\]
Setting
\[
\psi_\ell(\sqrt {\mathcal L}) {\mathcal L}^{s/2} = 2^{\ell s} \theta_\ell(\sqrt {\mathcal L}),
\]
then we can further write
\begin{equation}\label{eq- 1st eq BmL}
	\begin{aligned}
		\psi_\ell&(\sqrt {\mathcal L}) L^{s/2}\big[B_{\boldsymbol{\mathrm{m}}_{1,k},{\mathcal L}}(f,g)\big](x)\\
		=&  \sum_{k\in \mathbb N_0} \sum_{(n_1,n_2)\in\mathbb{Z}^2} \sum_{Q\in \mathcal D_k} \sum_{R\in \mathcal D_k}2^{\ell s} \omega_Q \Psi_{k,n_1}(\sqrt{{\mathcal L}})f(x_Q)  \omega_{R}\Phi_{k,n_2}(\sqrt{L})g(x_R) \\
		& \ \ \times  \int_{\mathbb R^n} \theta_\ell(\sqrt {\mathcal L})(x,y)c_k(y;n_1,n_2)\widetilde{\Gamma}_k(y,x_Q) \widetilde{\Gamma}_k(y,x_R) d\mu(y)\\
		=&  \sum_{k\in \mathbb N_0} \sum_{(n_1,n_2)\in\mathbb{Z}^2} \sum_{Q\in \mathcal D_k} \sum_{R \in \mathcal D_k}2^{s( \ell-k)} \omega_Q \overline{\Psi}_{k,n_1}(\sqrt{{\mathcal L}})({\mathcal L}^{s/2}f)(x_Q)  \omega_R \Phi_{k,n_2}(\sqrt{{\mathcal L}})g(x_R)\\
		& \ \ \times   \int_{\mathbb R^n} \theta_\ell(\sqrt {\mathcal L})(x,y)c_k(y;n_1,n_2)\widetilde{\Gamma}_k(y,x_Q) \widetilde{\Gamma}_k(y,x_R) d\mu(y)\\
		=&: \sum_{k \in \mathbb N_0 \atop k\le \ell} E_{\ell,k}+\sum_{k \in \mathbb Z \atop k>\ell} E_{\ell,k},
	\end{aligned}
\end{equation}
where $\overline{\Psi}_{k,n_1}(\sqrt {\mathcal L}) = [2^j\sqrt {\mathcal L}]^{-s}\Psi_{k,n_1}(\sqrt {\mathcal L})$.

The term $\displaystyle \sum_{k \in \mathbb Z \atop k>\ell} E_{\ell,k}$ can be done similarly to the corresponding term in the proof of Theorem \ref{main thm}.

We now take care of the term $\displaystyle \sum_{k \in \mathbb Z \atop k\le \ell} E_{\ell,k}$. We have,
\[
\begin{aligned}
	\Big|\int_{\mathbb R^n}& \theta_\ell(\sqrt {\mathcal L})(x,y)c_k(y;n_1,n_2) \widetilde{\Gamma}_k(y,x_Q)\widetilde{\Gamma}_k(y,x_R) d\mu(y)\Big|\\
	&=\Big|\int_{\mathbb R^n} \theta_\ell(\sqrt {\mathcal L})(y,x) c_k(y;n_1,n_2)\widetilde{\Gamma}_k(y,x_Q)\widetilde{\Gamma}_k(y,x_R) d\mu(y)\Big|\\
	&=\Big|\int_{\mathbb R^n} 2^{-2\ell m} (2^{-\ell}\sqrt L)^{2m}\theta_\ell(\sqrt {\mathcal L})(y,x) L^m\big[c_k(y;n_1,n_2)\widetilde{\Gamma}_k(y,x_Q)\widetilde{\Gamma}_k(y,x_R)\big] d\mu(y)\Big|.
\end{aligned}
\]

Note that
\[
\mathcal L^m(fg) = 
\sum_{\substack{\alpha,\beta,\sigma \in \mathbb{N}^n \\ |\alpha|+|\beta|+|\sigma| \leq 2m}}
c_{m,\alpha,\beta,\sigma}\,
\bigl(\partial^\alpha  f\bigr)\, x^\beta \, \bigl(\partial^\sigma g\bigr).
\]
Consequently, by \eqref{eq-cnn  x}, \eqref{eq- product estimate for hermite} and $k\in \mathbb N_0$, for any $N, M>0$, we have
\[
\begin{aligned}
	|\mathcal L^m &\big[c_k(x,\textbf{n})\widetilde \Gamma_k(x,y)\widetilde \Gamma_k(x,z)\big]|\\
	&\lesi \sum_{\substack{\alpha,\beta,\sigma \in \mathbb{N}^n \\ |\alpha|+|\beta|+|\sigma| \leq 2m}} |\partial^\alpha_x c_k(x;n_1,n_2)| \Big|x^\beta \partial^\sigma_x\big[\widetilde \Gamma_k(x,y)\widetilde \Gamma_k(x,z)\big]\Big|\\
	&\lesi 2^{k(\gamma+|\alpha|)}(1+n_1+n_2)^{-N}\sum_{\substack{\beta,\sigma \in \mathbb{N}^n \\ |\beta|+|\sigma| \leq 2m-|\alpha|}}   \Big|x^\beta \partial^\sigma_x\big[\widetilde \Gamma_k(x,y)\widetilde \Gamma_k(x,z)\big]\Big|\\
	&\lesi 2^{k(\gamma+|\alpha|)}(1+n_1+n_2)^{-N}\sum_{\substack{\beta,\sigma_1,\sigma_2 \in \mathbb{N}^n \\ |\beta|+|\sigma_1|+|\sigma_2| \leq 2m-|\alpha|}}   \Big|x^\beta \partial^{\sigma_1}_x\ \widetilde \Gamma_k(x,y)\partial^{\sigma_2}_x\widetilde \Gamma_k(x,z) \Big|\\
	&\lesi 2^{k(\gamma+|\alpha|)}(1+n_1+n_2)^{-N}\\
	& \ \ \times \sum_{\substack{\beta,\sigma_1,\sigma_2 \in \mathbb{N}^n \\ |\beta|+|\sigma_1|+|\sigma_2| \leq 2m-|\alpha|}}2^{k(|\beta|+|\sigma_1|+|\sigma_2|)} D_{2^{-k},M}(x,y) D_{2^{-k},M}(x,y)D_{2^{-k},M}(x,y)\\
	&\lesi 2^{k(\gamma+|\alpha|)}(1+n_1+n_2)^{-N} 2^{k(m-|\alpha|)} D_{2^{-k},M}(x,y) D_{2^{-k},M}(x,y)D_{2^{-k},M}(x,y)\\
	&\lesi 2^{k(\gamma+m)}(1+n_1+n_2)^{-N} D_{2^{-k},M}(x,y) D_{2^{-k},M}(x,y)D_{2^{-k},M}(x,y).
\end{aligned}
\]
This, together with  Lemmas \ref{lem-kernel-representation} and \ref{lem-composite kernel},  yields that  for $\mathbb N\ni m>(s+1)/2$ and a fixed $M>\max\{\f{n\tau_{w_1}}{r_1}, \f{n\tau_{w_2}}{r_2}\}$,
\[
\begin{aligned}
	\Big|\int_{\mathbb R^n} \theta_\ell(\sqrt {\mathcal L})(x,y)&c_k(y;n_1,n_2) \widetilde{\Gamma}_k(y,x_Q)\widetilde{\Gamma}_k(y,x_R) d\mu(y)\Big|\\
	&\lesi \int_{\mathbb R^n} 2^{-2m(\ell-k)}D_{2^{-\ell},2M+2n}(x,y)D_{2^{-k},2M+2n}(y,x_Q)D_{2^{-k},2M+2n}(y,x_R)  d\mu(y)\\
	&\lesi  2^{-(s+1)(\ell-k)}D_{2^{-k},M}(x,x_Q)D_{2^{-k},M}(x,x_R).
\end{aligned}
\]

Hence, using Lemma \ref{lem1- thm2 atom Besov},
\[
\begin{aligned}
	E_{\ell,k}(x)&\lesi  \sum_{(n_1,n_2)\in\mathbb{Z}^2} (1+n_1+n_2)^{-N}\sum_{Q\in \mathcal D_k} \sum_{R\in \mathcal D_k}2^{-(\ell-k)} \omega_Q  |\overline{\Psi}_{k,n_1}(\sqrt{L})(L^{s/2}f)(x_Q)|\\
	& \ \ \ \ \ \times  \omega_R |\Phi_{k,n_2}(\sqrt{L})g(x_R)|D_{2^{-k},M}(x,\xi)D_{2^{-k},M}(x,x_R)\\
	& \lesi \sum_{(n_1,n_2)\in\mathbb{Z}^2} (1+n_1+n_2)^{-N}2^{(k-\ell)} \\
	& \ \ \ \ \ \times \mathcal M_{r_1,w_1}\Big(\sum_{Q\in \mathcal D_k}|\overline{\Psi}_{k,n_1}(\sqrt{L})(L^{s/2}f)(x_Q)|1_{Q} \Big)(x) \mathcal M_{r_2,w_2}\Big(\sum_{R\in \mathcal D_k}|{\Phi}_{k,n_2}g(x_R)|1_{R} \Big)(x).  
\end{aligned}
\]
At this stage, the remaining steps follow by an argument similar to that used in the proof of Theorem \ref{main thm}, and we therefore omit the details.\\

This completes our proof.	
\end{proof}

\section{Appendices}
\subsection{Appendix A}\label{Sec-Appendix} 
This section is devoted to justifying the validity of the identity \eqref{eq-bilinear form}. 
In fact, it follows from Lemmas \ref{lem2-Appendix} and \ref{lem-kernel m(L,L)} stated below. 
Therefore, the main task here is to prove these two lemmas. 

\textit{Throughout this section we assume that $L$ satisfies (A1) and (A2) only. 
	The condition (A3) is not needed.}

\begin{lem}\label{lem1-Appendix}
	For any $s\in \mathbb R$ we have 
	\[
	(L_1+L_2)^s(f\otimes g)(x,x) \in L^p(X), 
	\qquad 1\le p\le \vc,
	\]
	for all $f,g\in \mathcal S_\vc$.
\end{lem}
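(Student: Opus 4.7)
My strategy will be to use Bochner subordination to reduce $(L_1+L_2)^s$ to the heat semigroup, exploit the tensor-product structure $e^{-t(L_1+L_2)}=e^{-tL_1}e^{-tL_2}$, and then use the defining property of $\mathcal{S}_\infty$ (the factorization $f=L^Nf_N$ with $f_N\in\mathcal{S}_\infty$ for arbitrary $N$) to extract large-$t$ decay.

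First I will fix an integer $m>s$ and start from the subordination identity
\[
\lambda^{s}=\frac{1}{\Gamma(m-s)}\int_{0}^{\infty}t^{m-s-1}\lambda^{m}e^{-t\lambda}\,dt,\qquad \lambda>0,
\]
applied via the functional calculus to $L_1+L_2$ on $L^2(X\times X)$. The factorization $f=L^Nf_N$ (and similarly for $g$) shows that the spectral measure of $f\otimes g$ vanishes to arbitrary order at $\{\lambda_1+\lambda_2=0\}$, which legitimizes the formula for every $s\in\mathbb{R}$. Combining $[L_1,L_2]=0$, the identity $e^{-t(L_1+L_2)}(f\otimes g)=(e^{-tL}f)\otimes(e^{-tL}g)$, and the binomial expansion of $(L_1+L_2)^m$, I will obtain the diagonal formula
\[
(L_1+L_2)^{s}(f\otimes g)(x,x)=\frac{1}{\Gamma(m-s)}\sum_{j=0}^{m}\binom{m}{j}\int_{0}^{\infty}t^{m-s-1}(L^{j}e^{-tL}f)(x)(L^{m-j}e^{-tL}g)(x)\,dt.
\]

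The core estimate will be: for every $f\in\mathcal{S}_\infty$, every $\beta,N\in\mathbb{N}$, and every $p\in[1,\infty]$,
\[
\|L^{\beta}e^{-tL}f\|_{L^{p}(X)}\lesssim_{f,\beta,N,p}(1+t)^{-(\beta+N)},\qquad t>0.
\]
For $t\leq 1$ I will commute $L^\beta$ past $e^{-tL}$ and use that $\|e^{-tL}\|_{L^{p}\to L^{p}}$ is uniformly bounded (Schur's lemma applied to the Gaussian bound in (A1)) together with $L^{\beta}f\in\mathcal{S}\subset L^{p}$. For $t\geq 1$ I will invoke $f=L^Nf_N$ to write $L^{\beta}e^{-tL}f=L^{\beta+N}e^{-tL}f_N$ and apply the higher-derivative heat kernel bound $|L^{k}p_{t}(x,y)|\lesssim t^{-k}V(x,\sqrt t)^{-1}\exp(-d(x,y)^{2}/(ct))$ (a standard consequence of (A1); compare \eqref{3.17}), so that Schur yields $\|L^{\beta+N}e^{-tL}f_N\|_{L^p}\lesssim t^{-(\beta+N)}\|f_N\|_{L^p}$.

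To conclude I will apply H\"older with $p_1=p$ and $p_2=\infty$ inside the $L^p$ norm of each summand, giving
\[
\|(L^{j}e^{-tL}f)(L^{m-j}e^{-tL}g)\|_{L^{p}}\lesssim (1+t)^{-(j+N_{1})}(1+t)^{-(m-j+N_{2})}=(1+t)^{-(m+N_{1}+N_{2})},
\]
and then integrate $t^{m-s-1}$ against this bound. The integral is finite provided $m>s$ (integrability near $0$) and $N_1+N_2>-s$ (integrability near $\infty$), both of which are guaranteed by the freedom in choosing $m$, $N_1$, $N_2$. Minkowski then delivers $\|(L_1+L_2)^{s}(f\otimes g)(\cdot,\cdot)\|_{L^p(X)}<\infty$ for every $p\in[1,\infty]$, proving the claim. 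The main obstacle I anticipate is not the estimate itself but the rigorous passage from the $L^2$ spectral identity to the pointwise diagonal formula; this will be secured by the absolute $L^p$-convergence of the Bochner integral established above, which licenses the use of Fubini to interchange $t$-integration with pointwise evaluation on the diagonal.
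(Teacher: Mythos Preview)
Your proposal is correct and follows essentially the same approach as the paper: both arguments rest on a Bochner subordination formula for $(L_1+L_2)^s$, the tensor factorization $e^{-t(L_1+L_2)}(f\otimes g)=(e^{-tL}f)\otimes(e^{-tL}g)$, H\"older on the diagonal product, and the $\mathcal S_\infty$-factorization $f=L^Nf_N$ to manufacture large-$t$ decay. The only differences are cosmetic: you work with a single subordination identity $\lambda^s=\tfrac{1}{\Gamma(m-s)}\int_0^\infty t^{m-s-1}\lambda^m e^{-t\lambda}\,dt$ for an arbitrary integer $m>s$, whereas the paper splits into the cases $s\ge 0$ and $s<0$ and uses the corresponding special formulas; and you pair the factors via $L^p\times L^\infty$ H\"older, while the paper uses $L^{2p}\times L^{2p}$.
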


\begin{proof}
	Fix $p\in [1,\vc]$. We consider two cases.  
	
	\medskip
	\textbf{Case 1: $s\ge 0$.}  
	
	If $s\in \mathbb N$, then it is straightforward that 
	\((L_1+L_2)^s(f\otimes g)(x,x) \in L^p(X)\), 
	since $L^mf, L^mg \in L^q$ for all $m\in \mathbb Z$ and $1\le q\le \vc$.  
	Hence it suffices to prove the case $s\in (0,1)$.  
	
	In this situation, we write
	\[
	\begin{aligned}
		\Big|(L_1+L_2)^s(f\otimes g)(x,x)\Big| 
		&= \f{1}{\Gamma(1-s)}\Big|\int_0^\vc t^{-s}(L_1+L_2)e^{-t(L_1+L_2)}(f\otimes g)(x,x)\,dt\Big| \\
		&=\Big|\int_0^1 \ldots \,dt\Big| + \Big|\int_1^\vc \ldots \,dt\Big| 
		=:E_1(x)+E_2(x).
	\end{aligned}
	\]
	
	Using H\"older  inequality and the $L^{2p}$-boundedness of $e^{-tL}$ (uniformly in $t$), we obtain
	\[
	\begin{aligned}
		\|E_1\|_p 
		&\lesi \int_0^1 t^{-s}\|e^{-tL}(Lf)(x)e^{-tL}g(x)\|_{L^p}\,dt 
		+ \int_0^1 t^{-s}\|e^{-tL}f(x)e^{-tL}(Lg)(x)\|_{L^p}\,dt \\
		&\lesi \|Lf\|_{{L^{2p}}}\|g\|_{{L^{2p}}} + \|f\|_{{L^{2p}}}\|Lg\|_{L^{2p}}.
	\end{aligned}
	\]
	
	Similarly, using the H\"older inequality and the uniform $L^{2p}$-boundedness of $e^{-tL}$ and $tLe^{-tL}$, we have
	\[
	\begin{aligned}
		\|E_2\|_p 
		&\lesi \int_1^\vc t^{-s-1}\|tLe^{-tL}f(x)e^{-tL}g(x)\|_{{L^p}}\,dt 
		+ \int_1^\vc t^{-s-1}\|e^{-tL}f(x)\,tLe^{-tL}g(x)\|_{L^p}\,dt \\
		&\lesi \|f\|_{{L^{2p}}}\|g\|_{{L^{2p}}}.
	\end{aligned}
	\]
	This completes the proof for $s\ge 0$.  
	
	\medskip
	\textbf{Case 2: $s<0$.}  
	
	We use the formula
	\[
	\begin{aligned}
		\Big|(L_1+L_2)^{-s}(f\otimes g)(x,x)\Big|
		&= \f{1}{\Gamma(-s)}\Big|\int_0^\vc t^{-s-1}(L_1+L_2)e^{-t(L_1+L_2)}(f\otimes g)(x,x)\,dt\Big| \\
		&=\Big|\int_0^1 \ldots \,dt\Big| + \Big|\int_1^\vc \ldots \,dt\Big| 
		=:F_1(x)+F_2(x).
	\end{aligned}
	\]
	
	As in the case of $E_1$, we get
	\[
	\|F_1\|_{L^p} \lesi \|Lf\|_{L^{2p}}\|g\|_{L^{2p}} + \|f\|_{L^{2p}}\|Lg\|_{L^{2p}}.
	\]
	
	For $F_2$, fix $N\in \mathbb N$ with $N>-s$. Then
	\[
	\begin{aligned}
		\|F_2\|_{L^{p}} 
		&\lesi \int_1^\vc t^{-s-1}\|L^{N+1}e^{-tL}(L^{-N}f)(x)e^{-tL}g(x)\|_{L^{p}}\,dt \\
		&\quad + \int_1^\vc t^{-s-1}\|e^{-tL}f(x)\,L^{N+1}e^{-tL}(L^{-N}g)(x)\|_{L^{p}}\,dt \\
		&\lesi \|f\|_{L^{2p}}\|g\|_{L^{2p}}.
	\end{aligned}
	\]
	
	By the H\"older inequality and the uniform $L^{2p}$-boundedness of $e^{-tL}$ and $(tL)^{N+1}e^{-tL}$, we further obtain
	\[
	\|F_2\|_{L^{p}} \lesi \|L^{-N}f\|_{L^{2p}}\|g\|_{L^{2p}} + \|f\|_{L^{2p}}\|L^{-N}g\|_{L^{2p}}.
	\]
	
	This completes the proof.
\end{proof}

\begin{lem}\label{lem2-Appendix}
	Let $\boldsymbol{\mathrm{m}}$ satisfy \eqref{eq-condition on m}. Then 
	$\boldsymbol{\mathrm{m}}(L_1,L_2)(f\otimes g) \in L^2(X\times X)$ for all $f,g\in \mathcal S_\vc$.
\end{lem}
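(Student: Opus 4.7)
The plan is to control the $L^{2}(X\times X)$-norm of $\boldsymbol{\mathrm{m}}(L_1,L_2)(f\otimes g)$ via the spectral theorem, reducing everything to integrals of one-variable spectral measures of $f$ and $g$, and then exploiting the infinite smoothness built into $\mathcal{S}_\infty$.

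First, I would unwind the tensor-product spectral calculus. Since $L_1=L\otimes I$ and $L_2=I\otimes L$ with joint spectral resolution $E(A_1\times A_2)=E_L(A_1)\otimes E_L(A_2)$, a direct computation on rectangles gives, for $h=f\otimes g$,
\[
d\langle E(\lambda_1,\lambda_2)(f\otimes g),f\otimes g\rangle
=d\mu_f(\lambda_1)\,d\mu_g(\lambda_2),
\]
where $\mu_f:=\langle E_L(\cdot)f,f\rangle$ and $\mu_g:=\langle E_L(\cdot)g,g\rangle$. The spectral theorem then yields $f\otimes g\in \mathcal D$ (the domain of $\boldsymbol{\mathrm{m}}(L_1,L_2)$) and the identity
\[
\|\boldsymbol{\mathrm{m}}(L_1,L_2)(f\otimes g)\|_{L^2(X\times X)}^2
=\iint_{[0,\infty)^2}|\boldsymbol{\mathrm{m}}(\lambda_1,\lambda_2)|^2\,d\mu_f(\lambda_1)\,d\mu_g(\lambda_2),
\]
provided the right-hand side is finite.

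Next, I would apply the hypothesis \eqref{eq-condition on m} with $\alpha=\beta=0$, which gives $|\boldsymbol{\mathrm{m}}(\lambda_1,\lambda_2)|\le C(\lambda_1+\lambda_2)^{\gamma}$, and split into two cases to separate the variables. For $\gamma\ge 0$ I would use the elementary bound
\[
(\lambda_1+\lambda_2)^{2\gamma}\le C_\gamma\bigl(\lambda_1^{2\gamma}+\lambda_2^{2\gamma}\bigr),
\]
so the double integral is controlled by $\|L^{\gamma}f\|_{L^2}^2\|g\|_{L^2}^2+\|f\|_{L^2}^2\|L^{\gamma}g\|_{L^2}^2$. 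For $\gamma<0$ the AM--GM inequality $\lambda_1+\lambda_2\ge 2\sqrt{\lambda_1\lambda_2}$ gives
\[
(\lambda_1+\lambda_2)^{2\gamma}\le 2^{2\gamma}\lambda_1^{\gamma}\lambda_2^{\gamma},
\]
leading to a bound by $\|L^{\gamma/2}f\|_{L^2}^2\,\|L^{\gamma/2}g\|_{L^2}^2$.

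It remains to verify that $L^{s}f\in L^2(X)$ for every $s\in\mathbb R$ and every $f\in\mathcal S_\infty$. For $s\ge 0$ this is a one-variable version of Lemma~\ref{lem1-Appendix}: an identical argument using the formula $L^{s}f=\tfrac{1}{\Gamma(-s+\lceil s\rceil)}\int_0^\infty t^{\lceil s\rceil-s-1}(tL)^{\lceil s\rceil}e^{-tL}f\,\tfrac{dt}{t}$ combined with the $L^2$-boundedness of $e^{-tL}$ and $(tL)^{m}e^{-tL}$ gives $\|L^{s}f\|_{L^2}\lesssim \|f\|_{L^2}+\|L^{\lceil s\rceil}f\|_{L^2}<\infty$. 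For $s<0$ I would use the defining property of $\mathcal S_\infty$: pick $k\in\mathbb N$ with $k+s\ge 0$ and write $f=L^{k}g_{k}$ with $g_{k}\in\mathcal S_\infty\subset\mathcal S$; then $L^{s}f=L^{s+k}g_{k}$, which is finite in $L^2$ by the previous step. The main technical care, and the step I regard as the principal obstacle, is justifying the tensor-product spectral measure identity at the level needed to conclude $f\otimes g\in\mathcal D$ rather than merely bounding a formal $L^2$ norm; once that identity is in hand, the remaining estimates are routine.
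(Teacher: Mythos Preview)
Your argument is correct and in fact more transparent than the paper's. Both proofs begin identically, writing
\[
\|\boldsymbol{\mathrm{m}}(L_1,L_2)(f\otimes g)\|_2^2
=\iint |\boldsymbol{\mathrm{m}}(\lambda_1,\lambda_2)|^2\,d\langle E(\lambda_1,\lambda_2)(f\otimes g),f\otimes g\rangle
\lesssim \iint (\lambda_1+\lambda_2)^{2\gamma}\,d\langle E(\cdot)(f\otimes g),f\otimes g\rangle,
\]
but then diverge. The paper identifies the right-hand side as $\|(L_1+L_2)^\gamma(f\otimes g)\|_{L^2(X\times X)}^2$ and appeals to Lemma~\ref{lem1-Appendix}; however that lemma literally concerns the diagonal restriction $(L_1+L_2)^s(f\otimes g)(x,x)\in L^p(X)$, so strictly speaking one is invoking the \emph{proof} of that lemma (the same semigroup splitting works on $X\times X$ with H\"older applied as $\|\cdot\|_{L^2(X)}\|\cdot\|_{L^2(X)}$ rather than $\|\cdot\|_{L^{2p}(X)}\|\cdot\|_{L^{2p}(X)}$). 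Your route instead exploits the factorisation $d\langle E(\cdot)(f\otimes g),f\otimes g\rangle=d\mu_f\otimes d\mu_g$ and the elementary pointwise inequalities $(\lambda_1+\lambda_2)^{2\gamma}\lesssim \lambda_1^{2\gamma}+\lambda_2^{2\gamma}$ for $\gamma\ge 0$ and $(\lambda_1+\lambda_2)^{2\gamma}\lesssim \lambda_1^{\gamma}\lambda_2^{\gamma}$ for $\gamma<0$, reducing everything to the one-variable claim $L^s f\in L^2(X)$ for all $s\in\mathbb R$, $f\in\mathcal S_\infty$. This is cleaner and avoids the two-variable semigroup computation entirely. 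Two minor remarks: your integral formula for $L^s$ breaks down when $s\in\mathbb N$ (the $\Gamma$-factor blows up), but you handle that case separately anyway; and for $s<0$ the function $g_k$ lies in $\mathcal S$ rather than $\mathcal S_\infty$, but your $s\ge 0$ step only uses $L^mg_k\in L^2$ for $m\ge 0$, which holds for $g_k\in\mathcal S$. The spectral-measure factorisation you flag as the main obstacle is standard for commuting self-adjoint operators with tensor-product structure and is exactly what the paper is using implicitly.
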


\begin{proof}
	We have
	\[
	\begin{aligned}
		\|\boldsymbol{\mathrm{m}}(L_1,L_2)(f\otimes g)\|_2^2
		&= \int_{(0,\vc)^2} |F(\lambda_1,\lambda_2)|^2\, d\langle E(\lambda_1,\lambda_2)(f\otimes g),f\otimes g \rangle \\
		&\lesi \int_{(0,\vc)^2} (\lambda_1+\lambda_2)^{2\gamma}\, d\langle E(\lambda_1,\lambda_2)(f\otimes g),f\otimes g \rangle \\
		&\simeq \|(L_1+L_2)^\gamma(f\otimes g)\|_2^2.
	\end{aligned}	
	\]
	This, together with Lemma \ref{lem1-Appendix}, implies that 
	$\boldsymbol{\mathrm{m}}(L_1,L_2)(f\otimes g) \in L^2(X\times X)$.
	
	This completes the proof.
\end{proof}  

In what follows, for $\boldsymbol{\mathrm{x}}=(x_1,x_2), \boldsymbol{\mathrm{y}}=(y_1,y_2) \in X\times X$, we define 
\[
\boldsymbol{\mathrm{d}}(\boldsymbol{\mathrm{x}}, \boldsymbol{\mathrm{y}}) = \max\{d(x_1,y_1), d(x_2,y_2)\}, \quad 
B(\boldsymbol{\mathrm{x}},r) = B(x_1,r)\times B(x_2,r), \quad r>0,
\]
and denote $\mu^2(B(\boldsymbol{\mathrm{x}},r)) = \mu(B(x_1,r))\times \mu(B(x_2,r))$. 

\begin{lem}
	Let $R, s>0$. Then there exists a constant $C = C(s)$ such that
	\begin{equation}\label{eq1-kernel mLL}
		\big|F(L_1,L_2)(\boldsymbol{\mathrm{x}},\boldsymbol{\mathrm{y}})\big| 
		\leq C \, \|F(R^2\cdot, R^2\cdot)\|_{\mathcal W^{s+1}_{\vc}(\mathbb R^2)} 
		\frac{\big(1+R \boldsymbol{\mathrm{d}}(\boldsymbol{\mathrm{x}},\boldsymbol{\mathrm{y}})\big)^{-s}}{\mu^2\big(B(\boldsymbol{\mathrm{y}},R^{-1})\big)},
	\end{equation}
	and
	\begin{equation}\label{eq2-kernel mLL}
		\begin{aligned}
			\big|F(L_1,L_2)(\boldsymbol{\mathrm{x}},\boldsymbol{\mathrm{y}}) - F(L_1,L_2)(\overline{\boldsymbol{\mathrm{x}}},\boldsymbol{\mathrm{y}})\big| 
			&\leq C \, \|F(R^2\cdot, R^2\cdot)\|_{\mathcal W^{s+1}_{\vc}(\mathbb R^2)}
			(R \boldsymbol{\mathrm{d}}(\boldsymbol{\mathrm{x}},\overline{\boldsymbol{\mathrm{x}}}))^{\delta_0} \\
			&\quad \times \frac{\big(1+R \boldsymbol{\mathrm{d}}(\boldsymbol{\mathrm{x}},\boldsymbol{\mathrm{y}})\big)^{-s}}{\mu^2\big(B(\boldsymbol{\mathrm{y}},R^{-1})\big)}
		\end{aligned}
	\end{equation}
	for all Borel functions $F$ with $\operatorname{supp} F \subseteq [0,R^2]^2$ and $\boldsymbol{\mathrm{x}}, \overline{\boldsymbol{\mathrm{x}}},\boldsymbol{\mathrm{y}}\in X\times X$ satisfying $R \boldsymbol{\mathrm{d}}(\boldsymbol{\mathrm{x}},\overline{\boldsymbol{\mathrm{x}}}) <1$, where 
	$F(L_1,L_2)(\boldsymbol{\mathrm{x}},\boldsymbol{\mathrm{y}})$ is the integral kernel of $F(L_1,L_2)$ and 
	$\|f\|_{\mathcal W^s_\vc(\mathbb R^2)} := \|(I+\Delta_{\mathbb R^2})^{s/2}f\|_{L^\vc(\mathbb R^2)}$.
\end{lem}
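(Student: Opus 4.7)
The plan is to reduce the bivariate kernel estimates \eqref{eq1-kernel mLL} and \eqref{eq2-kernel mLL} to the single-variable Proposition~\ref{thm-kernel estimate for functional calculus} by exploiting the tensor product structure $L_1=L\otimes I$, $L_2=I\otimes L$ together with a double Fourier expansion of $F$. First, a rescaling $L\mapsto R^{-2}L$ (which preserves (A1)--(A2) and turns $\|F(R^2\cdot,R^2\cdot)\|_{\mathcal W^{s+1}_{\vc}(\mathbb R^2)}$ into $\|F\|_{\mathcal W^{s+1}_{\vc}(\mathbb R^2)}$, with $F$ supported in $[0,1]^2$) reduces matters to $R=1$. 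Substituting $\mu_i=\sqrt{\lambda_i}$, set $G(\mu_1,\mu_2):=F(\mu_1^2,\mu_2^2)$; this is even in each variable, supported in $[-1,1]^2$, satisfies $G(\sqrt{L_1},\sqrt{L_2})=F(L_1,L_2)$, and $\|G\|_{\mathcal W^{s+1}_{\vc}(\mathbb R^2)}\lesssim \|F\|_{\mathcal W^{s+1}_{\vc}(\mathbb R^2)}$ by the chain rule on the compact support.

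Next, fix an even cutoff $\chi\in C_c^\infty([-2,2])$ with $\chi\equiv 1$ on $[-1,1]$ and expand $G$ in a double cosine series on $[-2,2]^2$:
\[
G(\mu_1,\mu_2)=\sum_{(k_1,k_2)\in \mathbb{Z}_{\ge 0}^2} c_{k_1,k_2}\,\phi_{k_1}(\mu_1)\,\phi_{k_2}(\mu_2),\qquad \phi_k(\mu):=\chi(\mu)\cos(\pi k\mu/2),
\]
where integration by parts in the variable carrying the larger frequency, iterated $s+1$ times, gives $|c_{k_1,k_2}|\lesssim (1+|k_1|+|k_2|)^{-(s+1)}\|G\|_{\mathcal W^{s+1}_{\vc}(\mathbb R^2)}$. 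Each $\phi_k$ is even, smooth, supported in $[-2,2]$, and satisfies $\phi_k^{(2\nu+1)}(0)=0$ for every $\nu\ge 0$, so Proposition~\ref{thm-kernel estimate for functional calculus} applies at scale $\lambda=1$ and delivers, for an integer $K=K(s,n)\in\mathbb N$, the pointwise bound $|\phi_k(\sqrt{L})(x,y)|\lesssim (1+|k|)^{K}D_{1,K-n/2}(x,y)$, together with its H\"older variant coming from \eqref{3.2} and carrying the modulus $d(x,\overline x)^{\delta_0}$ whenever $d(x,\overline x)<1$. Since $L_1$ and $L_2$ commute and act on independent variables, the integral kernel of each tensor term factorises as $[\phi_{k_1}(\sqrt{L_1})\phi_{k_2}(\sqrt{L_2})](\boldsymbol{\mathrm{x}},\boldsymbol{\mathrm{y}})=\phi_{k_1}(\sqrt L)(x_1,y_1)\,\phi_{k_2}(\sqrt L)(x_2,y_2)$.

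Summing these factorised kernels against the coefficients $c_{k_1,k_2}$, and combining $(1+d(x_1,y_1))^{-s}(1+d(x_2,y_2))^{-s}\le(1+\boldsymbol{\mathrm{d}}(\boldsymbol{\mathrm{x}},\boldsymbol{\mathrm{y}}))^{-s}$ with $\mu^2(B(\boldsymbol{\mathrm{y}},1))\simeq V(y_1,1)V(y_2,1)$, yields \eqref{eq1-kernel mLL} at $R=1$. For \eqref{eq2-kernel mLL} one decomposes the difference $F(L_1,L_2)(\boldsymbol{\mathrm{x}},\boldsymbol{\mathrm{y}})-F(L_1,L_2)(\overline{\boldsymbol{\mathrm{x}}},\boldsymbol{\mathrm{y}})$ into the two successive one-coordinate changes (first $x_1\to\overline x_1$ with $x_2$ frozen, then $x_2\to\overline x_2$); in each step the H\"older part of Proposition~\ref{thm-kernel estimate for functional calculus} is applied to the single-variable factor undergoing the change, while the pointwise bound is kept on the other factor, each contribution being controlled by $\boldsymbol{\mathrm{d}}(\boldsymbol{\mathrm{x}},\overline{\boldsymbol{\mathrm{x}}})^{\delta_0}$ times the right-hand side of \eqref{eq1-kernel mLL}. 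The chief technical obstacle is trading the polynomial growth $(1+|k_i|)^{K}$ coming from the single-variable bound against the Fourier coefficient decay $(1+|k_1|+|k_2|)^{-(s+1)}$ in order to reach the sharp Sobolev exponent $s+1$: this is handled by exploiting the max-structure of $\boldsymbol{\mathrm{d}}$ and deploying the full polynomial decay $D_{1,s}$ only in the coordinate realising $\boldsymbol{\mathrm{d}}$ (choosing $K$ accordingly there), while keeping only an integrable envelope $D_{1,n+\varepsilon}$ in the other coordinate, so that at each summation the polynomial loss is carried by a single Fourier index and is absorbed by the coefficient decay.
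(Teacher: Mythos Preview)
Your reduction to $R=1$ and the tensor/Fourier-series strategy is natural, but the derivative bookkeeping does not close, so the argument as written does not reach the stated Sobolev exponent $s+1$. Proposition~\ref{thm-kernel estimate for functional calculus} yields a kernel bound $D_{1,K-n/2}$ at the cost of a constant $c_K(\phi_{k})\lesssim (1+|k|)^{K}$, since $\|\phi_k^{(K)}\|_\infty\simeq |k|^K$. To extract the decay $(1+\boldsymbol{\mathrm{d}}(\boldsymbol{\mathrm{x}},\boldsymbol{\mathrm{y}}))^{-s}$ in the coordinate realising the maximum you need $K\ge s+n/2$, and in the other coordinate Proposition~\ref{thm-kernel estimate for functional calculus} forces $K'\ge 3n/2+1$ just to be applicable. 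Your Fourier coefficients, obtained from $s+1$ integrations by parts, decay only like $(1+|k_1|+|k_2|)^{-(s+1)}$. Hence in the sum over $(k_1,k_2)$ you face a net growth of order at least $(1+|k_1|)^{s+n/2-(s+1)}=(1+|k_1|)^{n/2-1}$ in the ``important'' index alone, which diverges for $n\ge 2$ (and the $k_2$-sum is worse). The paragraph about ``carrying the polynomial loss on a single index'' does not repair this: even on a single index the loss $s+n/2$ exceeds the available decay $s+1$. Your method would prove the lemma only with $\mathcal W^{s+1}_\infty$ replaced by $\mathcal W^{s+Cn}_\infty$ for some dimensional constant $C$, which is not what is claimed.

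The paper's route avoids this loss entirely. For \eqref{eq1-kernel mLL} it invokes the bivariate spectral-multiplier kernel estimate of Sikora (an $L^2$-based argument using the on-diagonal heat bound, not Proposition~\ref{thm-kernel estimate for functional calculus}), which achieves the sharp trade-off between Sobolev smoothness and pointwise decay without the $n/2$ derivative penalty. For \eqref{eq2-kernel mLL} it factors $F(L_1,L_2)=\bigl[e^{-L/R^2}\otimes e^{-L/R^2}\bigr]\circ\widetilde F(L_1,L_2)$ with $\widetilde F(\lambda_1,\lambda_2)=e^{(\lambda_1+\lambda_2)/R^2}F(\lambda_1,\lambda_2)$; the H\"older modulus $(R\boldsymbol{\mathrm{d}}(\boldsymbol{\mathrm{x}},\overline{\boldsymbol{\mathrm{x}}}))^{\delta_0}$ then comes for free from (A2) on the heat-kernel factor, while \eqref{eq1-kernel mLL} is applied to $\widetilde F$, whose $\mathcal W^{s+1}_\infty$-norm is comparable to that of $F$. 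This heat-kernel factorisation is the key idea you are missing for the second estimate; for the first, you would need to replace Proposition~\ref{thm-kernel estimate for functional calculus} by a sharper multiplier theorem along Sikora's lines.
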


\begin{proof} 
	Since the proof is quite standard, we just sketch the main ideas.
	
	The proof of \eqref{eq1-kernel mLL} is similar to that of Lemma 3.5 in \cite{Sikora} (see also \cite[Lemma 4.3]{BD2}).
	
	For \eqref{eq2-kernel mLL}, we write
	\[
	F(\lambda_1,\lambda_2) = e^{-(\lambda_1 +\lambda_2)/R^2} \widetilde F(\lambda_1,\lambda_2),
	\]
	where $\widetilde F(\lambda_1,\lambda_2) = e^{(\lambda_1 +\lambda_2)/R^2} F(\lambda_1,\lambda_2)$.
	
	It follows that 
	\[
	F(L_1,L_2) = [e^{-L/R^2}\otimes e^{-L/R^2}] \circ \widetilde F(L_1,L_2).
	\]
	At this stage, using \eqref{eq1-kernel mLL}, the condition (A2), and the fact 
	$$|F(R^2\cdot, R^2\cdot)\|_{\mathcal W^{s+1}_{\vc}(\mathbb R^2)} \simeq 
	\|\widetilde F(R^2\cdot, R^2\cdot)\|_{\mathcal W^{s+1}_{\vc}(\mathbb R^2)},
	$$ 
	we obtain \eqref{eq2-kernel mLL}.
	
	This completes the proof.
\end{proof}
\begin{lem}\label{lem-kernel m(L,L)}
	Let $\boldsymbol{\mathrm{m}}$ satisfy \eqref{eq-condition on m}. Then for all $f,g\in \mathcal S_\vc$, the function 
	\[
	\boldsymbol{\mathrm{x}} \mapsto \boldsymbol{\mathrm{m}}(L_1,L_2)(f\otimes g)(\boldsymbol{\mathrm{x}})
	\] 
	is continuous on $(X\times X, \boldsymbol{\mathrm{d}})$.
\end{lem}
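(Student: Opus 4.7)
The plan is to reduce continuity of $x\mapsto \boldsymbol{\mathrm{m}}(L_1,L_2)(f\otimes g)(x,x)$ to continuity of dyadic pieces plus a uniform summability estimate. Fix an even $\varphi\in C_c^\infty(\mathbb R)$ with $\supp \varphi\subset [1/4,4]$ and $\sum_{k\in\mathbb Z}\varphi(2^{-k}\lambda)=1$ for $\lambda>0$, and set $\boldsymbol{\mathrm{m}}_k(\lambda_1,\lambda_2):=\boldsymbol{\mathrm{m}}(\lambda_1,\lambda_2)\varphi(2^{-k}(\lambda_1+\lambda_2))$. Then $\boldsymbol{\mathrm{m}}_k$ is supported in the region where $\lambda_1+\lambda_2\sim 2^k$, and one checks from \eqref{eq-condition on m} that
\[
\|\boldsymbol{\mathrm{m}}_k(2^k\cdot ,2^k\cdot)\|_{\mathcal W^{s+1}_\vc(\mathbb R^2)}\lesssim 2^{k\gamma}.
\]
By Lemma \ref{lem2-Appendix}, $\boldsymbol{\mathrm{m}}(L_1,L_2)(f\otimes g)$ makes sense in $L^2(X\times X)$ and equals $\sum_k \boldsymbol{\mathrm{m}}_k(L_1,L_2)(f\otimes g)$; what remains is to prove that the diagonal restriction converges uniformly on $X$ to a continuous function.

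The first step will be to establish, for each $k\in\mathbb Z$, continuity of $x\mapsto \boldsymbol{\mathrm{m}}_k(L_1,L_2)(f\otimes g)(x,x)$. Using estimate \eqref{eq2-kernel mLL} with $R=2^{k/2}$, for $x,\overline x$ with $2^{k/2}d(x,\overline x)<1$ and any $s>0$,
\[
\big|\boldsymbol{\mathrm{m}}_k(L_1,L_2)((x,x),\boldsymbol{\mathrm{y}})-\boldsymbol{\mathrm{m}}_k(L_1,L_2)((\overline x,\overline x),\boldsymbol{\mathrm{y}})\big|
\lesssim 2^{k\gamma}(2^{k/2}d(x,\overline x))^{\delta_0}\,\frac{(1+2^{k/2}\boldsymbol{\mathrm{d}}((x,x),\boldsymbol{\mathrm{y}}))^{-s}}{\mu^2(B(\boldsymbol{\mathrm{y}},2^{-k/2}))}.
\]
Choosing $s$ sufficiently large, pairing this with the rapid decay of $f$ and $g$ as elements of $\mathcal S_\infty\subset\mathcal S$, and invoking dominated convergence yields continuity of the $k$-th term. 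This also gives the pointwise size estimate $|\boldsymbol{\mathrm{m}}_k(L_1,L_2)(f\otimes g)(x,x)|\lesssim 2^{k\gamma}\|f\|_\infty\|g\|_\infty$ at the crudest level.

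The main obstacle, and the second step, is to upgrade the crude bound to a bound that is summable in $k$ and uniform in $x\in X$. Here I will exploit the defining property of $\mathcal S_\infty$: for every $M\in\mathbb N$ there exist $f_M,g_M\in\mathcal S$ with $f=L^M f_M$, $g=L^M g_M$. Since $L_1^M L_2^M$ commutes with the spectral multiplier,
\[
\boldsymbol{\mathrm{m}}_k(L_1,L_2)(f\otimes g)=\widetilde{\boldsymbol{\mathrm{m}}}_{k,M}(L_1,L_2)(f_M\otimes g_M),
\qquad \widetilde{\boldsymbol{\mathrm{m}}}_{k,M}(\lambda_1,\lambda_2):=\lambda_1^M\lambda_2^M\boldsymbol{\mathrm{m}}_k(\lambda_1,\lambda_2),
\]
and the rescaled symbol satisfies $\|\widetilde{\boldsymbol{\mathrm{m}}}_{k,M}(2^k\cdot,2^k\cdot)\|_{\mathcal W^{s+1}_\vc}\lesssim 2^{k(\gamma+2M)}$. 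Dually, writing $\boldsymbol{\mathrm{m}}_k=[\boldsymbol{\mathrm{m}}_k(\lambda_1+\lambda_2)^{-N}]\cdot(\lambda_1+\lambda_2)^N$ and expanding $(L_1+L_2)^N$ binomially produces a representation in which the new symbol gains $2^{-kN}$. Feeding these two representations (with $M$ large for $k\to-\infty$ and with $N$ large for $k\to+\infty$) into \eqref{eq1-kernel mLL}, integrating against the rapidly decaying $f_M,g_M$ (resp.\ $f_N,g_N$) via Lemma \ref{lem-elementary}, and optimising, yields a bound of the form $|\boldsymbol{\mathrm{m}}_k(L_1,L_2)(f\otimes g)(x,x)|\lesssim 2^{-\varepsilon|k|}$ with $\varepsilon>0$, uniformly in $x$.

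Finally, combining the two steps: the series $\sum_k \boldsymbol{\mathrm{m}}_k(L_1,L_2)(f\otimes g)(\cdot,\cdot)$ converges uniformly on $X$ (by the summable majorant from the second step) to a function that is a uniform limit of continuous functions (by the first step), hence is continuous. I expect the hardest part to be the bookkeeping in the second step, where the scale-invariant $\mathcal W^{s+1}_\vc$-norms of the rescaled symbols and the decay/volume weights from \eqref{eq1-kernel mLL} must be balanced to obtain geometric decay at both ends of the dyadic scale; this is exactly where the distinguished structure of $\mathcal S_\infty$ (the existence of $L^{-M}$-preimages inside $\mathcal S$) is indispensable.
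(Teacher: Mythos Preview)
Your proposal is correct and follows essentially the same strategy as the paper: dyadic decomposition of $\boldsymbol{\mathrm m}$ according to $\lambda_1+\lambda_2\sim 2^k$, combined with the kernel estimates \eqref{eq1-kernel mLL}--\eqref{eq2-kernel mLL}. There are two minor technical differences worth noting. First, for the low-frequency tail ($k\to-\infty$) you invoke the defining property of $\mathcal S_\infty$ to write $f=L^Mf_M$, $g=L^Mg_M$ and absorb $\lambda_1^M\lambda_2^M$ into the symbol, whereas the paper absorbs $(\lambda_1+\lambda_2)^{-2N}$ and relies on Lemma~\ref{lem1-Appendix} to control $\|(L_1+L_2)^{-2N}(f\otimes g)\|_{L^\infty}$; your route is slightly more self-contained since it bypasses that auxiliary lemma. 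Second, the paper estimates the difference $\boldsymbol{\mathrm m}(L_1,L_2)(f\otimes g)(x,x)-\boldsymbol{\mathrm m}(L_1,L_2)(f\otimes g)(\overline x,\overline x)$ directly and obtains an explicit H\"older bound $\lesssim d(x,\overline x)^{\delta_0}$ (which forces a further case split at high frequencies according to whether $2^{j/2}d(x,\overline x)<1$, since \eqref{eq2-kernel mLL} only applies in that regime), while you argue continuity of each piece plus uniform summability; your argument avoids that case split but yields only continuity rather than a quantitative modulus.
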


\begin{proof}
	Fix $\boldsymbol{\mathrm{x}}, \overline{\boldsymbol{\mathrm{x}}}\in X\times X$ with $\boldsymbol{\mathrm{d}}(\boldsymbol{\mathrm{x}},\overline {\boldsymbol{\mathrm{x}}})<1$, and let $N>|\gamma|+\delta_0$. 
	Let $\psi\in C^\infty(\mathbb{R}^d)$ be a function satisfying $0\le\psi\le1$ and
	\[
	\supp\psi(\xi)\subset [1/2,2],\qquad \sum_{j\in\mathbb Z}\psi_j(\xi) = 1,
	\]
	where $\psi_j(\lambda)=\psi(2^{-j}\lambda)$.
	
	Then we can write
	\[
	\begin{aligned}
		\boldsymbol{\mathrm{m}}(\lambda_1,\lambda_2)&=\sum_{j\in \mathbb Z}\boldsymbol{\mathrm{m}}(\lambda_1,\lambda_2)\psi_{j}(\lambda_1+\lambda_2)\\
		&=\sum_{j\ge 0} 2^{-2jN} \boldsymbol{\mathrm{m}}(\lambda_1,\lambda_2)\psi^1_{j,N}(\lambda_1+\lambda_2)(\lambda_1+\lambda_2)^{2N}\\
		& \ \ +\sum_{j< 0} 2^{2jN} \boldsymbol{\mathrm{m}}(\lambda_1,\lambda_2)\psi^2_{j,N}(\lambda_1+\lambda_2)(\lambda_1+\lambda_2)^{-2N},
	\end{aligned}
	\]
	where $\psi^1_{j,N}(\lambda)=(2^{-j}\lambda)^{-2N}\psi_j(\lambda)$ and $\psi^2_{j,N}(\lambda)=(2^{-j}\lambda)^{2N}\psi_j(\lambda)$.
	
	Consequently, we have
	\[
	\begin{aligned}
		\boldsymbol{\mathrm{m}}(L_1,L_2)(f\otimes g)(\boldsymbol{\mathrm{x}})&=\sum_{j\ge 0\atop 2^{j/2} \boldsymbol{\mathrm{d}}(\boldsymbol{\mathrm{x}},\overline {\boldsymbol{\mathrm{x}}})<1} 2^{-2jN} \boldsymbol{\mathrm{m}}(L_1,L_2)\psi^1_{j,N}(L_1+L_2)[(L_1+L_2)^{2N}(f\otimes g)](\boldsymbol{\mathrm{x}})\\ 	
		& \ \ +\sum_{j\ge 0\atop 2^{j/2} \boldsymbol{\mathrm{d}}(\boldsymbol{\mathrm{x}},\overline {\boldsymbol{\mathrm{x}}})\ge 1} 2^{-2jN} \boldsymbol{\mathrm{m}}(L_1,L_2)\psi^1_{j,N}(L_1+L_2)[(L_1+L_2)^{2N}(f\otimes g)](\boldsymbol{\mathrm{x}})\\
		& \ \ +\sum_{j< 0} 2^{2jN} \boldsymbol{\mathrm{m}}(L_1,L_2)\psi^2_{j,N}(L_1+L_2)[(L_1+L_2)^{-2N}(f\otimes g)](\boldsymbol{\mathrm{x}})\\
		&=: \sum_{j\ge 0\atop 2^{j/2} \boldsymbol{\mathrm{d}}(\boldsymbol{\mathrm{x}},\overline {\boldsymbol{\mathrm{x}}})<1} 2^{-2jN}\boldsymbol{\mathrm{m}}_{j,1}(L_1,L_2)[(L_1+L_2)^{2N}(f\otimes g)](\boldsymbol{\mathrm{x}})\\ 	
		& \ \ +\sum_{j\ge 0\atop 2^{j/2} \boldsymbol{\mathrm{d}}(\boldsymbol{\mathrm{x}},\overline {\boldsymbol{\mathrm{x}}})\ge 1} 2^{-2jN} \boldsymbol{\mathrm{m}}_{j,1}(L_1,L_2)[(L_1+L_2)^{2N}(f\otimes g)](\boldsymbol{\mathrm{x}})\\
		& \ \ +\sum_{j< 0} 2^{2jN} \boldsymbol{\mathrm{m}}_{j,2}(L_1,L_2)[(L_1+L_2)^{-2N}(f\otimes g)](\boldsymbol{\mathrm{x}}),
	\end{aligned}
	\]
	which leads to
	\[
	\begin{aligned}
		&\Big|\boldsymbol{\mathrm{m}}(L_1,L_2)(f\otimes g)(\boldsymbol{\mathrm{x}})-\boldsymbol{\mathrm{m}}(L_1,L_2)(f\otimes g)(\overline{\boldsymbol{\mathrm{x}}})\Big|\\
		&\le \sum_{j\ge 0\atop 2^{j/2} \boldsymbol{\mathrm{d}}(\boldsymbol{\mathrm{x}},\overline {\boldsymbol{\mathrm{x}}})<1} 2^{-2jN}\Big|\boldsymbol{\mathrm{m}}_{j,1}(L_1,L_2)[(L_1+L_2)^{2N}(f\otimes g)](\boldsymbol{\mathrm{x}})-\boldsymbol{\mathrm{m}}_{j,1}(L_1,L_2)[(L_1+L_2)^{2N}(f\otimes g)](\overline{\boldsymbol{\mathrm{x}}})\Big|\\ 	
		& \ \ +\sum_{j\ge 0\atop 2^{j/2} \boldsymbol{\mathrm{d}}(\boldsymbol{\mathrm{x}},\overline {\boldsymbol{\mathrm{x}}})\ge 1} 2^{-2jN}  \big|\boldsymbol{\mathrm{m}}_{j,2}(L_1,L_2)[(L_1+L_2)^{2N}(f\otimes g)](\boldsymbol{\mathrm{x}})\big|\\
		& \ \ +\sum_{j\ge 0\atop 2^{j/2} \boldsymbol{\mathrm{d}}(\boldsymbol{\mathrm{x}},\overline {\boldsymbol{\mathrm{x}}})\ge 1} 2^{-2jN}  \big|\boldsymbol{\mathrm{m}}_{j,2}(L_1,L_2)[(L_1+L_2)^{2N}(f\otimes g)](\overline{\boldsymbol{\mathrm{x}}})\big|\\
		& \ \ +\sum_{j< 0} 2^{2jN}  \big|\boldsymbol{\mathrm{m}}_{j,3}(L_1,L_2)[(L_1+L_2)^{-2N}(f\otimes g)](\boldsymbol{\mathrm{x}})-\boldsymbol{\mathrm{m}}_{j,3}(L_1,L_2)[(L_1+L_2)^{-2N}(f\otimes g)](\overline{\boldsymbol{\mathrm{x}}})\big| \\ 
		&=:E_1+E_2+E_3+E_4.
	\end{aligned}
	\]
	
	Applying Lemma \ref{lem-kernel m(L,L)}, we obtain
	\[
	\begin{aligned}
		E_1&\lesi \sum_{j\ge 0\atop 2^{j/2} \boldsymbol{\mathrm{d}}(\boldsymbol{\mathrm{x}},\overline {\boldsymbol{\mathrm{x}}})<1} 2^{-2jN}(2^{j/2}\boldsymbol{\mathrm{d}}(\boldsymbol{\mathrm{x}},\overline {\boldsymbol{\mathrm{x}}})^{\delta_0}  \|\boldsymbol{\mathrm{m}}_{j,1}(2^j\cdot,2^j\cdot)\|_{\mathcal W^{2n+2}_\infty}\\
		&\quad \times \int_{X\times X}\frac{(1+2^{j/2}\textbf{d}(\boldsymbol{\mathrm{x}},\textbf{y}))^{-2n+1}}{\mu^2(B(\textbf{y},2^{-j/2}))}[(L_1+L_2)^{2N}(f\otimes g)](y_1,y_2)d\mu(y_1)d\mu(y_2),
	\end{aligned}
	\] 
	where $\textbf{y}=(y_1,y_2)\in X\times X$.
	
	From \eqref{eq-condition on m}, it follows that
	\[
	\|\boldsymbol{\mathrm{m}}_{j,1}(2^j\cdot,2^j\cdot)\|_{\mathcal W^{2n+2}_\infty}\lesi 2^{j\gamma}.
	\]
	
	Hence, we further obtain
	\[
	\begin{aligned}
		E_1&\lesi \sum_{j\ge 0} 2^{-j(N-\gamma-\delta_0)}\boldsymbol{\mathrm{d}}(\boldsymbol{\mathrm{x}},\overline {\boldsymbol{\mathrm{x}}})^{\delta_0}\|(L_1+L_2)^{2N}(f\otimes g)\|_{L^\vc}\\
		& \quad \times  \int_{X\times X}\frac{(1+2^{j/2}\textbf{d}(\boldsymbol{\mathrm{x}},\textbf{y}))^{-2n+1}}{\mu^2(B(\textbf{y},2^{-j/2}))} d\mu(y_1)d\mu(y_2)\\
		&\lesi \boldsymbol{\mathrm{d}}(\boldsymbol{\mathrm{x}},\overline {\boldsymbol{\mathrm{x}}})^{\delta_0}  \|(L_1+L_2)^{2N}(f\otimes g)\|_{L^\vc},
	\end{aligned}
	\]
	provided that $N>|\gamma|+\delta_0$.
	
	Similarly, we have
	\[
	\begin{aligned}
		E_4&\lesi \sum_{j<0} 2^{j(N+\gamma+\delta_0)}d(x,\overline x)^{\delta_0}   \|(L_1+L_2)^{-2N}(f\otimes g)\|_{L^\vc}\\
		&\lesi \boldsymbol{\mathrm{d}}(\boldsymbol{\mathrm{x}},\overline {\boldsymbol{\mathrm{x}}})^{\delta_0}   \|(L_1+L_2)^{-2N}(f\otimes g)\|_{L^\vc},
	\end{aligned}
	\]
	as long as $N>|\gamma|+\delta_0$.
	
	For $E_2$ and $E_3$, applying Lemma \ref{lem-kernel m(L,L)} again and using a similar argument, we get
	\[
	\begin{aligned}
		E_2&\lesi \sum_{j\ge 0\atop 2^{-j/2}\le \boldsymbol{\mathrm{d}}(\boldsymbol{\mathrm{x}},\overline {\boldsymbol{\mathrm{x}}})} 2^{-jN} \| \boldsymbol{\mathrm{m}}_{j,2}(2^j\cdot,2^j\cdot)\|_{\mathcal W^{2n+2}_\infty}\\
		&\quad \times \int_{X\times X}\frac{(1+2^{j/2}\textbf{d}(\textbf{x},\textbf{y}))^{-2n+1}}{\mu^2(B(\textbf{y},2^{-j/2}))}[(L_1+L_2)^{2N}(f\otimes g)](y_1,y_2)d\mu(y_1)d\mu(y_2)\\
		&\lesi \sum_{j\ge 0\atop 2^{-j/2}\le \boldsymbol{\mathrm{d}}(\boldsymbol{\mathrm{x}},\overline {\boldsymbol{\mathrm{x}}})} 2^{-j(N-\gamma)}\|(L_1+L_2)^{2N}(f\otimes g)\|_{L^\vc}\\
		&\lesi \sum_{j\ge 0\atop 2^{-j/2}\le \boldsymbol{\mathrm{d}}(\boldsymbol{\mathrm{x}},\overline {\boldsymbol{\mathrm{x}}})} 2^{-j(N-\gamma-\delta_0/2)}\boldsymbol{\mathrm{d}}(\boldsymbol{\mathrm{x}},\overline {\boldsymbol{\mathrm{x}}})^{\delta_0} \|(L_1+L_2)^{2N}(f\otimes g)\|_{L^\vc}\\
		&\lesi \boldsymbol{\mathrm{d}}(\boldsymbol{\mathrm{x}},\overline {\boldsymbol{\mathrm{x}}})^{\delta_0} \|(L_1+L_2)^{2N}(f\otimes g)\|_{L^\vc},
	\end{aligned}
	\]
	provided $N>|\gamma|+\delta_0$.
	
	Similarly,
	\[
	E_3\lesi \boldsymbol{\mathrm{d}}(\boldsymbol{\mathrm{x}},\overline {\boldsymbol{\mathrm{x}}})^{\delta_0} \|(L_1+L_2)^{2N}(f\otimes g)\|_{L^\vc}.
	\]
	
	Combining the estimates for $E_1, E_2, E_3, E_4$, we conclude
	\[
	\Big|\boldsymbol{\mathrm{m}}(L_1,L_2)(f\otimes g)(\textbf{x})-\boldsymbol{\mathrm{m}}(L_1,L_2)(f\otimes g)(\overline {\textbf{x}})\Big|\le C(f,g,N)  \boldsymbol{\mathrm{d}}(\boldsymbol{\mathrm{x}},\overline {\boldsymbol{\mathrm{x}}})^{\delta_0},
	\]
	which shows that the function $$\textbf{x}\mapsto \boldsymbol{\mathrm{m}}(L_1,L_2)(f\otimes g)(\textbf{x})$$ is continuous.
	
	This completes the proof.
\end{proof}

\subsection{Appendix B}
\begin{proof}[Proof of Lemma \ref{lem-Ds Dt}:]
	Without loss of generality, we might assume $s\ge t>0$. We write
	\[
	\int_{X}D_{s, N}(x,z)D_{t, N}(z,y)d\mu(z)=\int_{d(x,z)\ge d(x,y)/4}\ldots +\int_{d(x,z)< d(x,y)/4}\ldots =:E+F.
	\]
	By \eqref{doubling3} and Lemma \ref{lem-elementary},
	\[
	E\lesi D_{s, N-n}(x,y)\int_{X}D_{t, N}(z,y)d\mu(z)\lesi  D_{s, N-n}(x,y).
	\]
	In addition, $d(x,y)\simeq d(z,y)$ as long as $d(x,z)< d(x,y)/4$. Hence,
	\[
	F\lesi D_{t, N}(x,y)\int_{d(x,z)< d(x,y)/4}D_{s, N}(x,z)d\mu(z).
	\]
	If $d(x,y)\ge s \ge t$,  then
	\[
	\begin{aligned}
		D_{t, N}(x,y)&\simeq \f{1}{V(x\vee y,t)}\Big(\f{d(x,y)}{t}\Big)^{-N}\\
		&\lesi \f{1}{V(x\vee y,s)}\Big(\f{d(x,y)}{s}\Big)^{-N}\Big(\f{t}{s}\Big)^{N-n}\\
		&\lesi \f{1}{V(x\vee y,s)}\Big(\f{d(x,y)}{s}\Big)^{-N}\simeq D_{s,N}(x,y).	
	\end{aligned}
	\]
	This, together with Lemma \ref{lem-elementary}, yields
	\[
	F\lesi D_{s,N}(x,y).	
	\]
	If $t\le d(x,y)<s$, then  using the facts
	\[
	D_{s,N}(x,z)\simeq \f{1}{V(x\vee y,s)}\simeq \f{1}{V(x,s)}, \ \ d(x,z)<d(x,y)/4\le s/4
	\]
	and the doubling condition \eqref{doubling2}, we have
	\[
	\begin{aligned}
		F&\lesi D_{t, N}(x,y)\int_{d(x,z)< d(x,y)/4}\f{1}{V(x,s)} d\mu(z)\\
		&\lesi \f{1}{V(x,t)}\Big(\f{d(x,y)}{t}\Big)^{-N} \f{V(x,d(x,y))}{V(x,s)}=\f{1}{V(x,s)}\Big(\f{d(x,y)}{t}\Big)^{-N} \f{V(x,d(x,y))}{V(x,t)}\\
		&\lesi \f{1}{V(x,s)}\Big(\f{d(x,y)}{t}\Big)^{-N+n}\\
		&\lesi \f{1}{V(x,s)}\simeq D_{s,N}(x,y).
	\end{aligned}
	\]
	If $d(x,y)<t\le s$, then 
	\[
	D_{t,N}(x,y)\simeq \f{1}{V(x,t)} \ \text{and} \ \ \ D_{s,N}(x,z)\simeq \f{1}{V(x,s)}, \ \ d(x,z)<d(x,y)/4\le t/4 \le s/4,
	\]
	we have
	\[
	\begin{aligned}
		F &\lesi \f{1}{V(x,t)}  \f{V(x,d(x,y))}{V(x,s)}=\f{1}{V(x,s)}  \f{V(x,d(x,y))}{V(x,t)}\\
		&\lesi \f{1}{V(x,s)}\simeq D_{s,N}(x,y).
	\end{aligned}
	\]
	This completes our proof.
\end{proof}

\begin{proof}[Proof of Lemma \ref{lem-composite kernel}:]
	\noindent (a) By the H\"older inequality,
	\[
	\begin{aligned}
		\int_X &D_{2^{-\ell},2N}(x,y)D_{2^{-k},N}(y,z)D_{2^{-k},N}(y,u)d\mu(y)\\
		&\le \Big[\int_X D_{2^{-\ell},N}(x,y)D_{2^{-k},N}(y,z)^2d\mu(y)\Big]^{1/2}\Big[\int_X D_{2^{-\ell},N}(x,y) D_{2^{-k},N}(y,u)^2d\mu(y)\Big]^{1/2}.
	\end{aligned}
	\]
	On the other hand,
	\[
	D_{2^{-k},N}(y,z)^2 \le \f{1}{V(z,2^{-k})}D_{2^{-k},N}(y,z) \ \ \text{and} \ \ 	D_{2^{-k},N}(y,u)^2\lesi \f{1}{V(u,2^{-k})}D_{2^{-k},N}(y,u).
	\]
	Hence,
	\[
	\begin{aligned}
		\int_X &D_{2^{-\ell},N}(x,y)D_{2^{-k},N}(y,z)D_{2^{-k},N}(y,u)d\mu(y)\\
		&\lesi \f{1}{\sqrt{V(u,2^{-k}) V(z,2^{-k})}}\Big[\int_X D_{2^{-\ell},N}(x,y)D_{2^{-k},N}(y,z) d\mu(y)\Big]^{1/2}\\
		& \ \ \times\Big[\int_X D_{2^{-\ell},N}(x,y) D_{2^{-k},N}(y,u) d\mu(y)\Big]^{1/2}\\
		&\lesi \f{1}{\sqrt{V(u,2^{-k}) V(z,2^{-k})}}\Big[ D_{2^{-k},N-n}(x,z)D_{2^{-k},N-n}(x,u)  \Big]^{1/2}\\
		&\lesi   D_{2^{-k},\f{N-2n}{2}}(x,z)D_{2^{-k},\f{N-2n}{2}}(x,u), 
	\end{aligned}
	\]	
	where in the second inequality we used Lemma \ref{lem-Ds Dt} and in the last inequality we used \eqref{doubling3}.

	(b) By the H\"older inequality,
	\[
	\begin{aligned}
		\int_X &D_{2^{-\ell},2N}(x,y)D_{2^{-k},N}(y,z)D_{2^{-k},N}(y,u)d\mu(y)\\
		&\le \Big[\int_X D_{2^{-\ell},N}(x,y)^2D_{2^{-k},N}(y,z)d\mu(y)\Big]^{1/2}\Big[\int_X D_{2^{-k},N}(y,z) D_{2^{-k},N}(y,u)^2d\mu(y)\Big]^{1/2}\\
		&\lesi \f{1}{\sqrt{V(x,2^{-\ell})V(u,2^{-k})}}\Big[\int_X D_{2^{-\ell},N}(x,y)D_{2^{-k},N}(y,z)d\mu(y)\Big]^{1/2}\\
		& \ \ \times\Big[\int_X D_{2^{-k},N}(u,y) D_{2^{-k},N}(y,z) d\mu(y)\Big]^{1/2}.
	\end{aligned}
	\]
	Applying Lemma \ref{lem-Ds Dt} and \eqref{doubling3},
	\[
	\begin{aligned}
		\int_X &D_{2^{-\ell},2N}(x,y)D_{2^{-k},N}(y,z)D_{2^{-k},N}(y,u)d\mu(y)\\
		&\lesi \f{1}{\sqrt{V(x,2^{-\ell})V(u,2^{-k})}}\Big[ D_{2^{-\ell},N-n}(x,z) D_{2^{-k},N-n}(u,z)  \Big]^{1/2}\\
		&\lesi  \widetilde  D_{2^{-\ell},\f{N-2n}{2}}(x,z) D_{2^{-k},\f{N-2n}{2}}(z,u). 
	\end{aligned}
	\]
	This completes the proof.
\end{proof}

\begin{proof}[Proof of Proposition {prop1-maximal function}:]
	It is obvious that 
	\[
	\psi_j(\sqrt{L})=\sum_{k=j-2}^{j+3}\psi_j(\sqrt{L})\varphi_k(\sqrt{L}).
	\]
	Fix $\lambda>0$. We set $\ell = \lfloor 2n+\lambda \rfloor +2$ and $C_{\psi,\ell}= \|\psi\|_\vc + \|\psi^{(\ell)}\|_{L^\vc}$. By Proposition \ref{thm-kernel estimate for functional calculus} we have,  for $y\in X$ and $N=\ell -\lambda >n$,
	\begin{equation}\label{eq1-pro1}
		\begin{aligned}
			|\psi_j(\sqrt{L})f(y)|&\leq \sum_{k=j-2}^{j+3}|\psi_j(\sqrt{L})\varphi_k(\sqrt{L})f(y)|\\
			&\lesi  C_{\psi,\ell}\sum_{k=j-2}^{j+3}\int_X \f{1}{V(y,2^{-j})}(1+2^jd(y,z))^{-N-\lambda}|\varphi_k(\sqrt{L})f(z)|d\mu(z).
		\end{aligned}
	\end{equation}
	It follows that
	\begin{equation}\label{eq2-pro1}
		\begin{aligned}
			\f{|\psi_j(\sqrt{L})f(y)|}{(1+d(x,y)/s)^\lambda}&\lesi  C_{\psi,\ell}\sum_{k=j-2}^{j+3}\int_X \f{1}{V(y,2^{-j})}(1+2^jd(y,z))^{-N-\lambda}\f{|\varphi_k(\sqrt{L})f(z)|}{(1+2^jd(x,y))^\lambda}d\mu(z)\\
			&\lesi C_{\psi,\ell} \sum_{k=j-2}^{j+3}\int_X \f{1}{V(y,2^{-j})}(1+2^jd(y,z))^{-N}\f{|\varphi_k(\sqrt{L})f(z)|}{(1+2^jd(x,y))^\lambda(1+2^jd(y,z))^\lambda}d\mu(z)\\
			&\lesi C_{\psi,\ell} \sum_{k=j-2}^{j+3}\int_X \f{1}{V(y,2^{-j})}(1+2^jd(y,z))^{-N}\f{|\varphi_k(\sqrt{L})f(z)|}{(1+2^jd(x,z))^\lambda}d\mu(z).\\
		\end{aligned}
	\end{equation}
	Using the fact that $2^k\simeq 2^j$, we obtain 
	\begin{equation}\label{eq2s-pro1}
		\begin{aligned}
			\f{|\psi_j(\sqrt{L})f(y)|}{(1+d(x,y)/s)^\lambda}&\lesi  C_{\psi,\ell}\sum_{k=j-2}^{j+3}\int_X \f{1}{V(y,2^{-j})}(1+2^jd(y,z))^{-N}\f{|\varphi_k(\sqrt{L})f(z)|}{(1+2^kd(x,z))^\lambda}d\mu(z)\\
			&\lesi C_{\psi,\ell}\sum_{k=j-2}^{j+3} \varphi^*_{k,\lambda}(\sqrt{L})f(x)\int_X \f{1}{V(y,2^{-j})}(1+2^jd(y,z))^{-N}d\mu(z)\\
			&\lesi C_{\psi,\ell}\sum_{k=j-2}^{j+3} \varphi^*_{k,\lambda}(\sqrt{L})f(x),
		\end{aligned}
	\end{equation}
	where in the last inequality we use Lemma \ref{lem-elementary}.
	Taking the supremum over $y\in X$, we derive \eqref{eq-psistar vaphistar}.
\end{proof}

\subsection{Appendix C} This section is devoted to proving Lemma \ref{lem-kernel-representation} and the inequality \eqref{eq- equivalence of maximal functions}. Although some ideas are inspired from \cite{PK}, igorous computations are required to keep track of explicit constants.

\begin{prop}\label{prop1-frame}
	Let $\Gamma$ be as in Lemma \ref{lem-exp kernel}. For $j \in \mathbb Z$ and $1 \leq p < \infty$,  we have
	\begin{equation} \label{eq:4.3}
		\sum_{Q \in \mathcal D_{j}} \int_{Q}  |\Gamma_j(\sqrt L)f(x) - \Gamma_j(\sqrt L)f(x_Q)|^p d\mu(x) \leq (a_nb_\sharp c_{n,p}^\star 2^{-\delta_0 k_0})^p \|\Gamma_j(\sqrt L)f\|_p^p.
	\end{equation}
	%and for any $f \in \Sigma^\infty_\lambda$,
	%\begin{equation} \label{eq:4.4}
	%	\sup_{\xi \in X_\delta} \sup_{x \in A_\xi} |f(x) - f(\xi)| \leq K(\sigma^*) \gamma^\alpha  c_\diamond \|f\|_\infty,
	%\end{equation}
	where $a_n, b_\sharp$ and $c^\star_{n,p}$ are constants in Lemma \ref{lem-elementary}, Remark \ref{rem1} and \eqref{eq-sharp weighted estimates for M}, respectively.
\end{prop}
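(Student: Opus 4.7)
The plan is to bound the increment $\Gamma_j(\sqrt L)f(x)-\Gamma_j(\sqrt L)f(x_Q)$ pointwise by a constant times the Hardy--Littlewood maximal function of $\Gamma_j(\sqrt L)f$, and then to invoke the $L^p$-boundedness of $\mathcal M$ with the sharp constant $c^\star_{n,p}$ from \eqref{eq-sharp weighted estimates for M}. The key structural input is a reproducing identity obtained by exploiting the cut-off $\Sigma$ from Remark \ref{rem1}: since $\Sigma\equiv 1$ on $[0,4]\supset\supp\Gamma$, the functional calculus yields
\[
\Gamma_j(\sqrt L)f = \Sigma_j(\sqrt L)\bigl[\Gamma_j(\sqrt L)f\bigr],
\]
so the H\"older regularity of $\Sigma$'s kernel \eqref{eq- kernel Gamma function} can be transferred onto the increment of $\Gamma_j(\sqrt L)f$, with $\Gamma_j(\sqrt L)f$ itself playing the role of the input function --- exactly what is needed to end up with $\|\Gamma_j(\sqrt L)f\|_p$ on the right-hand side.

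Concretely, first I would note that for $Q\in\mathcal D_j$ and $x\in Q$, Remark \ref{rem1} gives $d(x,x_Q)\le a_\sharp 2^{-(j+k_0)}$, which is strictly less than $2^{-j}$ by the choice \eqref{eq-constant k0} of $k_0$. Self-adjointness of $\Sigma_j(\sqrt L)$ makes the kernel symmetric in its two variables, so the H\"older bound \eqref{eq- kernel Gamma function} transfers to the first variable and yields
\[
\bigl|\Sigma_j(\sqrt L)(x,y)-\Sigma_j(\sqrt L)(x_Q,y)\bigr|
\le b_\sharp\,\bigl(2^j d(x,x_Q)\bigr)^{\delta_0}\, D_{2^{-j},n+1}(x_Q,y).
\]
Using $d(x,x_Q)\le 2^{-j}$ together with the doubling property \eqref{doubling3} gives $D_{2^{-j},n+1}(x_Q,y)\simeq D_{2^{-j},n+1}(x,y)$, and substituting into the reproducing identity, followed by Lemma \ref{lem-elementary}, produces the pointwise estimate
\[
\bigl|\Gamma_j(\sqrt L)f(x)-\Gamma_j(\sqrt L)f(x_Q)\bigr|
\le a_n b_\sharp\,\bigl(a_\sharp 2^{-k_0}\bigr)^{\delta_0}\,\mathcal M\bigl(\Gamma_j(\sqrt L)f\bigr)(x),
\]
with the geometric factor $(a_\sharp 2^{-k_0})^{\delta_0}$ absorbed into $2^{-\delta_0 k_0}$ via the constraint \eqref{eq-constant k0}.

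The conclusion is then immediate: raising to the $p$-th power, integrating over $Q$, and summing over the essentially disjoint cubes $Q\in\mathcal D_j$ covering $X$ modulo a null set (Lemma \ref{Christ'slemma}(i)) collapses the sum of integrals to $\int_X|\mathcal M(\Gamma_j(\sqrt L)f)|^p\,d\mu$, which \eqref{eq-sharp weighted estimates for M} with $w\equiv 1$ bounds by $(c^\star_{n,p})^p\|\Gamma_j(\sqrt L)f\|_p^p$. Collecting the constants produces \eqref{eq:4.3}.

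I anticipate the delicate point to be the careful bookkeeping of constants: specifically, using the symmetry of $\Sigma_j(\sqrt L)(\cdot,\cdot)$ to apply \eqref{eq- kernel Gamma function} in the first variable rather than the stated second variable, and absorbing the doubling factors that appear when passing between $D_{2^{-j},n+1}(x_Q,y)$ and $D_{2^{-j},n+1}(x,y)$, so that the final constant matches the form $(a_n b_\sharp c^\star_{n,p} 2^{-\delta_0 k_0})^p$ stated in the proposition rather than a larger but equally admissible one.
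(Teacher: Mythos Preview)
Your proposal is correct and follows essentially the same route as the paper: reproduce $\Gamma_j(\sqrt L)f$ via $\Sigma_j(\sqrt L)$, apply the H\"older estimate \eqref{eq- kernel Gamma function} for the kernel of $\Sigma_j(\sqrt L)$, dominate by $\mathcal M(\Gamma_j(\sqrt L)f)$ through Lemma \ref{lem-elementary}, and finish with \eqref{eq-sharp weighted estimates for M}. The only simplification the paper makes (implicitly) is that, after using the kernel symmetry $\Sigma_j(\sqrt L)(x,y)=\Sigma_j(\sqrt L)(y,x)$, the H\"older bound yields $D_{2^{-j},n+1}(y,x)=D_{2^{-j},n+1}(x,y)$ directly, so the doubling comparison between $D_{2^{-j},n+1}(x_Q,y)$ and $D_{2^{-j},n+1}(x,y)$ that you flag as a delicate point is not actually needed, and the constant comes out exactly as stated.
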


\begin{proof}
	Let $\Sigma$ be the function in Remark \ref{rem1}. Then we have
	\[
	\Gamma_j(\sqrt L)f(x) = \int_X \Sigma_j(\sqrt L)(x, y)[\Gamma_j(\sqrt L)f](y)\, d\mu(y).
	\]
	Using Theorem \ref{thm-kernel estimate for functional calculus}, Lemma \ref{lem-elementary} and \eqref{eq-sharp weighted estimates for M},
	\[
	\begin{aligned}
		\sum_{Q \in \mathcal D_j} \int_{Q}&  |\Gamma_j(\sqrt L)f(x) - \Gamma_j(\sqrt L)f(x_Q)|^p d\mu(x) \\
		&= \sum_{Q \in \mathcal D_{j}} \int_{Q}  \left| \int_X \left[\Sigma_j(\sqrt L)(x, y) - \Sigma_j(\sqrt L)(x_Q, y)\right] [\Gamma_j(\sqrt L)f](y) d\mu(y) \right|^p d\mu(x) \\
		&\le b^p_\sharp\sum_{Q \in \mathcal D_{j}} \int_{Q} \left| \int_X [2^{j}d(x,x_Q)]^\alpha  D_{2^{-j},n+1}(x,y)[\Gamma_j(\sqrt L)f](y) d\mu(y) \right|^p d\mu(x) \\
		&\le (a_nb_\sharp  2^{-\delta_0 k_0})^p\sum_{Q \in \mathcal D_{j}} \int_{Q}|\mathcal M[\Gamma_j(\sqrt L)f](x)|^pd\mu(x)=(b_\sharp a_n 2^{-\delta_0 k_0})^p\|\mathcal M[\Gamma_j(\sqrt L)f]\|_p\\
		&\le (a_nb_\sharp c_{n,p}^\star 2^{-\delta_0 k_0})^p\|\Gamma_j(\sqrt L)f\|_p^p.
	\end{aligned}
	\]
	This completes our proof.
\end{proof}

\begin{prop}\label{prop2-frame}
	Let $\Gamma$ be as in Lemma \ref{lem-exp kernel}. Let $\varepsilon>0$ such that
	\begin{equation} \label{eq:4.8}
		a_nb_\sharp c_{n,2}^\star 2^{-\delta_0 k_0} \le \varepsilon/4,
	\end{equation}
	where $a_n, b_\sharp$ and $c^\star_{n,2}$ are constants in Lemma \ref{lem-elementary}, Remark \ref{rem1} and \eqref{eq-sharp weighted estimates for M} for $p=2$, respectively. Then we have
	\begin{equation} \label{eq:4.9}
		(1 - \varepsilon) \|\Gamma_j(\sqrt L)f\|_2^2 \leq   \sum_{Q \in \mathcal D_{j}} V(Q)|\Gamma_j(\sqrt L)f(x_Q)|^2   \leq (1 + \varepsilon) \|\Gamma_j(\sqrt L)f\|_2^2.
	\end{equation}
\end{prop}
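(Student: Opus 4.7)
The plan is to compare the continuous $L^2$-norm with the Riemann-type sum by introducing the step function that equals $\Gamma_j(\sqrt L)f(x_Q)$ on each dyadic cube $Q\in\mathcal D_j$, and then applying Proposition \ref{prop1-frame} together with the triangle inequality in $L^2$.

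More precisely, write $F=\Gamma_j(\sqrt L)f$, and, using that $\bigcup_{Q\in\mathcal D_j} Q$ covers $X$ up to a set of measure zero, define $G\in L^2(X)$ by $G(x)=F(x_Q)$ for $x\in Q$. Then $\|G\|_2^2=\sum_{Q\in\mathcal D_j} V(Q)|F(x_Q)|^2$, which is exactly the middle quantity in \eqref{eq:4.9}. Applying Proposition \ref{prop1-frame} with $p=2$ gives
\[
\|F-G\|_2^2 \;=\; \sum_{Q\in\mathcal D_j}\int_Q |F(x)-F(x_Q)|^2\,d\mu(x)
\;\le\;\bigl(a_nb_\sharp c_{n,2}^\star 2^{-\delta_0 k_0}\bigr)^2\|F\|_2^2
\;\le\;(\varepsilon/4)^2\|F\|_2^2,
\]
by the hypothesis \eqref{eq:4.8}. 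The triangle inequality in $L^2$ then yields
\[
\bigl|\,\|G\|_2-\|F\|_2\,\bigr|\;\le\;\|F-G\|_2\;\le\;\tfrac{\varepsilon}{4}\|F\|_2,
\]
so that $(1-\varepsilon/4)\|F\|_2\le\|G\|_2\le(1+\varepsilon/4)\|F\|_2$.

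Squaring these bounds and using the elementary inequalities $(1-\varepsilon/4)^2\ge 1-\varepsilon$ (which holds for every $\varepsilon>0$) and $(1+\varepsilon/4)^2=1+\varepsilon/2+\varepsilon^2/16\le 1+\varepsilon$ (valid whenever $\varepsilon\le 8$, in particular in the small-$\varepsilon$ regime relevant here), we obtain
\[
(1-\varepsilon)\|F\|_2^2\;\le\;\|G\|_2^2\;\le\;(1+\varepsilon)\|F\|_2^2,
\]
which is exactly \eqref{eq:4.9}. There is no substantial obstacle: the only delicate point is the reduction to a simple $L^2$-triangle inequality via the step function $G$, after which everything follows at once from Proposition \ref{prop1-frame} and the smallness condition \eqref{eq:4.8} on $\varepsilon$. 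No additional control on the kernel or on the geometry of $\mathcal D_j$ is required beyond what was already used in the proof of Proposition \ref{prop1-frame}.
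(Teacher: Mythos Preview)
Your proof is correct and follows essentially the same approach as the paper: both arguments reduce to Proposition~\ref{prop1-frame} with $p=2$ and then use elementary algebra to pass from the $L^2$-smallness of $F-G$ to the two-sided bound \eqref{eq:4.9}. The only difference is in the packaging of that algebra: the paper applies the pointwise inequality $|a|^2\le(1+\delta)\bigl[\delta^{-1}|a-b|^2+|b|^2\bigr]$ with $\delta=\varepsilon/2$, whereas you use the triangle inequality $\bigl|\|G\|_2-\|F\|_2\bigr|\le\|F-G\|_2$ and then square; your route is marginally cleaner and yields a slightly better admissible range ($\varepsilon\le 8$ versus the paper's implicit $\varepsilon\le 6$), though both are only ever used for small $\varepsilon$.
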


\begin{proof}
	Note that 
	\begin{equation}\label{eq-Minkowski inequality}
		|a|^2  \le (1+\delta)\Big[\f{1}{\delta}|a-b|^2+|b|^2 \Big]
	\end{equation}
	Applying this with $\delta := \varepsilon/2$, 
	\[
	\begin{aligned}
	\int_{Q} |\Gamma_j(\sqrt L)f(x)|^2 d\mu(x) &\leq  \f{2+\varepsilon}{\varepsilon} \int_{Q} |\Gamma_j(\sqrt L)f(x) - \Gamma_j(\sqrt L)f(x_Q)|^p d\mu(x)\\
	& \ \ \ \ \  + (1+\varepsilon/2)V(Q)|\Gamma_j(\sqrt L)f(x_Q)|^2.
	\end{aligned}
	\]
	This, together with Proposition \ref{prop1-frame}, implies
	\[
	\begin{aligned}
		\|\Gamma_j(\sqrt L)f\|_2^2&=\sum_{Q\in \mathcal D_j}\int_{Q} |\Gamma_j(\sqrt L)f(x)|^2 d\mu(x)\\
		&\le \f{(a_nb_\sharp c_{n,2}^\star 2^{-\delta_0 k_0})^2(2+\varepsilon)}{\varepsilon}\|\Gamma_j(\sqrt L)f\|_2^2 +(1+\varepsilon/2)\sum_{Q\in \mathcal D_j}V(Q)|\Gamma_j(\sqrt L)f(x_Q)|^2, 
	\end{aligned}
	\]
	which implies
	\[
	\begin{aligned}
		(1+\varepsilon/2)\sum_{Q\in \mathcal D_j}V(Q)|\Gamma_j(\sqrt L)f(x_Q)|^2&\ge \Big[1-\f{(a_nb_\sharp c_{n,2}^\star 2^{-\delta_0 k_0})^2 (2+\varepsilon)}{\varepsilon}\Big]\|\Gamma_j(\sqrt L)f\|_2^2\\
		&\ge  [1-\varepsilon(2+\varepsilon)/8]\|\Gamma_j(\sqrt L)f\|_2^2,
	\end{aligned}
	\]
	as long as $a_nb_\sharp c_{n,2}^\star 2^{-\delta_0 k_0} \le \varepsilon/4$.
	
	It is equivalent to that 
	\[
	\begin{aligned}
		\sum_{Q\in \mathcal D_j}V(Q)|\Gamma_j(\sqrt L)f(x_Q)|^2&\ge \f{1-\varepsilon/4 -\varepsilon^2/8}{1+\varepsilon/2}\|\Gamma_j(\sqrt L)f\|_2^2\\
		&\ge (1-\varepsilon)\|\Gamma_j(\sqrt L)f\|_2^2.
	\end{aligned}
	\]
	
	Similarly, using \eqref{eq-Minkowski inequality} for $\delta=\varepsilon/2 $ and Proposition \ref{prop1-frame} again,
	\[
	\begin{aligned}
		\sum_{Q\in \mathcal D_j}V(Q)|\Gamma_j(\sqrt L)f(x_Q)|^2&=\int_{Q} |\Gamma_j(\sqrt L)f(x_Q)|^2 d\mu(x)\\
		&\leq  \f{2+\varepsilon}{\varepsilon} \sum_{Q\in \mathcal D_j}\int_{Q} |\Gamma_j(\sqrt L)f(x) - \Gamma_j(\sqrt L)f(x_Q)|^p d\mu(x)\\
		& \ \  + (1+\varepsilon/2)\sum_{Q\in \mathcal D_j}\int_Q|\Gamma_j(\sqrt L)f(x)|^2d\mu(x)\\
		&\le \f{(a_nb_\sharp c_{n,2}^\star 2^{-\delta_0 k_0})^2(2+\varepsilon)}{\varepsilon}\|\Gamma_j(\sqrt L)f\|_2^2 +(1+\varepsilon/2)\|\Gamma_j(\sqrt L)f\|_2^2\\
		&\le (1+\varepsilon)\|\Gamma_j(\sqrt L)f\|_2^2, 
	\end{aligned}
	\]
	as long as $a_nb_\sharp c_{n,2}^\star 2^{-\delta_0 k_0} \le \varepsilon/4$.
	
	This completes our proof.
\end{proof}
 
 We are ready to give the proof of Lemma \ref{lem-kernel-representation}.
\begin{proof}[Proof of Lemma \ref{lem-kernel-representation}:]
	Let $\Gamma$ be as in Lemma \ref{lem-exp kernel}. Set $\varepsilon = \f{1}{4c^2_\sharp c_\diamond}$, where $c_\diamond$ and $c_\sharp$ is the constant in Lemma \ref{lem-exp kernel} and Lemma \ref{lem-product exp}, respectively. From \eqref{eq-constant k0}, the condition \eqref{eq:4.8} is satisfied. Hence, by Proposition \ref{prop2-frame}, 
	\[
	(1 - \varepsilon)\|\Gamma_j(\sqrt L)f\|_2^2 
	\leq \sum_{Q \in \mathcal D_j} V(Q) |\Gamma_j(\sqrt L)f(x_Q)|^2 
	\leq (1 + \varepsilon)\|\Gamma_j(\sqrt L)f\|_2^2.
	\]
	With $\omega_Q := \frac{1}{1+\varepsilon}V(Q)$, we obtain
	\begin{equation}\label{eq1-lem2.5}
		(1 - 2\varepsilon)\|\Gamma_j(\sqrt L)f\|_2^2 
		\leq \sum_{Q \in \mathcal D_j} \omega_Q |\Gamma_j(\sqrt L)f(x_Q)|^2 
		\leq \|\Gamma_j(\sqrt L)f\|_2^2.
	\end{equation}
	
	Define
	\[
	V_jf(x) = \int_X V_j(x,y)f(y)\,d\mu(y),
	\]
	where
	\[
	V_j(x,y) = \sum_{Q\in \mathcal D_j} \omega_Q \Gamma_j(\sqrt L)(x,x_Q)\Gamma_j(\sqrt L)(x_Q,y).
	\]
	Set
	\[
	R_j := \Gamma_j(\sqrt L)^2 - V_j.
	\]
	
	Note that
	\[
	\langle R_jf,f \rangle = \|\Gamma_j(\sqrt L)f\|_2^2 - \langle V_jf,f \rangle. 
	\]
	
	Moreover,
	\[
	\langle V_jf,f \rangle = \sum_{Q\in \mathcal D_j} \omega_Q |\Gamma_j(\sqrt L)f(x_Q)|^2,
	\]
	which, together with \eqref{eq1-lem2.5}, implies
	\[
	(1-2\varepsilon)\|\Gamma_j(\sqrt L)f\|_2^2 \le \langle V_jf,f \rangle \le \|\Gamma_j(\sqrt L)f\|_2^2.
	\] 
	Consequently,
	\[
	\langle R_jf,f \rangle \le 2\varepsilon \|\Gamma_j(\sqrt L)f\|_2^2 \le 2\varepsilon \|f\|_2^2. 
	\]
	Thus $\|R_j\|_{2\to 2}\le 2\varepsilon$.
	
	Define
	\[
	T_j := ({\rm Id} - R_j)^{-1} = {\rm Id} + \sum_{k \geq 1} R_j^k =: {\rm Id} + S_j.
	\]
	Since $\Gamma_j^2(\sqrt L)\psi_j(\sqrt L)f = \psi_j(\sqrt L)f$, we obtain
	\[
	\begin{aligned}
		\psi_j(\sqrt L)f 
		&= T_j\circ \psi_j(\sqrt L)f - T_j\circ R_j\circ \psi_j(\sqrt L)f\\
		&= T_j\circ \psi_j(\sqrt L)f - T_j\circ \Gamma_j^2(\sqrt L)\psi_j(\sqrt L)f + T_j\circ V_j\circ \psi_j(\sqrt L)f\\
		&= T_j\circ V_j \circ\psi_j(\sqrt L)f.
	\end{aligned}
	\]
	
	Furthermore,
	\[
	\begin{aligned}
		V_j\circ\psi_j(\sqrt L)f(x)
		&= \sum_{Q\in \mathcal D_j} \omega_Q \Gamma_j(\sqrt L)(x,x_Q)\Gamma_j(\sqrt L)\psi_j(\sqrt L)f(x_Q)\\
		&= \sum_{Q\in \mathcal D_j} \omega_Q \Gamma_j(\sqrt L)(x,x_Q)\psi_j(\sqrt L)f(x_Q),
	\end{aligned}
	\]
	where we used $\Gamma_j(\sqrt L)\psi_j(\sqrt L)f= \psi_j(\sqrt L)f$.
	
	Therefore,
	\[
	\psi_j(\sqrt L)f  
	= \sum_{Q\in \mathcal D_j} \omega_Q \psi_j(\sqrt L)f(x_Q)\, T_j\big[\Gamma_j(\sqrt L)(\cdot,x_Q)\big](x).
	\]
	Set 
	\[
	\widetilde \Gamma_j(x,y) := T_j\big[\Gamma_j(\sqrt L)(\cdot,y)\big](x).
	\]
	Then
	\[
	\psi_j(\sqrt L)f(x)  
	= \sum_{Q\in \mathcal D_j} \omega_Q \psi_j(\sqrt L)f(x_Q)\,\widetilde \Gamma_j(x,x_Q),
	\]
	which proves (iii).
	
	\smallskip
	
	We next prove that $\widetilde \Gamma_j$ satisfies (i)-(ii). Firstly, from Lemma \ref{lem-exp kernel} and \ref{lem-product exp}, it follows that for any $k\in \mathbb N$,
	\begin{equation}\label{eq0-kernel-of-Rj}
		|R_j(x,y)| \lesssim 2c_\sharp c_{\diamond}E_{2^{-j}, \kappa_0}(x,y), \qquad x,y\in X,
	\end{equation}
	\begin{equation}\label{eq1-kernel-of-Rj}
		|L^kR_j(x,y)| \lesssim 2^{2jk}E_{2^{-j}, \kappa_0}(x,y), \qquad x,y\in X,
	\end{equation}
	and
	\begin{equation}\label{eq2-kernel-of-Rj}
		|R_j(x,y)-R_j(\overline x,y)| 
		\lesssim (2^j d(x,\overline x))^{\delta_0}  E_{2^{-j}, \kappa_0}(x,y),
	\end{equation}
	for all $x,\overline x,y\in X$ with $d(x,\overline x)<2^{-j}$.
	
	From \eqref{eq0-kernel-of-Rj} and Lemma \ref{lem-product exp}, for each $k\in \mathbb N$, $j\in \mathbb Z$ and for all $x,y\in X$,
	\[
	|R_j^k(x,y)|\le c^{k-1}_\sharp (2c_\sharp c_{\diamond})^kE_{2^{-j}, \kappa_0}(x,y).
	\]
	On the other hand, for $k\ge 2$
	\[
	\begin{aligned}
		|R_j^k(x,y)|&\le \|R_j(x,\cdot)\|_2\|R_j\|_{2\to 2}^{k-2}\|R_j(\cdot,y)\|_2\\
		&\le C\f{(2c_\sharp c_{\diamond})^2\varepsilon^{k-2}}{\sqrt{V(x,2^{-j})V(y,2^{-j})}},
	\end{aligned}
	\]
	where the constant $C$ only depends on $n$ and $\kappa_0$.
	
	Interpolating these two inequalities,
	\begin{equation}\label{eq-kernel of R_jk}
		|R_j^k(x,y)|\lesi (2c^2_\sharp c_\diamond \varepsilon)^{(k-2)/2}E_{2^{-j}, \kappa_0}(x,y) = 2^{-(k-2)/2}E_{2^{-j}, \kappa_0}(x,y),
	\end{equation}
	since $\varepsilon = \f{1}{2c_\sharp}$.
	
	We now prove the estimate (i). Indeed, by Lemma \ref{lem-exp kernel},
	\[
	|\Gamma_j(\sqrt L)(x,y)| \lesssim c_{\diamond}E_{2^{-j}, \kappa_0}(x,y).
	\]	
	Moreover, from $\widetilde \Gamma_j(x,y) := T_j\big[\Gamma_j(\sqrt L)(\cdot,y)\big](x)$ and $T_j={\rm Id} + S_j$, we have
	\[
	\widetilde \Gamma_j(x,y)
	= \Gamma_j(\sqrt L)(x,y) + \int_{X} S_j(x,z)\Gamma_j(\sqrt L)(z,y)\,d\mu(z).
	\]
	Hence, from this and Lemma \ref{lem-product exp}, it suffices to show that 
	\[
	|S_j(x,y)| \lesssim E_{2^{-j}, \kappa_0/2}(x,y).
	\]	
	To do this, using \eqref{eq-kernel of R_jk},
	\[
	\begin{aligned}
		|S_j(x,y)|&\le \sum_{k\ge 1}|R_j^k(x,y)|\\
		&\lesi E_{2^{-j}, \kappa_0/2}(x,y) +\sum_{k\ge 2}2^{-(k-2)/2} E_{2^{-j}, \kappa_0/2}(x,y)\\
		&\lesi E_{2^{-j}, \kappa_0/2}(x,y),
	\end{aligned}
	\]
	as desired, and therefore (i) holds for $m=0$.
	
	In addition, note that 
	\[
	\begin{aligned}
		\widetilde \Gamma_j(x,y)
		&= \Gamma_j(\sqrt L)(x,y) + \int_{X} S_j(x,z)\Gamma_j(\sqrt L)(z,y)\,d\mu(z)\\
		&= \Gamma_j(\sqrt L)(x,y) + \int_{X} R_j(x,z)\,\widetilde{\Gamma}_j(z,y)\,d\mu(z),
	\end{aligned}
	\]
	where we used $S_j = \sum_{k\ge 1} R_j^k= R_j \sum_{k\ge 0} R_j^k =R_jT_j$.  
	
	This, together with (i) for $m=0$,  \eqref{eq1-kernel-of-Rj}, \eqref{eq2-kernel-of-Rj}, and Lemmas \ref{lem-exp kernel} and \ref{lem-product exp}, proves (i) and (ii).
	
	\medskip
	It remains to prove (iii). To do this, we compute
	\[
	\begin{aligned}
		L^m\big[\widetilde \Gamma_j(\cdot,y)\widetilde \Gamma_j(\cdot,w)\big](x)
		&= L^m\big[\Gamma_j(\sqrt L)(\cdot,y)\Gamma_j(\sqrt L)(\cdot,w)\big](x)\\
		&\quad + \int_{X}L^m\big[\Gamma_j(\sqrt L)(\cdot,y)R_j(\cdot,z)\big](x)\,\widetilde{\Gamma}_j(z,w)\,d\mu(z)\\
		&\quad + \int_{X}L^m\big[\Gamma_j(\sqrt L)(\cdot,w)R_j(\cdot,z)\big](x)\,\widetilde{\Gamma}_j(z,y)\,d\mu(z)\\
		&\quad + \int_{X}\int_{X} L^m\big[R_j(\cdot,z_1)R_j(\cdot,z_2)\big](x)\,\widetilde{\Gamma}_j(z_1,y)\widetilde{\Gamma}_j(z_2,w)\,d\mu(z_1)d\mu(z_2)\\
		&=: E_1+E_2+E_3+E_4.
	\end{aligned}
	\]
	
	By condition (A3),
	\[
	E_1 \lesssim 2^{2jm}D_{2^{-j},N}(x,y)D_{2^{-j},N}(x,w).
	\]
	
	The estimates of $E_2,E_3$ are similar and simpler, so we only detail $E_4$.  Recall that 
	\[
	R_j(x,y) =  \Gamma^2_j(\sqrt L)(x,y) -\sum_{Q\in \mathcal D_j} \omega_Q \Gamma_j(\sqrt L)(x,x_Q)\Gamma_j(\sqrt L)(x_Q,y),
	\]
	which implies that 
	\[
	\begin{aligned}
		L^m&\big[R_j(\cdot,y)R_j(\cdot,z)\big](x)\\
		= \ &  L^m\big[\Gamma^2_j(\sqrt L)(\cdot,y)\Gamma^2_j(\sqrt L)(\cdot,z)\big](x) \\
		&-\sum_{Q\in \mathcal D_j} \omega_Q L^m\big[\Gamma_j(\sqrt L)(\cdot,x_Q)\Gamma^2_j(\sqrt L)(\cdot,z)\big](x)\Gamma_j(\sqrt L)(x_Q,y)\\
		&-\sum_{Q\in \mathcal D_j} \omega_Q L^m\big[\Gamma_j(\sqrt L)(\cdot,x_Q)\Gamma^2_j(\sqrt L)(\cdot,y)\big](x)\Gamma_j(\sqrt L)(x_Q,z)\\
		&+\sum_{Q,Q'\in \mathcal D_j} \omega_Q \omega_{Q'} L^m\big[\Gamma_j(\sqrt L)(\cdot,x_Q)\Gamma_j(\sqrt L)(\cdot,x_{Q'})\big](x)\Gamma_j(\sqrt L)(x_Q,y)\Gamma_j(\sqrt L)(x_{Q'},z)\\
		=:&F_1+F_2+F_3+F_4.
	\end{aligned}
	\]
	We now estimate $F_4$. For $M=N+2n$, by (A3) and Theorem \ref{thm-kernel estimate for functional calculus},
	\[
	\begin{aligned}
		F_4&\lesi 2^{2mj}\sum_{Q,Q'\in \mathcal D_j} \omega_Q \omega_{Q'} D_{2^{-j},M}(x,x_Q)D_{2^{-j},M}(x,x_{Q'})D_{2^{-j},M}(x_Q,y)D_{2^{-j},M}(x_{Q'},z)\\
		&\simeq 2^{2mj} \Big[\sum_{Q\in \mathcal D_j}\omega_Q D_{2^{-j},\sigma}(x,x_Q)D_{2^{-j},M}(x_Q,y)\Big]\Big[\sum_{Q'\in \mathcal D_j}\omega_{Q'}D_{2^{-j},M}(x,x_{Q'})D_{2^{-j},M}(x_{Q'},z)\Big]\\
		&\simeq 2^{2mj} \Big[\sum_{Q\in \mathcal D_j}\int_Q D_{2^{-j},\sigma}(x,w)D_{2^{-j},M}(w,y)d\mu(w)\Big]\Big[\sum_{Q'\in \mathcal D_j}\int_{Q'}D_{2^{-j},M}(x,w)D_{2^{-j},M}(w,z)d\mu(w)\Big]\\
		&\simeq 2^{2mj} \Big[ \int_X D_{2^{-j},M}(x,w)D_{2^{-j},M}(w,y)d\mu(w)\Big]\Big[ \int_{X}D_{2^{-j},M}(x,w)D_{2^{-j},M}(w,z)d\mu(w)\Big]\\
		&\lesi 2^{2mj}D_{2^{-j},M}(x,y)D_{2^{-j},M}(x,z),
	\end{aligned}
	\]
	where in the last inequality we used Lemma \ref{lem-Ds Dt}.
	
	Similarly, 
	\[
	F_1+F_2+F_3\lesi 2^{2mj}D_{2^{-j},N+n}(x,y)D_{2^{-j},M}(x,z).
	\]
	Therefore,
	\[
	L^m\big[R_j(\cdot,y)R_j(\cdot,z)\big](x)\lesi 2^{2mj}D_{2^{-j},N+n}(x,y)D_{2^{-j},M}(x,z).
	\]
	This, together with Lemma \ref{lem-Ds Dt}, implies that 
	\[
	\begin{aligned}
		E_4&\lesi 2^{2mj}\int_{X}\int_{X}D_{2^{-j},N}(x,z_1)D_{2^{-j},N}(x,z_2)  D_{2^{-j},N}(z_1,y)D_{2^{-j},N}(z_2,w)d\mu(z_1)d\mu(z_2)\\
		&\lesi 2^{2mj}D_{2^{-j},N}(x,y)D_{2^{-j},N}(x,w).
	\end{aligned}
	\]
	
	This completes the proof of (iii).
\end{proof}

We now give the proof of the inequality \eqref{eq- equivalence of maximal functions}. In fact, the inequality \eqref{eq- equivalence of maximal functions} is a direct consequence of the following proposition.
\begin{prop}\label{prop-maximal function equivalence}
	Let $L$ satisfy the condition (A1)  and let $w\in A_\vc$, $r\in (0,1]$, $k, n_2\in \mathbb Z$ and $\lambda>0$. Then we have
	\[
	\mathcal M_{r,w}\Big(  \Phi_{k,n_2,\lambda}^*(\sqrt{L})g \Big)(x)\simeq \mathcal M_{r,w}\Big(  \Phi_{k,n_2,\lambda}^*(\sqrt{L})g \Big)(y)
	\]
	for $x,y\in Q\in \mathcal D_k$.
\end{prop}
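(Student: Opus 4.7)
The plan is to exploit two ingredients: the Peetre-type slow variation of the function $h := \Phi_{k,n_2,\lambda}^*(\sqrt L)g$ at scale $2^{-k}$, together with the doubling property of any $A_\infty$ weight. Note that condition (A1) on $L$ does not really enter beyond the fact that $h$ is pointwise well-defined.

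First I would establish the elementary pointwise inequality
\[
h(z_1)\le (1+2^k d(z_1,z_2))^\lambda\, h(z_2), \qquad z_1,z_2\in X,
\]
which follows directly from the definition of $h$ as a Peetre-type supremum, combined with $1+2^k d(z_2,w)\le (1+2^k d(z_2,z_1))(1+2^k d(z_1,w))$. An immediate consequence is that $h$ is comparable to $h(y)$ on any ball $B(y,C 2^{-k})$, with constants depending only on $C$ and $\lambda$.

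Now fix $x,y\in Q\in\mathcal D_k$; by Remark \ref{rem1-Christ} one has $d(x,y)\lesi 2^{-k}$. For an arbitrary ball $B=B(x_B,r_B)\ni x$ I split into two cases. If $r_B\gtrsim 2^{-k}$, then $y\in 2B$; since $w\in A_\infty$ is doubling, $w(2B)\lesi w(B)$, and so the average of $h^r w$ over $B$ is controlled by the corresponding average over $2B$, hence by $\mathcal M_{r,w}h(y)$. If instead $r_B\lesi 2^{-k}$, then $B\subset B(y,c\, 2^{-k})$ for a suitable constant $c$, and by the slow-variation step $h\simeq h(y)$ on $B$, so the average of $h^r w$ over $B$ is bounded by a constant multiple of $h(y)$.

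To close the small-ball case, I would bound $h(y)$ itself by $\mathcal M_{r,w}h(y)$: applying the slow-variation inequality on $B(y,2^{-k})$ gives $h\simeq h(y)$ there, whence
\[
\Big(\f{1}{w(B(y,2^{-k}))}\int_{B(y,2^{-k})}h^r w\Big)^{1/r}\simeq h(y),
\]
and this quantity is trivially at most $\mathcal M_{r,w}h(y)$. Taking the supremum over balls $B\ni x$ then yields $\mathcal M_{r,w}h(x)\lesi \mathcal M_{r,w}h(y)$, and the reverse estimate follows by interchanging the roles of $x$ and $y$. I do not foresee any serious obstacle; the only care needed is in choosing the threshold radius so that the large-ball case is handled purely by doubling of $w$, while the small-ball case is absorbed by the slow variation of $h$ combined with the lower bound $h(y)\lesi \mathcal M_{r,w}h(y)$.
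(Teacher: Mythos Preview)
Your proposal is correct and follows essentially the same strategy as the paper: split the balls $B\ni x$ by whether $r_B$ is large or small compared to $2^{-k}\simeq\ell(Q)$, use doubling in the large-radius case, and use the Peetre slow-variation inequality $h(z_1)\le (1+2^k d(z_1,z_2))^\lambda h(z_2)$ in the small-radius case to reduce to $h(y)\lesi \mathcal M_{r,w}h(y)$. The paper carries this out only for $r=1$, $w\equiv 1$ and asserts that the general case is analogous; you instead treat general $r,w$ directly by invoking the doubling of $A_\infty$ weights, and you correctly note that in the large-ball case one only gets $y\in 2B$ rather than $y\in B$, which is then handled by doubling.
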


\begin{proof}
	It suffices to prove the claim for $r=1$ and $w\equiv 1$, since the case $r\in(0,1)$ and $w\in A_\infty$ follows by the same argument with minor modifications. By definition,
	\[
	\mathcal M\big(\Phi_{k,n_2,\lambda}^*(\sqrt{L})g\big)(x)
	= \sup_{B\ni x} \fint_B \Phi_{k,n_2,\lambda}^*(\sqrt{L})g(z)\,d\mu(z).
	\]
	Split the supremum according to the radius of the ball:
	\[
	\begin{aligned}
		\mathcal M\big(\Phi_{k,n_2,\lambda}^*(\sqrt{L})g\big)(x)
		&\le \sup_{\substack{B\ni x\\ r_B\ge 4\ell(Q)}} \fint_B \Phi_{k,n_2,\lambda}^*(\sqrt{L})g(z)\,d\mu(z)
		\\ &\quad
		+ \sup_{\substack{B\ni x\\ r_B< 4\ell(Q)}} \fint_B \Phi_{k,n_2,\lambda}^*(\sqrt{L})g(z)\,d\mu(z).
	\end{aligned}
	\]
	
	If $B\ni x$ and $r_B\ge 4\ell(Q)$, then for every $y\in Q$ we have $y\in B$. Hence for any fixed $y\in Q$,
	\[
	\fint_B \Phi_{k,n_2,\lambda}^*(\sqrt{L})g(z)\,d\mu(z)
	\le \mathcal M\big(\Phi_{k,n_2,\lambda}^*(\sqrt{L})g\big)(y),
	\]
	and therefore
	\[
	\sup_{\substack{B\ni x\\ r_B\ge 4\ell(Q)}} \fint_B \Phi_{k,n_2,\lambda}^*(\sqrt{L})g(z)\,d\mu(z)
	\le \mathcal M\big(\Phi_{k,n_2,\lambda}^*(\sqrt{L})g\big)(y).
	\]
	
	If $B\ni x$ and $r_B<4\ell(Q)$, then for any $z\in B$ and any $y\in Q$ (with $Q$ containing $x$) we have 
	\[
	\Phi_{k,n_2,\lambda}^*(\sqrt{L})g(z)
	\lesssim \Phi_{k,n_2,\lambda}^*(\sqrt{L})g(y).
	\]
	Therefore
	\[
	\begin{aligned}
		\sup_{\substack{B\ni x\\ r_B<4\ell(Q)}} \fint_B \Phi_{k,n_2,\lambda}^*(\sqrt{L})g(z)\,d\mu(z)
		&\lesssim \Phi_{k,n_2,\lambda}^*(\sqrt{L})g(y)
		\\&\le \mathcal M\big(\Phi_{k,n_2,\lambda}^*(\sqrt{L})g\big)(y).
	\end{aligned}
	\]
	
	Combining the two cases yields
	\[
	\mathcal M\big(\Phi_{k,n_2,\lambda}^*(\sqrt{L})g\big)(x)
	\lesssim \mathcal M\big(\Phi_{k,n_2,\lambda}^*(\sqrt{L})g\big)(y),
	\]
	for any fixed $y\in Q$, as required. This completes the proof.
	
\end{proof}

{\bf Acknowledgement.} The author was supported by the research grant ARC DP140100649 from the Australian Research Council.

\end{document}